\documentclass{article}
\usepackage[utf8]{inputenc}
\usepackage{amsmath,amsthm, amssymb,latexsym,enumitem,hyperref}
\usepackage{bm}
\usepackage[title,titletoc,toc]{appendix}
\usepackage{mathtools}
\usepackage{graphics}
\usepackage[english]{babel}
\usepackage{xcolor}
\usepackage{amsthm}
\usepackage{multicol}
\usepackage{times}
\usepackage{hyperref}
\usepackage[T1]{fontenc}
\usepackage{textpos}
\usepackage{subfigure}
\usepackage{framed} 
\usepackage{color}
\usepackage{tikz}
\usepackage{tikz-cd}
\usepackage{mathtools}
\usepackage{commath}
\usepackage{nicematrix}
\usepackage{enumitem}
\usepackage{geometry}
\usepackage{float}

\usepackage[doi=false,isbn=false,url=false,eprint=false, style=alphabetic, backend=bibtex,maxbibnames=99]{biblatex}
\addbibresource{bibliography.bib}

\geometry{margin=4cm}

\DeclareMathAlphabet{\mathbbold}{U}{bbold}{m}{n}

\theoremstyle{definition}

\newcommand{\ITM}{\mathrm{ITM}}
\newcommand{\IET}{\mathrm{IET}}

\newcommand{\C}{\mathbb{C}}

\newcommand{\N}{\mathbb{N}}
\newcommand{\R}{\mathbb{R}}

\newcommand{\betas}{\beta_*}
\newcommand{\betass}{\beta_{**}}

\newtheorem{theorem}{Theorem}[section]
\newtheorem{question}[theorem]{Question}
\newtheorem{definition}[theorem]{Definition}

\newtheorem{example}[theorem]{Example}
 
\newtheorem{proposition}[theorem]{Proposition}
\newtheorem{conjecture}[theorem]{Conjecture}
\newtheorem*{conjecture*}{Conjecture}
\newtheorem*{mtheorem}{Topological Prevalence of Finite Type Maps}%[section]
%[section]
\newtheorem*{mconjecture}{Boshernitzan--Kornfeld Conjecture}%[section]
\newtheorem*{mconjecture2}{Irrational Rotations Conjecture}%[section]
\newtheorem*{mtheoremI}{Main Theorem I}%[section]
\newtheorem*{mtheoremII}{Main Theorem II}
%[section]\newtheorem*{mtheoremA}{Main Theorem A}%[section]

\newtheorem*{theoremA}{Theorem A} 
\newtheorem*{theoremB}{Theorem B} 
\newtheorem*{theoremC}{Theorem C}
\newtheorem*{theorem*}{Theorem}
\newtheorem{lemma}[theorem]{Lemma}
\newtheorem{corollary}[theorem]{Corollary}

\usepackage{xpatch}
\makeatletter
\AtBeginDocument{\xpatchcmd{\@thm}{\thm@headpunct{.}}{\thm@headpunct{}}{}{}}
\makeatother

\newcounter{lstv}
\newenvironment{lstv}{%
\refstepcounter{lstv}%
\begin{center}
\begin{minipage}{.9\textwidth}}{%
\end{minipage}%
\makebox[.1\textwidth][r]{(*)}%
\end{center}}

\newcounter{lsta}
\newenvironment{lsta}{%
\refstepcounter{lsta}%
\begin{center}
\begin{minipage}{.9\textwidth}}{%
\end{minipage}%
\makebox[.1\textwidth][r]{(**)}%
\end{center}}

\author{
Kostiantyn Drach\footnote{Universitat de Barcelona (Gran Via de les Corts Catalanes, 585, 08007 Barcelona, Spain) and Centre de Recerca Matem\`atica (Edifici C, Carrer de l'Albareda, 08193 Bellaterra, Spain), email: kostiantyn.drach@ub.edu}, 
Leon Staresinic\footnote{Imperial College London (180 Queen's Gate, South Kensington, London SW7 2AZ, UK), email: l.staresinic21@imperial.ac.uk}, 
and Sebastian van Strien\footnote{Imperial College London (180 Queen's Gate, South Kensington, London SW7 2AZ, UK), email: s.van-strien@imperial.ac.uk}}
\title{Density of Stable Interval Translation Maps} 
\date{\today}

\begin{document}

\maketitle

\begin{abstract} An \emph{Interval Translation Mapping} (ITM) is a piece-wise translation $T \colon I \to I$ defined on a finite partition $I_1, \ldots, I_r$ of an interval $I$ into $r \ge 2$ sub-intervals. We do not assume that the images of these intervals are disjoint. These maps naturally generalize classical Interval Exchange Transformations (IETs) by removing the bijectivity assumption. Let $\ITM(r)$ be the space of all interval translation mappings, where we fix $r$ but not the intervals $I_1,\ldots,I_r$, nor the translations on each of them. The set $X(T) :=\bigcap_{n\ge 0} T^n(I)$ is either a finite union of intervals (and $T$ behaves like an IET on those intervals), in which case the map is called \emph{of finite type}, or is a disjoint union of finitely many intervals and a Cantor set, in which case the map is called \emph{of infinite type}. In this paper, for an arbitrary $r \ge 2$, we show that the set of finite type maps contains an open and dense set of $\ITM(r)$. This resolves in positive a topological version of a long-standing conjecture due to Boshernitzan and Kornfeld. More precisely, we show that there exists an open and dense subset $\mathcal{S}(r)$ of $\ITM(r)$ consisting of \emph{stable maps} such that each $T\in \mathcal{S}(r)$
\begin{itemize}[leftmargin=4mm,labelsep=2mm,itemsep=1mm]
\item[-] is of finite type; 
\item[-] the first return map to any component of $X(T)$  corresponds to a  circle rotation;
\item[-] $\mathcal{S}(r) \ni T \mapsto X(T)$ is continuous in the Hausdorff topology. 
\end{itemize}
 
\end{abstract}

\setcounter{tocdepth}{1} 
\tableofcontents  

%\medskip

\section{Introduction}

\textit{Interval Translation Mappings} ($\ITM$s) are orientation-preserving piecewise isometries of an interval. As such, they are the generalization of the well-known Interval Exchange Transformations ($\IET$s) obtained by dropping the bijectivity assumption on the mapping. They were introduced by Boshernitzan and Kornfeld in \cite{MR1356616}. The graphs of an $\ITM$ (left) and $\IET$ (right) on $4$ intervals are displayed in the figures below.

\begin{figure}[H]
    \centering
    \begin{minipage}{0.45\textwidth}
        \centering
        \vspace{-1mm}
        \includegraphics[width=1.1\textwidth, trim={18 15 15 10},clip]{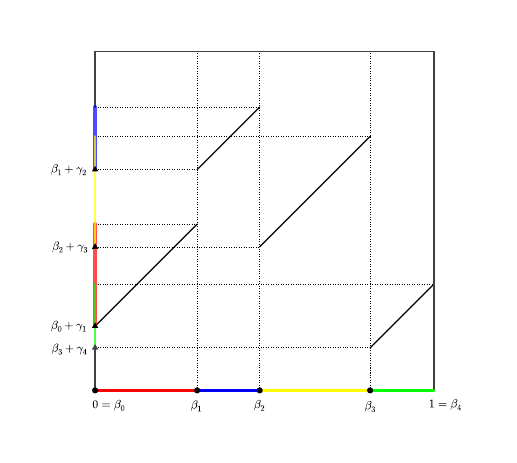}
        \vspace*{-6mm}
        \caption{$\ITM$ on $4$ intervals}\label{fig:ITM}
    \end{minipage}\hfill
    \begin{minipage}{0.45\textwidth}
        \centering
        \vspace{-2mm}
        \includegraphics[width=1.1\textwidth, trim={20 15 20 10},clip]{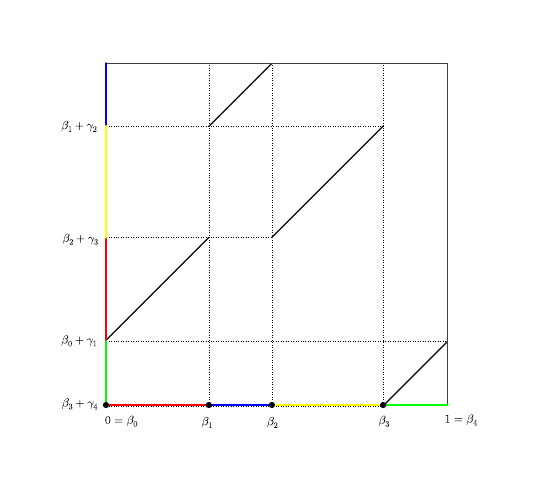}
        \vspace*{-6mm}
        \caption{$\IET$ on $4$ intervals}\label{fig:IET}
    \end{minipage}
\end{figure}

\noindent More precisely, an $\ITM$ on $r$ intervals is a map $T \colon [0,1) \to [0,1)$ defined by two sets of parameters:

\begin{itemize}
    \item Discontinuities: $0 < \beta_1 < \beta_2 < \dots < \beta_{r-1} < 1$;
    \item Translation factors: $\gamma_1, \gamma_2, \dots, \gamma_3$;
\end{itemize}

\noindent so that $T(x) = x + \gamma_i$ for $x \in [\beta_{i-1},\beta_{i})$, where $1 \le i \le r$. The set of all such parameters forms the parameter space $\ITM(r)$ of all Interval Translation Mappings on $r$ intervals. Naturally, $\ITM(r)$ is a closed convex polytope in $\mathbb{R}^{2r-1}$. 

The basic difference between $\ITM$s and $\IET$s is the fact that for a typical $\ITM$ $T$ we have that $T(I) \subsetneq I$, where $I = [0,1)$, which implies that we have the following chain of inclusions: $I \supsetneq T(I) \supseteq T^2(I) \supseteq T^3(I) \supseteq \dots$. This leads to a natural dichotomy for $\ITM$s: A map $T$ is said to be of \textit{finite type} if there exists an integer $n$ such that $T^n(I) = T^{n+1}(I)$, and is of \textit{infinite type} if $T^n(I) \supsetneq T^{n+1}(I)$ for all $n \ge 0$. The first example of an infinite type map was constructed in \cite{MR1356616} by Boshernitzan and Kornfeld. They conjectured that such behavior should be rare:

\begin{mconjecture}
\label{conj:inf-type-zero}
For all $r \ge 2$, the set of all infinite type $\ITM$s on $r$ intervals is a measure zero subset of $\ITM(r)$.    
\end{mconjecture}

This conjecture has been the focus of a large part of research in the field of $\ITM$s, but remains widely open, except in a few cases. Namely, it was first established for $r=3$ and a special $2$-parameter family of $\ITM$s in \cite{MR2013352}. It was shown that this family supports a renormalization scheme analogous to the Rauzy--Veech induction for $\IET$s (see also \cite{artigiani2024renormalization}). The dynamical properties of this renormalization imply that the measure of the set of all infinite type maps must be zero, analogous to how the analysis of the Rauzy--Veech induction shows that almost all minimal $\IET$s are uniquely ergodic (see \cite{MR644018}, \cite{MR644019}). A similar method of renormalization was used for $r \le 4$ and a more complicated family of \textit{double rotations} in \cite{MR2152403} and \cite{MR2966738} (see also \cite{MR4397159}). The conjecture was established in full for $r=3$ in \cite{MR3124735} by showing that almost every $\ITM$ on three intervals can be renormalized to a double rotation. Unfortunately, there has been little progress on developing a renormalization scheme for $\ITM$s on an arbitrary number of intervals (except in some special cases \cite{MR2308208}).

In this paper, instead of using renormalization, we develop a theory of stability for $\ITM$s (Section \ref{sec:theoremB}) and show a transversality result that allows for detailed control over global dynamics under perturbation (Section \ref{sec:lin-indep}, Theorem \ref{thm:lin-dep}). Using these results, we establish the topological version of Boshernitzan--Kornfeld Conjecture, the main result of this paper:

\begin{mtheorem}
\label{mtheorem}
For every $r \ge 2$, the set of all finite type $\ITM$s on $r$ intervals contains an open and dense subset of $\ITM(r)$.    
\end{mtheorem}

We say that a map $T$ corresponds to an irrational circle rotation if the return map to every component interval of its non-wandering set is equivalent to an irrational circle rotation. In particular, such maps are of finite type. The methods used in the proof of Topological Prevalence of Finite Type Maps also suggest a significant refinement of the Boshernitzan--Kornfeld Conjecture:

\begin{mconjecture2}
\label{conj:rotations-full-meas}
The set of all $\ITM$s on $r$ intervals that correspond to irrational circle rotations has full measure in $ITM(r)$.
\end{mconjecture2}

It is widely believed that the Boshernitzan--Kornfeld Conjecture is true. In Section \ref{sec:ae-fin-stable} we will show that if it is true, then the stability theory developed in this paper implies the Irrational Rotations Conjecture.

We give the formal introduction and statements of our results in the following subsections.

\subsection{Interval Translation Mappings}
\label{subsec:itm}

A map $T:[0,1)\to [0,1)$ is called an \textit{Interval Translation Mapping ($\ITM$)} if there exists a partition of $I = [0,1)$ into finitely many intervals $I_i=[\beta_{i-1},\beta_{i})$, where $0=\beta_0<\beta_1<\dots < \beta_{r-1}<\beta_r=1$ and $1 \le i \le r$, such that the restriction of $T$ to each of these intervals is a translation. In other words, we have that:
\[
T\vert_{I_i}(x) = x+\gamma_i,
\]
for $1 \le i \le r$. Note that we must have that $\gamma_i \in [-\beta_{i-1}, 1-\beta_i]$ for all $1 \le i \le r$, since the image of $T$ is contained in $[0,1)$ by definition. These conditions mean that the parameter space $\ITM(r)$ for $\ITM$s on $r$ intervals is a convex polytope in $\R^{2r-1}$ and we endow $\ITM(r)$ with the corresponding subspace topology. In the special case when $T$ is bijective, we get the well-known Interval Exchange Transformations ($\IET$s).

For $n \ge 1$, let  $X_n(T) := T^n(I)$ and note that $X_{n+1} \subset X_n$. Let 
\[
X:=\bigcap^{\infty}_{n=0} X_n.
\]
Note that each $X_n$ is a union of finitely many intervals and that $X$ is therefore non-empty. An interval translation mapping is said to be of \textit{finite type} if $X_{n+1}=X_n$ for some $n$ and of \textit{infinite type} when $X_{n+1} \subsetneq X_n$ for all $n$. 

It is easy to see that for $r=2$ all $\ITM$s are of finite type and $X$ is a single interval on which the induced map is a rotation. In general, it is known that $\overline{X}$ is equal to the union of two sets $A_1$ and $A_2$ (one of which can be empty) so that $A_1$ is a union of a finite number of intervals and $A_2$ is a topological Cantor set (see Lemma \ref{lem:x-structure}). It was shown in \cite{MR1796167} that $T$ is of finite type if and only if $X$ is a finite union of intervals, and that if $T$ is of infinite type and topologically transitive then $\overline{X}$ is a Cantor set. In Lemma \ref{lem:nonwandering} we will show that $\overline{X}$ is equal to the non-wandering set of $T$. 

The first explicit example of an infinite type $\ITM$ was given in \cite{MR1356616}. In \cite{MR2013352} for $r=3$ Bruin and Troubetzkoy introduced a special two-parameter family of maps with a renormalisation operator acting on it; we call this family the Bruin--Troubetzkoy family. It was shown that within this family, infinite type $\ITM$s are those that under successive iterations of this renormalisation operator never enter a particular open subset of the parameter space. Moreover, it was shown that the set of infinite type maps has Lebesgue measure zero in parameter space. Figures \ref{fig:bt-fin} and \ref{fig:bt-inf} show the parameter space of the special family from \cite{MR2013352}: the right figure (taken from \cite{MR2013352}) shows the set of infinite type maps, while each open triangle in the left figure\footnote{The picture by Bj\"orn Winckler.} corresponds to a region of equivalent maps, in the sense defined in Subsection \ref{subsec:bt-family}, that are all of finite type. These regions are dynamically related through renormalisation, see \cite{MR2013352} for details.

\begin{figure}[H]
    \centering
    \begin{minipage}{0.45\textwidth}
        \centering
        \includegraphics[width=0.9\textwidth]{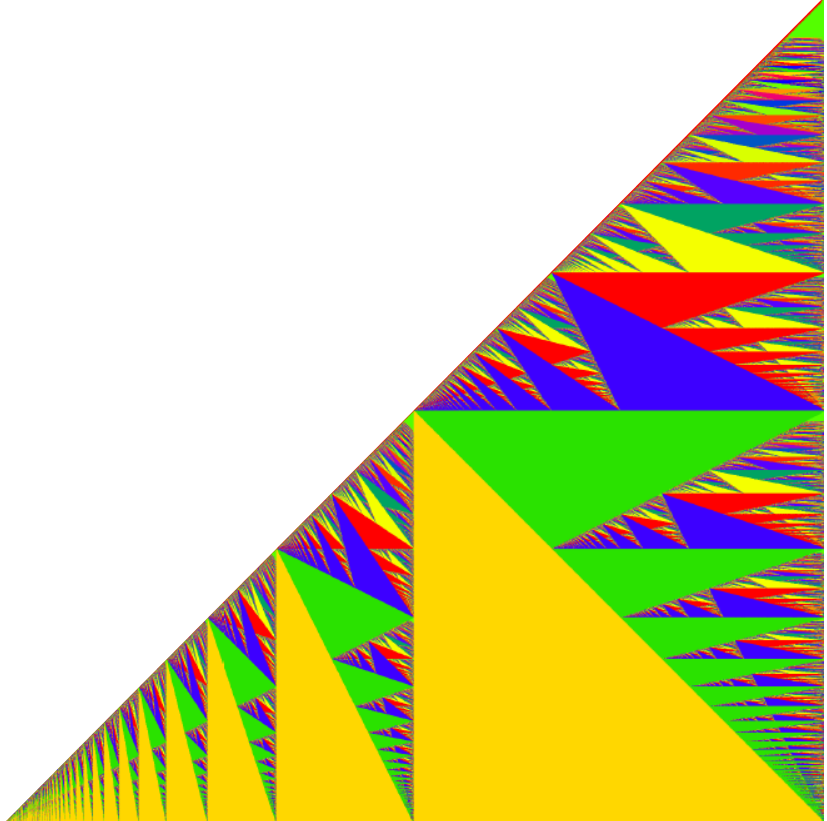}
        \caption{Finite type maps from the family in \cite{MR2013352}}\label{fig:bt-fin}
    \end{minipage}\hfill
    \begin{minipage}{0.45\textwidth}
        \centering
        \includegraphics[width=\textwidth]{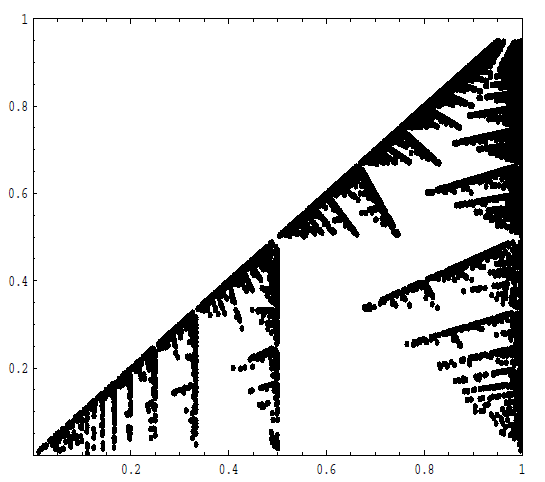}
        \caption{Infinite type maps from the family in \cite{MR2013352}}\label{fig:bt-inf}
    \end{minipage}
\end{figure}

Linear recurrence and weakly mixing properties for the maps the Bruin--Troubetzkoy family were recently studied in \cite{bruin2023interval}. Estimates on the Hausdorff dimension of the set of all infinite type maps in the family were obtained in \cite{artigiani2024renormalization}.

A similar analysis was carried out for the more complicated family of \textit{double rotations} in \cite{MR2152403}, \cite{MR2966738} and \cite{MR4397159}. In \cite{MR3124735}, it was shown that almost every $\ITM$ on three intervals is of finite type. In \cite{MR2308208}, the same result was shown for special families on an arbitrary number of intervals.

This paper initiates a systematic study of $\ITM$s on $r$ intervals without restrictions on $r$.

\subsection{Stability of $\ITM$s}

One of the main aims in the field of dynamical systems is to determine which dynamical systems are stable, and if possible, to prove that such systems are {\lq}typical{\rq}. It turns out that $\ITM$s are never structurally stable (see Corollary \ref{cor:itm-not-struct-stab}). For this reason, we define a map to be {\em stable} in this context if the next best thing holds: if -loosely speaking- its non-wandering set depends continuously on the system. This notion of stability is weaker than the usual ones in $1$-dimensional dynamics because it does not require that the system and its perturbation are conjugate on their non-wandering sets (see \cite{MR1239171}, \cite{MR1312365}). Note that for $\IET$'s, the non-wandering set is always the entire interval, so the main result of this paper becomes trivial.

That density of structural stability holds for smooth real 1D systems is proven in \cite{MR2342693}; that it holds for complex holomorphic maps in 1D  was proven by McMullen and Sullivan \cite{MR1620850}; it does not hold in higher dimension due to examples by Newhouse (and others). That it holds for $\ITM$'s is proved in this paper.

{\small \begin{center}
\begin{tabular}{l | l |c  }
stability notion & appropriate context &  density holds for \\ 
\hline 
structural stability & smooth systems & real 1D: $\checkmark$, $\C$1D: $\checkmark$,  real $>1$D: $\times$ \\
stability & piecewise isometry & ITM: $\checkmark$ 
\end{tabular}
\end{center}}
\noindent  

Whether or not a map in $\ITM$ is stable is equivalent to 
two conditions: Absence of Critical Connections (ACC) and the Matching condition (M) (see Section \ref{sec:theoremB} and Theorem \ref{thm:stability=accm} for more details). Here ACC means that no iterate of a discontinuity (critical point) is mapped to another discontinuity. In the $\IET$ literature this is often called the Keane condition. The Matching condition means that the return map to a component of the non-wandering set has at most one discontinuity. This condition has no clear analogue for invertible systems. It rules out that one connected component of the non-wandering set consists of two pieces with independent dynamics which join up for {\lq}coincidental{\rq} reasons. This characterization of stability implies in particular that stable maps are simple: their dynamics on their non-wandering set corresponds to (a union of) circle rotations.

{\small \begin{center}
\begin{tabular}{c |c |c |c}
stability notion & context &  condition & density  \\ 
\hline 
structural stability &  smooth systems >1D & hyperbolicity and transversality & $\times$ \\ 
structural stability & smooth 1D systems & hyperbolicity and ACC  & $\R$: $\checkmark$, $\C$: ? \\
stability & ITM & ACC and M & $\checkmark$
\end{tabular}
\end{center}}

So in this sense, the results for $\ITM$s of this paper are the analogue of the density of hyperbolicity in the real one-dimensional dynamics (proved in \cite{MR2342693}). The corresponding question for systems on the complex plane (such as quadratic maps) is the Fatou conjecture and is still wide open. Hence the ACC and M conditions can be considered as the analogues of hyperbolicity in our setting.

\begin{definition}[Stable maps]
We say that a map $T$ is \textit{stable} if there a neighbourhood $\mathcal{U}$ of $T$ in $\ITM(r)$ such that:

\begin{enumerate}
\item The mapping $\overline{X}$ assigning to each $\ITM$ in $\mathcal{U}$ its non-wandering set is continuous with respect to the Hausdorff topology on compact subsets of $I$. Moreover, for each $\tilde{T} \in \mathcal{U}$, $\overline{X}(\Tilde{T})$ is homeomorphic to $\overline{X}(T)$.
\item The number of discontinuities in $I \setminus \overline{X}(\Tilde{T})$ is constant in $\mathcal{U}$.
\end{enumerate}
\end{definition}  

It is easy to see that stable maps are of finite type (see Corollary \ref{cor:stable-fin-type}). Moreover, the number of components of $\overline{X}(T)$ is constant in $\mathcal{U}$. In general, the map $T\mapsto \overline{X}(T)$ is neither upper nor lower semi-continuous. For example, it is possible to have that the set $\overline{X}(T)$ can jump up or down as is shown in Example \ref{ex:ghost-preimage}.

Let $\mathcal{S}(r)$ be the set of all stable maps in $\ITM(r)$. For a stable map $T$, denote by $[T]$ the connected component containing $T$ in $\mathcal{S}(r)$. We call $[T]$ the \textit{stability component} of $T$. When we do not need to emphasize a specific map $T$, we call a connected component of $\mathcal{S}(r)$ a \textit{stable region}. Note that the minimal number $n$ so that $T^n(I)=X(T)$ does not need to be constant
within a stable region.  It is clear that $[T]$ is the maximal neighbourhood $\mathcal{U}$ for which the conditions of stability hold. It is easy to prove that for $r=2$ the entire parameter space $\ITM(2)$ is equal to the closure of a single stability component. A very interesting open problem is to describe the geometry of stable regions (see subsection \ref{subsec:param-space}).

In the space of $\IET$s, the definition of stability is vacuous, as the non-wandering set is always equal to $[0,1)$, so all of the conditions for stability are trivially satisfied. On the other hand, by the Characterization of Stability Theorem \ref{thm:stability=accm}, every $\IET$ is not stable when considered as a map in the space of $\ITM$s.

\subsection{Summary of results}
\label{subsec:summary-of-results}

The results and pictures discussed in Subsection \ref{subsec:itm} suggest that the finite type maps should be prevalent in the parameter space for an arbitrary number of intervals. As mentioned at the beginning, this was already conjectured in the measure-theoretic sense by Boshernitzan and Kornfeld in the first paper on $\ITM$s (\cite{MR1356616}).

Our main result shows that this conjecture is true in the topological sense, i.e. that the set of all finite type maps contains an open and dense subset of the full parameter space $\ITM(r)$. This is achieved by developing the theory of stability of $\ITM$s.

\begin{mtheoremI}[Density of Stable Interval Translation Maps]
\label{thm:density-of-stable}
The set $\mathcal{S}(r)$ of all stable maps interval translations maps on $r$ intervals is dense in the parameter space $\ITM(r)$.
\end{mtheoremI} 

It is easy to show that the stable maps are of finite type (see Corollary \ref{cor:stable-fin-type}), from which we obtain the main theorem of this paper:

\begin{mtheoremII}[Topological Prevalence of Finite Type Maps] 
Finite type maps contain an open and dense set of the parameter space $\ITM(r)$, while the set of infinite type maps has empty interior in $\ITM(r)$.    
\end{mtheoremII}

\begin{proof}
By Main Theorem I, the stable maps form a dense subset of $\ITM(r)$. Since the set $\mathcal{S}(r)$ is open by definition and stable maps are of finite type by Corollary \ref{cor:stable-fin-type}, the result follows.
\end{proof}

The proof of Main Theorem I is divided into the following three steps. 

\begin{theoremA}[Eventually periodic maps are dense]
\label{thmA}
The set $EP(r)$ of all eventually periodic maps is dense in $\ITM(r)$. 
\end{theoremA}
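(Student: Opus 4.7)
The plan is to show $EP(r)$ is dense by observing that any $\ITM$ whose defining parameters are all rational with a common denominator $N$ automatically has every critical orbit confined to the finite lattice $\frac{1}{N}\Z \cap [0,1)$, and is therefore eventually periodic.

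Given $T \in \ITM(r)$ with parameters $(\beta_1,\dots,\beta_{r-1},\gamma_1,\dots,\gamma_r) \in \R^{2r-1}$ and $\varepsilon > 0$, I would proceed as follows. The polytope $\ITM(r) \subset \R^{2r-1}$ is cut out by linear constraints with integer coefficients,
\[
0 < \beta_1 < \dots < \beta_{r-1} < 1, \qquad -\beta_{i-1} \le \gamma_i \le 1 - \beta_i,
\]
so rational points are dense in $\ITM(r)$. I would pick a large $N \in \N$ and a rational approximation $\tilde T \in \ITM(r)$ with parameters $\tilde\beta_i = a_i/N$, $\tilde\gamma_j = b_j/N$ all within $\varepsilon$ of the originals; the existence of such $\tilde T$ is a routine density argument using convexity of the polytope.

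To verify $\tilde T \in EP(r)$: every partition endpoint $\tilde\beta_i$ lies in the finite lattice $L = \frac{1}{N}\Z \cap [0,1)$. For any $x = k/N \in L$, the half-open partition element containing $x$ is determined by the $\tilde\beta_j$'s, and the constraints on $\tilde\gamma_s$ ensure
\[
\tilde T(x) \;=\; x + \tilde\gamma_s \;=\; \frac{k + b_s}{N} \;\in\; [0,1),
\]
so in fact $\tilde T(x) \in L$. Thus $\tilde T$ restricts to a self-map of the finite set $L$, and every forward orbit in $L$ — in particular every critical orbit $\{\tilde T^n(\tilde\beta_i)\}_{n\ge 0}$ — is eventually periodic by pigeonhole, giving $\tilde T \in EP(r)$.

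I do not anticipate a genuine obstacle here: the whole content reduces to the preservation of the rational lattice $\frac{1}{N}\Z$ under translations by elements of $\frac{1}{N}\Z$, combined with density of rational points in the parameter polytope. The deeper machinery advertised in the introduction — the transversality and linear independence of critical itinerary vectors — is reserved for the subsequent steps (Theorems B and C) that upgrade density of $EP(r)$ to density of the stable set $\mathcal{S}(r)$, where one must not only arrange eventual periodicity of critical orbits but also control how these orbits sit relative to the components of the non-wandering set.
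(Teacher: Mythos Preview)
Your proof is correct and follows essentially the same approach as the paper: approximate by rational parameters, observe that orbits of critical points then lie in a finite lattice, and conclude eventual periodicity by pigeonhole. The paper's version is marginally sharper in that it only requires the translation parameters $\gamma_1,\dots,\gamma_r$ to be rational (not the $\beta_i$), and it is stated in a parametric form (density within any rational subfamily of $\ITM(r)$), but the core mechanism is identical to yours.
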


An eventually periodic map is the one for which every point in $I$ lands on a periodic point.

\begin{theoremB}[Stability $\iff$ ACC + Matching]
\label{thmB}
A finite type map is stable if and only if it satisfies the Absence of Critical Connections and Matching conditions.
\end{theoremB}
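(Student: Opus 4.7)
For the forward direction of Theorem B, that stability implies ACC and M, I would argue by contrapositive. Suppose ACC fails, so there is a critical connection $T^k(c) = c'$ for some $k \ge 1$ and discontinuities $c, c'$ of $T$. A generic perturbation of the parameters $(\beta_i, \gamma_i)$ will move $T^k(c)$ off of $c'$, to either its left or its right; since the translation jumps across the critical point $c'$, the forward orbit of $c$ after step $k$ then depends on which side it lands, and the two perturbed orbits diverge by a definite amount. Those two families of perturbations typically produce non-homeomorphic non-wandering sets $\overline{X}(\tilde T)$, contradicting the homeomorphism clause in the definition of stability. Similarly, if M fails so that the first return map to some component $J \subset X(T)$ has at least two discontinuities, then $J$ is assembled from two (or more) sub-pieces whose internal dynamics carry independent translation parameters. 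A generic perturbation decouples these sub-pieces, so $J$ splits into several components in $X(\tilde T)$, again changing the homeomorphism type of $\overline{X}(\tilde T)$.

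For the reverse direction I argue directly. Since $T$ is of finite type, there exists $N$ with $T^N(I) = X(T)$. The ACC condition ensures that the forward itinerary of each of the finitely many discontinuities of $T$ is unambiguously defined and depends continuously on the parameters throughout a small open neighbourhood $\mathcal U$ of $T$ in $\ITM(r)$; hence the endpoints of the intervals making up $\tilde T^n(I)$ can be written as affine functions of the parameters of $\tilde T$ for every $n \le N$, and the map $\tilde T \mapsto \tilde T^N(I)$ is Hausdorff-continuous on $\mathcal U$. The matching condition then guarantees that the first return map $R_J$ to each component $J$ of $X(T)$ is a circle rotation (an $\IET$ on at most two intervals). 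Rotations are elementarily stable: shrinking $\mathcal U$ if necessary, the return map of $\tilde T$ to the corresponding component of $\tilde T^N(I)$ is still a circle rotation with a nearby rotation number. In particular $\tilde T^{N+1}(I) = \tilde T^N(I)$, so $\tilde T$ remains of finite type, $X(\tilde T) = \tilde T^N(I)$, and $\overline{X}$ depends Hausdorff-continuously on $\tilde T$ with constant homeomorphism type. Finally, the number of discontinuities of $\tilde T$ lying in $I \setminus \overline{X}(\tilde T)$ equals the number of discontinuities of $T$ whose forward orbits eventually leave $\overline{X}(T)$, and this is locally constant by ACC.

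The main obstacle is in the reverse direction, specifically the step that persistence of the finite-type property and of the homeomorphism type of $\overline{X}$ follows from ACC and M together. Without M, a component of $X(T)$ could split upon perturbation because its return map already has an internal discontinuity whose two-sided behaviour is free to be perturbed independently; the matching condition rules this out by forcing the return map on each component to be a single circle rotation whose two branches cannot be decoupled. The delicate remaining point is to verify that no new discontinuities of the return maps appear in the perturbed system and that all critical-orbit combinatorics up to time $N$ is truly rigid under perturbation, which is precisely the transversality statement — linear independence of critical itinerary vectors — that the introduction singles out as the main technical tool of the paper.
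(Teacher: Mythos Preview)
Your proposal has a fundamental mismatch with the paper's actual ACC condition, and this creates genuine gaps in both directions.

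\textbf{The definition of ACC.} You treat ACC as the Keane condition: no iterate of any discontinuity ever hits another discontinuity. The paper's formal Definition~5.5 is different and weaker. It consists of three parts A1, A2, A3, and in particular A3 is a \emph{ghost tree} condition: for each discontinuity $\beta \notin X$, the tree of ``ghost preimages'' (where $\betas^{-}$ is a ghost preimage of $\beta^{+}$ if $\betas^{-}$ lands on $\beta^{-}$) must not contain $\beta$ again. Stable maps \emph{can} have critical connections among discontinuities outside $X$, provided these do not close up into a cycle in the ghost-tree sense. So your contrapositive argument for the forward direction---``if $T^k(c)=c'$ then perturbing produces non-homeomorphic $\overline X$''---is attempting to prove something false.

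\textbf{Forward direction.} The paper's route is indirect and substantially harder than you suggest. First (Theorem~5.11) one shows that stability together with A1, A2 forces Matching; this is where the linear-independence result (Corollary~4.5) is actually used, to derive a contradiction from the return-map equations when $R_J$ has more than one discontinuity. Then (Theorem~5.13) one shows stability implies A1, A2, A3 by perturbing to a rationally independent nearby map and pulling the structure back. The proof that A3 holds is the subtlest: if A3 fails, one explicitly constructs a perturbation (using the linear independence of the critical-connection vectors for discontinuities outside $X$) that makes a discontinuity $\beta\notin X$ periodic, so $\tilde X$ acquires a new component a definite distance from $X$. Your sketch does not touch this mechanism.

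\textbf{Reverse direction.} Your argument that $\tilde T^{N+1}(I)=\tilde T^N(I)$ persists is not justified: the paper notes explicitly that the minimal such $N$ is \emph{not} locally constant in a stability component. More importantly, you have no argument that $\tilde X$ cannot be strictly larger than the union of continuations of the old components. This is exactly what the ghost-preimage phenomenon (Example~5.3) shows can happen, and it is exactly what A3 is designed to forbid. The paper's proof (Theorem~5.14) handles this by a careful case analysis: if some discontinuity outside $X$ changes its itinerary under perturbation and fails to fall into $X'$, one walks backward through a chain of ghost preimages and, by finiteness, finds a discontinuity appearing in its own ghost tree, contradicting A3. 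Your final paragraph gestures at ``no new discontinuities of the return maps appear'' and attributes this to linear independence, but that is the wrong tool here; it is the combinatorial A3 hypothesis that does the work in this direction.
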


The ACC and Matching conditions will be defined in Section \ref{sec:theoremB}.

\begin{theoremC}[Stable maps approximate Eventually Periodic maps]
\label{thmC}
For any eventually periodic map $T$, there exists an arbitrarily small perturbation $\Tilde{T}$ of $T$ that is of finite type and satisfies the ACC and Matching conditions, and in particular, is stable by Theorem B.
\end{theoremC}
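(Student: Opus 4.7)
The plan is to work in the $(2r-1)$-dimensional parameter space $\ITM(r)$ near the given eventually periodic map $T$, identify the codimension-one loci where ACC or Matching fails, and use the linear-independence-of-critical-itinerary-vectors tool highlighted in the introduction to move transversally off all of them simultaneously with an arbitrarily small perturbation.

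First I would catalogue the orbits of all discontinuities of $T$. Because $T$ is eventually periodic, each critical orbit is a finite set, a preperiodic tail followed by a periodic cycle, so there are only finitely many candidate critical connections $T^n(c_i^\pm) = c_j$ to rule out. Each such relation is an affine equation $F_{i,j,n,\pm}(\beta,\gamma)=0$ in the parameters $(\beta_1,\dots,\beta_{r-1},\gamma_1,\dots,\gamma_r)$, and its gradient at $T$ is (up to sign) the difference of two critical itinerary vectors. The finitely many such equations define a finite union of affine hyperplanes; since the itinerary vectors are linearly independent by the main transversality tool, none of these hyperplanes fills parameter space, and I can select a perturbation direction avoiding the entire union. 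Any sufficiently small step in this direction produces a map $\tilde T$ satisfying ACC. Because the perturbation is small and $T$ had only finitely many combinatorially distinct iterates to begin with, the full symbolic structure of $T^n(I)$ persists, so $\tilde T$ is still of finite type and $X(\tilde T)$ is Hausdorff-close to $X(T)$.

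Next I would enforce Matching. Once ACC holds, for each component $J$ of $X(\tilde T)$ the first-return map $R_J$ has a well-defined and combinatorially stable set of discontinuities. A failure of Matching on $J$ means two distinct critical orbits re-enter $J$ at different rotation phases, producing two separate discontinuities of $R_J$; this is again a finite family of affine conditions on $(\beta,\gamma)$, whose linearisations are combinations of critical itinerary vectors enriched by the return time of each critical orbit to $J$. Using the same linear-independence input (applied to this enriched family), I would pick a secondary perturbation direction transverse both to the remaining ACC hyperplanes and to these Matching hyperplanes, and take a further arbitrarily small step. The resulting map still has the finite type structure preserved by the first step, satisfies ACC and Matching, and is therefore stable by Theorem~B.

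The main obstacle is the Matching step: one must verify that every Matching violation really is an affine condition whose gradient lies in the span governed by the linear-independence theorem. Concretely, one has to refine the notion of critical itinerary vector to record, for each critical point $c_i$, not only its symbolic trajectory but also its successive return times to each component of $X$, and then re-derive linear independence for this enriched family. Once that refinement is available, the two-stage perturbation sketched above delivers the required $\tilde T$ arbitrarily close to $T$ and completes Theorem~C.
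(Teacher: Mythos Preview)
Your approach has a genuine gap in the Matching step. You treat a Matching failure as a codimension-one affine condition that can be avoided by a generic perturbation, but this is backwards. For a component $J$ of $X$, having $R_J$ with more than one discontinuity is an \emph{open} condition, not a closed one: if $R_J$ has $N\ge 2$ branches because $N-1$ distinct critical points of $T$ lie in the orbit of $J$, then any sufficiently small perturbation still has those critical points in the orbit of $\tilde J$, and $R_{\tilde J}$ still has $N$ branches. Breaking all critical connections (your ACC step) does not reduce $N$; it simply makes the $N$ branches ``generic''. Achieving Matching requires \emph{removing} critical points from $X$ altogether---pushing them into the wandering set---and that is not accomplished by moving off any finite union of hyperplanes. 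Relatedly, your claim that after the first perturbation ``the full symbolic structure of $T^n(I)$ persists, so $\tilde T$ is still of finite type'' is unjustified: once you destroy the critical connections of an eventually periodic map you typically lose eventual periodicity, and nothing in your argument prevents $\tilde T$ from being of infinite type.

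The paper's proof is accordingly quite different. It introduces an ``unstable number'' $U(T)$ counting, roughly, the excess of critical points in $X$, and reduces $U(T)$ step by step through a finite sequence of carefully engineered perturbations, each of which ejects one critical point from $X$. At every step Corollary~\ref{cor:lin-indep} is used not to avoid all critical-connection hyperplanes but to \emph{stay on} most of them: one selects $\delta$ so that all critical connections outside the orbit of a chosen interval $J_0$, and all but one or two of the return vectors of $J_0$, are preserved, while one or two branches of $R_{J_0}$ are shifted by amounts $\epsilon_1,\epsilon_2$ chosen as rational multiples of the periodic-interval length. This creates a gap in the image of $\tilde R_{J_0}$ containing the targeted critical point, forcing it out of $\tilde X$, while the rational choice keeps $\tilde T$ eventually periodic. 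The argument splits into four cases according to the number $N_0$ of branches of $R_{J_0}$; a separate final perturbation then secures condition A3. Your one-shot transversality argument captures neither the ejection mechanism nor the preservation of eventual periodicity that the proof actually hinges on.
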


\begin{proof}[Proof of the Main Theorem]
By the theorems above we have that:
\begin{align*}
&\overline{\mathcal{S}(r)} \\
\overset{\text{Thm B}}{=}\, & \overline{\{\text{Maps satisfying ACC and Matching} \}} \\
\overset{\text{Thm C}}{\supseteq} \, & \overline{EP(r)} \\
\overset{\text{Thm A}}{=} \, & \ITM(r).
\end{align*}
\end{proof}

\begin{corollary}
\label{cor:itm-not-struct-stab}
No map in $\ITM(r)$ is structurally stable or $\Omega$-stable. 
\end{corollary}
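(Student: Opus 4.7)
The plan is to reduce both non-statements to the same fact: on a stable map the non-wandering dynamics is a disjoint union of circle rotations whose rotation numbers vary continuously and non-trivially with the $\ITM$ parameters. Since structural stability trivially implies $\Omega$-stability (the global conjugacy restricts to the non-wandering set), it is enough to rule out $\Omega$-stability for every $T\in\ITM(r)$.

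First I would reduce to the stable case using Main Theorem I. If some $T\in \ITM(r)$ were $\Omega$-stable with witnessing neighborhood $\mathcal U$, then every $\tilde T\in \mathcal U$ is $\Omega$-conjugate to $T$, so all maps in $\mathcal U$ are pairwise $\Omega$-conjugate. By density of $\mathcal S(r)$, pick any $T_0\in \mathcal S(r)\cap \mathcal U$; restricting $\Omega$-stability to a smaller neighborhood around $T_0$ shows that $T_0$ itself would be $\Omega$-stable. So it suffices to derive a contradiction from assuming that some $T_0\in \mathcal S(r)$ is $\Omega$-stable.

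Let $[T_0]\subseteq \mathcal S(r)$ be the open stability component of $T_0$. By Theorem B, $\overline X(T_0)$ is a finite disjoint union of intervals $J_1,\dots,J_k$, and the first-return map of $T_0$ to each $J_j$ is conjugate to a rotation of $J_j$ with a well-defined rotation number $\rho_j(T_0)$. Within $[T_0]$ the combinatorial itinerary of this first return is constant, so $\rho_j$ is an explicit affine function (taken modulo $|J_j|$) of the translation parameters $\gamma_i$ that appear in the return cycle. This function is evidently non-constant on the open set $[T_0]$: varying any single one of the participating $\gamma_i$'s (keeping everything else fixed) changes its value. Consequently, in every neighborhood of $T_0$ there exist $\tilde T_0\in [T_0]$ with $\rho_j(\tilde T_0)\ne \rho_j(T_0)$ for some $j$, and one can even arrange $\rho_j(\tilde T_0)$ to be rational while $\rho_j(T_0)$ is irrational (or rational with a different denominator). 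Since topological conjugacy of circle rotations is equivalent to equality of rotation numbers, the restrictions $T_0\,|\,\overline X(T_0)$ and $\tilde T_0\,|\,\overline X(\tilde T_0)$ are not $\Omega$-conjugate, contradicting $\Omega$-stability of $T_0$.

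The main obstacle is the non-constancy claim for the $\rho_j$'s on the stability component $[T_0]$. This should follow directly from the explicit description of the first-return map on a stable region as a composition of finitely many translations with a fixed symbolic itinerary, together with the fact that $[T_0]$ has non-empty interior in the polytope $\ITM(r)$, which provides the required freedom to move each participating $\gamma_i$ independently. Everything else is formal once the Characterization of Stability Theorem is in hand, together with the classical classification of circle rotations up to topological conjugacy by their rotation number.
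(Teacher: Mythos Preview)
Your argument is essentially correct but takes a considerably longer route than the paper. The paper's proof is two lines: near any $T$ there is an eventually periodic map (by Theorem~A), hence a map whose non-wandering set consists entirely of periodic points, and also a map with rationally independent translation parameters, which has no periodic points at all; two such maps cannot be $\Omega$-conjugate, so $T$ is not $\Omega$-stable. No reduction to stable maps, no return-map analysis, no rotation numbers.

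Your detour through $\mathcal S(r)$ and Theorem~B does work, and your non-constancy claim is in fact easy to verify directly: writing $\rho_j=|J_2|/|J|$ as a ratio of two non-negative integer linear forms in the $\gamma_s$, one checks that $\partial_{\gamma_i}\rho_j\neq 0$ for every $i$ with $(k_1)_i+(k_2)_i>0$, since the coefficient vectors $(k_1)_s$ and $(k_2)_s$ are non-negative and cannot be proportional with a negative ratio. There is, however, a small gap you gloss over: an $\Omega$-conjugacy may permute the components of $\overline X$, so changing a single $\rho_j$ is not quite enough --- you need the multisets of rotation numbers (mod the reflection $\rho\mapsto 1-\rho$) to differ. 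The cleanest way to close this is precisely the paper's observation: pick $\tilde T_0$ eventually periodic (all $\rho_j$ rational, every point periodic) versus $T_0$ with rationally independent $\gamma$'s (all $\rho_j$ irrational, no periodic points). At that point your argument collapses to the paper's, just with more scaffolding around it.
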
 
\begin{proof} Near each map there is an eventually periodic map, by Theorem A, and one with rationally independent parameters (which implies that there are no periodic points). 
\end{proof} 

In Section \ref{sec:theoremA}, we prove that the eventually periodic maps are of finite type and that the set of all eventually periodic maps contains a countable dense subset of $\ITM(r)$. By the Characterization of Stability Theorem \ref{thm:stability=accm}, we also know that the return maps to intervals of $X$ must correspond to circle rotations. The stable maps which are also eventually periodic therefore correspond to rational rotations, in the sense that the return map to any component interval of $X$ is a rational rotation. In Section \ref{sec:ae-fin-stable} we show that the stable maps form a full measure subset of the set of all finite type maps.

In Subsection \ref{subsec:bt-family}, we apply our results to the special family from \cite{MR2013352} to show that the connected components of stability are exactly the coloured triangle regions in Figure~\ref{fig:bt-fin}. Moreover, any point not contained in a closure of a triangle is of infinite type. In Subsection \ref{subsec:param-space}, we will discuss the full parameter space $\ITM(r)$ of $\ITM$s on $r$ intervals, with a focus on the geometry of stable regions. 

\subsection{Main idea of the proof}
\label{subsec:main-idea}
The basic problem we deal with everywhere in the paper is: how to perturb a map $T$ so that some of its dynamical properties change, while some others do not? In analogy to smooth one-dimensional dynamics (e.g. \cite{MR1239171}), the dynamical properties of a map depend on the behaviour of critical points. For us, the discontinuities of the map serve as the critical points. Thus the problem is about controlling critical itineraries: we want some of them to change, and some of them to stay the same up to certain return times. Because our maps are piecewise linear, this can be reformulated in terms of the linear independence of certain vectors defined by the itineraries of critical points (see Section \ref{sec:lin-indep} for details). The main difficulty is the following: how does the dynamics of a map $T$ prevent the linear dependence of a set of vectors? We deal with this in the proof of Theorem \ref{thm:lin-dep}. This is a transversality result that is crucial in the proofs of Theorem B and Theorem C. The proof of this linear independence relies on a rather detailed and subtle dynamical analysis.

\subsection{Overview of $\IET$s, $\ITM$s and piecewise isometries}
\label{subsec:history}

The subject of interval exchange transformations ($\IET$s) goes back to 
the 60's. Some the earliest publications were by Katok \cite{MR0331438}, Keane \cite{MR0357739}, \cite{MR0435353} and Veech \cite{MR0516048}. For a broad overview of $\IET$s, we recommend the survey papers \cite{MR2219821}, \cite{MR2648692}.  It is well-known that $\IET$s are connected to billiards, see for example \cite{MR0399423} and \cite{MR0644840}, \cite[Ch. 5]{MR0832433} and \cite[Ch.2, \S 4D]{MR0889254}, and to flows on flat surfaces, see \cite{MR2000471}, \cite{MR644019}, \cite{MR644018}, \cite{MR1393518}, \cite{MR1733872}, \cite{MR2261104}.

In Subsection \ref{subsec:itm} we covered the results about generic behavior and renormalization for $\ITM$s, so we mention the remaining topics here. The topological properties of general $\ITM$s were first studied by Schmeling and Troubetzkoy in \cite{MR1796167}. Buzzi proved in \cite{MR1855837} that all piecewise isometries on polytopes have zero topological entropy (for $\ITM$s this was remarked already in \cite{MR1356616}). Buzzi and Hubert proved in \cite{MR2054049} that all piecewise monotone maps without periodic points (with some necessary assumptions) are semi-conjugate to $\ITM$s (possibly with flips). They also showed that the upper bound for the number of ergodic measures of an $\ITM$ on $r$ intervals is $r-1$, in contrast to $\IET$s for which the bound is $\frac{r}{2}$ (for the precise result see \cite{MR2648692}). This bound was realized by the family considered by Bruin in \cite{MR2308208}. $\ITM$s were connected to billiards with {\lq}one-sided mirrors{\rq}, and analysed in \cite{MR3449199} and \cite{MR3403406} (see Appendix~\ref{appendix:billiards} for the explanation of this connection). Connecting $\ITM$s with flows on surfaces is a very interesting open problem.

$\ITM$s also appeared in the work of Levitt in 1993 (see \cite{MR1231840}) in the context of group action on $\mathbb{R}$-trees. There he considered a two-parameter family of $\ITM$s related to the holonomy map of certain 1-dimensional foliations.

The notion of Matching also appears in previous works, see for example \cite{MR3893724}, \cite{MR3597033}, \cite{MR2422375}. Interestingly, the authors were unaware of this at the time of defining Matching in the context of this paper.

$\ITM$s are a special class of piecewise isometries. There is a vast literature on such systems, and so we mention just a small subset of it: \cite{MR1905204}, \cite{MR1938473}, \cite{MR1772421}, \cite{MR1738947}, \cite{MR2091702}, \cite{MR1992662}, \cite{MR2039048}, \cite{MR2221800}, \cite{MR2486783}, \cite{MR3010377}, \cite{MR4032960}, \cite{MR4075314}, \cite{MR4441154}. In \cite{MR4082258}, the authors connected 1D and 2D piecewise isometries, by embedding $\IET$s into planar piecewise isometries.

\subsection{Organization of the paper}
\label{subsec:organisation}

The structure of the paper is as follows. In Section~\ref{sec:nw} we prove that $\overline{X}$ is equal to the non-wandering set of and a few other auxiliary results used throughout the paper. In Section~\ref{sec:theoremA} we introduce the product notation, define eventually periodic maps, and prove Theorem $A$. In Section~\ref{sec:lin-indep} we prove the crucial Theorem \ref{thm:lin-dep} about the linear dependence of critical itinerary vectors. In Section~\ref{sec:theoremB} we define the ACC and Matching conditions and prove Theorem $B$, using a special case of Theorem \ref{thm:lin-dep}, while in Section~\ref{proofoftheoremC} we prove Theorem $C$, using the full version of Theorem \ref{thm:lin-dep}. In Section \ref{sec:ae-fin-stable} we show that the set of stable maps has full measure in the set of finite type maps, which strengthens the Boshernitzan--Kornfeld Conjecture to the Irrational Rotations Conjecture. In Section~\ref{sec:future} we discuss the parameter space and possible future research. The connection between $\ITM$s and billiards is contained Appendix~\ref{appendix:billiards}. Finally, in Appendix~\ref{appendix:program} we give a link to the GitHub repository of the program written by Bj\"orn Winckler which allows one to experiment with the family of $\ITM$s introduced in \cite{MR2013352}.

\subsection*{Acknowledgments}
The first author is partially supported from grants 2021 SGR 00697 (Generalitat de Catalunya), PID2023-147252NB-I00 (AEI), CEX2020-001084-M (Maria de Maeztu Excellence program), and the ERC Advanced Grant ``SPERIG'' (\#885707). The second author acknowledges 
the support by the Roth PhD scholarship from Imperial College London. The third author acknowledges a partial sponsorship via 
Bj\"orn Winckler's Marie Curie postdoctoral fellowship \#743959.

\section{Preliminaries}
\label{sec:nw}

\subsection{Notation and conventions} 

Since the map $T$ is discontinuous, it will be useful to introduce the notion of \textit{signed points}. Consider the relation $\sim$ defined on all half-open intervals of contained $I$, such that $I_1 \sim I_2$ if and only if the left endpoint of $I_1$ is equal to the left endpoint of $I_2$. For a point $x \in I$, we define $x^+$ as the equivalence class of all intervals $[x,x+\epsilon)$, for $\epsilon > 0$ sufficiently small so that $[x,x+\epsilon) \in I$. We refer to points of this form as the $+$-type points. Analogously, we define $x^-$ as the equivalence class (under the relation of having the same right endpoint) of all intervals $[x-\epsilon,x)$, where $\epsilon$ is sufficiently small so that $[x-\epsilon,x) \in I$, and we refer to these points as the $-$-type points. We will also say that $x^+$ is the `$+$-part' of $x$ and that $x^-$ is the `$-$-part' of $x$. We use the term `geometric point' to refer to points contained in the geometric interval $I = [0,1)$ and the term `signed point' to refer to the equivalence class $x^+$ or $x^-$ of a point $x \in I$. We will simply use the term `point' if it does not matter what type we are considering.

We adopt the convention that the signed point $x^+$ is immediately to the right $x$, while $x^-$ is immediately to the left of $x$. With this convention, the definition of intervals contained in $I$ can be extended to signed points in the following way. By $[a,b]$, where $a$ and $b$ can be signed points or geometric points, we will denote the set of all points (signed or geometric) between $a$ and $b$, including $a$ and $b$. For example, the set $[x^-,y^-]$ contains $x^-,x,x^+$ and $y^-$, but not $y$ and $y^+$, and we will write $x^-, x, x^+, y^- \in [x^-,y^-]$ and $y, y^+ \notin [x^-,y^-]$. Analogously, $(a,b)$ will denote the set of all points between $a$ and $b$, excluding $a$ and $b$. We also define the sets $[a,b)$ and $(a,b]$ in the obvious way. We will refer to all of these sets as \textit{intervals}. Thus the term `intervals' will refer to these more general sets if we are dealing with signed points, and to subsets of $I$ if we are not. Moreover, we will use the convention that the intervals of $I$ are half-open and of the form $[a,b)$, unless stated otherwise.

We will write $a \sim b$ if the set $\{a,b\}$ is equal to $\{x^+, x^-\}$ for some point $x \in I$. We will also say that such a pair of signed points \textit{touches} (or \textit{is touching}).

The definition of any map $T$ can be extended to signed points in the following way. For a point $x \in I$, let $z_1 := \lim_{y \downarrow x}T^m(y)$ and $z_2:= \lim_{y \uparrow x} T^m(y)$. Since $T$ is discontinuous, $z_1^+ \sim z_2^-$ does not necessarily hold. Thus it makes sense to define:

\begin{align*}
T^m(x^+) &:= z_1^+ \\
T^m(x^-) &:= z_2^-.
\end{align*} 
The definition of limits of signed points is also straightforward. If $\lim_{i \to \infty} z_i = z$ for some sequence $(z_i) \in I$, then we can define $\lim_{i \to \infty} z_i^{\pm} := z^{\pm}$. Finally, the distance $d(a,b)$ between two signed points $a,b$ is defined as the distance between the geometric points in $I$ corresponding to $a$ and $b$.

We now define the \textit{critical set} $\mathcal{C}$ of $T$ as the following set of signed points:

\[
\mathcal{C} := \{\beta_1^-, \beta_1^+,\dots,\beta_{r-1}^-, \beta_{r-1}^+\}.
\]
We will refer to the elements of $\mathcal{C}$ as either the critical points or the discontinuities, depending on the context. We will sometimes use the labels $\beta_0^+ := 0^+$ and $\beta_r^- := 1^-$, but we do not consider them as critical points. We denote by $\mathcal{C}^+$ the set of all $+$-type points in $\mathcal{C}$, and by $\mathcal{C}^-$ the set of all $-$-\textit{type} points in $\mathcal{C}$. We will also continue to use the term `discontinuity' to refer to geometric points in $I$ at which the map $T : I \to I$ is discontinuous. If we want to highlight that we are not considering a signed point, we will use the term `geometric discontinuity'.

If a point $x \in I$ does not land on a discontinuity of $T$ up to some time $n > 0$, then for our purposes there is no difference between iterates $T^n(x^-), T^n(x)$ and $T^n(x^+)$. If a geometric point $x$ does land on a discontinuity of $T$ at some time $n$, then we will not consider iterates of $x$ for any time larger than $n$, but iterates of $x^+$ and $x^-$ instead. In particular, we do not consider iterates of any geometric discontinuity $\beta$, but only of $\beta^+$ and $\beta^-$. Thus whenever we are considering iterates of some discontinuity, it is assumed that it is a signed point.

Most of the time, we do not need to know the index of a discontinuity with respect to the order in $I$ nor whether the discontinuity is of $+$-type or $-$-type. That is why we will often use labels $\beta$, $\betas$ and $\betass$ to denote the discontinuities we are dealing with. If we care about the sign of a discontinuity, we will use the labels $\beta^+, \betas^-$, etc. In that case, we will use the same label without a sign to denote the geometric discontinuity in $I$ corresponding to the signed discontinuity, e.g. we denote by $\beta$ the geometric discontinuity such that $\beta^+$ is the $+$-part of $\beta$. We will use the notation $\text{ind}(\beta) \in \{0,1, \dots, r\}$ to mean the index of $\beta$ with respect to this order inside $I$.

The perturbation of (or a map sufficiently close to) some starting map $T$ will be denoted by $\Tilde{T}$, and by $\Tilde{Z}$ we will denote the continuation of some object of interest $Z$, e.g. $\Tilde{\beta}^+$, $\Tilde{X}$, $\Tilde{J}$ and similar.

We associate to each point $x \in I$ its \textit{itinerary}. Here it does not matter if a point is signed or geometric. The itinerary of $x$ is an infinite sequence of integers $(i_0(x), i_1(x), \dots, i_n(x), \dots)$, with $1 \le i_n(x) \le r$, where $i_n(x) = s$ means that $T^n(x) \in I_s$. We will often use `itinerary up to time $n$' to refer to the first $n$ elements of this sequence. 

Finally, for a point $x$ (can be either signed or geometric) we will refer to the set $O(x) := \{ x, T(x), T^2(x), \dots \}$ as the \textit{$T$-orbit} of $x$. Moreover, if $S$ is a subset of $I$, we will refer to the set $O(S) := \bigcup_{x \in S} O(x)$ as the $T$-orbit of $S$. If the map $T$ is clear from the context, we will omit it and simply use the term `orbit'. We will refer to the set $O(x,n) := \{ x, \dots, T^{n-1}{x} \}$, where $n \ge 0$, as the orbit up to time $n$ of $x$, and to the set $O(S,n) := \bigcup_{x \in S} O(x,n)$ as the orbit up to time $n$ of $S$. 

\subsection{The non-wandering set of $T$}

As usual, define the non-wandering set $NW(T)$ of $T$ to be the set of $x \in I$ so that for each neighbourhood $U$ of $x$ there exists $n>0$ so that $T^n(U)\cap U\ne \emptyset$. The set $X = \bigcap_{n=1}^{\infty} X_n$ is by definition the  \textit{attractor} of $T$. Here we prove that $\overline{X}$ is also the non-wandering set of $T$. We first prove a lemma about the structure of $X$ that is mentioned in several places in the literature (\cite{MR1356616}, \cite{MR2013352}, \cite{MR2308208}), but no explicit proof is available. 

\begin{lemma}
\label{lem:x-structure}
For any map $T$, we have that $\overline{X} = A_1 \cup A_2$, where $A_1$ is a finite union of intervals and $A_2$ is a Cantor set. One of the sets $A_1, A_2$ is allowed to be empty.
\end{lemma}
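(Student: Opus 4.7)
I would decompose $\overline{X}$ by topological type. Let $A_1 := \overline{\mathrm{int}_{[0,1]}(\overline{X})}$ and $A_2 := \overline{X}\setminus\mathrm{int}_{[0,1]}(A_1)$. By construction $A_1$ is a disjoint union of maximal non-degenerate closed subintervals of $\overline{X}$ and $A_2$ is closed. Since any non-degenerate interval contained in $A_2$ would lie in $\mathrm{int}(\overline{X})$ and hence in $\mathrm{int}(A_1)$, the set $A_2$ contains no non-degenerate interval, so its connected components are singletons and $A_2$ is automatically totally disconnected. It therefore suffices to establish that (i) $A_1$ has only finitely many components, and (ii) $A_2$ has no isolated points; this way $A_2$, if nonempty, is compact, perfect, and totally disconnected, i.e.\ a Cantor set.

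Claim (i) is the main obstacle, and this is where the $\ITM$ structure is used essentially. Each $X_n = T^n(I)$ is a finite disjoint union of half-open intervals whose endpoints lie in the set $\bigcup_{0 \le k \le n} T^k(\mathcal{C}\cup\{0,1\})$ of forward iterates of the critical set. Since $\overline{X_n}\setminus X_n$ is finite, the interior of any maximal non-degenerate subinterval $J\subseteq\overline{X}$ meets $X$ in a co-countable subset of $J$, so the forward $T$-orbit of $J$---up to a countable exceptional set---is a forward-invariant family of intervals inside $\overline{X}$. The planned argument is then to consider the first-return map of $T$ to a component of $A_1$: by the piecewise-translation structure, it is again a piecewise translation with a number of branches bounded in terms of $r$, and therefore only finitely many maximal intervals of $A_1$ can fit inside a single component. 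The subtle point is to rule out that infinitely many disjoint components of $A_1$ coexist globally; the bound must come from the finite translation data $\gamma_1,\ldots,\gamma_r$ and from the fact that overlaps of translated pieces prevent the number of components of $X_n$ from growing without control in the limit.

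Claim (ii) is comparatively soft. An isolated point $p$ of $A_2$ is either isolated in $\overline{X}$ itself, or is accumulated by components of $A_1$; in the latter case, the boundary points of those accumulating components lie in $A_2$ and approach $p$, contradicting that $p$ is isolated in $A_2$. The former case---isolated points of $\overline{X}$---is ruled out using that $T$ is locally a translation: the forward $T$-orbit of an isolated point of $\overline{X}\cap X$ would consist of isolated points of $\overline{X}$ all sharing a common isolation radius, contradicting compactness; and an isolated point of $\overline{X}\setminus X$ is, by definition, a limit of $X$-points, none of which can themselves be isolated in $\overline{X}$. Combining (i) and (ii) yields the decomposition $\overline{X}=A_1\cup A_2$ with the required structure.
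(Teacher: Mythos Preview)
Your decomposition and overall strategy are reasonable, but both claims have genuine gaps.

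For claim (i), you correctly flag this as the crux and then stop short: saying the bound ``must come from the finite translation data'' is not a proof, and the observation that the first-return map to a \emph{single} component of $A_1$ has boundedly many branches does not by itself bound the \emph{number} of components. The idea you are missing, and which the paper uses, is that every maximal interval $J \subseteq X$ lies in the $T$-orbit of some interval $J_\beta$ of $X$ containing a discontinuity $\beta \in X$. Indeed, the endpoints of the continuity intervals of $R_J$ must land on discontinuities of $T$ before returning, so each continuity interval eventually enters some $J_\beta$; since $T$ restricted to $O(J_\beta)$ is a bijection, this forces the continuity interval (and hence $J$) to be contained in $\bigcup_\beta O(J_\beta)$. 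There are only finitely many discontinuities $\beta \in X$, and each $O(J_\beta)$ consists of finitely many intervals (by the branch bound you mention), so $A_1$ is a finite union.

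For claim (ii), the assertion that the forward $T$-orbit of an isolated point of $\overline{X}$ consists of isolated points sharing a common isolation radius is unjustified and in general false: $T$ is not injective, so $T(p)$ can have preimages on other branches, and points of $\overline{X}$ may cluster near $T(p)$ without any clustering near $p$. Isolation propagates \emph{backward} under a local isometry, not forward. The paper exploits the backward direction: a point $x \in A_2$ cannot be periodic (else it lies in a periodic interval inside $A_1$), hence has infinitely many preimages in $A_2$, which must accumulate at some $y \in A_2$; one then argues that either the forward orbit of $y$, or the forward orbit of a discontinuity on which the orbit of $y$ accumulates, accumulates on $x$.
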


Recall that all of the intervals we consider are half-open and of the form $[a,b)$, unless stated otherwise. Before going into the proof, we introduce some standard definitions.

\begin{definition}[Return map]
\label{defn:rj}
Let $J$ be an interval contained in $X$. By $R_J$, we denote the first return map to $J$.
\end{definition}

It is easy to see that the domain of $R_J$ is the entire interval $J$, that it is bijective, and that $J$ is partitioned into finitely many maximal half-open subintervals such that no point in their interiors lands on a discontinuity of $T$. The return map $R_J$ is thus continuous on these intervals, so for simplicity, we will refer to these intervals as `continuity intervals of $R_J$'.

\begin{proof}[Proof of Lemma \ref{lem:x-structure}]
Let $A_1$ be the union of all maximal intervals contained in $X$. We first prove that $A_1$ always contains at most finitely many intervals. Indeed, we claim that $A_1$ is equal to the union of orbits of intervals of $X$ that contain discontinuities of $T$. More precisely, $A_1 = \bigcup_{\beta \in X} O(J_{\beta})$, where $J_{\beta}$ is the component interval of $X$ containing $\beta \in X$. Since the return map $R_J$ to any interval $J$ of $X$ always has finitely many branches, this proves the claim. Let $J$ be some interval of $X$. As a boundary point of any continuity interval of $R_J$ must at some point land onto a discontinuity in $R_J$, we have that each such interval eventually lands into an interval $J_{\beta}$. Thus every continuity interval must be contained in the orbit of some $J_{\beta}$, as $T$ is a bijection on any $O(J_{\beta})$. Thus the entire interval $J$ must be in $A_1$, which proves the claim.

Let $A_2 := \overline{X}\setminus A_1$, and assume that it is non-empty. Recall the well-known characterization of a topological Cantor set: it is a compact, metrizable and totally disconnected set with no isolated points. We now show $A_2$ satisfies all of these properties. Compactness follows by definition, while metrizability follows from it being a subset of $I$. It is totally disconnected because the image of any path between two different points in $A_2$ must be an interval in $A_2$, which is impossible as $\overline{X} \setminus A_1$ is nowhere dense. Indeed, if it were dense in some interval, then this entire interval must be contained in $X_n$ for all $n$, as $X_n$ is a finite union of intervals. This gives that this interval is in $X$ as well, so we get a contradiction with the definition of $A_2$.

Finally, we prove that there are no isolated points by showing that for any point $x \in A_2$, there is a sequence in $A_2$ accumulating on $x$. First of all, we know that $x$ is not periodic, as it would then be contained in a maximal periodic interval. As it is contained in $X$, the set of all of its preimages must therefore be infinite and contained in $A_2$. Thus its set of preimages must have an accumulation point $y$ in $A_2$. If the orbit of $y$ accumulates on $x$, we are done, so assume that this is not the case. This means that there exists a strictly monotone sequence $x_{n_k} \to y$ of $n_k$-preimages, with $n_1 < n_2 \dots$, such that the itinerary of $x_{n_k}$ and $y$ up to time $n_k$ is different, i.e. the iterates of the interval $[x_{n_k},y)$ before time $n_k$ contain a discontinuity. Since there are finitely many discontinuities, there is a discontinuity $\beta$ that occurs for infinitely many $k$. Thus the orbit of $y$ accumulates on $\beta$, so $\beta$ is contained in $A_2$ and the orbit of $\beta$ accumulates on $x$. Indeed, since $y^-$ is not periodic (preimages of $x$ would then be contained in a periodic interval), we know that in any finite time, the iterates of $y$ must have bounded non-zero distance between discontinuities. Thus the times at which the interval $[x_{n_k},y)$ lands on $\beta$ must be unbounded, so $x$ is not isolated.  
\end{proof}

\begin{lemma}
\label{lem:nonwandering} 
$\overline{X}$ is equal to the non-wandering set $NW(T)$.
\end{lemma}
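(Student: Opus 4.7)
The plan is to prove the two inclusions $NW(T) \subseteq \overline{X}$ and $\overline{X} \subseteq NW(T)$ separately.

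For $NW(T) \subseteq \overline{X}$, I argue by contrapositive. If $x \notin \overline{X}$, there is an open neighborhood $V_0$ of $x$ disjoint from the closed set $\overline{X}$. The nested closed sets $\overline{X_n}$ satisfy $\bigcap_n \overline{X_n} = \overline{X}$ (which follows from the fact that each $X_n$ is a finite union of half-open intervals, whose missing right endpoints that persist to the limit lie in $\overline{X}$), so a compactness argument yields $X_n \subseteq B(\overline{X},\epsilon)$ for all $n$ large enough, and in particular $V_0 \cap X_N = \emptyset$ for some $N$. For every $n \ge N$ we then have $T^n(V_0) \subseteq T^n(I) = X_n \subseteq X_N$, hence $T^n(V_0) \cap V_0 = \emptyset$. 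For the finitely many smaller times $1 \le n \le N-1$, I use that $x$ cannot be periodic, since any periodic orbit lies in $X \subseteq \overline{X}$; thus $T^n(x) \ne x$, and shrinking $V_0$ to a small enough $V$ keeps $T^n(V) \cap V = \emptyset$ for those $n$ as well. Then $V$ is a wandering neighborhood and $x \notin NW(T)$.

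For the reverse inclusion, let $x \in \overline{X}$ and let $U$ be any open neighborhood of $x$; the task is to produce $n > 0$ with $T^n(U) \cap U \ne \emptyset$. I use the decomposition $\overline{X} = A_1 \cup A_2$ from Lemma~\ref{lem:x-structure} and split into two cases. If $x \in A_1$, then $x$ lies in some component interval $J \subseteq X$, and after shrinking $U$ I may assume $U \subseteq J$. By the discussion following Definition~\ref{defn:rj}, the first return map $R_J\colon J \to J$ is an interval exchange: its branches are translations, it is a bijection of $J$, and it preserves Lebesgue measure. Poincar\'e recurrence applied to the positive-measure open set $U$ produces a point $y \in U$ and an integer $k \ge 1$ with $R_J^k(y) \in U$; writing $R_J^k(y) = T^m(y)$ for the accumulated return time $m$ gives the desired intersection $T^m(U) \cap U \ne \emptyset$.

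The main difficulty is the Cantor case $x \in A_2$, because $U$ can meet $X$ in a Lebesgue-null set and direct Poincar\'e recurrence is unavailable. Here I extract the needed recurrence from the proof of Lemma~\ref{lem:x-structure} itself, which shows that every $x \in A_2$ lies in the $\omega$-limit set of some discontinuity $\beta \in A_2$. One takes an accumulation point $y \in A_2$ of preimages of $x$, applies pigeonhole to the finitely many discontinuities to locate a single $\beta$ hit by the iterates of the shrinking intervals $[x_{n_k},y)$ before time $n_k$, and then invokes the non-periodicity of $y$ (since any periodic $y$ would force its preimages of $x$ into a periodic interval) to conclude that the times $l_k := n_k - m_k$ at which $\beta$'s orbit approaches $x$ are unbounded, giving $T^{l_k}(\beta) \to x$. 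Choosing $j$ so large that both $T^{l_j}(\beta)$ and $T^{l_{j+1}}(\beta)$ lie in $U$, the point $T^{l_j}(\beta) \in U$ satisfies $T^{l_{j+1}-l_j}\bigl(T^{l_j}(\beta)\bigr) = T^{l_{j+1}}(\beta) \in U$, so $T^{l_{j+1}-l_j}(U) \cap U \ne \emptyset$, which proves $x \in NW(T)$.
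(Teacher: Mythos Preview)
Your proof is correct and follows the same overall structure as the paper's: prove both inclusions separately, and for $\overline{X} \subseteq NW(T)$ split according to the decomposition $\overline{X} = A_1 \cup A_2$ from Lemma~\ref{lem:x-structure}. The sub-arguments differ in detail. For the interval part $A_1$, you invoke Poincar\'e recurrence for the measure-preserving return map $R_J$, whereas the paper uses a direct pigeonhole argument on lengths (some $T^m(J)$ and $T^n(J)$ must overlap, since $T$ is a piecewise isometry) together with bijectivity of $T|_X$ to pull back to $J \cap T^{n-m}(J) \ne \emptyset$. For the Cantor part $A_2$, you mine the proof of Lemma~\ref{lem:x-structure} to locate a point whose forward orbit accumulates on $x$; the paper instead argues directly that $x \in \omega(\beta)$ for some discontinuity $\beta$ by noting that the boundary points of each $X_n$ are iterates of discontinuities, so if no such orbit accumulated on $x$ then a whole interval around $x$ would sit inside every $X_n$ and hence in $X$, contradicting $x \in A_2$. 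Both routes land on the standard fact that $\omega$-limit sets lie in $NW(T)$. One small remark: the proof of Lemma~\ref{lem:x-structure} that you are recycling has two branches (either the orbit of the accumulation point $y$ of preimages already approaches $x$, or it does not and one passes to a discontinuity $\beta$); you spell out only the second branch, but the first is trivial for your purposes since it gives $x \in \omega(y)$ directly.
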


\begin{proof}
We first prove that $\overline{X} \supseteq NW(T)$. Let $x$ be a point such that $x \notin \overline{X}$. This means that $\overline{\mathcal{U}} \cap \overline{X} = \emptyset$ for any sufficiently small neighbourhood $\mathcal{U}$ of $x$. Take one such sufficiently small open neighbourhood $\mathcal{U}_0$ of $x$. Then there exists $n_0 \ge 1$ such that $X_n$ has definite distance from $\mathcal{U}_0$ for all $n \ge n_0$, and thus we have that $T^n(\mathcal{U}_0) \cap \mathcal{U}_0 = \emptyset$ for all $n \ge n_0$, since $T^n(\mathcal{U}_0) \subset X_n$.

Next, we know that neither $x^+$ nor $x^-$ is periodic, since $X = \bigcap_{n=1}^{\infty} T^n(I)$. Because of this, for every $n \ge 1$, there exists a sufficiently small neighbourhood $\mathcal{U}_n$ of $x$ such that $T^i(\mathcal{U}_n) \cap \mathcal{U}_n = \emptyset$ for all $1 \le i \le n$. Let $\mathcal{U}_{n_0} \subset \mathcal{U}_0$ be such a sufficiently small neighbourhood. By our choice of $\mathcal{U}_0$, this means that $T^n(\mathcal{U}_{n_0}) \cap \mathcal{U}_{n_0} = \emptyset$ for all $n \ge 1$. Thus the point $x$ is not in the non-wandering set.

We now prove that $\overline{X} \subseteq NW(T)$. Let $x$ be a point in $\overline{X}$. By Lemma \ref{lem:x-structure}, we have that $\overline{X} = A_1 \cup A_2$, where $A_1$ is a finite union of intervals and $A_2$ is a Cantor set (we may assume both are non-empty). Thus for any sufficiently small neighbourhood $\mathcal{U}$ of $x$ we have that exactly one of the following holds:
\begin{enumerate}
    \item $\mathcal{U} \cap \overline{X}$ contains exactly one closed interval $J$ of definite length, and $x \in J$;
    \item $\mathcal{U} \cap \overline{X}$ contains no interval.
\end{enumerate} 
In the first case, we know that there must exist $n > m \ge 1$ such that $T^m(J) \cap T^n(J)$ contains an interval of definite length since $J$ has definite length and $T$ is a piecewise isometry. Since $X$ is $T$-invariant, $\text{int}(J) \subset X$ and $T|X$ is a bijection, we know that $T^{m-1}(J) \cap T^{n-1}(J)$ contains an interval of definite length as well. Thus we also have that $J \cap T^{n-m}(J) \neq \emptyset$, so $x \subset NW(T)$.

In the second case, we show that $x \in \omega(\beta)$ for some discontinuity $\beta \in \mathcal{C}$. This is sufficient, since all $\omega$-limit sets are contained in the non-wandering set, and thus $x \in NW(T)$ as well. Let $\mathcal{U}$ be a neighbourhood of $x$ as above. For any $n \ge 0$, we know that the boundary points of $X_n$ consist of iterates of discontinuities. If $x \notin \omega(\beta)$ for every $\beta \in \mathcal{C}$, then there is a definite distance between the boundary of $\overline{X_n}$ and $x$ for all sufficiently large $n$. Since $x$ is contained in every $\overline{X_n}$, an interval of definite length containing $x$ must also be contained in every $X_n$. Thus $\overline{X} \cap \mathcal{U}$ must contain an interval, which is a contradiction with our choice of $\mathcal{U}$. Thus $x \in \omega(\beta)$ for some discontinuity $\beta \in \mathcal{C}$, which finishes the proof.
\end{proof}  

\subsection{Properties of finite type $\ITM$s}

In this subsection, we state or prove several simple results that will be continuously used throughout the paper.

\begin{lemma}[Orbit Classification Lemma]
\label{lem:orb-class}
For each point $x \in I$, either singed or geometric, at least one of the following three possibilities holds:
\begin{enumerate}
\item (Precritical) $x$ lands on a critical point of $T$;
\item (Preperiodic) $x$ lands on a periodic point of $T$;
\item (Accumulation) There exists a critical point $\beta \in \mathcal{C}$ such that $\beta \in \omega(x)$.
\end{enumerate}
\end{lemma}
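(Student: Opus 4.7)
The plan is to prove the contrapositive: if $x$ is neither precritical (case 1) nor preperiodic (case 2), then some critical point must accumulate on the orbit of $x$ (case 3). The key mechanism is that on an interval avoiding $\mathcal{C}$, the map $T$ acts as a single translation and hence preserves relative offsets between orbits; if no critical point ever accumulates on the orbit, this rigidity will be strong enough to force periodicity.

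So assume $x$ is neither precritical nor preperiodic. Then every iterate $T^n(x)$ is a well-defined signed point lying outside $\mathcal{C}$, and the iterates are pairwise distinct, so the orbit is an infinite subset of the bounded interval $I$. I would argue by contradiction, supposing $\omega(x) \cap \mathcal{C} = \emptyset$. Since $\mathcal{C}$ is finite and $\omega(x) \subseteq \overline{I}$ is compact, pick $\delta > 0$ with $d(\omega(x),\mathcal{C}) > 3\delta$. There exists $N_0$ so that $d(T^n(x),\omega(x)) < \delta$ for all $n \ge N_0$, and in particular $d(T^n(x),\mathcal{C}) > 2\delta$ for such $n$. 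By pigeonhole applied to the infinite, bounded tail $\{T^n(x) : n \ge N_0\}$, I would choose indices $N_0 \le m < n$ with geometric distance $|T^m(x)-T^n(x)| < \delta$, and set $\ell := n-m$ and $c := T^n(x)-T^m(x)$.

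The heart of the argument is the inductive claim
\[
T^{n+k}(x) - T^{m+k}(x) = c \qquad \text{for every } k \ge 0.
\]
The induction step works because the geometric interval spanned by $T^{n+k}(x)$ and $T^{m+k}(x)$ has length $|c| < \delta$ and both of its endpoints are within $\delta$ of $\omega(x)$; hence every point of this interval lies within geometric distance $2\delta$ of $\omega(x)$, so the interval contains no critical point. Consequently $T$ acts by a single translation on it, preserving the offset $c$. Iterating along the arithmetic progression $m, m+\ell, m+2\ell, \ldots$ then yields $T^{m+k\ell}(x) = T^m(x) + kc$ for every $k \ge 0$. But these iterates all lie in the bounded interval $[0,1)$, so necessarily $c = 0$, giving $T^m(x) = T^n(x)$ and making $T^m(x)$ a periodic point, which contradicts the hypothesis that $x$ is not preperiodic.

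The main expected obstacle is purely bookkeeping: making sure that the signed-point conventions do not interfere with the geometric distance estimates and, more delicately, that the open geometric interval between $T^{m+k}(x)$ and $T^{n+k}(x)$ — interpreted as a union of signed points — really is acted on by a single translation, regardless of whether the endpoints are of $+$- or $-$-type. Once one checks that avoidance of the signed set $\mathcal{C}$ by this interval is equivalent to $T$ being a single translation on it, the induction goes through without any further subtlety.
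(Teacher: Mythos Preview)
Your proof is correct, and it takes a genuinely different route from the paper's. The paper first argues that if (1) and (3) fail then the orbit of $x$ stays $\delta$ away from every critical point \emph{and} from every periodic point; it then builds, for each $i$, the maximal interval $J_i\ni T^i(x)$ none of whose points land on a critical or periodic point, observes that each $J_i$ has length at least $\delta$, and uses pigeonhole on these intervals to find $J_m=J_n$, forcing $T^{n-m}|_{J_m}=\mathrm{id}$. Your argument avoids both the auxiliary step about periodic points and the maximal-interval machinery: you pigeonhole directly on the orbit points to find two $\delta$-close iterates, propagate their offset $c$ forward using the fact that the short segment between them never meets $\mathcal{C}$, and then kill $c$ via the arithmetic progression $T^{m+k\ell}(x)=T^m(x)+kc$ being bounded. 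This is more elementary and slightly shorter; the paper's approach, by contrast, yields the stronger intermediate fact that an entire interval of length $\ge\delta$ around $x$ maps forward continuously for all time, which is not needed for the lemma but is structurally informative.
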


\begin{proof} Assume that none of the above holds for a point $x \in I$. Since the first and the third conditions do not hold, the orbit of $x$ stays a positive distance $\delta$ away from every critical point of $T$. This and the second condition give that the orbit of $x$ also stays at least $\delta$ away from every periodic point of $T$. Indeed, if the point $T^n(x)$, for some $n \ge 0$, is less than $\delta$ away from some periodic point $y$ of minimal period $p$, then the closed interval between $T^n(x)$ and $y$ cannot map forward continuously until time $p$, because this would mean that $T^n(x)$ is periodic. Thus some iterate of this interval must contain a discontinuity, which means that the orbit of $x$ gets less than $\delta$ close to $\mathcal{C}$, which is a contradiction.

Thus there exists a maximal interval $J_0$ of length at least $\delta$ and containing $x$ in its interior such that no point in this interval lands on a critical or a periodic point. Thus the interval $J_0$ maps forward continuously for all time, and every interval $T^i(J_0)$ has the property that none of its points land on critical or periodic points. Let $J_i$ be the maximal interval containing $T^i(x)$ such that no point in it lands on a critical or periodic point, and note that $T^i(J_0) \subset J_i$. Since all of the intervals $J_i$ have length at least $\delta$, there exists $n > m \ge 0$ such that $J_n \cap J_m \neq \emptyset$. By maximality, $J_n = J_m$ and thus $T^{n-m}(J_m) = J_m$. Since $T^{n-m}_{\vert J_m}$ is continuous and a translation, it must be the identity, so $T^{n-m}(T^m(x)) = T^m(x)$, which is a contradiction.
\end{proof} 

The following theorem was proven in \cite[Theorem 2.1]{MR1796167}:

\begin{theorem}[Characterization of Finite type]
\label{thm:fin-type-char}
An $\ITM$ $T$ is of finite type if and only if $X$ consists of a finite union of intervals. \qed
\end{theorem}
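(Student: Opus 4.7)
The forward direction is immediate: if $X_{N+1}=X_N$ for some $N$, then $X_n=X_N$ for all $n\ge N$, so $X=X_N$ is a finite union of intervals. For the converse I plan to argue contrapositively: assume $T$ is of infinite type, so $X_{n+1}\subsetneq X_n$ for every $n\ge 0$, and show that in the decomposition $\overline X=A_1\cup A_2$ supplied by Lemma \ref{lem:x-structure} one has $A_2\neq \emptyset$, contradicting the hypothesis. Each strict inclusion introduces a new boundary point in $\partial X_{n+1}\setminus \partial X_n$, and any such point has the form $T^n(\beta)$ with $\beta$ in the finite set $\mathcal{C}\cup\{0^+,1^-\}$. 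By pigeonhole there exists some $\beta_*$ whose forward orbit produces infinitely many distinct fresh boundary points, and in particular its orbit is infinite.

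Apply the Orbit Classification Lemma \ref{lem:orb-class} to $\beta_*$. The precritical case is ruled out since a chain through critical points is finite. The preperiodic case is ruled out as well: once $T^m(\beta_*)$ lies on a periodic cycle, a small neighbourhood of it is a periodic interval inside $X$, so $T^n(\beta_*)\in X$ for all $n\ge m$, and no iterate afterwards produces a boundary point of $X_n$ lying outside $X$. Hence there must exist some $\beta_{**}\in\mathcal{C}\cap\omega(\beta_*)$, and $\beta_{**}\in\overline X$.

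The decisive and hardest step is to deduce from the accumulation $T^{n_k}(\beta_*)\to \beta_{**}$ that $\beta_{**}\in A_2$. My plan is to mimic the pigeonhole/itinerary construction used in the proof of Lemma \ref{lem:x-structure}: we may assume that infinitely many $T^{n_k}(\beta_*)$ approach $\beta_{**}$ from outside $X$ (otherwise the orbit enters $\mathrm{int}(X)$ and stays there, contributing only finitely many fresh endpoints). For each such $k$, the short interval bounded by $T^{n_k}(\beta_*)$ and $\beta_{**}$ pulls back; since the itineraries of its two endpoints must differ at some discontinuity along the way, a further pigeonhole over the finite set $\mathcal{C}$ produces a single $\beta\in\mathcal{C}$ separating them for infinitely many $k$. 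Tracking these preimages one extracts a sequence of non-isolated points of $\overline X$ accumulating on $\beta_{**}$, witnessing $\beta_{**}\in A_2$ and contradicting $A_2=\emptyset$. The main obstacle is precisely this last step: one must track signed points carefully and verify that the approximating preimages actually persist in $\bigcap_n X_n$ rather than being absorbed into the shrinking difference $X_n\setminus X$.
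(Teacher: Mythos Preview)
The paper does not prove this theorem; it is quoted from \cite{MR1796167} and closed with a \qed. So there is no in-paper argument to compare against.

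Your forward direction is correct. For the converse, one small repair and one genuine gap remain. The repair: a single strict inclusion $X_{n+1}\subsetneq X_n$ need not create a point in $\partial X_{n+1}\setminus\partial X_n$ (an entire component may disappear); what you actually need is that $\bigcup_n\partial X_n$ is infinite, which follows because otherwise the nested $X_n$ would take only finitely many values and stabilise. Your pigeonhole over $\mathcal C\cup\{0^+,1^-\}$ then works.

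The gap is precisely the step you flag. Your parenthetical is in fact fine: if some $T^{n_k}(\beta_*)\in\text{int}(X)$ then, since $T$ maps $\text{int}(X)$ into itself, all later iterates lie in $\text{int}(X)\subset\text{int}(X_m)$ and can never be boundary points of any $X_m$. This rules out $\beta_{**}\in\text{int}(A_1)$. But it does not rule out $\beta_{**}\in\partial A_1$: the orbit of $\beta_*$ could stay forever in $I\setminus\text{int}(X)$ and accumulate on an endpoint of a component of $X$ that happens to be critical. Your pullback-and-pigeonhole sketch does not handle this case---the separating discontinuities it produces may themselves lie on $\partial A_1$, and the ``sequence of non-isolated points of $\overline X$ accumulating on $\beta_{**}$'' you hope for then lies in $\overline X=A_1$, not in $A_2$. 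Closing this requires either an additional argument that such accumulation on $\partial A_1$ from outside $X$ is impossible, or a direct finite-time analysis (show $T|_X$ is a bijection, identify the endpoints of $X$ as finite iterates of critical points in $X$, and verify $X_N=X$ for suitable $N$), which is closer to the approach in the cited reference.
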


For a finite type map $T$, every component $J$ of $X$ is of the form $[x^+,y^-]$. We will refer to $x^+,y^-$ as the `boundary points of $X$' (or as the points in the boundary of $X$). The set of all boundary points of $X$ will be denoted by $\partial X$. The points in the set $\text{int}(X) := X \setminus \partial X$ will be referred to as the `internal points of $X$'. The next simple result describes the boundary and internal dynamics of an interval $J$ of $X$.  

\begin{lemma}
\label{lem:J-dynamics}  
Let $T$ be a finite type $\ITM$. Then the following two properties hold: 
\begin{enumerate}[label=(\alph*)]
    \item Every component of $X$ is of the form $[T^{k_1}(\beta^+), T^{k_2}(\betas^-)]$, respectively, for some $\beta^+, \betas^- \in X \cap \mathcal{C}$ and $k_1,k_2 \ge 0$;
    \item If $T^{l_1}(\beta)$ is an interior point of $X$ for some $\beta \in X \cap \mathcal{C}$ and $l_1 \ge 0$, then there exist $\betas \in X \cap \mathcal{C}$ and $l_2 \ge 0$ such that $T^{l_1}(\beta) \sim T^{l_2}(\betas)$.
\end{enumerate}
\end{lemma}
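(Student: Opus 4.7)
The plan for both parts rests on the observation that since $T$ is of finite type, we have $X = T^n(I)$ for some $n$, and therefore $T(X) = X$. Combined with the fact that $T$ is a piecewise translation, this forces $T|_X\colon X \to X$ to be a bijection: surjectivity is immediate, and injectivity follows because any two-to-one overlap in the image would strictly decrease its Lebesgue measure. I will use this bijection throughout.

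For part (a), let $[p^+, q^-]$ be a component of $X$ and focus on the left endpoint $p^+$. I would iteratively pull it back by $T|_X^{-1}$ and argue that its preimage in $X$ is either (i) itself a left endpoint of some component, or (ii) a critical signed point $\beta^+ \in \mathcal{C}$. Indeed, if the preimage were a non-critical interior signed point $u^+$, then $u^- \in X$ as well, and continuity of $T$ at $u$ would force $T(u^-) = p^-$ to lie in $X$, contradicting that $p^+$ is a left endpoint. Since $X$ has only finitely many left endpoints, the backward sequence must terminate at a critical point, giving $p^+ = T^{k_1}(\beta^+)$ with $\beta^+ \in \mathcal{C} \cap X$. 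The right endpoint is handled by the symmetric argument. The degenerate case of a stationary boundary such as $0^+$ fixed when $\gamma_1 = 0$ has to be treated as an implicit quasi-critical boundary point, a convention I read into the statement.

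For part (b), take WLOG $\beta = \beta_j^+$, so that $y^+ := T^{l_1}(\beta_j^+)$ is interior to $X$ and hence both $y^+, y^- \in X$. The plan is to track the pair $\bigl(T^{l_1 - k}(\beta_j^+),\; T^{-k}|_X(y^-)\bigr)$ for $k = 0, 1, \ldots, l_1$, which starts co-located at the geometric point $y$. The key auxiliary step is a \emph{splitting lemma}: if $a^+, a^- \in X$ have unique $T|_X$-preimages $v^+$ and $v'^-$ with $v \neq v'$, then both $v$ and $v'$ must be geometric partition endpoints, because otherwise $T$ would be continuous on the entire continuity interval containing $v$ and would force $v^-$ itself to be the preimage of $a^-$. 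If the pair never splits in $l_1$ steps, its second coordinate terminates at $\beta_j^-$, yielding $\betas = \beta_j^-$ and $l_2 = l_1$. Otherwise, letting $k^*$ be the first step at which the pair splits, the splitting lemma applied at step $k^* - 1$ shows that $T^{-k^*}|_X(y^-)$ itself sits at a partition endpoint, hence equals some $\betas \in \mathcal{C}^-$, giving $l_2 = k^*$.

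The main obstacle, I expect, will be keeping the signed-point bookkeeping rigorous throughout: correctly tracking $+/-$ types under forward and backward iteration across discontinuities, verifying bijectivity of $T|_X$ at signed boundary points, and cleanly handling the boundary indices $\beta_0 = 0$ and $\beta_r = 1$ when they surface in the splitting argument (the analogue of the stationary-endpoint edge case in part (a)). The splitting lemma is the essential conceptual device; once it is in hand, the rest of the argument is careful but routine case analysis on how signed points interact with the partition and the induced dynamics on $X$.
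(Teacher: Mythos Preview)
Your approach is fundamentally sound and closely parallels the paper's; both hinge on the bijectivity of $T|_X$. However, there are two genuine gaps.

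\textbf{Part (a), the cycling case.} You write ``Since $X$ has only finitely many left endpoints, the backward sequence must terminate at a critical point.'' Finiteness alone does not force termination: the backward orbit could cycle among left endpoints without ever reaching $\mathcal{C}$. You need the extra step the paper supplies: if the backward orbit of $p^+$ cycles, then $p^+$ is $T$-periodic and (by assumption) its forward orbit avoids $\mathcal{C}$, so an entire interval around $p^+$ is periodic and hence lies in $X$, contradicting that $p^+$ is a boundary point.

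\textbf{Part (b), the splitting lemma is overstated.} You claim that when the pair splits, both $v$ and $v'$ must be geometric partition endpoints (i.e.\ discontinuities). Your justification is that otherwise continuity of $T$ at $v$ would force $v^-$ to be the $T|_X$-preimage of $a^-$. But this step silently assumes $v^- \in X$. If $v^+$ happens to be a left boundary point of a component of $X$ (not a discontinuity), then $v^- \notin X$ and your argument collapses. The correct conclusion is weaker: each of $v^+$, $v'^-$ is either a discontinuity \emph{or} a boundary point of $X$. In your application this matters: $T^{-k^*}|_X(y^-)$ may land on a non-critical right boundary point of $X$, not directly on some $\betas^-$. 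You then need to invoke part~(a) to write that boundary point as $T^{k_2}(\betas^-)$ and take $l_2 = k^* + k_2$. The paper's proof of (b) is organized in the forward direction precisely so that this appeal to (a) is transparent: it first shows the one-step fact that if a discontinuity or boundary point maps into $\operatorname{int}(X)$ then its image has a touching partner coming from another discontinuity or boundary point, and then reduces the general case to this.

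Once these two points are patched, your backward-tracking argument and the paper's forward argument are essentially dual versions of the same proof.
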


Recall that $T_{\vert X}$ is a bijection, so the inverse $T^{-1}_{\vert X}$ is well defined.

\begin{proof} 
Since $T$ is of finite type, the preimage of every boundary point of $X$ is either a boundary point of $X$ or a discontinuity of $T$. Fix $x$ in the boundary of $X$ and assume that $T^{-n}(x) \notin \mathcal{C}$ for every $n \ge 0$. Since there are finitely many boundary points, this means that $T^{-n}(x)$ is eventually periodic. Because $T_{\vert X}$ is a bijection, this means that $x$ is also periodic. By assumption, it does not land on a discontinuity. Thus there is an interval of points around $x$ that are all periodic with the same period. This contradicts $x$ being a boundary point of $X$, as all periodic points are contained in $X$. Thus $x = T^n(\beta)$ for some discontinuity $\beta \in \mathcal{C}$, which proves (a).

For (b), let $x$ be a point that is either a discontinuity contained in $X$ or a boundary point in $X$. Since $T$ of finite type and bijective on $X$, if the image of $x$ is contained in the interior of $X$, then there is point $y$ which is either a discontinuity or a boundary point of $X$ such that $T(x) \sim T(y)$. By part (a), we have that $T(y)$ is of the form $T^{l_2}(\betas)$ in both cases. 

Assume that $T^{l_1}(\beta) \in \text{int}(X)$ for some $n>0$, $\beta \in X \cap \mathcal{C}$. We may assume $\beta$ is the last discontinuity in $O(\beta, l_1)$. Thus $T^i(\beta) \notin \mathcal{C}$ for $0 < i < l_1$. We may further assume that $T^i(\beta) \notin \text{int}(X)$ for $0 < i < l_1$. Indeed, if some iterate $T^i(\beta)$, for $0 < i < l_1$, was in $\text{int}(X)$ and satisfied (b), then every iterate $T^j(\beta)$, for $i < j \le l_1$, is in $\text{int}(X)$ and also satisfies (b). This is because $\beta$ does not land on a discontinuity before time $l_1$. This means that $T^{l_1-1}(\beta)$ is in the boundary of $X$, so (b) once again follows from the paragraph above. 
\end{proof} 

\section{Product notation and eventually periodic maps}
\label{sec:theoremA} 

\subsection{Product notation and perturbations}
\label{subsec:prod}

In this subsection, we introduce the product notation, which will be used throughout the paper. It is a very convenient way to represent and control iterates of critical points.

For each  $T$,  $x \in I$, $s=1,\dots,r$ and $n\ge 1$, we define:
\[
k_s(x,n,T) := \# \{ T^i(x) \in I_s; 0\le i < n \}.
\]
Thus $k_s(x,n,T)$ represents the number of entries of $x$ into $I_s$ up to time $n$. This is an analogue of the first term in the continued fraction expansion for circle rotations, see \cite[Section 1.1]{MR1239171}. When the map $T$ is clear from the context, we will usually simply write $k_s(x,n)$.

Let $W(r) := \R{}^{r} \bigoplus \R{}^{r-1}$ be the $(2r-1)$-dimensional real vector space that contains the \textit{coefficient vectors} (introduced below), and let $(\bm{e}_1, \dots, \bm{e}_r)$ and $(\bm{f}_1, \dots, \bm{f}_{r-1})$ be the canonical bases for $\R{}^{r}$ and $\R{}^{r-1}$, respectively. Recall that $\ITM(r)$ is the parameter space of $\ITM$s on $r$ intervals, and that it is a convex polytope contained in $\mathbb{R}^{2r-1}$. We call the elements of this space \textit{parameter vectors}. These vectors also have canonical coordinates coming from the ambient space $\mathbb{R}^{2r-1}$. We will use the shorthand $(\gamma \, \beta)$ for a parameter vector $(\gamma_1 \dots \gamma_r \, \beta_1 \dots \beta_{r-1})$.

Let $\langle \cdot, \cdot \rangle$ be the standard scalar product on $\mathbb{R}^{2r-1}$. Since $W(r) = \R{}^{r} \bigoplus \R{}^{r-1}$ and $\ITM(r)$ is a subset of $\mathbb{R}^{2r-1}$, it makes sense to write $\langle v, (\gamma \, \beta) \rangle =  \sum_{s=1}^r v_s \gamma_s + \sum_{s=1}^{r-1} v_{s+r} \beta_s$ for a coefficient vector $v = \sum_{s=1}^r v_s \bm{e}_s + \sum_{s=1}^{r-1} v_{s+r} \bm{f}_s \in W(r)$ and a parameter vector $(\gamma \, \beta) \in \ITM(r)$. We call $\langle v, (\gamma \, \beta) \rangle$ the \textit{product} of $v$ and $(\gamma \, \beta)$.

The first reason for introducing this product is to obtain a formula for any iterate $T^n(\betas)$ of some discontinuity $\betas$. Define the vector of coefficients in $v(\betas,n,T) \in W(r)$ of this iterate as:

\[
v(\betas,n,T) \coloneqq \left(\sum_{1 \le s \le r} k_s(\betas, n, T) \, \bm{e}_s, \,  \bm{f}_{\text{ind}(\betas)}\right).
\]
Then the following holds:

\begin{align*}
T^n(\betas) &= \sum_{s=1}^r k_s(\betas,n,T) \gamma_s + \betas \\
&= \langle \, v(\betas,n,T) , (\gamma \, \beta) \, \rangle,
\end{align*}
where the vector $(\gamma \, \beta)$ corresponds to the defining coefficients of $T$. With this notation, we can associate to any landing of some discontinuity $\betas$ on some other discontinuity $\betass$ a coefficient vector in the following way. If $T^n(\betas) = \betass$, then the landing vector $L$ is defined as:

\[
L \coloneqq \left(\sum_{s = 1}^r \, k_s(\betas, n) \, \bm{e}_s, \, \bm{f}_{\text{ind}(\betas)} - \bm{f}_{\text{ind}(\betass)} \right).
\]
By the above formula for the iterate $T^n(\betas)$, we have that:

\begin{align*}
0 &= T^n(\betas) - \betass \\
&= \sum_{s=1}^r k_s(\betas,n,T) \gamma_s + \betas - \betass \\
&= \langle \, L , (\gamma \, \beta) \, \rangle.
\end{align*}

The second reason why the scalar product notation is useful is that it gives explicit formulas for how the $T$-iterates of points change after a perturbation $\delta$ of $T$. Let $\{ \bm{g_1}, \dots, \bm{g_r}, \bm{h_1}, \dots, \bm{h_{r-1}} \}$ be the canonical basis of the tangent space $\bm{T}_T \ITM(r)$ at point $T = (\gamma \, \beta)$ in $ITM(r)$. Each perturbation $\Tilde{T}$ of a map $T$ in $ITM(r)$ is given by a linear functional $\bm{\delta}: \bm{T}_T \ITM(r) \to \R$, so that $\Tilde{\gamma}_s = \gamma_s + \bm{\delta}(\bm{g_s})$ and $\Tilde{\beta}_s = \beta_s + \bm{\delta}(\bm{h_s})$. Thus $\Tilde{T}$ is obtained from $T$ by perturbing in the direction $\delta := (\bm{\delta}(\bm{g}_1) \, \dots \, \bm{\delta}(\bm{g}_r) \, \bm{\delta}(\bm{h_1}) \, \dots \, \bm{\delta}(\bm{h}_{r-1})) \in \bm{T}_T \ITM(r)$. Hence, after identifying $\bm{T}_T \ITM(r)$ and $\mathbb{R}^{2r-1}$, it makes sense to write $(\gamma \, \beta) + \delta = (\Tilde{\gamma}\, \Tilde{\beta})$.

If the itineraries of $\beta$ under $T$ and $\Tilde{\beta}$ under $\Tilde{T}$ are the same up to some finite time $n \ge 1$, then we have the following formula: 

\begin{align*}
\Tilde{T}^n(\Tilde{\beta}) - T^n(\beta) &= \sum_{s = 1}^r k_s(\beta,n) \Tilde{\gamma}_s + \Tilde{\beta} - \sum_{s = 1}^r k_s(\beta,n) \gamma_s - \beta \\
&= \sum_{s = 1}^r k_s(\beta,n) (\Tilde{\gamma}_s - \gamma_s) + \Tilde{\beta} - \beta \\
&= \sum_{s = 1}^r k_s(\beta,n) \bm{\delta}(\bm{g_s}) + \bm{\delta}(\bm{h_{\text{ind}(\beta)}}) \\
&= \langle \, v(\beta,n,T) , \delta \,  \rangle.
\end{align*}

Thus if we want $T^n(\betas) = \betass$ to hold after perturbation as well, we must have that (assuming the itinerary of $\betas$ up to time $n$ does not change):

\[
\langle \, L , \delta \, \rangle = 0.
\]
Product notation can also be used to define parameter families of $\ITM$s. Assume that the parameter vector $(\gamma \, \beta)$ of $T$ satisfies some equation:

\begin{align*}
\sum_{i=1}^r a_i \gamma_i + \sum_{i=1}^{r-1} b_i \beta_i = 0,
\end{align*}
where $a_,b_i \in \mathbb{R}$. This is equivalent to saying:

\begin{equation}
\label{eq:param-eq}
\langle \, v , (\gamma \, \beta) \, \rangle = 0    
\end{equation}
for the associated vector of coefficients $v := (a_1, \dots, a_r, b_1, \dots, b_{r-1})$. By a \textit{parameter family}, we will refer to the subspace of $\ITM(r)$ consisting of all parameter vectors for which equation \eqref{eq:param-eq} holds for all vectors $v$ in some set $\mathcal{E}$. For a map $T$ and a set of vectors $\mathcal{E}$, we will write $\langle \, \mathcal{E} , T \, \rangle = 0$ if the parameter vector corresponding to the map $T$ satisfies \eqref{eq:param-eq} for all vectors $v \in \mathcal{E}$.

We will say that a family is \textit{rational} if all of the vectors in $\mathcal{E}$ have rational coefficients. They are important because of the following simple lemma:

\begin{lemma}
\label{lem:rat-fam}
Parameters with rational coefficients are dense in rational families.
\end{lemma}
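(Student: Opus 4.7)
The plan is to realize the rational family $\mathcal{F}$ as a rational polyhedron in $\mathbb{R}^{2r-1}$ and then invoke the elementary fact that rational points are dense in any non-empty rational polyhedron. The ambient polytope $\ITM(r)$ is already cut out by rational linear inequalities (namely $0 \le \beta_1 \le \cdots \le \beta_{r-1} \le 1$ together with $-\beta_{i-1} \le \gamma_i \le 1-\beta_i$), while $\mathcal{F}$ is obtained from $\ITM(r)$ by further imposing the rational linear equations $\langle v, (\gamma\,\beta)\rangle = 0$ for $v \in \mathcal{E}$. Since $W(r)$ is finite-dimensional, I may replace $\mathcal{E}$ by a finite rational basis of its $\mathbb{Q}$-linear span without changing $\mathcal{F}$, so $\mathcal{F}$ is the solution set of a finite rational system of linear equalities and inequalities.

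Given $p \in \mathcal{F}$, the first step is to pass to the minimal face $M$ of $\mathcal{F}$ containing $p$: promote to equalities precisely those inequalities of $\ITM(r)$ that $p$ saturates. Because every such inequality has rational coefficients, $M$ is a rational affine subspace of $\mathbb{R}^{2r-1}$, and $p$ lies in its relative interior. Consequently every point of $M$ sufficiently near $p$ remains in $\mathcal{F}$, so it is enough to approximate $p$ by rational points of $M$.

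For this last step I would invoke the standard observation that a non-empty rational affine subspace $\{x : Ax = b\}$ contains a rational point (Gaussian elimination over $\mathbb{Q}$ succeeds whenever the system is solvable over $\mathbb{R}$, because the ranks of $A$ and $[A\mid b]$ are independent of the ground field) and that its direction is a rational linear subspace, which admits a basis of rational vectors. Rational $\mathbb{Q}$-combinations of such a basis, translated by any rational base point, form a dense countable subset of $M$, and any one of these close enough to $p$ lies in $\mathcal{F}$ by the previous paragraph. There is no real obstacle in the argument; the only point worth flagging is that facial rationality is inherited from the rationality of the defining inequalities of $\ITM(r)$, which is automatic.
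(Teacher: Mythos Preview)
Your argument is correct and essentially parallel to the paper's: both identify the rational family as a rational polyhedron in $\mathbb{R}^{2r-1}$ and invoke the density of rational points therein. The only difference is in execution --- the paper applies a rational invertible linear change of coordinates to send the constraint subspace to a coordinate subspace (where density of rationals is immediate) and then pulls back via the rational inverse, while you work directly with the minimal face and a rational basis of its affine hull; both are standard routes to the same elementary fact.
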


 \begin{proof} 
 Recall that the set $IMT(r)$ of all $\ITM$s on $r$ intervals can be viewed as a closed convex polytope in $\mathbb R^{2r-1}$. Therefore, a family $\mathcal T$ given by a set of vectors $\mathcal E$ can be viewed as a convex polytope of co-dimension $t=\text{dim}(\text{span}(\mathcal E))$ as the intersection of $\ITM(r)$ and the linear sub-space $E$ spanned by $\mathcal E$. If $\mathcal T$ is now a rational family, then there exists an invertible matrix $A$ with rational coefficients that maps $E$ to a coordinate sub-space, say, $x_1 =\ldots=x_{2r-1-t}=0$ (where we denoted by $x_i$ some Cartesian coordinates in $\mathbb R^{2r-1}$). It is clear that the points with rational coordinates are dense in the image $A(\mathcal T)$. Since the matrix $A$ has rational coefficients, so does $A^{-1}$, and thus the points with rational coordinates are dense in $A^{-1}\cdot A(\mathcal T)=\mathcal T$. Hence, parameters with rational coefficients are dense in the rational family $\mathcal T$.
\end{proof}

\subsection{Density of eventually periodic maps}

We say that $T$ is \textit{eventually periodic} if every point in $\mathcal{C}$ (recall that this is a set of signed points) is eventually periodic. Eventually periodic maps are the simplest non-trivial maps, an analogue of Morse-Smale systems from smooth interval dynamics (see \cite{MR1239171}). We now prove the parametric version of Theorem A:

\begin{theorem}[Eventually periodic maps are dense - Parametric version]
\label{thm:ep-dense-param}
Let $T$ be a map contained in a rational family $\mathcal{T}$ defined by a set of vectors $\mathcal{E}$. Then arbitrarily close to $T$ in $\mathcal{T}$, there is an eventually periodic map $\tilde{T}$.
\end{theorem}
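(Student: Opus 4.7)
The plan is to combine Lemma~\ref{lem:rat-fam} with a simple finiteness argument. First I would apply Lemma~\ref{lem:rat-fam} to obtain a parameter vector $(\tilde\gamma\,\tilde\beta)\in\mathcal T$ with all rational coordinates, arbitrarily close to the parameter vector of $T$. The corresponding map $\tilde T$ lies in $\mathcal T$ by construction, so it suffices to show that any rational-parameter $\ITM$ is eventually periodic in the sense defined above.

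Fix such a $\tilde T$ and let $q$ be a common denominator of all of $\tilde\gamma_1,\ldots,\tilde\gamma_r,\tilde\beta_1,\ldots,\tilde\beta_{r-1}$. Since $\tilde T$ restricted to each partition interval is a translation by $\tilde\gamma_s$, and since the image always stays in $[0,1)$ by the definition of $\ITM(r)$ (so no modular wrap-around is applied), any point of the form $k/q\in[0,1)$ maps to another point of this form. In particular, for each signed critical point $\beta^{\pm}\in\mathcal C$, the underlying geometric orbit is contained in the finite set $\{k/q:0\le k<q\}$. By pigeonhole, the geometric orbit of $\beta^{\pm}$ must return to some geometric value that it has already visited. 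Since the sign carried along the orbit is determined by which partition interval the signed point belongs to at each step (and there are only finitely many signed points $\{(k/q)^+,(k/q)^-:0\le k<q\}$), the full signed orbit of $\beta^{\pm}$ also lives in a finite set and is therefore eventually periodic. Hence every element of $\mathcal C$ is eventually periodic under $\tilde T$, which is exactly the definition of $\tilde T$ being eventually periodic.

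The only genuine step is Lemma~\ref{lem:rat-fam}; once a rational-parameter approximant is available inside $\mathcal T$, the eventual periodicity is immediate from the pigeonhole observation above. The non-parametric version (Theorem A) then follows by taking $\mathcal E=\emptyset$, so that $\mathcal T=\ITM(r)$ is itself a rational family.
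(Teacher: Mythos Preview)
Your proposal is correct and follows essentially the same approach as the paper: invoke Lemma~\ref{lem:rat-fam} to approximate within $\mathcal{T}$ by a map with rational parameters, then use a finiteness/pigeonhole argument to conclude that every critical orbit is eventually periodic. The paper's version notes the slightly sharper fact that rationality of the $\gamma_s$ alone suffices (the orbit of any $\beta\in\mathcal{C}$ lies in the discrete set $\beta+\mathbb{Z}\gamma_1+\cdots+\mathbb{Z}\gamma_r$ intersected with $[0,1)$), but the idea is identical.
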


This theorem is a straightforward consequence of the fact that the maps with rational parameters are dense. Recall that the distance between signed points is defined as the distance of geometric points in $I$ that the signed points correspond to.

\begin{proof}
Because of Lemma \ref{lem:rat-fam}, we only need to prove that maps $T$ for which the parameter vector $(\gamma \beta)$ has rational coefficients are eventually periodic. In fact, it is enough that only the coefficients $\gamma_1, \dots, \gamma_r$ are rational. Let $\mathbb{N}(\gamma)$ be the set of all linear combinations of $\gamma_1, \dots, \gamma_r$ with integer coefficients. This set is discreet, because the minimal distance between any of its elements, if non-zero, is at least $\frac{1}{q^r}$, where $q$ is the largest denominator of the numbers $\gamma_1, \dots, \gamma_r$. Thus for any $\beta \in \mathcal{C}$, we have that the set $\beta + \mathbb{N}(\gamma) \cap [0,1)$ is finite. As any iterate of $\beta$ is contained in this set, we must have that $\beta$ is eventually periodic.
\end{proof}

\begin{definition}
\label{defn:c1c2}
For $T \in \ITM(r)$, we define the following sets:
\begin{itemize}
    \item $\mathcal{C}_1$ is the set of all $\beta \in \mathcal{C}$ that eventually land on a discontinuity;
    \item $\mathcal{C}_2$ is the set of all $\beta \in \mathcal{C}$ that never land on a discontinuity, but are eventually periodic;
    \item $\mathcal{C}_0 = \mathcal{C}_1 \cup \mathcal{C}_2$;
    \item $\mathcal{C}_i^{\pm} = \mathcal{C}^{\pm} \cap \mathcal{C}_i$, for $i \in \{ 0, 1, 2 \}$.
\end{itemize}
\end{definition}

It is easy to see $\mathcal{C} = \mathcal{C}_0$ if and only if $T$ is eventually periodic. The next proposition shows that a stronger property holds: if the $+$-type discontinuities are eventually periodic, then so is every other point in $I$. Analogously, it is also enough that the $-$-type discontinuities are eventually periodic.

\begin{proposition}
\label{prop:+enough}
If $\mathcal{C}^+ = \mathcal{C}^+_0$, then $T$ is eventually periodic. Moreover, every point in $I$ is eventually periodic.
\end{proposition}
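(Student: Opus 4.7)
Plan. The plan is a two-layer bootstrap, driven throughout by a shadowing (parallel-transport) principle for piecewise translations: if the forward orbit of a point $y$ stays at distance at least $\epsilon$ from every ``disrupting'' discontinuity (meaning a discontinuity on the side of $y$ on which the perturbation sits), then the $\epsilon$-perturbation of $y$ runs in lockstep with $y$ under all iterates of $T$, differing by the constant offset $\epsilon$, and they share the same eventual periodicity. The outer layer deduces ``every point in $I$ is eventually periodic'' from the assertion $\mathcal{C}=\mathcal{C}_0$. Applied to an arbitrary $x\in I$, Lemma~\ref{lem:orb-class} handles the preperiodic and precritical cases immediately; in the accumulation case, pass to a subsequence $T^{n_k}(x)\to \beta_*$ approaching the geometric critical $\beta_*$ from a fixed side, and shadow using the orbit of $\beta_*^+$ or $\beta_*^-$ (whichever matches that side). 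Since under $\mathcal{C}=\mathcal{C}_0$ both $\beta_*^+$ and $\beta_*^-$ have forward orbits consisting of only finitely many points, we may choose $\epsilon_k$ smaller than every relevant gap, making the shadow eternal and so forcing $x$ to be eventually periodic.

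The inner layer is the upgrade from $\mathcal{C}^+=\mathcal{C}^+_0$ to $\mathcal{C}^-\subseteq\mathcal{C}^-_0$. Fix $\beta^-\in\mathcal{C}^-$ and assume $\beta^-\notin\mathcal{C}^-_0$ for contradiction. Because $T$ preserves signs, a chain of landings starting at $\beta^-$ stays inside the finite set $\mathcal{C}^-$; that chain either closes into a periodic cycle (contradicting the assumption) or terminates at a $-$-critical which is neither precritical nor preperiodic, so after relabelling we may take $\beta^-$ itself of that kind. Lemma~\ref{lem:orb-class} then gives a geometric critical $\beta_*\in\omega(\beta^-)$ along some subsequence $T^{n_k}(\beta)\to\beta_*$. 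If this subsequence approaches from the right, then shadowing with the eventually periodic orbit of $\beta_*^+\in\mathcal{C}^+=\mathcal{C}^+_0$ forces $\beta^-$ to be eventually periodic, a contradiction. Hence every critical in $\omega(\beta^-)$ is approached \emph{only from the left}. The candidate left-shadow target is $\beta_*^-$, not controlled by hypothesis; but if $\beta_*^-$ were eventually periodic, left-shadowing would again conclude, and otherwise the same dichotomy propagates to $\beta_*^-$. Iterating inside the finite set $\mathcal{C}^-\setminus\mathcal{C}^-_0$ produces a cycle $\sigma_1^-\to\sigma_2^-\to\cdots\to\sigma_m^-\to\sigma_1^-$ in which $\sigma_{i+1}$ lies in $\omega(\sigma_i^-)$ approached from the left only.

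The final step eliminates this cycle by a self-shadowing argument applied to $\sigma_1^-$. The ``left only'' property, combined with pigeonhole on the finitely many critical points, guarantees that there is some $d>0$ such that the forward orbit of $\sigma_1^-$ stays at distance at least $d$ from every discontinuity lying to its left (otherwise some critical would appear in $\omega(\sigma_1^-)$ approached from the right). Composing the shadowings around the cycle yields, for every sufficiently small $\delta>0$, a time $N=N(\delta)$ with $T^N(\sigma_1^-)=\sigma_1-\delta$, after which left-shadowing $\sigma_1^-$'s own orbit by $\delta<d$ gives $T^{N+j}(\sigma_1^-)=T^j(\sigma_1^-)-\delta$ for every $j$ until the shifted orbit would exit $[0,1)$. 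Applying this relation iteratively with $j=N$ would yield $T^{kN}(\sigma_1^-)=\sigma_1-k\delta$, eventually negative, which is impossible; so the shadowing must break at some finite $j^*$. Under the ``left only'' standing condition, the only way for a break to occur is $T^{j^*}(\sigma_1^-)=\gamma+\eta$ with $0\le\eta<\delta$ for some geometric critical $\gamma$ (the degenerate break at the left boundary of $[0,1)$ is ruled out by the same argument applied to the auxiliary orbit of $0^+$). Letting $\delta\to 0$ along a subsequence and pigeonholing over $\mathcal{C}$ produces a fixed critical $\gamma\in\omega(\sigma_1^-)$ approached from the right---the sought-for contradiction. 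The main technical obstacle is executing the cycle composition cleanly: the constituent approach times $n_{k_i}^{(i)}$ must be chosen so that the cumulative offset $\delta=\sum_i\epsilon_{k_i}^{(i)}$ stays below every gap that the intermediate shadowings of $\sigma_i^-$'s orbit through the cycle must respect.
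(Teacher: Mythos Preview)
Your plan is correct but follows a route quite different from the paper's. The paper's argument is short and direct: it first exhibits a single $\epsilon>0$ such that every interval $[\beta,\beta+\epsilon)$ with $\beta^+\in\mathcal{C}^+$ is eventually periodic and maps forward continuously for all time (this uses only that the combined forward orbit of the $\beta^+$'s up to periodicity is finite), and then, for an arbitrary $x\in I_s$, produces a finite increasing sequence $\beta_{s-1}=x_0<x_1<\dots<x_t<x$ of eventually periodic preimages of $+$-criticals with successive gaps at least $\epsilon$; the construction must stop with $x-x_t<\epsilon$, and then $[x_t,x)$ maps forward into some $[\beta,\beta+\epsilon)$, forcing $x$ to be eventually periodic. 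Lemma~\ref{lem:orb-class} is never invoked. Your approach instead bootstraps the hypothesis to $\mathcal{C}^-\subseteq\mathcal{C}_0^-$ via an $\omega$-limit/cycle/self-shadowing contradiction, and then handles arbitrary points by a second pass through the Orbit Classification Lemma. The cycle elimination is sound: once $\delta<\min_i d_i$, the self-shadowing of $\sigma_1^-$ cannot break at all (any break would place a genuine critical $\beta_s$ in the gap $[v_{j^*-1}-\delta,v_{j^*-1})$, giving $0<v_{j^*-1}-\beta_s\le\delta<d_1$, a contradiction), so $v_{N+j}=v_j-\delta$ for every $j$ and hence $v_{kN}=\sigma_1-k\delta<0$ for large $k$. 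A few cosmetic points: the chain-of-landings reduction belongs at each propagation step rather than at the very start (if $\beta^-\notin\mathcal{C}_0^-$ it is already neither precritical nor preperiodic); the cycle lives in the set of \emph{not-eventually-periodic} $-$-criticals, which may meet $\mathcal{C}_1^-$, not merely in $\mathcal{C}^-\setminus\mathcal{C}_0^-$; and both the ``degenerate break at $0$'' caveat and the final $\delta\to 0$ pigeonhole are unnecessary, since the contradiction is already direct for a single $\delta<\min_i d_i$. The paper's route buys brevity; yours packages a reusable shadowing template.
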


Thus this proposition shows that the name `eventually periodic' is justified.

\begin{proof}
We first prove that there exists an $\epsilon > 0$ such that for each $\beta^+ \in \mathcal{C}^+$, the interval $[\beta, \beta+\epsilon)$ is eventually periodic and maps forward continuously for all times. Let $N$ be the smallest time such that $T^N(\beta^+)$ is a periodic point for all $\beta^+$, and let $p$ be the maximal period of these points. Let $\epsilon > 0$ be the closest positive distance the orbit of any $\beta^+$ up to time $N+p$ gets to the left of a point in $\mathcal{C}^+$. Note that $\beta^+$ can land on another critical point, but in our definition of $\epsilon$ we ignore these times and look only at the positive distances. We claim that this is our required $\epsilon$. Indeed, if the iterates of any interval $[\beta, \beta + \epsilon)$ contain a discontinuity before time $N+p$, it would mean that the distance between a point in the orbit of $\beta^+$ and another critical point would be less than $\epsilon$, contradicting the definition of $\epsilon$. Thus, every such interval maps forward continuously up to time $N+p$. Since $T^N(\beta^+)$ is periodic and has period $\le p$, we see that the entire interval $[T^N(\beta), T^N(\beta) + \epsilon)$ consists of points of the same period, which shows that $\epsilon$ has the required property.

Now assume, for the sake of contradiction, that there is some $x \in I_s$ which is not eventually periodic. By assumption, $\beta_{s-1}^+$ is eventually periodic. We define a finite sequence of eventually points $x_0 = \beta_{s-1}^+ < x_1 < \dots < x_t < x$ and times $n_0 = 0 < n_1 < \dots < n_t$ such that $T^{n_i}(x_i) \in \mathcal{C}$ and $d(T^{n_i}(x_i), T^{n_{i-1}}(x_{i-1})) \ge \epsilon$ for all $1 \le i \le t$ by the following inductive procedure. 

If $x_i$ is already $\epsilon$-away from $x$, we stop the procedure. Otherwise, define $n_{i+1}$ be the minimal time the orbit of the interval $[x_i, x)$ contains a critical point. Such a time exists because $x$ and $x_i$ must have different itineraries, as $x_i$ is eventually periodic, while $x$ is not. Let $x_{i+1}$ be the rightmost point in $[x_i, x)$ which lands on a critical point at time $n_{i+1}$. We have that $n_{i+1} > n_i$, because $x_i$ is the rightmost point in $[x_{i-1}, x)$ which lands on a critical point by time $n_i$, which means that the interval $[x_i,x)$ maps forward continuously up to time $n_i$. Moreover, we have that $n_{i+1} - n_i \le N + p$, as $x_i$ becomes periodic with period at most $p$, after at most $N$ iterations. We claim that $x_{i+1} - x_i \ge \epsilon$. Indeed, by construction, we have that $T^{n_i}([x_i,x)) = [\beta, y)$, for some $\beta^+ \in \mathcal{C}^+$. The interval $[\beta, \beta + \epsilon)$ maps forward continuously for all times, which means that only the points in $[\beta + \epsilon, y)$ can land on critical points. Thus $x_{i+1} - x_i \ge \epsilon$, which means that the inductive procedure must terminate at some finite time. Since $d(x_t, x) \le \epsilon$ and this interval maps forward continuously up to time $n_t$ when $x_t$ lands on some critical point $\beta^+$, we see that all points in this interval must be eventually periodic, contradicting our choice of $x$.
\end{proof}

An important corollary of the proof of this proposition is that eventually periodic maps are of finite type. Thus Lemma $1$ already gives that the finite type maps are dense in $\ITM(r)$.

\begin{corollary}
\label{cor:ep-fin-type}
Eventually periodic maps are of finite type.
\end{corollary}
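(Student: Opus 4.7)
The plan is to use the Characterization of Finite Type (Theorem \ref{thm:fin-type-char}) together with the fact that, in an eventually periodic map, the forward orbit of every critical point is a finite set. In fact, by Proposition \ref{prop:+enough}, the orbits of the "corner" signed points $0^+$ and $1^-$ are also finite, so I can define a single finite set
\[
F \;\coloneqq\; \bigcup_{\beta \in \mathcal{C} \cup \{0^+, 1^-\}} O(\beta),
\]
which is forward $T$-invariant. The goal is to show that the boundary of each $X_n$ lives inside $F$, and then to conclude by a pigeonhole argument.

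The central step is to prove, by induction on $n$, that $\partial X_n \subseteq F$ (interpreting the endpoints of the half-open intervals making up $X_n$ as signed points, with left endpoints of $+$-type and right endpoints of $-$-type). For $n=0$ this is immediate since $X_0 = [0,1)$ has boundary $\{0^+, 1^-\} \subseteq F$. For the inductive step I would write $X_{n+1} = T(X_n)$ and observe that each component of $X_{n+1}$ is the $T$-image of a maximal continuity subinterval of $X_n$. Hence $\partial X_{n+1}$ consists of $T$-images of points in $\partial X_n$, together with images $T(\beta^+)$ or $T(\beta^-)$ corresponding to discontinuities $\beta \in \mathcal{C}$ landing in the interior of $X_n$. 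In both cases the new boundary points are $T$-images of signed points already in $F$, and $F$ is forward-invariant, so $\partial X_{n+1} \subseteq F$.

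Once the claim is established, the conclusion is short. Each $X_n$ is a finite union of half-open intervals whose endpoints lie in the fixed finite set $F$, so only finitely many distinct sets $X_n$ can ever occur. Since the sequence is decreasing ($X_{n+1} \subseteq X_n$), it must stabilize: there exists $m$ with $X_{m+1} = X_m$. By the definition of finite type given in Subsection \ref{subsec:itm}, this means $T$ is of finite type.

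The step I expect to require the most care is the inductive description of $\partial X_{n+1}$: one has to verify that no new boundary points arise except at images of discontinuities sitting inside $X_n$, and that the bookkeeping of $\pm$-signs is consistent (for instance, that a discontinuity $\beta$ with $\beta^+$ lying in a component of $X_n$ contributes $T(\beta^+)$ as a $+$-endpoint of a component of $X_{n+1}$). This is a routine piece of combinatorics with the signed-point formalism set up in Section \ref{sec:nw}, but it is the only place where a little genuine argument is needed; everything else is counting and monotone convergence in a finite poset.
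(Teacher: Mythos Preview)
Your argument is correct and takes a genuinely different route from the paper. The paper's proof extracts from the proof of Proposition~\ref{prop:+enough} the stronger conclusion that every point of $I$ eventually lands in one of finitely many periodic intervals, so that $X$ equals the orbit of these intervals and is therefore a finite union of intervals; finite type then follows via the characterization Theorem~\ref{thm:fin-type-char}. You instead verify the original definition of finite type directly: the endpoints of each $X_n$ are trapped in a fixed finite forward-invariant set $F$, so the decreasing sequence $(X_n)$ takes only finitely many values and must stabilize. Your approach avoids Theorem~\ref{thm:fin-type-char} entirely and uses Proposition~\ref{prop:+enough} only to know that the corner orbits $O(0^+)$ and $O(1^-)$ are finite (note that $1\notin I$, so for $1^-$ one should apply Proposition~\ref{prop:+enough} to $T(1^-)$, or invoke its $-$-type analogue). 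The paper's route has the advantage of also describing $X$ explicitly as a union of periodic-interval orbits, information used later in the paper; your route has the advantage of being a clean pigeonhole argument once $F$ is set up. One small imprecision worth noting: a component of $X_{n+1}$ need not be the image of a \emph{single} continuity subinterval of $X_n$, since several such images can merge; but this does not affect your conclusion, since $\partial X_{n+1}$ is still contained in the $T$-images of $\partial X_n$ together with the $T$-images of the discontinuities, all of which lie in $F$.
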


\begin{proof}
The second part of the proof of Proposition \ref{prop:+enough} shows that under iterates of an eventually periodic map, each point in $I$ eventually lands into one of the finitely many periodic intervals. Thus $X$ must be equal to the union of orbits of these periodic intervals, which shows that it is of finite type by Theorem \ref{thm:fin-type-char}. 
\end{proof}

\section{Linear independence of itinerary vectors}
\label{sec:lin-indep}

\subsection{Statement and setup}

The crucial property that allows us to change one part of the dynamics while keeping the other parts as they are is the linear independence of itinerary vectors of discontinuities. This is the content of Theorem \ref{thm:lin-dep} and Corollary \ref{cor:lin-indep}. To state these results precisely, we need some preliminary results and notation. When introducing objects that depend on some interval $J$, the dependence will be explicit in the notation if the object in question will be used in the context when there are several $J$-intervals, and it will be implicit otherwise.

Let $J = [x^{J,+},y^{J,-}]$ be an interval contained in $X$. Recall that the return map $R_J$ to $J$ is well-defined (see \ref{defn:rj}) and that there are finitely many points $a_{1}^{J}, \dots, a_{N_J-1}^J$ in the interior of $J$ that land on discontinuities before returning to $J$. Let $J_1^J, \dots, J_{N_J-1}^J$ be the maximal intervals in $J$ such that no point in their interior lands on a discontinuity, and let $r_j^J$ be the return time of $J_j^J$ to $J$. Let $a_{0}^{J,+} := x^+$ and $a_{N_J}^{J,-} := y^-$ be the boundary points of $J$. For each $1 \le j \le N_J-1$, let $m_{j}^{J,+}$ be the number of discontinuities that $a_{j}^{J,+}$ lands on before returning to $J$ and, for $1 \le k \le m_{j}^{J,+}$, let $\beta^{J,+}(j,k)$ be the $k$-th discontinuity along the orbit up to return time to $J$ of $a_j^{J,+}$. Define $m_{j}^{J,-}$ and $\beta^{J,-}(j,k)$ similarly, for $1 \le j \le N_J-1$ and $1 \le k \le m_{j}^{J,-}$. Note that for each $1 \le j \le N_J$, we have that $\beta^{J,+}(j,1)$ and $\beta^{J,-}(j,1)$ are the $+$ and $-$ part of a single discontinuity, which we denote by $\beta^J(j)$. Figure \ref{fig:points-in-J-orbit} illustrates these points for an interval $J$ for which $R_J$ has three branches and $\sigma = (3 2 1)$.

\begin{figure}
    \centering
    \includegraphics[width=\linewidth]{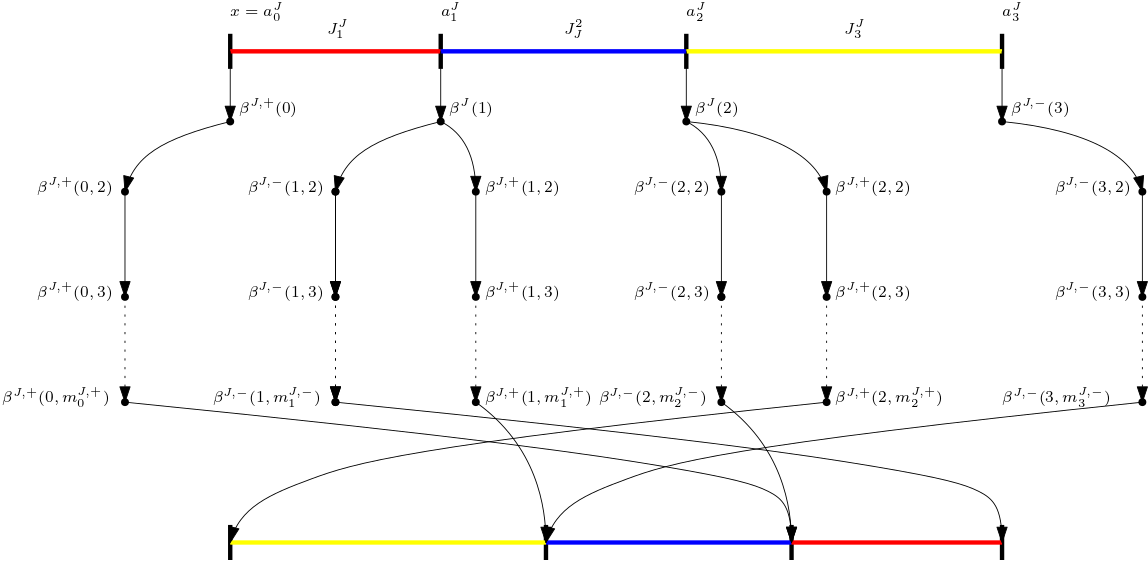}
    \caption{All dynamically important points in the orbit of an interval $J$ for which the return map $R_J$ has three branches.}
    \label{fig:points-in-J-orbit}
\end{figure}

Recall that $W(r) = \R{}^{r} \bigoplus \R{}^{r-1}$ is the space of coefficient vectors, and that $(\bm{e}_1, \dots, \bm{e}_r)$ and $(\bm{f}_1, \dots, \bm{f}_{r-1})$ are the canonical bases for $\R{}^{r}$ and $\R{}^{r-1}$, respectively.

We now define three types of dynamically defined vectors associated to the orbit of $J$. The first ones are the \textit{L-vectors}, that correspond to the first landings of points $a_1^J, a_2^J, \dots, a_{N_J-1}^J$ to discontinuities:

\begin{definition}[Landing vectors]
For $1 \le j \le N_J-1$, let $l_{j}^J$ be the landing time of $a_{j}^J$ to $\beta^J(j)$ and let $L_{j}^{J} \in W(r)$ be the associated landing vector:

\[
L_{j}^{J} \coloneqq \left(\sum_{s = 1}^r k_s(a_j^J, l_{j}^J) \, \bm{e}_s, \, - \bm{f}_{\text{ind}(\beta^J(j))}\right).
\]
\end{definition}

Recall that ind$(\beta)$ is the index of $\beta$ with respect to the order of critical points inside of $I$. Note that if $a_j$ is a discontinuity of $T$, i.e. if the landing time $l_j^J$ is zero, then $L_j^{J} = \left( 0, - \bm{f}_{\text{ind}(\beta^J(j))}\right)$ by definition. We call $L_j^{J}$ the landing vector of $a_j^J$ to $\beta^J(j)$ because the following holds:

\begin{align*}
0 &= a_j^J + \sum_{s = 1}^r k_s(a_j^J, l_{j}^J) \gamma_s - \beta^J(j) \\
&= \left(\sum_{s = 1}^r k_s(a_j^J, l_{j}^J) \gamma_s - \beta^J(j)\right) + a_j^J \\
&= \langle L_j^{J}, (\gamma \, \beta) \rangle + a_j^J.
\end{align*}

The second type of vectors we need to consider are the \textit{C-vectors}, which correspond to landings of discontinuities in the orbit of $J$ onto other discontinuities (which are thus also in the orbit of $J$).

\begin{definition}[Critical connection vectors]
For each $1 \le j \le N_J-1$ and $1 \le k < m_{j}^{J,+}$, let $q^{J,+}(j,k)$ be the landing time of $\beta^{J,+}(j,k)$ to $\beta^{J,+}(j,k+1)$ and let $C^{J,+}(j,k) \in W(r)$ be the associated landing vector:

\[
C^{J,+}(j,k) \coloneqq \left(\sum_{s=1}^r k_s(\beta^{J,+}(j,k), q^{J,+}(j,k)) \, \bm{e}_s, \, \bm{f}_{\text{ind}(\beta^{J,+}(j,k))} - \bm{f}_{\text{ind}(\beta^{J,+}(j,k+1))}\right).
\]
\end{definition}%
The following holds by construction:

\begin{align*}
0 &= \beta^{J,+}(j,k) + \sum_{s=1}^r k_s(\beta^{J,+}(j,k), q^{J,+}(j,k)) \gamma_s - \beta^{J,+}(j,k+1) \\
&= \sum_{s=1}^r k_s(\beta^{J,+}(j,k), q^{J,+}(j,k)) \gamma_s + \beta^{J,+}(j,k) - \beta^{J,+}(j,k+1) \\
&= \langle C^{J,+}(j,k), (\gamma \, \beta) \rangle.
\end{align*}
Define $C^{J,-}(j,k) \in W(r)$ analogously for $1 \le j \le N_J-1$ and $1 \le k < m_{j}^{J,-}$.

The third type of dynamical vector we need to consider are the \textit{R-vectors}, which correspond to the return of points $a_1^{J,-}, a_1^{J,+}, \dots, a_{N_J-1}^{J,+}, a_{N_J-1}^{J,-}$ to the interval $J$.

\begin{definition}[Return vectors]
For each $1 \le j \le N_J-1$, let $r_{j}^{J,+}$ be the time at which $\beta^+(j,m_{j}^{J,+})$ lands into $J$ and let $R_{j}^{J,+} \in W(r)$ be the landing vector:

\[
R_{j}^{J,+} \coloneqq \left(\sum_{s=1}^r k_s(\beta^{J,+}(j,m_{j}^{J,+}), r_{j}^{J,+}) \, \bm{e}_s, \, \bm{f}_{\text{ind}(\beta^{J,+}(j,m_{j}^{J,+}))}\right).
\]
\end{definition}

Thus we have that:
\[
\langle R_j^{J,+}, (\gamma \, \beta) \rangle = \sum_{s=1}^r k_s(\beta^{J,+}(j,m_{j}^{J,+}), r_{j}^{J,+}) \gamma_s + \beta^+(j,m_{j}^{J,+}) \in J.
\]
Define $r_{j}^{J,-}$ and $R_{j}^{J,-} \in W(r)$ analogously for $1 \le j \le N_J-1$.

For the boundary points $x^{J,+} = a_0^{J,+}$ and $y^- = a^{J,-}_{N_J}$, the definitions of the corresponding dynamical vectors depend on whether they land on discontinuities of $T$ before returning to $J$ or not. If $a_0^{J,+}$ lands on a discontinuity of returning to $J$, then we may analogously as for other $a_j^{J,+}$, where $1 \le j \le N_J-1$, define the vectors $L_0^J$, $C^{J,+}(0,k)$ and $R^{J,+}_0$. In the case when $a_0^{J,+}$ does not land on a discontinuity of $T$ before returning to $J$, we only define the return vector $R_0^{J,+}$ to $J$:

\[
R_0^{J,+} := \left(\sum_{s=1}^r k_s(a_0^{J,+}, r_{0}^{J,+}) \, \bm{e}_s, \, 0 \right),
\]
where $r_0^+$ is the return time of $a_0^+$ to $J$. Analogously, if $a_{N_J}^{J,-}$ lands on a discontinuity of $T$, we may define the vectors $L_{N_J}^J, C^{J,-}(N_J,k)$ and $R^{J,-}_{N_J}$. If $a_{N_J}^{J,-}$ does not land on a discontinuity of $T$ before returning to $J$, then define $R_{N_J}^{J,-}$ analogously as for $a_0^{J,+}$. 

Because the definitions of the dynamical vectors associated to the boundary points the interval $J_0$ from Theorem \ref{thm:lin-dep} are different depending on whether they land on discontinuities of $T$ or not, this leads to different statements of Theorem \ref{thm:lin-dep}, depending on it these landings happen or not. For simplicity, we will assume that these boundary points land on discontinuities of $T$, because this case is more complicated, and we state Theorem \ref{thm:lin-dep} with this assumption. The proof of Theorem \ref{thm:lin-dep} in the case when at least one of the boundary points does not land on a discontinuity is analogous. Moreover, we also assume that all of the points $a_j^{0,\pm}$, for $0 \le j \le N_0$, land on at least two discontinuities before returning to $J_0$. If this is not true for some $a_j^{0,\pm}$, then there are no critical connection vectors associated to this point, leading to a different statement of Theorem \ref{thm:lin-dep}. As this case is also simpler, we will assume that there are always critical connection vectors associated to every $a_j^{0,\pm}$. All of these vectors are illustrated in Figure \ref{fig:vectors-in-J-orbit}, for the same interval $J$ as in Figure \ref{fig:points-in-J-orbit}.

\begin{figure}
    \centering
    \includegraphics[width=\linewidth]{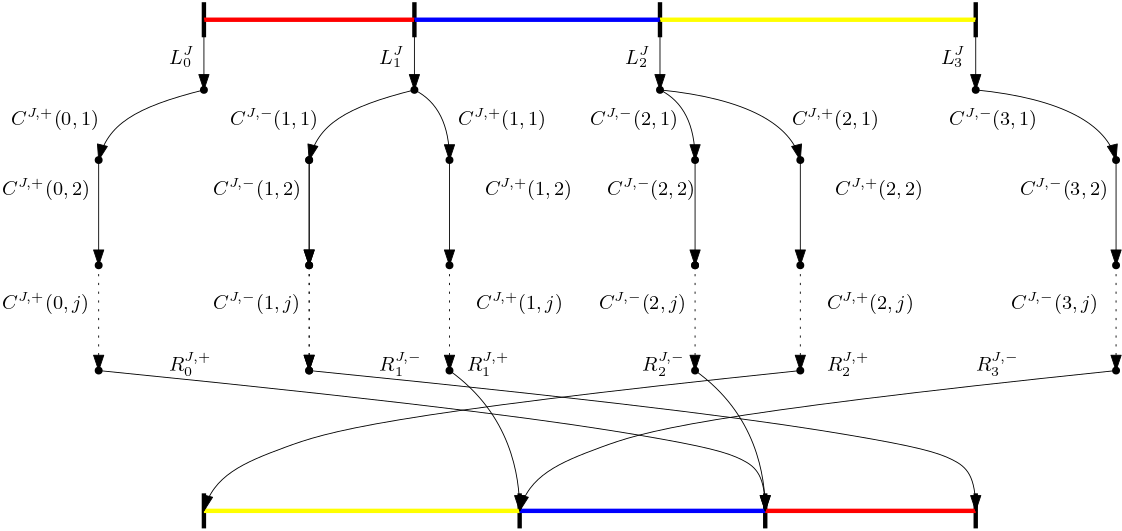}
    \caption{Vectors associated to the orbit of an interval $J$ for which the return map $R_J$ has three branches.}
    \label{fig:vectors-in-J-orbit}
\end{figure}

We would also like to have control over the dynamics of the discontinuities not contained in $X$. For this purpose, we also introduce the following more general notation for a critical connection vector. Recall from Section \ref{sec:theoremA} that we say that $\beta \in \mathcal{C}_1$ if $\beta$ eventually lands on a discontinuity. Let $\beta'$ denote the first such discontinuity in the orbit of $\beta$, and let $q(\beta)$ be the landing time of $\beta$ to $\beta'$. Note that $\beta$ and $\beta'$ are signed discontinuities, by the definition of $\mathcal{C}_1$ \eqref{defn:c1c2}.

\begin{definition}[General critical connection vectors]
Let $\beta$ be a discontinuity of $T$ which eventually lands on another discontinuity. Define the vector $v_{\beta} \in W$ as:

\[
C_{\beta} := \left(\sum_{s=1}^r k_s(\beta,q(\beta))\bm{e}_s, \, \bm{f}_{\text{ind}(\beta)} - \bm{f}_{\text{ind}(\beta')}\right),
\]
\end{definition}

By definition, we have that:

\begin{align*}
0 &= \beta + \sum_{s=1}^r k_s(\beta,q(\beta)) \gamma_s - \beta' \\
&= \langle C_{\beta}, (\gamma \, \beta) \rangle
\end{align*}
Recall also that $\mathcal{C}_2$ denotes the set of all discontinuities that never land on other discontinuities, but are eventually periodic. The dynamically important vector related to these discontinuities is the landing vector of such a discontinuity into $X$. We do not include these vectors in Theorem \ref{thm:lin-dep}. The reason is that they land into interiors of intervals which we can control with Theorem \ref{thm:lin-dep}. Thus we can already control the dynamics of these discontinuities with this theorem, without the need to include them into the statement.

In the statement of Theorem \ref{thm:lin-dep}, there are two types of intervals contained in $X$. There is a distinguished interval $J_0$, which can be any interval contained in $X$, and there are in intervals $J_1, \dots, J_n$ that are \textit{maximal periodic intervals}. Here is the definition:

\begin{definition}
\label{defn:max-per-int}
A maximal interval $J \subset I$ consisting of periodic points with the same itinerary is called a \textit{maximal periodic interval}. It is clear that $J$ is a half-open interval and that its boundary points have the property that they land on discontinuities of $T$ or on the boundary points of $I$. Moreover, no point in the interior of $P$ lands on a discontinuity. 
\end{definition}

To ease the notation, we will not explicitly state the ranges for all indices that appear in the statement, as they are the same as in the discussion above. Moreover, we will abbreviate the dependence of vectors and numbers on an interval $J_i$, for $0 \le i \le n$, to the dependence on the number $i$, e.g. we will write $C^{i,+}(j,k)$ instead of $C^{J_i,+}(j,k)$. 

For the maximal periodic intervals $J_1, \dots, J_n$, the only points that land on discontinuities are the boundary points, so $N_{i}= 1$ for $1 \le i \le n$. For each $J_i$, we will consider the vector $C^{i,+}(0,m_0^{i,+}) := R^{i,+}_0 + L^{i}_0$, instead of the landing vector $L^{i}_0$ and the return vector $R^{i,+}_0$. The reason for this is that the vector $C^{i,+}(0,m_0^{i,+})$ is equal to the critical connection vector corresponding to the landing of $\beta^{i,+}(0,m_0^{i,+}-1)$ to $\beta^{i,+}(0,1)$. This is the special property of maximal periodic intervals: their return and landing vectors combine into a critical connection vector, which is not true in general. This will be crucially used in the proof of Proposition \ref{prop:refine}, which is one of the main steps in the proof of Theorem \ref{thm:lin-dep}. We define $C^{i,-}(1,m_1^{i,-})$ analogously. 

\begin{theorem}[Coefficients of Linear Dependence]
\label{thm:lin-dep}
Let $T$ be an $\ITM$. Let $J_0, J_1, J_2, \dots, J_n$ be intervals contained in $X$ that have pairwise disjoint orbits, and let $\mathcal{C}_{\neg X}$ be the set of all discontinuities in $\mathcal{C}_1$ that are not contained in $X$. Assume that $J_0$ is a component interval of $X$ and that $J_1, \dots, J_n$ are maximal periodic intervals. As above, let the following be the vectors related to their dynamics:

\begin{lstv}
\label{eq:vectors}
\begin{itemize}
    \item $L^{0}_j, R^{0,+}_j, R^{0,-}_j, C^{i,+}(j,k), C^{i,-}(j,k)$;
    \item $C_{\beta},\text{ for } \beta \in \mathcal{C}_{\neg X}$.
\end{itemize}
\end{lstv}
Assume that there exist real coefficients:

\begin{lsta}
\label{eq:a-coefficients}
\begin{itemize}
    \item $\alpha_{j}^{0}, \alpha_j^{0,+}, \alpha_j^{0,-}, \alpha^{i,+}(j,k), \alpha^{i,-}(j,k)$;
    \item $\alpha_{\beta}, \text{ for } \beta \in \mathcal{C}_{\neg X}$.
\end{itemize}
\end{lsta}
such that the vectors in \hyperref[eq:vectors]{(*)} satisfy:

\begin{align}
\label{eq:lin-dep-sum}
\begin{split}
&\sum_{i=1}^n \left( \sum_{k=1}^{m_0^{i,+}} \alpha^{i,+}(0,k) C^{i,+}(0,k) \right) + \sum_{i=1}^n \left( \sum_{k=1}^{m_1^{i,-}} \alpha^{i,-}(1,k) C^{i,-}(1,k) \right) \\
+&\sum_{j=0}^{N_0-1} \left( \sum_{k=1}^{m_j^{0,+}-1} \alpha^{0,+}(j,k) C^{0,+}(j,k) + \alpha_j^{0,+} R_j^{0,+} \right) \\ 
+&\sum_{j=1}^{N_0} \left( \sum_{k=1}^{m_j^{0,-}-1} \alpha^{0,-}(j,k) C^{0,-}(j,k) + \alpha_j^{0,-} R_j^{0,-} \right) \\
+& \left( \sum_{j=0}^{N_0} \alpha_j^0 L^0_j \right) + \sum_{\beta \in \mathcal{C}_{\neg X}} \alpha_{\beta} \mathcal{C}_{\beta} = 0.    
\end{split}
\end{align}
Then it must hold that:
\begin{align}
\label{eq:lin-dep-equality1}
\begin{split}
&\alpha^{i,+}(0,1) = \dots = \alpha^{i,+}(0,m_0^{i,+}) = -\alpha^{i,-}(1,1) = \dots = - \alpha^{i,-}(1,m_{1}^{i,-}); \\
&\alpha^{0,+}(j,1) = \dots = \alpha^{0,+}(j,m_j^{0,+}-1) = a_j^{0,+} \\
&=-\alpha^{0,-}(j+1,1) = \dots = - \alpha^{0,-}(j+1,m_{j+1}^{0,-}-1) = -\alpha_{j+1}^{0,-},
\end{split}
\end{align}
for all $1 \le i \le n$ and $0 \le j \le N_0-1$, respectively. Moreover, we have that:
\begin{align}
\label{eq:lin-dep-equality2}
\begin{split}
& \alpha_0^0 = \alpha^{0,+}(0,1); \\
& \alpha_{N_0}^0 = \alpha^{0,-}(N_0,1); \\
& \alpha_j^0 = \alpha^{0,+}(j,1) + \alpha^{0,-}(j,1); \\
& \alpha_{\beta} = 0,
\end{split}
\end{align}
for all $1 \le j \le N_0-1$ and for all $\beta \in \mathcal{C}_{\neg X}$, respectively.
\end{theorem}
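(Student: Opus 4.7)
The strategy is to project the vector equation \eqref{eq:lin-dep-sum} onto the two summands $\mathbb{R}^r$ and $\mathbb{R}^{r-1}$ of $W(r)$ and to interpret each projection combinatorially. The $\bm{f}$-part of every vector in \hyperref[eq:vectors]{(*)} is either a single $\pm\bm{f}_s$ (for landings $L^0_j$ and returns $R^{0,\pm}_j$) or a telescoping difference $\bm{f}_a-\bm{f}_b$ (for every critical-connection vector $C^{\cdot,\cdot}(j,k)$ and every $C_\beta$). The first key combinatorial input is that the pairwise disjointness of the orbits of $J_0,J_1,\dots,J_n$, together with the fact that distinct continuity intervals of $R_{J_0}$ have pairwise disjoint orbits until return, implies that each signed discontinuity $\beta_s^\sigma$ appears as a link in at most one of the chains underlying \hyperref[eq:vectors]{(*)}. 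The vanishing of the $\bm{f}_s$-coefficient of \eqref{eq:lin-dep-sum} for each $s\in\{1,\dots,r-1\}$ is then a short local equation in only a few of the $\alpha$'s, and reading these off yields local constancy along each chain (so $\alpha^{0,+}(j,1)=\dots=\alpha^{0,+}(j,m_j^{0,+}-1)=\alpha_j^{0,+}$, and likewise for $-$ chains and for the closed-loop chains inside each periodic $J_i$), together with the junction relation $\alpha_j^0=\alpha^{0,+}(j,1)+\alpha^{0,-}(j,1)$ at each interior landing $\beta^0(j)$ and its boundary versions for $j=0,N_0$. This establishes all equalities of \eqref{eq:lin-dep-equality2} except the sign relations between matched $+$ and $-$ chains.

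The sign relations come from the $\bm{e}$-component. After the previous step, each pair of chains sharing the same underlying orbit --- the $+$ chain of $a_j^{0,+}$ and the $-$ chain of $a_{j+1}^{0,-}$ on a continuity interval $(a_j,a_{j+1})$ of $R_{J_0}$, and the paired $+/-$ loops inside each periodic $J_i$ --- carries a pair of constants $(c,d)$, and its total $\bm{e}$-contribution collapses to $(c+d)\,\mathbf{N}$, where $\mathbf{N}\in\mathbb{Z}_{\geq 0}^r$ is the orbit-visit vector of that pair. The crucial observation is that the two boundaries of the common interval produce coordinate-wise identical itineraries through $I_1,\dots,I_r$: whenever one boundary coincides with a discontinuity, the other lies just inside on the same side of that discontinuity, so the two halves of the pair combine into a single scalar multiple $(c+d)$ of $\mathbf{N}$. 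Each vector $C_\beta$ for $\beta\in\mathcal{C}_{\neg X}$ contributes $\alpha_\beta\,\mathbf{N}^\beta$. The $\bm{e}$-part of \eqref{eq:lin-dep-sum} is then a single identity among these visit-vectors; the disjoint-orbit hypothesis and the condition $\beta\notin X$ ensure that the $\mathbf{N}$-vectors are supported on pairwise disjoint orbit-pieces, so the identity decouples and forces $c+d=0$ for every matched pair and $\alpha_\beta=0$ for every $\beta\in\mathcal{C}_{\neg X}$; these are the remaining relations of \eqref{eq:lin-dep-equality1}--\eqref{eq:lin-dep-equality2}.

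The main difficulty lies in the combinatorial analysis underlying both steps: checking that each signed discontinuity is visited by at most one chain (for the $\bm{f}$-analysis) and that the multisets of orbit-points supporting the $\mathbf{N}$-vectors are pairwise disjoint (for the $\bm{e}$-analysis). Both verifications require one to handle carefully the interaction between the $+$ and $-$ sides of each discontinuity, to distinguish the three regimes of return to $J_0$, periodic motion inside a maximal periodic $J_i$, and aperiodic orbit of a discontinuity outside $X$, and to use crucially the assumptions of the theorem: the pairwise disjointness of the orbits of the $J_i$'s, the maximality of each periodic interval $J_i$ (which is precisely what makes $C^{i,+}(0,m_0^{i,+})=R^{i,+}_0+L^{i}_0$ close its $\bm{f}$-chain into a loop), and the constraint $\beta\in\mathcal{C}_1\setminus X$ for $\beta\in\mathcal{C}_{\neg X}$.
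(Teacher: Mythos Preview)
Your proposal has a genuine gap in both steps.

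\textbf{The $\bm f$-analysis does not give chain constancy.} The $\bm f$-coordinates are indexed by \emph{geometric} discontinuities, not signed ones: $\beta_s^+$ and $\beta_s^-$ both contribute to the same coordinate $\bm f_s$. So even if each signed discontinuity lies on at most one chain, the vanishing of the $\bm f_s$-coefficient of \eqref{eq:lin-dep-sum} is a single equation that mixes the $+$-chain through $\beta_s^+$, the $-$-chain through $\beta_s^-$, and possibly several $C_\beta$ with $\beta'\!=\!\beta_s^\pm$. Concretely, if $\beta_s^+=\beta^{0,+}(j,k)$ and $\beta_s^-=\beta^{0,-}(j',k')$ are interior links of two different chains, the $\bm f_s$-equation reads
\[
\bigl(-\alpha^{0,+}(j,k-1)+\alpha^{0,+}(j,k)\bigr)+\bigl(-\alpha^{0,-}(j',k'-1)+\alpha^{0,-}(j',k')\bigr)+\sum_{\beta:\,\beta'=\beta_s^\pm}(-\alpha_\beta)=0,
\]
which is one relation among at least four unknowns and cannot yield $\alpha^{0,+}(j,k-1)=\alpha^{0,+}(j,k)$. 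In the paper, chain constancy is obtained not from the $\bm f$-part at all but from the refined $\bm e$-analysis (Proposition~\ref{prop:push-fwd}); the $\bm f$-part is used only for the junction relations, and only \emph{after} one has already shown $\alpha_\beta=0$ for $\beta\in\mathcal C_{\neg X}$.

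\textbf{The $\bm e$-analysis does not decouple.} Disjointness of the underlying orbit segments as subsets of $[0,1)$ says nothing about the visit vectors $\mathbf N\in\mathbb Z_{\ge 0}^r$: many disjoint orbit pieces can all pass through the same $I_s$, so their $\mathbf N$-vectors have overlapping supports and there is no reason for a relation $\sum (c_\ell+d_\ell)\mathbf N_\ell+\sum_\beta\alpha_\beta\mathbf N^\beta=0$ in $\mathbb R^r$ to force each summand to vanish. The linear independence you are implicitly assuming here is essentially the content of the theorem. The paper's proof confronts exactly this obstacle: it refines the coarse partition $I_1,\dots,I_r$ into finer and finer $\mathcal A$-subordinate pieces (Lemma~\ref{lem:dist-pair}, Proposition~\ref{prop:refine}, then Proposition~\ref{prop:push-fwd}) until each piece contains points from only one orbit segment, at which point the desired equalities can be read off. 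That refinement machinery is the heart of the argument and is not something your two-step projection can replace.
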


\begin{figure}
    \centering
    \includegraphics[width=\linewidth]{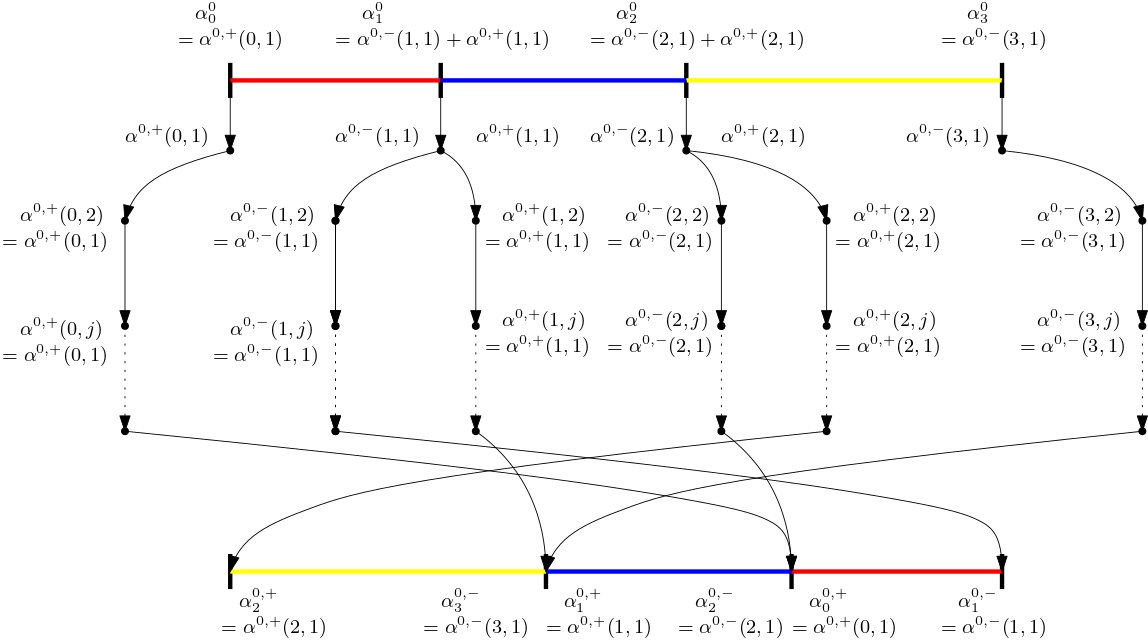}
    \caption{The coefficients for the landing vectors and critical connection vectors are at the start of their corresponding arrows, while the coefficients for the return vectors are at the end. The indicated equalities between the coefficients come from the conclusion of Theorem \ref{thm:lin-dep}.}
    \label{fig:return-map-vectors}
\end{figure}

%Note the second version is much simpler to state: we simply omit the first two lines in the assumption, sum from $i=1$ in the third line and remove the first line of the conclusion. 

The conclusion of Theorem \ref{thm:lin-dep} for $J_0$ (which is in fact the same interval as in Figures \ref{fig:points-in-J-orbit} and \ref{fig:vectors-in-J-orbit}) with three branches of the return map is demonstrated in Figure \ref{fig:return-map-vectors}. The assumption $J_1, \dots, J_n$ are maximal periodic intervals is necessary for our proof to work. A natural question to ask is whether this is necessary.

\begin{question}
Does Theorem \ref{thm:lin-dep} hold if we assume that $J_1, \dots, J_n$ are component intervals of $X$? 
\end{question}

The proof of this theorem is split into the next three subsections. The important consequence of Theorem \ref{thm:lin-dep} is the following corollary:

\pagebreak

\begin{corollary}[Linear independence of dynamical vectors]
\label{cor:lin-indep}
Let $T$ be an $\ITM$. Let $J_0, J_1, J_2 \dots, J_n$ be intervals contained in $X$ with pairwise disjoint orbits. Assume that $J_0$ is a component interval of $X$ and that $J_1, \dots, J_n$ are maximal periodic intervals. Then the vectors:

\begin{itemize}
    \item $L_{j}^0, R^{0,+}_j, C^{0,+}(j,k), C^{0,-}(j,k)$;
    \item $C^{i,+}(0,k)$ for $1 \le k \le m_0^{i,+}$ and $C^{i,-}(1,k)$ for $1 \le k < m_0^{i,-}$;
    \item $C_{\beta}$, for $\beta \in \mathcal{C}_{\neg X}$,
\end{itemize}
form a linearly independent set.
\end{corollary}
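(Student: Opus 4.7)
My plan is to derive the corollary directly from Theorem~\ref{thm:lin-dep} by a short bookkeeping argument. Starting with an arbitrary linear combination of the vectors in the corollary that is assumed to equal zero, I would pad this combination into an instance of the sum \eqref{eq:lin-dep-sum} by declaring the vectors that appear in the theorem but not in the corollary to have coefficient zero. Concretely, I would set $\alpha_j^{0,-} := 0$ for every $j$ in the appropriate range, accounting for the return vectors $R_j^{0,-}$ that the corollary omits, and I would set $\alpha^{i,-}(1, m_1^{i,-}) := 0$ for each maximal periodic interval $J_i$, accounting for the single ``loop-closing'' critical connection vector that the corollary's restricted range on $k$ drops.

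The next step is to apply Theorem~\ref{thm:lin-dep} and exploit the fact that each equality chain in \eqref{eq:lin-dep-equality1} is terminated by exactly one of these declared zeros. For each $J_i$ the chain
\[
\alpha^{i,+}(0,1) = \dots = \alpha^{i,+}(0,m_0^{i,+}) = -\alpha^{i,-}(1,1) = \dots = -\alpha^{i,-}(1, m_1^{i,-})
\]
terminates at $0$ by construction, so every entry of the chain must vanish. Likewise, for each $0 \le j \le N_0-1$ the chain
\[
\alpha^{0,+}(j,1) = \dots = \alpha_j^{0,+} = -\alpha^{0,-}(j+1,1) = \dots = -\alpha_{j+1}^{0,-}
\]
terminates with $\alpha_{j+1}^{0,-} = 0$, so again every entry vanishes. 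This annihilates the coefficient of every $C^{i,\pm}$, $C^{0,\pm}$, and $R_j^{0,+}$ vector on the corollary's list.

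Substituting these zeros into \eqref{eq:lin-dep-equality2} yields $\alpha_0^0 = \alpha^{0,+}(0,1) = 0$, $\alpha_{N_0}^0 = \alpha^{0,-}(N_0,1) = 0$, and $\alpha_j^0 = \alpha^{0,+}(j,1) + \alpha^{0,-}(j,1) = 0$ for $1 \le j \le N_0 - 1$, and the identity $\alpha_\beta = 0$ asserted there accounts for the vectors $C_\beta$ with $\beta \in \mathcal{C}_{\neg X}$. Hence every coefficient in the original combination vanishes, giving the desired linear independence. I do not anticipate any substantive obstacle, since the real content sits in Theorem~\ref{thm:lin-dep}: the corollary is essentially the observation that its list of vectors is obtained by pruning \eqref{eq:vectors} exactly along the directions of non-uniqueness exposed in \eqref{eq:lin-dep-equality1}--\eqref{eq:lin-dep-equality2}, thereby breaking each equality chain at one endpoint.
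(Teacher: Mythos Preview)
Your proposal is correct and follows essentially the same approach as the paper: you pad the given linear combination with zero coefficients $\alpha_j^{0,-}=0$ and $\alpha^{i,-}(1,m_1^{i,-})=0$ for the omitted vectors, apply Theorem~\ref{thm:lin-dep}, and observe that each equality chain in \eqref{eq:lin-dep-equality1} is anchored to one of these zeros, forcing all coefficients to vanish via \eqref{eq:lin-dep-equality2}. The paper's proof is identical in structure, merely phrased as a proof by contradiction.
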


\begin{proof}
For the sake of contradiction, assume that they are linearly dependent with coefficients:
\begin{itemize}
    \item $\alpha_{j}^{0}, \alpha_j^{0,+}, \alpha^{0,+}(j,k), \alpha^{0,-}(j,k)$;
    \item $\alpha^{i,+}(0,k)$ for $1 \le k \le m_0^{i,+}$ and $\alpha^{i,-}(1,k)$ for $1 \le k < m_0^{i,-}$;
    \item $\alpha_{\beta}, \text{ for } \beta \in \mathcal{C}_{\neg X}$.
\end{itemize}
This linear dependence can be extended to include the vectors $R^{0,-}_j$ and $C^{i,-}(1,m_0^{i,-})$ by setting:
\begin{itemize}
    \item $\alpha_{j}^{0,-} = 0$;
    \item $C^{i,-}(1,m_0^{i,-}) = 0$,
\end{itemize}
for all $1 \le j \le N_0$ and $1 \le i \le n$, respectively. Then by equation \eqref{eq:lin-dep-equality1} from Theorem \ref{thm:lin-dep}, we must have that:
\begin{align*}
&\alpha^{i,+}(0,1) = \dots = \alpha^{i,+}(0,m_0^{i,+}) = -\alpha^{i,-}(1,1) = \dots = - \alpha^{i,-}(1,m_{1}^{i,-}) = 0; \\
&\alpha^{0,+}(j,1) = \dots = \alpha^{0,+}(j,m_j^{0,+}-1) = a_j^{0,+} \\
&=-\alpha^{0,-}(j+1,1) = \dots = - \alpha^{0,-}(j+1,m_{j+1}^{0,-}-1) = -\alpha_{j+1}^{0,-} = 0,
\end{align*}
for all $1 \le i \le n$ and $0 \le j \le N_0-1$, respectively. Thus by equation \eqref{eq:lin-dep-equality2} from Theorem \ref{thm:lin-dep}, we also have that:
\begin{align*}
& \alpha_0^0 = \alpha^{0,+}(0,1) = 0; \\
& \alpha_{N_0}^0 = \alpha^{0,-}(N_0,1) = 0; \\
& \alpha_j^0 = \alpha^{0,+}(j,1) + \alpha^{0,-}(j,1) = 0 + 0 = 0; \\
& \alpha_{\beta} = 0,
\end{align*}
for all $1 \le j \le N_0-1$ and for all $\beta \in \mathcal{C}_{\neg X}$, respectively. Thus all of the coefficients of the linear dependence are equal to zero, which is what we wanted to prove.
\end{proof}

\subsection{Refining partitions of $I$}
\label{subsec:refining-part}

For the purpose of the proof, we will simplify the notation used in the statement of Theorem \ref{thm:lin-dep}. Denote by $\mathcal{V}$ the set of all vectors from \hyperref[eq:vectors]{(*)} and by $\mathcal{A}$ the set of all coefficients from \hyperref[eq:a-coefficients]{(**)}. 

A critical connection vector $v \in \mathcal{V}$ corresponds to the orbit of a discontinuity of $T$ up to the time of landing on another discontinuity. For such $v$, let $P(v)$ be the ordered set of points along the orbit corresponding to $v$. The landing vectors correspond to the orbit of a discontinuity of the return map $R_{J}$ up to the time of landing onto discontinuity of $T$, critical connection vectors corresponding to the orbit of a discontinuity of $T$ up to the landing time to another discontinuity of $T$, and the return vectors correspond to the orbit of a discontinuity up to the time of landing into $J$. We adopt the convention that the first point in the orbit corresponding to the vector is included in $P(v)$, while the last one is not. For example, this means that the $P(v)$ associated to the critical connection vector $C^{0,+}(0,1)$ is equal to $\{ \beta^{0,+}(0,1), T(\beta^{0,+}(0,1)), \dots, T^{q^{0,+}(0,2) - 1}(\beta^{0,+}(0,1)) \}$. If a discontinuity of $T$ is already contained in $J_0$, then the set $P(v)$ for the associated landing vector is empty. We will exclude such vectors from the set $\mathcal{V}$. For a vector $v \in \mathcal{V}$, let us denote by $v_{first}$ the first element of $P(v)$ and by $v_{last}$ the last element of $P(v)$.

\begin{definition}
Let $\mathcal{V}$ be the set of all vectors as defined above. Then we define the set of all \textit{distinguished points} (with respect to $\mathcal{V}$) as the union of all points in $P(v)$:
\[
\mathcal{P} := \bigcup_{v \in \mathcal{V}} P(v).
\]
\end{definition}
Note that $\mathcal{P}$ is by definition a set that includes both signed (from critical connection and return vectors) and geometric points (from landing vectors) points. For a vector $v \in \mathcal{V}$, we will denote by $\alpha(v)$ its corresponding coefficient. The assumption \eqref{eq:lin-dep-sum} of Theorem \ref{thm:lin-dep} then reads as:

\begin{equation}
\label{eq:lin-dep}
\sum_{v \in \mathcal{V}} \alpha(v) v = 0.
\end{equation}
Let $v_s$ be the $s$-th $\bm{e}$-coefficient of $v$. Equation \eqref{eq:lin-dep} implies:

\[
\sum_{v \in \mathcal{V}} \alpha(v) v_s = 0, \text{ for all } 1 \le s \le r.
\]
Note that $v_s$ also equals the number of points from $P(v)$ contained in $I_s$. This means that we have the initial partition $\mathcal{I}^0$ of $I$ into intervals $I_s^0 := I_s$, for $1 \le s \le r$, such that a linear equation on the number of points from each $P(v)$ holds on each interval of this partition. This motivates the following two definitions:

\begin{definition}
\label{N}
Let $I^*$ be any interval in $I$. Then we define the number of points from $P(v)$ in $I^*$ as:
\[
N(I^*,v) := |P(v) \cap I^*|.
\]
\end{definition}

Next, we introduce intervals for which the number of points from each $P(v)$ satisfy a linear equation with respect to the coefficients in $\mathcal{A}$:

\begin{definition}[$\mathcal{A}$-subordinate interval]
Let $I^*$ be an interval in $I$. We say that $I^*$ is an \textit{$\mathcal{A}$-subordinate interval} if the following equation holds: 
\[
\sum_{v \in \mathcal{V}} \alpha(v) N(I^*,v) = 0.
\]
\end{definition}

Let $\mathcal{I}^* = \bigcup^k_{s=1} I_s^*$ be a partition (not necessarily dynamical) of $I$ into intervals $I_s^*$, i.e. $I = \bigsqcup_{s=1}^k I_s^*$. We say $\mathcal{I}$ is an \textit{$\mathcal{A}$-subordinate partition} if every interval $I_s^*$ is an $\mathcal{A}$-subordinate interval. By the discussion above, we know that $\mathcal{I}^0$ is an $\mathcal{A}$-subordinate partition. Our goal will be to refine the level-$0$ partition $\mathcal{I}^0$ into $\mathcal{A}$-subordinate partitions which consist of more intervals.

More precisely, we will define a sequence of partitions $\mathcal{I}^m = \bigcup^{r+m}_{s=1} I^m_s$ of $I$ into intervals $I_s^m$ so that:

\begin{enumerate}
    \item $\mathcal{I}^m$ is a refinement of $\mathcal{I}^{m-1}$;
    \item $\mathcal{I}^m$ is an $\mathcal{A}$-subordinate partition.
\end{enumerate}
This procedure will terminate at some maximal level $M$, described in Definition \ref{defn:max-part}. Our first goal is to find a criterion for when the partition $\mathcal{I}^m$ can be refined into $\mathcal{I}^{m+1}$ by splitting some interval $I_s^m$ of $\mathcal{I}^m$ into two intervals. We will show that this can be done by splitting along the gap interval between a certain pair of two neighbouring, but not touching, distinguished points. Recall that two signed points $x$ and $y$ touch if $\{ x,y \}$ = $\{z^+, z^-\}$ for some point $z$. The following lemma shows how to find such a pair:

\begin{lemma}[Distinguished pair]
\label{lem:dist-pair}
Let $\mathcal{I}^m$ be an $\mathcal{A}$-subordinate partition. Assume that there exists a triple $(I_s^m, p_1, p_2)$, where $I^m_s$ is an element of the partition, and $p_1,p_2$ is a pair of signed and distinct distinguished points in $I^m_s$, with $p_1$ to the left of $p_2$, such that they are:
\begin{enumerate}[label=(\alph*)]
    \item Neighbouring, but not touching, in $I_s^m$;
    \item There exist no $1 \le j \le N_0-1$ and $0 \le k < r_j^0$ such that $p_1$ and $p_2$ are contained in $T^k(J_j^0)$, and no $i \ge 1$ and $k \ge 0$ such that $p_1$ and $p_2$ are contained in $T^k(J_i)$.
\end{enumerate}
Then one can find a triple, still denoted by $(I^m_s, p_1, p_2)$, that additionally satisfies:

\begin{enumerate}[label=(\alph*), resume]
    \item The gap interval $G = [p_1,p_2]$ between the points has maximum length among all triples satisfying (a) and (b);
   
    \item Either at least one of the points $p_1$, $p_2$ has no preimage in $\mathcal{P}$ or no pair of preimages $q_1, q_2 \in \mathcal{P}$ of $p_1, p_2$ is contained in the same element of the partition $\mathcal{I}^m$.
\end{enumerate}
\end{lemma}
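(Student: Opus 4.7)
The proof naturally splits into securing (c) and then upgrading to (d), since the existence of triples satisfying (a) and (b) is part of the hypothesis. For (c), I would invoke finiteness directly: $\mathcal{P}$ is a finite set, so the collection of triples satisfying (a) and (b) is finite, and one of them attains the maximal gap length.

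For (d) I would argue by descent on the orbit structure. Suppose we have a triple satisfying (a), (b), (c) for which (d) fails. Then both $p_1, p_2$ admit preimages $q_1, q_2 \in \mathcal{P}$ with $T(q_i) = p_i$ lying in a common partition element $I_{s'}^m$. Since $\mathcal{I}^m$ refines $\mathcal{I}^0 = \{I_1, \dots, I_r\}$, the element $I_{s'}^m$ sits inside a single continuity interval of $T$, on which $T$ acts as a translation; hence $|q_2 - q_1| = |p_2 - p_1|$, so the new candidate $(I_{s'}^m, q_1, q_2)$ also realizes the maximal gap and condition (c) is automatically preserved.

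One then checks that the new triple inherits (a) and (b). Condition (b) is immediate: if $q_1, q_2$ were both contained in some orbit piece $T^k(J)$ with $J \in \{J_j^0\} \cup \{J_i\}$, applying $T$ would place $p_1, p_2$ in $T^{k+1}(J)$, contradicting (b) for the original triple. For (a), the \emph{not touching} part persists because a common geometric point of two touching signed points in $I_{s'}^m$ lies in the interior of a continuity interval, so its $\pm$ parts are again touching after the local translation, which contradicts that $p_1, p_2$ do not touch. The \emph{neighbouring} part is the delicate point: any distinguished $q \in \mathcal{P} \cap (q_1, q_2) \cap I_{s'}^m$ satisfies $T(q) \in (p_1, p_2) \cap I_s^m$, and if $T(q) \in \mathcal{P}$ this contradicts neighbouring of $p_1, p_2$; the residual case $T(q) \notin \mathcal{P}$ occurs only when $q$ is a terminal point in every orbit segment $P(v)$ containing it, and is absorbed by reselecting the preimage pair inside $\mathcal{P} \cap I_{s'}^m$ so that the new $q_1, q_2$ are neighbouring there, at no cost to (b), (c), or the not-touching half of (a).

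Finally, termination: each descent step moves $q_i$ backward along some orbit segment $P(v)$, and the total depth of $(p_1, p_2)$, measured as the sum over $v$ of the index of each $p_i$ within $P(v)$, strictly decreases. Since $\mathcal{P}$ is finite, the process must halt, producing a triple satisfying (a)--(d). I expect the principal difficulty to be precisely the neighbouring bookkeeping sketched above: the interplay between points in $\mathcal{P}$ that are terminal in their orbit segments and the structure of the partition $\mathcal{I}^m$ means the descent may require reselecting triples rather than blindly taking preimages, and making that selection canonical and verifying that it still fits in a common partition element is where the real technical content sits.
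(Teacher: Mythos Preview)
Your overall strategy—pick a maximal-gap triple, then pull back until (d) holds—is the paper's strategy too, but two of your steps do not go through as written.

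\textbf{Neighbouring.} When some $q \in \mathcal{P}$ sits strictly between the preimages $q_1, q_2$ with $T(q) \notin \mathcal{P}$, you propose to ``reselect'' the pair inside $\mathcal{P} \cap I_{s'}^m$. But $T$ is injective on $I_{s'}^m$, so $q_1, q_2$ are the \emph{only} preimages of $p_1, p_2$ there; replacing either of them by $q$ shortens the gap and destroys (c). The paper does not reselect: it shows no such $q$ can exist. Since $T(q) \notin \mathcal{P}$ forces $q = v_{last}$ for some $v$, either $v$ is a critical connection vector—then $T(q)$ is a signed discontinuity, so a geometric discontinuity lies strictly between $p_1$ and $p_2$, impossible inside a single element of the refinement $\mathcal{I}^m$ of $\mathcal{I}^0$—or $v$ is a return vector, so $T(q) \in J_0$, and one checks that the points $a_j^0$ and the endpoints of $J_0$ (all in $\mathcal{P}$) must then separate $p_1$ from $p_2$, contradicting that they were neighbouring. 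This case analysis is the actual content, and your sketch does not supply it.

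\textbf{Termination.} Your depth function does not strictly decrease at segment transitions: when $p_i = v_{first}$ its preimage $q_i$ is $w_{last}$ for some other $w$, at the top of that segment. Worse, for the maximal periodic intervals $J_1, \dots, J_n$ the orbit segments $P(C^{i,\pm}(\cdot,k))$ concatenate \emph{cyclically}, so no monotone depth exists on those orbits at all. The paper's termination argument is different and uses (b) in an essential way: if the pull-back can be iterated forever, by pigeonhole some pair recurs, so the gap $G$ is periodic with no discontinuity in its orbit; then $p_1, p_2 \in X$ share an itinerary, forcing them into a common $T^k(J_j^0)$ or $T^k(J_i)$, which contradicts (b).
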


We call a pair of points from the conclusion of Lemma \ref{lem:dist-pair} a \textit{distinguished pair}.

\begin{proof}

By assumption, we can always choose $p_1$ and $p_2$ such that they satisfy (a) and (b), and by choosing the pair with maximal distance, we can assume they also satisfy (c). Assume that this pair does not satisfy assumption (d). We will show that such a pair can be `pulled back' to find a different pair of points that also satisfies (d).

By assumption, the pair of points $p_1, p_2$ has at least one pair of $T$-preimages $q_1, q_2 \in \mathcal{P}$ that are contained in a single interval of $\mathcal{I}^m$. Let us show that this pair $q_1, q_2$ also satisfies properties (a), (b) and (c). Indeed, if they are contained in a single element of the partition, then $T$ maps $[q_1,q_2]$ continuously to $G$, as $\mathcal{I}^m$ is a refinement of $\mathcal{I}^0$. Thus $q_1$ and $q_2$ are not touching and not contained in an iterate of the form $T^k(J_j^0)$ or $T^k(J_i)$, so (b) follows. Moreover, if they are neighbouring, i.e. if there is no point from $\mathcal{P}$ between them, then they also satisfy (a) and (c). 

Assume the contrary, that there is some point $q \in \mathcal{P} \cap [q_1, q_2]$ between points $q_1$ and $q_2$. Since this interval maps forward continuously, $T(q) \in G$ is not contained in $\mathcal{P}$. This is only possible if $q = v_{last}$, where $v$ is either a critical connection vector corresponding to landing on a point in $\mathcal{C}_2$, or $q = v_{R,last}$, where $v$ is a return vector. In the first case, $T(q)$ must be a discontinuity, which contradicts $p_1$ and $p_2$ being in the same partition element of $\mathcal{I}^m$. In the second case, we know that $T(q) \in J_0$, for some $1 \le i \le n$, so the interval $(p_1,p_2)$ must intersect $J_0$. We will show that this means that $p_1$ and $p_2$ are not neighbouring in $\mathcal{P}$. Indeed, if neither of them is contained in $J_0 = [x^{0,+},y^{0,-}]$, then the boundary points of $J_0$ are between them. If only $p_1$ is contained in $J_0$, then $y^{0,-}$ is between them, since it must be contained in $(p_1, p_2)$. If only $p_2$ is in $J_0$, then $x^{0,+}$ is between them. Finally, if they are both contained in $J_0$, then by (b) they are not contained in the same interval $J_j^0$, so they are not neighbouring in this case either. Thus the pair $q_1,q_2$ satisfies (c) as well. 

Thus if the points $p_1, p_2$ satisfy (a), (b) and (c), but not (d), we can always find their pullback to another pair of points $q_1,q_2$ that also satisfies (a), (b) and (c). If the pair $q_1,q_2$ does not satisfy (d), then we can pull back this pair as well. If this can be performed indefinitely, then some pair of points $p_1$, $p_2$ must appear twice. Then the interval $G = [p_1,p_2]$ must be periodic, as the gap interval between the pullback of points maps continuously over the gap interval between the points themselves. Moreover, no point in $[p_1,p_2]$ lands on a discontinuity. Thus $p_1$ and $p_2$ are contained in $X$ and have the same itineraries. This is only possible if they are both contained in a single iterate of the form $T^k(J_j^0)$ or $T^k(J_i)$, as points from different iterates of this kind must have different itineraries. This is a contradiction with (b), so the pullback cannot be performed indefinitely. Thus by pulling back, at some point we arrive at a pair of points that also satisfies (d).
\end{proof}

Let $\mathcal{P}^m_s := I^m_s \cap \mathcal{P}$ and $G^{\circ} := G \setminus \{p_1, p_2\}$. Define $\frac{p_1+p_2}{2}$ as the average of geometric points in $I$ corresponding to $p_1$ and $p_2$. Note that $\frac{p_1+p_2}{2}$ is the midpoint of $G$. Let $I^l$ be the interval in $I$ to the left of $\frac{p_1+p_2}{2}$ and let $I^r$ be the interval in $I$ to the right of $\frac{p_1+p_2}{2}$.

\begin{lemma}[Left or right]
\label{lem:l-or-r}
Let $\mathcal{I}^m$ be an $\mathcal{A}$-subordinate partition as in Lemma \ref{lem:dist-pair}, and let $G = [p_1, p_2]$ be a gap interval. Then for any element $I^m_s$ of this partition, we have that the set $T(\mathcal{P}_s^m)$ is either fully to the left or fully to the right of $\frac{p_1+p_2}{2}$. More precisely, we have that either $T(\mathcal{P}_s^m) \cap I^l = \emptyset$ or $T(\mathcal{P}_s^m) \cap I^r = \emptyset$.
\end{lemma}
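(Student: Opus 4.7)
The plan is to argue by contradiction: assume $T(\mathcal{P}_s^m)$ contains a point strictly to the left and a point strictly to the right of the midpoint $\tfrac{p_1+p_2}{2}$. The first observation is that, because $\mathcal{I}^m$ refines $\mathcal{I}^0$, the element $I^m_s$ is contained in a single original continuity piece $I^0_{s'}$; hence $T|_{I^m_s}$ is a translation and preserves order and pairwise distances on the signed points of $\mathcal{P}_s^m$. Ordering $\mathcal{P}_s^m$ as $a_1 < a_2 < \dots < a_k$, I can therefore pick a consecutive pair $a := a_i$, $a' := a_{i+1}$ whose images $y_i := T(a)$ and $y_{i+1} := T(a')$ straddle the midpoint, with $|a' - a| = |y_{i+1} - y_i|$.

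The key step is to show that $y_i \le p_1$ and $y_{i+1} \ge p_2$ in the signed order, which will force $|a' - a| = |y_{i+1} - y_i| \ge |G|$. Since $p_1, p_2$ are neighbouring in $\mathcal{P}_s^m$, the open interval $G^{\circ}$ contains no distinguished point, so it suffices to argue that $y_i \notin G^{\circ}$. As $y_i$ is the $T$-image of a distinguished point $a \in \mathcal{P}$, inspection of the orbit structure of $\mathcal{V}$ shows that $y_i \in \mathcal{P}$ unless $a = v_{last}$ for some vector $v$ which is either (iii) a return vector (so $y_i \in J_0$) or (iv) a landing vector $L^0_j$ (so $y_i$ is the discontinuity $\beta^0(j)$). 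Case (iv) is ruled out by the hypothesis that each $a_j^{0, \pm}$ lands on at least two discontinuities before returning to $J_0$, which makes $\beta^0(j)^{\pm} \in \mathcal{P}$, and these would necessarily lie in $G^{\circ}$. Case (iii) is ruled out by observing that the boundary points $x^{0,+}$, $y^{0,-}$ of $J_0$ are themselves distinguished (being $a_0^{0,+}$ and $a_{N_0}^{0,-}$), so $G^{\circ}$ cannot straddle the boundary of $J_0$; combined with $y_i \in J_0$, this forces $G^{\circ} \subset J_0$, and further $G^{\circ}$ lies in a single continuity interval $J_j^0$, so both $p_1, p_2 \in T^0(J_j^0)$, contradicting (b). Therefore $y_i \notin G^{\circ}$; symmetrically $y_{i+1} \notin G^{\circ}$.

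Having established $|a' - a| \ge |G|$, I would next check that the pair $(a, a')$ is a valid candidate triple for Lemma \ref{lem:dist-pair}. They are consecutive in $\mathcal{P}_s^m$, hence neighbouring in $I^m_s$, and non-touching because their images are at positive distance $\ge |G|$. Property (b) for $(a, a')$ follows by contradiction: if $[a, a'] \subset T^k(J_j^0)$, applying the translation $T$ gives $[y_i, y_{i+1}] \subset T^{k+1}(J_j^0)$ (folding $k+1$ back modulo the return time $r_j^0$ as needed), and since $[y_i, y_{i+1}] \supseteq [p_1, p_2]$, this forces $p_1, p_2$ into a single iterate $T^{k'}(J_j^0)$ for some admissible $k'$, contradicting (b) for $(p_1, p_2)$; the case of $T^k(J_i)$ is analogous. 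Applying the maximality (c) of the gap length to the pair $(a, a')$, we conclude $|a' - a| \le |G|$, hence equality, so $y_i, y_{i+1}$ agree with $p_1, p_2$ at the geometric level. This yields a pair of preimages of $(p_1, p_2)$ inside $\mathcal{P}$ contained in a single partition element $I^m_s$, directly contradicting property (d) of the distinguished pair.

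The main obstacle is the careful bookkeeping between signed and geometric points in the second step: one must verify rigorously that images of last-orbit points $T(v_{last})$ landing in $G^{\circ}$ can be systematically excluded in each enumerated case, and that the concluding contradiction with (d) remains robust even when the images $y_i$, $y_{i+1}$ agree with $p_1$, $p_2$ only up to touching signs.
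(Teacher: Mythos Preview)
Your approach matches the paper's: argue by contradiction, take adjacent distinguished points in $I^m_s$ whose $T$-images straddle the midpoint, exclude those images from $G^\circ$ by case analysis on $v_{last}$, and derive a contradiction from maximality (c) together with the preimage condition (d).

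Two points. First, your case list for $T(a)\notin\mathcal P$ omits the general critical connection vectors $v = C_\beta$ with $\beta \in \mathcal{C}_{\neg X}$ landing on a discontinuity $\beta'$ that may fail to lie in $\mathcal{P}$; this gap is harmless, since $\beta'$ is still a discontinuity and $G^\circ$ lies in the interior of a single element of $\mathcal{I}^0$, so $\beta' \notin G^\circ$ by the same mechanism as your case (iv). Second, and this is the obstacle you flagged, invoking (c) on the pair $(a,a')$ requires both points to be \emph{signed}, which fails when one of them comes from a landing vector's $P(v)$. The paper does not try to force (c) in that situation; instead it treats the unsigned case by a direct argument: if, say, $a$ sits on the geometric boundary of some $T^k(J^0_j)$, then, using that the interior of $T^k(J^0_j)$ contains no distinguished points, one concludes that $p_1, p_2 \in T^{k+1}(J^0_j)$, which already contradicts (b). So your outline is correct but would need this separate branch to close.
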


\begin{proof}
Assume the contrary, that there exists an interval $I^m_s$ for which we have that $T(\mathcal{P}_s^m)$ intersects both $I^l$ and $I^r$. Let $q_1, q_2 \in \mathcal{P}_s^m$ be two neighbouring (not necessarily signed) points in $I^m_s$ such that their images are contained in $\mathcal{P}$, and $q_1$ gets mapped to $I^l$ and $q_2$ gets mapped to $I^r$. Assume first that the image $T(q_1)$ is contained in $G^{\circ}$, with the case for $T(q_2)$ being analogous. Then $T(q_1)$ is not a point in $\mathcal{P}$, since $p_1$ and $p_2$ are neighbouring. This is only possible if $q_1 = v_{last}$, where $v$ is either a critical connection vector corresponding to landing on a point in $\mathcal{C}_2$ or a return vector. The second case was already shown to be impossible in the proof of Lemma \ref{lem:dist-pair}, while the first case is impossible since $p_1$ and $p_2$ are by assumption contained in a single interval of the partition $\mathcal{I}^m$. Thus, neither of the points $T(q_1), T(q_2)$ is contained in $G^{\circ}$. As this pair gets mapped forward continuously, this means that the image of the gap between these intervals is at least as large as $G$. By property (d) of Lemma \ref{lem:dist-pair}, it is impossible that $q_1$ and $q_2$ get mapped over $p_1$ and $p_2$, respectively. If $q_1$ and $q_2$ are both signed, then this is a contradiction with maximality of $G$. We may without loss of generality assume that $q_1$ is not signed, with the other case being analogous. This means that $q_1 \in P(v)$ for a landing vector $v$, so it is contained in the left boundary of an iterate of the form $T^k(J^0_j)$. As there is no other points in $\mathcal{P}$ contained in the interior $T^k(J^0_j)$, this means that $p_1$ and $p_2$ must both be contained in $T^{k+1}(J^0_j)$, which is a contradiction.
\end{proof}

We now show that we can get an $\mathcal{A}$-subordinate partition of level $m+1$ if we split the interval $I^m_{s'}$ containing the pair $p_1,p_2$ along the gap interval $G$.

\begin{proposition}[Refining a partition]
\label{prop:refine} 
Let $\mathcal{I}^m$ be an $\mathcal{A}$-subordinate partition of level $m$ and assume that there exists at least one interval $I_{s'}^m$ of the partition as in Lemma \ref{lem:dist-pair}. Then the partition $\mathcal{I}^{m+1}$ obtained from $\mathcal{I}^m$ by splitting $I^m_{s'}$ along $\frac{p_1+p_2}{2}$ is an $\mathcal{A}$-subordinate partition of level $m+1$.
\end{proposition}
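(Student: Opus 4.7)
The plan is to show that both intervals $I^l_{s'} := I^m_{s'} \cap [0, \tfrac{p_1+p_2}{2})$ and $I^r_{s'} := I^m_{s'} \cap [\tfrac{p_1+p_2}{2}, 1)$ obtained by splitting $I^m_{s'}$ are $\mathcal{A}$-subordinate. Introduce the additive functional $F(J) := \sum_{v \in \mathcal{V}} \alpha(v)\, N(J, v)$. By hypothesis $F$ vanishes on every element of $\mathcal{I}^m$, hence on every union of such elements; in particular $F(I^l_{s'}) = F(I^l)$ where $I^l := [0, \tfrac{p_1+p_2}{2})$. Since $F(I^l_{s'}) + F(I^r_{s'}) = F(I^m_{s'}) = 0$, the whole proposition reduces to proving $F(I^l) = 0$.

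Next I would derive a general ``orbit decomposition'': each $p \in P(v) \cap J$ equals either $v_{first}$ or $T(q)$ for the unique predecessor $q \in P(v) \setminus \{v_{last}\}$; summing over $v$ gives
\[
F(J) \;=\; F\bigl(T^{-1}(J)\bigr) \;+\; \sum_{v \in \mathcal{V}} \alpha(v)\bigl([v_{first} \in J] - [T(v_{last}) \in J]\bigr).
\]
Specialize to $J = I^l$. By Lemma~\ref{lem:l-or-r} each $T(\mathcal{P}^m_s)$ lies entirely on one side of the midpoint; letting $S_L := \{s : T(\mathcal{P}^m_s) \subset I^l\}$ and using $P(v) \subset \mathcal{P}$, one gets $T^{-1}(I^l) \cap \mathcal{P} = \bigsqcup_{s \in S_L} \mathcal{P}^m_s$ and hence $F(T^{-1}(I^l)) = \sum_{s \in S_L} F(I^m_s) = 0$. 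Therefore $F(I^l)$ equals the ``boundary sum''
\[
B(I^l) \;:=\; \sum_{v \in \mathcal{V}} \alpha(v)\bigl([v_{first} \in I^l] - [T(v_{last}) \in I^l]\bigr).
\]

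Showing that $B(I^l) = 0$ is the heart of the proof. The plan is to telescope along the chain structure of $\mathcal{V}$: within each critical-connection chain $L^0_j \to C^{0,\pm}(j,1) \to \cdots \to R^{0,\pm}_j$ attached to the return map of $J_0$, and within each cycle $C^{i,\pm}(0,1) \to \cdots \to C^{i,\pm}(0, m_0^{i,\pm})$ around a maximal periodic interval $J_i$ (where the identification $C^{i,\pm}(0, m_0^{i,\pm}) = R^{i,\pm}_0 + L^i_0$ closes the loop), consecutive vectors $v, v'$ satisfy $T(v_{last}) = v'_{first}$. Thus at every interior chain node the indicator $[\cdot \in I^l]$ appears in both sums, and telescoping leaves only residual contributions at the chain endpoints $a^0_j$ and the corresponding return points in $J_0$ (together with the $C_\beta$ terms for $\beta \in \mathcal{C}_{\neg X}$), weighted by successive differences of chain coefficients. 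Forcing these residuals to vanish is the main obstacle, to be accomplished by combining properties~(b) and~(d) of the distinguished pair from Lemma~\ref{lem:dist-pair} -- which constrain where $p_1, p_2$ and their $\mathcal{P}$-preimages can lie -- with the $\mathcal{A}$-subordinate hypothesis at level $m$ applied to the partition elements of $\mathcal{I}^m$ intersecting $J_0$. The most delicate case is when $J_0$ itself straddles the midpoint $\tfrac{p_1+p_2}{2}$, so that some of the $a^0_j$ and return points lie in $I^l$ while others lie in $I^r$.
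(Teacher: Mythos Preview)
Your setup is essentially the paper's: the functional $F$, the reduction to $F(I^l)=0$, the orbit decomposition, and the use of Lemma~\ref{lem:l-or-r} to kill $F(T^{-1}(I^l))$ all appear in the paper's argument (packaged there as the error term $\Delta_v$). The divergence is in how you propose to show the boundary sum $B(I^l)$ vanishes.

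Telescoping along chains does not work: consecutive vectors $v, v'$ in a chain have $T(v_{last}) = v'_{first}$, but their coefficients $\alpha(v), \alpha(v')$ are a priori \emph{different} (their equality is precisely the conclusion of Theorem~\ref{thm:lin-dep}, which this proposition is a step towards). So the indicator $[\,\cdot \in I^l]$ at each interior node contributes $(\alpha(v') - \alpha(v))$, and you are left with a residual at \emph{every} node, not just chain endpoints. Your proposed remedy---invoking properties (b), (d) of the distinguished pair and the level-$m$ hypothesis on pieces meeting $J_0$---does not control these differences; those ingredients were already consumed in the proof of Lemma~\ref{lem:l-or-r}.

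The paper's argument is different and short. First, it observes that one may assume $J_0 \subset I^r$: since $F(I^l) + F(I^r) = 0$ it suffices to prove either one vanishes, and $J_0$ cannot straddle the midpoint anyway (if $p_1, p_2 \in J_0$, property~(b) forces them into distinct $J^0_j$, so some $a^0_j \in \mathcal{P}$ lies between them, contradicting that they are neighbouring). This disposes of your ``most delicate case'' entirely. With $J_0 \subset I^r$, every $v_{first} \in I^l$ is a signed discontinuity and every $T(v_{last}) \in I^l$ is a discontinuity. Now group $B(I^l)$ by \emph{geometric} discontinuity $\beta \in I^l$ rather than by chain: the contribution at $\beta$ is
\[
-\alpha(w_{\beta^+}) - \alpha(w_{\beta^-}) + \sum_{\substack{w \in \mathcal{V}\\ T(w_{last}) \in \{\beta, \beta^+, \beta^-\}}} \alpha(w),
\]
and this is exactly the $\bm{f}_{\mathrm{ind}(\beta)}$-coordinate of the assumed linear dependence $\sum_v \alpha(v)\, v = 0$, hence zero. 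Since the midpoint lies in the gap $G^\circ$ (which contains no discontinuity), $\beta^+ \in I^l \iff \beta^- \in I^l$, so this grouping is well-defined. That is the missing idea: the $\bm{f}$-coordinates of the linear relation, not chain telescoping, are what make $B(I^l)$ vanish.
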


\begin{proof}
Let $I^{m,l}_{s'}$ be the maximal (half-open) interval to the left of $\frac{p_1+p_2}{2}$ inside $I^m_{s'}$, and let $I^{m,r}_{s'}$ be the maximal (half-open) interval to the right of $\frac{p_1+p_2}{2}$ inside $I^m_{s'}$. Then the intervals of the partition $\mathcal{I}^{m+1} = \bigcup_{1 \le s \le r+m+1} I^{m+1}_s$ are defined as:

\begin{equation*}
I^{m+1}_s := \begin{cases}
    I^m_s,& \text{For $s < s'$} \\[2pt]
    I^{m,l}_{s'},& \text{For $s = s'$} \\[2pt]
    I^{m,r}_{s'},& \text{For $s = s'+1$} \\[2pt]
    I^m_{s-1},& \text{For $s > s'+1$.}
\end{cases}
\end{equation*}
By construction, $\mathcal{I}^{m+1}$ is a refinement of $\mathcal{I}^0$. As only one of the intervals was modified, we know that:

\begin{equation}
\label{eq:m+1=m}
\begin{split}
\sum_{v \in \mathcal{V}} &\alpha(v) N(I^{m+1}_{s},v) = \sum_{v \in \mathcal{V}} \alpha(v) N(I^{m}_{s},v) = 0 \text{ for } s < s'; \\
\sum_{v \in \mathcal{V}} &\alpha(v) N(I^{m+1}_{s},v) = \sum_{v \in \mathcal{V}} \alpha(v) N(I^{m}_{s-1},v) = 0 \text{ for } s > s' + 1.
\end{split}
\end{equation}
Thus, in order to show that $\mathcal{I}^{m+1}$ is an $\mathcal{A}$-subordinate partition, we only need to prove:

\begin{equation}
\label{eq:l,r-eq}
\begin{split}
\sum_{v \in \mathcal{V}} &\alpha(v) N(I^{m+1}_{s'},v) = 0 \\
\sum_{v \in \mathcal{V}} &\alpha(v) N(I^{m+1}_{s'+1},v) = 0.
\end{split}
\end{equation}
Note that it is clearly enough to only prove one of the above equations holds, as the other follows from:

\begin{align*}
&\sum_{v \in \mathcal{V}} \alpha(v) N(I^{m+1}_{s'},v) + \sum_{v \in \mathcal{V}} \alpha(v) N(I^{m+1}_{s'+1},v) \\
= &\sum_{v \in \mathcal{V}} \alpha(v) N(I^{m}_{s'},v) = 0.
\end{align*}
Because of this, we may without loss of generality assume that the interval $J_0$ is contained in $I^r$, because we can otherwise just do the computation that follows for $I^r$ instead of $I^l$. This assumption will be important when we compute $\Delta_v$ (defined in \eqref{eq:n}) for landing and return vectors. Let $\mathcal{P}^l := \mathcal{P} \cap I^l$ be the set of all distinguished points to the left of the midpoint of $G$. Let Ind$^l$ be the set of all $s \in \{ 1, \dots, m+r \}$ such that $T(\mathcal{P}_s^m) \subset I^l$. Note that this definition makes sense because of Lemma \ref{lem:l-or-r}. Let $\mathcal{P}^{l,-1} := \bigcup_{s \in \text{Ind}^l} \mathcal{P}^m_s = T^{-1}(\mathcal{P}^l) \cap \mathcal{P}$ be the set of all distinguished points that $T$ maps into $\mathcal{P}^l$. Consider the following equation:

\begin{equation}
\label{eq:n}
\Delta_v := \sum_{s \in \text{Ind}^l} N(I^{m}_s, v) - \sum_{s \le s'} N(I^{m+1}_s, v).
\end{equation}
This equation states that the number of elements of $P(v)$ contained in $I^l$ equals the number of elements of $P(v)$ that get mapped under $T$ into $I^l$ up to an error term $\Delta_v$. From \eqref{eq:n}, we get:

\begin{align*}
&\sum_{v \in \mathcal{V}} \alpha(v) N(I^{m+1}_{s'},v) \\
= &\sum_{v \in \mathcal{V}} \alpha(v) \left( \sum_{s \in \text{Ind}_L} N(I^m_s, v) - \sum_{s < s'} N(I^{m+1}_s, v) - \Delta_v \right) \\
= &\sum_{s \in \text{Ind}_L} \left( \sum_{v \in \mathcal{V}} \alpha(v) N(I^m_s, v) \right) \\
&- \sum_{s < s'} \left( \sum_{v \in \mathcal{V}} \alpha(v) N(I^{m+1}_s, v) \right) - \sum_{v \in \mathcal{V}} \alpha(v) \Delta_v \\
= &-\sum_{v \in \mathcal{V}} \alpha(v) \Delta_v.
\end{align*}
where the sum in the third line is zero because of the inductive assumption that each interval $I^m_s$ is $\mathcal{A}$-subordinate, and the sum in the fourth line is zero because of equation \eqref{eq:m+1=m}. Thus we only need to prove that:

\begin{equation}
\label{eq:delta-v}
\sum_{v \in \mathcal{V}} \alpha(v) \Delta_v = 0.
\end{equation}
We now compute the error term $\Delta_v$ for all vectors $v \in \mathcal{V}$. For this purpose, we define the following sets:

\begin{align*}
P(v)^{l,-1} &:= P(v) \bigcap \left( \bigcup_{s \in \text{Ind}^l} I^{m}_s \right)\\
P(v)^l &:= P(v) \bigcap \left( \bigcup_{s \le s'} I^{m}_s \right).
\end{align*}
Note that the following holds:
\begin{align*}
|P(v)^{l,-1}| &= \sum_{s \in \text{Ind}^l} N(I^{m}_s, v) \\
|P(v)^l| &= \sum_{s \le s'} N(I^{m+1}_s, v).
\end{align*}
We first compute $\Delta_v$ for a critical connection vector $v \in \mathcal{V}$. Note that this includes the vectors in $\mathcal{C}_{\neg X}$. For any point $p$ in $P(v)^l$ that is not equal to $v_{first}$ (we do not assume it is contained in $P(v)^l$), we have that there exists exactly one point $q$ in $P(v)^{l,-1}$ such that $T(q) = p$. Indeed, this is because each element of $P(v)$ except the first one has a preimage in $P(v)$. Moreover, for each point $q$ in $P(v)^{l,-1}$ that is not equal to $v_{last}$, we have that $T(q) \in P(v)^l$. Thus we get the following formula for $\Delta_v$ for a critical connection vector $v \in \mathcal{V}$:

\begin{equation}
\label{eq:delta-v-crit}
\Delta_v = \begin{cases}
    0,& \text{If $v_{first} \in \mathcal{P}^l$ and $v_{last} \in \mathcal{P}^{l,-1}$} \\[2pt]
    -1,& \text{If $v_{first} \in \mathcal{P}^l$ and $v_{last} \notin \mathcal{P}^{l,-1}$} \\[2pt]
    1,& \text{If $v_{first} \notin \mathcal{P}^l$ and $v_{last} \in \mathcal{P}^{l,-1}$} \\[2pt]
    0,& \text{If $v_{first} \notin \mathcal{P}^l$ and $v_{last} \notin \mathcal{P}^{l,-1}$.}
\end{cases}
\end{equation}
Now we compute $\Delta_v$ for a return vector $v \in \mathcal{V}$. For every point $q$ in $P(v)^l$ that is not equal to $v_{first}$, we again have that there again exists exactly one point $q$ in $P(v)^{l,-1}$ such that $T(q) = p$. Moreover, for each point $q$ in $P(v)^{l,-1}$ that is not equal to $v_{last}$, we have that $T(q) \in P(v)^l$. Recall that we have assumed that $J_0$ is contained in $I^r$. This means that $T(v_{last}) \notin \mathcal{P}^l$, and so $v_{last} \notin \mathcal{P}^{l,-1}$. Thus $\Delta_v$ in this case only depends on whether $v_{first} \in \mathcal{P}^l$, so we get the following formula:

\begin{equation}
\label{eq:delta-v-return}
\Delta_v = \begin{cases}
    -1,& \text{If } v_{first} \in \mathcal{P}^l \\[2pt]
    0,& \text{If } v_{first} \notin \mathcal{P}^l.
\end{cases}
\end{equation}
Analogously, for a landing vector $v$, we know that $v_{first} \notin \mathcal{P}^l$, so the value of $\Delta_v$ depends only on whether $v_{last} \in \mathcal{P}^{l,-1}$:
\begin{equation}
\label{eq:delta-v-landing}
\Delta_v = \begin{cases}
    1,& \text{If $v_{last} \in \mathcal{P}^{l,-1}$} \\[2pt]
    0,& \text{If $v_{last} \notin \mathcal{P}^{l,-1}$}. \\[2pt]
\end{cases}
\end{equation}
Let $w_{\beta}$, for a (signed or geometric) discontinuity $\beta \in \mathcal{P}$, be the unique vector in $\mathcal{V}$ such that $w_{first} = \beta$, and let $w_{\beta} = 0, \alpha(w_{\beta}) = 0$ for all $\beta \notin \mathcal{P}$. We claim that the sum in \eqref{eq:delta-v} is equal to:

\begin{equation}
\label{eq:beta-L}
\sum_{\beta \in I^l} \left( - \alpha(w_{\beta^+}) - \alpha(w_{\beta^-}) + \sum_{\substack{w \in \mathcal{V}, \\ {T(w_{last}) \in \{ \beta, \beta^+, \beta^-\}}}} \alpha(w) \right).
\end{equation}
Here, we sum over all geometric discontinuities $\beta \in I^l$. Note for every $v \in \mathcal{V}$, the coefficient $\alpha(v)$ is equal to at most one $\alpha(w_{\beta^{\pm}})$ in \eqref{eq:beta-L}. Such a $\alpha(w_{\beta})$ exists if and only if $v_{first}$ is a signed discontinuity in $I^l$. Moreover, $\alpha(v)$ is also equal to at most one $\alpha(w)$ in the inner sum of \eqref{eq:beta-L}, and such $\alpha(w)$ exists if and only if $T(v_{last})$ is a (signed or geometric) discontinuity in $I^l$. We now prove that the following two claims hold:

\begin{enumerate}
    \item $\Delta_v = -1$ in \eqref{eq:delta-v} if and only if $\alpha(v)$ is equal to some $\alpha(w_{\beta})$ and not equal to any $\alpha(w)$ in \eqref{eq:beta-L};
    \item$\Delta_v = 1$ in \eqref{eq:delta-v} if and only if $\alpha(v)$ is not equal to any $\alpha(w_{\beta})$ and equal to some $\alpha(w)$ in \eqref{eq:beta-L}.
\end{enumerate} 
The equality of \eqref{eq:delta-v} and \eqref{eq:beta-L} clearly follows from these two claims, because then the contribution of $\alpha(v)$ to equations \eqref{eq:delta-v} and \eqref{eq:beta-L} is equal for all vectors $v \in \mathcal{V}$. We prove the first claim, while the proof of the second one is analogous.

We know that if $\Delta_v = -1$, then $v_{first} \in \mathcal{P}^l$ and $v_{last} \notin \mathcal{P}^{l,-1}$. In particular, $v$ is not a landing vector, by \eqref{eq:delta-v-return}. Thus $v_{first}$ is a signed discontinuity in $I^l$, while $T(v_{last})$ is not a discontinuity in $I^l$, so $\alpha(v)$ is equal to some $\alpha(w_{\beta})$ and not equal to any $\alpha(w)$. Conversely if $\alpha(v)$ is equal to some $\alpha(w_{\beta})$ and not equal any $\alpha(w)$ then the $v_{first}$ is a signed discontinuity in $I^l$, so $v_{first} \in \mathcal{P}^l$, and $T(v_{last})$ is not a discontinuity in $I^l$, so $v_{last} \notin \mathcal{P}^{l,-1}$. Thus $\Delta_v = -1$, so the claim follows.

We now show that the sum in \eqref{eq:beta-L} is equal to zero, which finishes the proof. Indeed, we know that the $\bm{f}$-coefficient at ind$(\beta)$ for some $\beta \in I^l$ for the resulting vector in \eqref{eq:lin-dep} has to be zero. The only vectors in this sum for which this $\bm{f}$-coefficient is non-zero are those vectors $v$ for which $v_{first} = \beta^{\pm}$ or those for which $T(v_{last}) \in \{ \beta, \beta^+, \beta^- \}$. Thus taking the $\bm{f}$-coefficient at ind$(\beta)$ in equation \eqref{eq:lin-dep}, we get that:

\[
\left( - \alpha(w_{\beta^+}) - \alpha(w_{\beta^-}) + \sum_{\substack{w \in \mathcal{V}, \\ {T(w_{last}) \in \{ \beta, \beta^+, \beta^- \}}}} \alpha(w) \right) = 0
\]
As we split along an interval $G^{\circ}$ that does not contain a discontinuity in its interior, we know that $\beta^+ \in I^l$ if and only if $\beta^- \in I^l$. Thus we must have that:
\[
\sum_{\beta \in I^l} \left( - \alpha(w_{\beta^+}) - \alpha(w_{\beta^-}) + \sum_{\substack{w \in \mathcal{V}, \\ {T(w_{last}) \in \{ \beta, \beta^+, \beta^-\}}}} \alpha(w) \right) = 0,
\]
which finishes the proof.
\end{proof}

Let us now show that Proposition \ref{prop:refine} can be applied only finitely many times.

\begin{lemma}
\label{lem:finite-refine}
We can apply Proposition \ref{prop:refine} finitely many times if we start with the initial partition $\mathcal{I}^0$ given by the continuity intervals.
\end{lemma}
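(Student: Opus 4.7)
The plan is to run a simple finiteness argument based on the observation that the underlying set $\mathcal{P}$ of distinguished points is finite, and that each refinement permanently separates a pair of \emph{globally consecutive} elements of $\mathcal{P}$. Since there are only finitely many such consecutive pairs, the refinement process must terminate.

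First I would record that the collection $\mathcal{V}$ is finite: the number of landing, critical connection and return vectors associated to $J_0$ is determined by the finitely many continuity branches of $R_{J_0}$ and the finitely many discontinuities along their orbits; the vectors associated to the maximal periodic intervals $J_1,\dots,J_n$ are likewise finite in number; and the set $\mathcal{C}_{\neg X}\subset\mathcal{C}$ is finite since $\mathcal{C}$ is. For each $v\in\mathcal{V}$, the set $P(v)$ is a finite initial segment of a single $T$-orbit, stopped at the relevant landing time. Hence $\mathcal{P}=\bigcup_{v\in\mathcal{V}}P(v)$ is finite; enumerate its elements in left-to-right order on $I$ as $p^{(1)}<p^{(2)}<\cdots<p^{(N)}$.

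Next I would track what a single refinement does. An application of Proposition~\ref{prop:refine} is triggered by a triple $(I^m_{s'},p_1,p_2)$ produced by Lemma~\ref{lem:dist-pair} in which $p_1,p_2$ are neighbouring in $I^m_{s'}$, and the split is performed at the midpoint $\tfrac{p_1+p_2}{2}\in I^m_{s'}$. Because $I^m_{s'}$ is itself a subinterval of $I$, every element of $\mathcal{P}$ lying strictly between $p_1$ and $p_2$ in $I$ automatically lies in $I^m_{s'}$; consequently $p_1,p_2$ are not merely neighbouring within $I^m_{s'}$ but are a globally consecutive pair $(p^{(k)},p^{(k+1)})$ in $\mathcal{P}$ for some $k\in\{1,\dots,N-1\}$. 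After the split, $p^{(k)}$ lies in $I^{m+1}_{s'}$ and $p^{(k+1)}$ lies in $I^{m+1}_{s'+1}$, so the two points are now in different intervals of the partition.

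Finally, since every subsequent partition $\mathcal{I}^{m'}$ with $m'\ge m+1$ is a refinement of $\mathcal{I}^{m+1}$ by construction, the points $p^{(k)}$ and $p^{(k+1)}$ remain in distinct intervals thereafter, and therefore can never again arise as the distinguished pair of any future application of Proposition~\ref{prop:refine}. Associating to each refinement step the index $k$ of the pair it separates thus defines an injection from the set of refinement steps into $\{1,\dots,N-1\}$, bounding the total number of applications by $N-1$. The only subtle point in the argument is the identification of ``neighbouring in $I^m_{s'}$'' with ``globally consecutive in $\mathcal{P}$'', which follows solely from the fact that each element of $\mathcal{I}^m$ is an interval; everything else is bookkeeping.
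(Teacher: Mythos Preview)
Your proof is correct and follows essentially the same approach as the paper. The paper phrases the invariant slightly differently—it tracks the number of partition elements having non-empty intersection with $\mathcal{P}$ and shows this increases by one at each step—whereas you track which globally consecutive pair in $\mathcal{P}$ gets separated; these are two sides of the same counting argument, both resting on the finiteness of $\mathcal{P}$ and the fact that a split places $p_1$ and $p_2$ into distinct intervals that remain distinct under further refinement.
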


\begin{proof}
We will show that applying Proposition \ref{prop:refine} always increases the number of elements of the partition that have a non-empty intersection with $\mathcal{P}$. Since $\mathcal{P}$ is a finite set, this clearly shows that Proposition \ref{prop:refine} can only be applied finitely many times. By construction, we can apply Proposition \ref{prop:refine} to a partition $\mathcal{I}^m$ if and only if we find a pair of distinguished points $p_1,p_2$ as in Lemma \ref{lem:dist-pair}. These points are contained in the same element $I^m_{s'}$ of the partition, so after splitting the partition along the gap interval $G$ between them, we get two intervals $I^{m+1}_{s'}$ and $I^{m+1}_{s'}$ that contains $p_1$ and $p_2$, respectively. Thus we have increased the number of elements of the partition that have a non-empty intersection with $\mathcal{P}$, so the proof follows.
\end{proof}
 
Thus this refinement procedure must stop at some maximal level:

\begin{definition}[Maximal partition]
\label{defn:max-part}
Let $M$ be the level at which the procedure of inductively applying Proposition \ref{prop:refine} stops. We call the partition $\mathcal{I}^M$ obtained by this procedure the \textit{maximal partition}.
\end{definition}
It is of maximal level in the sense that for the partition $\mathcal{I}^M$ of this level, there is no pair of points satisfying (a) and (b) from Lemma \ref{lem:dist-pair}. Recall that $\mathcal{C}_{\neg X}$ is the set of all discontinuities in $\mathcal{C}_1$ not contained in $X$. The maximality of $\mathcal{I}^M$ gives the following corollary:

\begin{corollary}
\label{cor:alpha_x=0}
All of the coefficient $\alpha(v_{\beta})$, for $\beta \in \mathcal{C}_{\neg X}$, are equal to zero.
\end{corollary}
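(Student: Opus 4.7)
The approach is by contradiction, exploiting the maximality of $\mathcal{I}^M$ together with the location of elements of $\mathcal{C}_{\neg X}$ outside of $X$. The key observation is that for $\beta \in \mathcal{C}_{\neg X}$, the signed discontinuity $\beta$ lies outside $X$, and since $X$ is forward-invariant, an entire initial segment of the orbit $P(C_\beta) = \{\beta, T(\beta), \dots, T^{q(\beta)-1}(\beta)\}$ also lies outside $X$. On the other hand, every iterate $T^k(J_j^0)$ and $T^k(J_i)$ referenced in condition (b) of Lemma \ref{lem:dist-pair} is contained in $X$, so no point of $P(C_\beta) \setminus X$ can be contained in any such iterate.

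By construction of $\mathcal{I}^M$, no further application of Proposition \ref{prop:refine} is possible, which means that for every partition element $I^M_s$, any two neighbouring non-touching distinguished points in $I^M_s$ must be contained in a common iterate $T^k(J)$ (the failure of (b) being the only way to block the other conditions of Lemma \ref{lem:dist-pair}). Combining this with the previous observation, every distinguished point $p \in P(C_\beta) \setminus X$ has no neighbouring non-touching distinguished point inside its own partition element; that is, such $p$ is isolated up to possibly touching its signed counterpart. I would then apply the local $\mathcal{A}$-subordinate equation $\sum_{v \in \mathcal{V}} \alpha(v) N(I^M_s, v) = 0$ at partition elements containing isolated points of $P(C_\beta) \setminus X$. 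In the fully isolated case the sum reduces to $\alpha(C_\beta) \cdot N(I^M_s, C_\beta) = 0$, immediately forcing $\alpha(C_\beta) = 0$ since $N(I^M_s, C_\beta) > 0$. In the touching case (when a point of $P(C_\beta)$ touches a point of some $P(C_{\beta''})$ with $\beta'' \in \mathcal{C}_{\neg X}$), one obtains a two-term coupled equation, which can be resolved by combining with the $\bm{f}$-coefficient equations at $\mathrm{ind}(\beta)$ extracted from the original linear dependence \eqref{eq:lin-dep-sum}, and processing the orbits of $\mathcal{C}_{\neg X}$ inductively by orbit length.

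The main obstacle I anticipate is handling the transitional partition elements where the orbit $P(C_\beta)$ for $\beta \in \mathcal{C}_{\neg X}$ contains terminal points that already lie in $X$, causing the coefficient $\alpha(C_\beta)$ to couple with coefficients of landing, return, or critical connection vectors from the $J_0, \ldots, J_n$ system. There, the $\bm{f}$-coefficient equations at the relevant discontinuities must be invoked carefully to disentangle the contributions from outside-$X$ orbits, and one must ensure that the combinatorial chain of implications terminates without ambiguity, which will likely require performing the induction from the deepest outside-$X$ orbit segments first and working toward the boundary of $X$.
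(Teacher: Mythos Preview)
Your approach is in the right spirit but misses a key simplification that makes the paper's proof immediate. Instead of considering arbitrary points of $P(C_\beta)\setminus X$ and worrying about touching pairs, terminal points in $X$, coupled equations, and induction, the paper simply looks at the partition element $I^M_s$ containing the discontinuity $\beta$ \emph{itself}. Because $\beta$ (say of $+$-type) is a discontinuity, it is the left endpoint of its continuity interval in $\mathcal{I}^0$; since $\mathcal{I}^M$ refines $\mathcal{I}^0$, it is also the leftmost point of $I^M_s$. In particular the touching partner $\beta^-$ lies in the adjacent partition element, so any other distinguished point in $I^M_s$ would form a neighbouring \emph{non-touching} pair with $\beta$. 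As $\beta\notin X$, such a pair satisfies (a) and (b) of Lemma~\ref{lem:dist-pair}, contradicting maximality of $\mathcal{I}^M$. Hence $\beta$ is the \emph{only} distinguished point in $I^M_s$, and the $\mathcal{A}$-subordinate equation reduces to $\alpha(C_\beta)=0$ in one line.

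This choice of $p=\beta$ also fixes a small gap in your argument. You claim that for an isolated $p\in P(C_\beta)\setminus X$ the subordinate equation reduces to $\alpha(C_\beta)\,N(I^M_s,C_\beta)=0$; but for a generic orbit point $p=T^k(\beta)$ with $k>0$, there is no reason $p$ cannot coincide (as a signed point) with a point of some other $P(C_{\beta''})$, since $T$ is not injective, so the reduction need not be to a single term. At $p=\beta$ this issue disappears: $\beta$ is a discontinuity, so it cannot equal $T^{k'}(\beta'')$ for any $0<k'<q(\beta'')$ by the minimality of $q(\beta'')$, and it is not in any $P(v)$ associated to the $J_i$ since those lie in $X$. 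Thus all the obstacles you anticipate in your third paragraph evaporate once you focus on the discontinuity itself.
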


\begin{proof}
We will prove this for a $+$-type discontinuity $\beta$, with the proof for a $-$-type discontinuity being analogous. Let $\mathcal{I}^M$ be the maximal partition as in Definition \ref{defn:max-part}. Assume that there is another point $p \in \mathcal{P}$ contained in the element of the partition $I^M_s$ containing $\beta$. As $\beta$ is a discontinuity, it must be the leftmost point $I^M_s$, as $\mathcal{I}^M$ is a refinement of $\mathcal{I}^0$. Thus we may assume that $\beta$ and $p$ are neighbouring, but not touching. As $\beta \notin X$, we know that this pair of points is not contained in a periodic interval. Thus this pair of points satisfies properties (a) and (b) from Lemma \ref{lem:dist-pair}, so the partition $\mathcal{I}^M$ can in fact be refined, which is a contradiction with its maximality. Thus there is no other distinguished point in $I^M_s$. As $\mathcal{I}^M$ is an $\mathcal{A}$-subordinate partition, we must have that $\alpha(v_{\beta}) = 0$, as this is exactly the equation which $I^M_s$ needs to satisfy to be $\mathcal{A}$-subordinate.
\end{proof}
As all of these coefficients are zero, they do not contribute to the equation \eqref{eq:lin-dep}. Thus we may exclude their corresponding vectors and points from $\mathcal{V}$ and $\mathcal{P}$, respectively. This means that all points in $\mathcal{P}$ are now contained in $X$. This immediately gives the following corollary, which proves a part of Theorem \ref{thm:lin-dep} for the landing vectors corresponding to points in the interior of $J_0$:

\begin{corollary}
We have that $\alpha_j^0 = \alpha^{0,+}(j,1) + \alpha^{0,-}(j,1)$ for all $1 \le j \le N_0-1$ in conclusion \eqref{eq:lin-dep-equality2} of Theorem \ref{thm:lin-dep}.
\end{corollary}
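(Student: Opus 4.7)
The plan is to read off the desired identity directly from the $\bm{f}_{\text{ind}(\beta^0(j))}$-component of the vector identity \eqref{eq:lin-dep-sum}, after invoking Corollary \ref{cor:alpha_x=0} to eliminate the contributions of the $\mathcal{C}_{\neg X}$-vectors. Since \eqref{eq:lin-dep-sum} is an equation in $W(r)$, every $\bm{f}$-coordinate of the left-hand side must vanish, and the coordinate at $\text{ind}(\beta^0(j))$ turns out to involve essentially only the three coefficients appearing in the claim.

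The landing vector $L_j^0$ has $\bm{f}$-part equal to $-\bm{f}_{\text{ind}(\beta^0(j))}$ by definition, so it contributes $-\alpha_j^0$ to the $\bm{f}_{\text{ind}(\beta^0(j))}$-coefficient. The critical connection vectors $C^{0,+}(j,1)$ and $C^{0,-}(j,1)$, which start at $\beta^{0,+}(j,1)$ and $\beta^{0,-}(j,1)$ respectively, have $\bm{f}$-parts of the form $\bm{f}_{\text{ind}(\beta^{0,\pm}(j,1))} - \bm{f}_{\text{ind}(\beta^{0,\pm}(j,2))}$, and thus contribute $+\alpha^{0,+}(j,1)$ and $+\alpha^{0,-}(j,1)$ respectively. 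Any $C_\beta$-vector with $\beta \in \mathcal{C}_{\neg X}$ that happens to have $\beta^0(j)$ as an endpoint contributes a term proportional to $\alpha_\beta$, which vanishes by Corollary \ref{cor:alpha_x=0}. Finally, the vectors $C^{i,+}(0,k)$ and $C^{i,-}(1,k)$ attached to the maximal periodic intervals $J_1,\dots,J_n$ cannot have $\beta^0(j)^{\pm}$ among their endpoints, because $\beta^0(j) = T^{l_j^0}(a_j^0)$ lies in the forward orbit of a point of $J_0$ and the orbits of $J_0, J_1, \dots, J_n$ are pairwise disjoint.

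In the generic case, where no other landing, critical connection, or return vector in $\mathcal{V}$ has an endpoint at $\beta^0(j)^{\pm}$, the $\bm{f}_{\text{ind}(\beta^0(j))}$-component of \eqref{eq:lin-dep-sum} therefore collapses to
\[
-\alpha_j^0 + \alpha^{0,+}(j,1) + \alpha^{0,-}(j,1) = 0,
\]
which is the claimed equality. The delicate point I anticipate is the non-generic case in which additional orbits from within $J_0$ also pass through $\beta^0(j)^{\pm}$, producing extra incoming and outgoing terms in the conservation identity; these extra contributions have to cancel in pairs. I expect this cancellation to follow from further applications of the maximality of $\mathcal{I}^M$ to the intervals of $\mathcal{I}^M$ containing $\beta^{0,+}(j,1)$ and $\beta^{0,-}(j,1)$, in direct analogy with the proof of Corollary \ref{cor:alpha_x=0}, and this bookkeeping is the main technical hurdle.
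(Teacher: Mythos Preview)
Your approach is exactly the paper's: project \eqref{eq:lin-dep-sum} onto the $\bm{f}_{\text{ind}(\beta^0(j))}$-coordinate and use Corollary~\ref{cor:alpha_x=0} to kill the $\mathcal{C}_{\neg X}$-terms. The only issue is that the ``non-generic case'' you anticipate does not exist, so the bookkeeping you flag as the main technical hurdle is unnecessary.

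The reason is that the signed discontinuities $\beta^{0,+}(j',k)$, as $(j',k)$ ranges over all admissible pairs, are pairwise distinct, and likewise for the $\beta^{0,-}(j',k)$. Indeed, $\beta^{0,+}(j',k)$ is the left endpoint of an iterate $T^t(J^0_{j'+1})$ for some $0 \le t < r^0_{j'+1}$, and these iterates are pairwise disjoint half-open intervals (this is just the standard fact that the orbit pieces of a first return map are disjoint, which here follows from $T|_X$ being a bijection). Hence $\beta^{0,+}(j',k) = \beta^0(j)^+ = \beta^{0,+}(j,1)$ forces $(j',k)=(j,1)$, and similarly for the minus side. Under the standing assumption $m_j^{0,\pm}\ge 2$ the return vectors $R^{0,\pm}_{j'}$ carry $\bm{f}_{\text{ind}(\beta^{0,\pm}(j',m_{j'}^{0,\pm}))}$ with $m_{j'}^{0,\pm}\ge 2$, so they never hit index $\text{ind}(\beta^0(j))$ either. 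Combined with your orbit-disjointness argument for the $J_i$-vectors, this shows that after Corollary~\ref{cor:alpha_x=0} exactly the three vectors $L_j^0$, $C^{0,+}(j,1)$, $C^{0,-}(j,1)$ have nonzero $\bm{f}_{\text{ind}(\beta^0(j))}$-component, and the identity follows directly. No appeal to the maximal partition $\mathcal{I}^M$ is needed.
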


\begin{proof}
By Corollary \ref{cor:alpha_x=0}, we know that for all $1 \le j \le N_0-1$, there exist only three vectors $v \in \mathcal{V}$ such that the $\bm{f}$-coefficient at ind($\beta^0(j)$) is non-zero. These are exactly $L_j^0, C^{0,+}(j,1)$ and $C^{0,-}(j,1)$. As the $\bm{f}$-coefficients at ind($\beta^0(j)$) in equation \eqref{eq:lin-dep} have to cancel out, we get that:
\[
\alpha_j^0 = \alpha^{0,+}(j,1) + \alpha^{0,-}(j,1)
\]
for all $1 \le j \le N_0-1$.
\end{proof}
We may now assume that the points and vectors associated to the landing vectors $L_j^0$, for all $1 \le j \le N_0-1$, are not contained in $\mathcal{P}$ and $\mathcal{V}$, respectively. Thus $\mathcal{P}$ now contains only signed points. We can make this assumption because we will be able to deduce the remaining equalities from the conclusions \eqref{eq:lin-dep-equality1} and \eqref{eq:lin-dep-equality2} of Theorem \ref{thm:lin-dep} without using these landing vectors. 

The maximality of $\mathcal{I}^M$ also gives the following corollary:

\begin{corollary}
\label{cor:a-subord-int-in-X}
All maximal intervals contained in $X$ that do not contain discontinuities in their interiors are $\mathcal{A}$-subordinate intervals.
\end{corollary}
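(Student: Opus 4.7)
The plan is to reduce the $\mathcal A$-subordinacy of $K$ to that of the elements of the maximal partition $\mathcal I^M$. First, I would record the structural observation that every boundary point of such a $K$ is either (i) a critical point of $T$ lying in $X$, or (ii) a boundary point of the component of $X$ containing $K$ (which by Lemma~\ref{lem:J-dynamics}(a) is an iterate of a critical point, but not itself in $\mathcal C$). This dichotomy drives the entire case analysis.

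The key claim to establish is: for every element $I^M_s$ of $\mathcal I^M$, the set $I^M_s \cap \mathcal P$ is either entirely contained in $K$ or entirely disjoint from $K$. Granted the claim, the corollary follows by simple bookkeeping. Setting $S := \{s : I^M_s \cap \mathcal P \subseteq K\}$, every distinguished point of $K$ lies in some $I^M_s$ with $s \in S$ and no $I^M_s$ with $s \notin S$ contributes any distinguished point to $K$, so $N(K,v) = \sum_{s \in S} N(I^M_s, v)$ for every $v \in \mathcal V$. Summing against the coefficients $\alpha(v)$ gives
\[
\sum_{v \in \mathcal V} \alpha(v)\,N(K,v) \;=\; \sum_{s \in S}\sum_{v \in \mathcal V} \alpha(v)\,N(I^M_s, v) \;=\; 0,
\]
because each $I^M_s$ is $\mathcal A$-subordinate by construction of $\mathcal I^M$.

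The claim itself I would prove by contradiction. If some $I^M_s$ met $\mathcal P$ both inside and outside $K$, then one boundary point $b$ of $K$ would have distinguished points of $I^M_s$ on both sides of it; let $p \in I^M_s \cap \mathcal P \cap K$ be the one closest to $b$ from the $K$-side and $q \in I^M_s \cap \mathcal P$ the one closest from the other side. By construction $(p,q)$ is a neighbouring pair in $I^M_s \cap \mathcal P$, and I would verify conditions (a) and (b) of Lemma~\ref{lem:dist-pair} in order to apply Proposition~\ref{prop:refine} and contradict the maximality of $\mathcal I^M$. In case (i) (where $b \in \mathcal C$) the argument is immediate: $\mathcal I^0$ already separates $b^-$ from $b^+$, so $\mathcal I^M$ does as well and $p,q$ could not have sat in the same $I^M_s$ to begin with. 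In case (ii), just past $b$ there is a genuine gap outside of $X$; since $\mathcal P \subseteq X$ by Corollary~\ref{cor:alpha_x=0}, the point $q$ must lie in a different component of $X$ beyond this gap, which forces $p$ and $q$ to be at positive distance and hence not touching, giving (a). For (b), each iterate $T^k(J_j^0)$ and $T^k(J_i)$ is a connected subinterval of $X$, so none of them can bridge the gap outside $X$ separating $p$ from $q$, so $p,q$ cannot both lie in any such iterate.

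The main obstacle I anticipate is verifying condition (b) in the boundary-of-$X$ case; the proof uses exactly the combination of $\mathcal P \subseteq X$ (which depends on the earlier reduction via Corollary~\ref{cor:alpha_x=0}) together with the connectedness of each $T^k(J_j^0)$ and $T^k(J_i)$ inside $X$. A secondary point of care is the case analysis at the boundaries of $K$: in the critical-point case no appeal to Proposition~\ref{prop:refine} is actually needed because the separation is already present in $\mathcal I^0$, whereas the boundary-of-$X$ case is the one where the full strength of maximality of $\mathcal I^M$ is used.
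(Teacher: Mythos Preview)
Your proposal is correct and follows essentially the same approach as the paper: both argue by contradiction that if the claim failed, one could find a pair of distinguished points in some $I^M_s$ satisfying conditions (a) and (b) of Lemma~\ref{lem:dist-pair}, and then Proposition~\ref{prop:refine} would allow a further refinement of $\mathcal{I}^M$, contradicting its maximality. The only difference is in the final bookkeeping: the paper additionally argues that each such $K$ is contained in a \emph{single} element $I^M_s$ (so your set $S$ has at most one element with $I^M_s\cap\mathcal P\neq\emptyset$), whereas you simply sum over all $s\in S$; your version is slightly more economical since it avoids that extra step.
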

\begin{proof}
We claim that every element $I^M_s$ of the partition, $I^M_s \cap X$ contains at most one interval that contains points in $\mathcal{P}$. Indeed, if there were at least two such intervals in this intersection, then we could choose a pair of points from two such neighbouring intervals as the pair of points from Lemma \ref{lem:dist-pair}, which would mean we can further refine $\mathcal{I}^M$, and this is a contradiction with maximality of $\mathcal{I}^M_s$. Since $\mathcal{I}^M$ is a refinement of $\mathcal{I}^0$ and since in Proposition \ref{prop:refine} we always split along a gap interval $G$ not containing any points in $\mathcal{P}$ in its interior, a maximal interval $H$ contained in $X$ that does not contain discontinuities in its interior and has non-empty intersection with $\mathcal{P}$ is contained in a single element $I^M_s$ of the partition $\mathcal{I}^M$. By Corollary \ref{cor:alpha_x=0}, all of the points in $\mathcal{P} \cap I^M_s$ are contained in a single such interval $H$. Since $I^M_s$ is an $\mathcal{A}$-subordinate interval, this gives that $H$ is also an $\mathcal{A}$-subordinate interval. Since intervals contained in $X$ that do not contain points in $\mathcal{P}$ are trivially $\mathcal{A}$-subordinate, the proof follows.
\end{proof}

\subsection{Refining partitions into $\mathcal{A}$-subordinate intervals}

Our next goal is to partition the intervals of $X$ into smaller $\mathcal{A}$-subordinate intervals. These smaller intervals can then be further partitioned, and we will show that we may continue with this procedure until all intervals of the form $T^k(J_j^0)$, for $1 \le j \le N_0-1$ and $l_j^0 \le k < r_j^0$, and $T^k(J^i)$, for $1 \le i \le n$ and $k \ge 0$, are elements of this partition. In order to do this, we will show how a partition of an interval into smaller $\mathcal{A}$-subordinate intervals may be further refined by `pushing forward' the partition of the preimage of this interval.

Let $H = [a,b) \neq J_0$ be a component interval of $X$. Assume that $H$ satisfies the following two properties:
\begin{enumerate}
\label{H-properties}
\item There is a finite number of points $a = h_0 < h_1 < \dots < h_k < h_{k+1} = b$ such that the intervals $H_0 = [h_0, h_1), \dots, H_k = [h_k,h_{k+1})$ are $\mathcal{A}$-subordinate, and such the intervals $(h_q^+,h_{q+1}^-)$ contain no discontinuities for all $0 \le q \le k$; 
\item The preimage of $H$ in $X$ can be partitioned into $\mathcal{A}$-subordinate intervals $G_0 = [g_0,g_1), G_1=[g_2,g_3), \dots, G_l=[g_{2l},g_{2l+1})$, such that the intervals $(g_{2r}^+, g_{2r+1}^-)$ contain no discontinuities for all $0 \le r \le l$. We allow for $g_{2r+1}=g_{2r+2}$, i.e. the intervals could touch at their boundary points.     
\end{enumerate} 
We will refer to these intervals as $H$-intervals and $G$-intervals, respectively. We distinguish between the following two types of points in $H$:
\begin{itemize}
    \item $h_0^+,h_1^{\pm},\dots,h_{k+1}^-$ will be called \textit{Type-1 points};
    \item $T(g_0^+), T(g_1^-) \dots, T(g_{2l+1}^-)$ will be called \textit{Type-2 points}.
\end{itemize}
We will refer to any point that is either a Type-$1$ or a Type-$2$ point as \textit{special}. Note that a point can be both a Type-$1$ and a Type-$2$ point. For example, this is the case for boundary points points $a^+$ and $b^-$.

Let $H_0' = [h_{0}',h_{1}'), H_1'=[h_{1}',h_{2}'), \dots, H_m'=[h_{m}',h_{m+1}')$ be the maximal intervals in $H$, such that the intervals $(h_{t}'^+,h_{t+1}'^-)$ contain no special points for all $0 \le t \le m$. In other words, the boundary of each $H_t'$ consists of two neighboring, but not touching, special points. We will refer to these intervals as $H'$-intervals. These intervals form a partition of $H$, which is a refinement of the partition into $H_0, H_1, \dots, H_k$.

Recall that because of Corollary \ref{cor:alpha_x=0} all of the points in $\mathcal{P}$ are contained in $X$. Since $H \neq J_0$, every point $p \in \mathcal{P} \cap H$ not equal to some $\beta^{0,\pm}(j,1)$, for $1 \le j \le N_0-1$, has a unique preimage in $X \cap \mathcal{P}$. Moreover, $p$ is contained in the same $P(v)$ as its unique preimage in $X \cap \mathcal{P}$ if and only if $p$ is not a discontinuity of $T$. This fact will be repeatedly used in the proof below. It will also be convenient to introduce the following notation: for a $\beta \in X$ that is not equal to some $\beta^{0,\pm}(j,1)$, for $1 \le j \le N_0-1$, denote by $v^{-1}_{\beta}$ the unique vector in $\mathcal{V}$ such that $T(v_{last}) = \beta$. Note that this vector is now unique because of Corollary \ref{cor:alpha_x=0}. For a discontinuity $\beta^{0,\pm}(j,1)$, with $1 \le j \le N_0-1$, we define $v^{-1}_{\beta^{0,\pm}(j,1)} = 0$. Recall also that $v_{\beta}$ is the unique vector in $\mathcal{V}$ such that $v_{first} = \beta$.

\begin{proposition}[Push-forward of a partition]
\label{prop:push-fwd}
Let $H$ satisfy the two assumptions in \ref{H-properties}. Then the intervals $H_0', \dots, H_m'$ are also $\mathcal{A}$-subordinate intervals. Moreover, for every discontinuity contained in $H$ that is also a Type-$2$ point, we have that:
\[
-\alpha(v^{-1}_{\beta}) + \alpha(v_{\beta}) = 0.
\]
\end{proposition}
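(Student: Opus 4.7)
The plan is to exploit the bijectivity of $T|_X$ (Lemma~\ref{lem:J-dynamics}) together with the fact that all of $\mathcal P$ lies in $X$ (Corollary~\ref{cor:alpha_x=0}) to derive a single counting identity, from which both conclusions will follow by telescoping prefix sums through the partition by special points. Write $S(H^*) := \sum_{v \in \mathcal V} \alpha(v) N(H^*, v)$ and $\Delta(H^*) := \sum_{\beta} [\alpha(v_\beta) - \alpha(v^{-1}_\beta)]$, where the latter sum runs over signed discontinuities $\beta \in H^*$, with the convention $\alpha = 0$ for non-existent vectors. Since $P(v) \subset X$ for each $v \in \mathcal V$ and $T$ restricts to a bijection on $X$, the map $T$ induces a bijection $P(v) \setminus \{v_{last}\} \to P(v) \setminus \{v_{first}\}$ for each $v$. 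Counting points of $P(v)$ in any signed sub-interval $H^* \subseteq H$ and weighting by $\alpha(v)$ then gives the key identity
\[
S(H^*) \;=\; S(T^{-1}(H^*) \cap X) \;+\; \Delta(H^*). \tag{$\dagger$}
\]

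I would next localise preimages. By bijectivity of $T|_X$, any Type-$2$ point $T(g_{2r'}^{\pm}) \in T(G_r)$ forces $g_{2r'}^{\pm} \in G_r$, so $r' = r$; hence the only Type-$2$ points inside $T(G_r)$ are its two boundary points, and each $H_t'$ lies inside a unique $T(G_{r(t)})$ with $\tilde H_t' := T^{-1}(H_t') \cap X$ a sub-interval of $G_{r(t)}$. Introduce the prefix intervals $P_t^H := [h_0', h_t')$ and $P_t^G := T^{-1}(P_t^H) \cap X$, and apply~$(\dagger)$. The crucial dichotomy at each special point $h_t'$ is: if $h_t' = h_q$ is Type-$1$, then $P_t^H = \bigsqcup_{q' < q} H_{q'}$ is a disjoint union of $\mathcal A$-subordinate intervals, so $S(P_t^H) = 0$; if $h_t'$ is Type-$2$, then by the localisation $P_t^G$ is a disjoint union of those complete $G_r$'s whose image lies in $P_t^H$, so $S(P_t^G) = 0$. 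In either case, $(\dagger)$ pins down the remaining quantity as $\pm \Delta(P_t^H)$.

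From here both conclusions follow simultaneously. By the hypothesis that the interiors $(h_q^+, h_{q+1}^-)$ contain no discontinuities, the only possible signed discontinuity in $H_t' = [h_t', h_{t+1}')$ is its left endpoint $h_t'^+$, so $\Delta(H_t')$ equals $\alpha(v_{h_t'^+}) - \alpha(v^{-1}_{h_t'^+})$ when $h_t'$ is a discontinuity and vanishes otherwise. Combining $S(H_t') = S(P_{t+1}^H) - S(P_t^H)$ with $S(H_t') = S(\tilde H_t') + \Delta(H_t')$ and the dichotomy above, a case analysis over the four combinations of Type-$1$/Type-$2$ at $h_t'$ and $h_{t+1}'$ shows that the increment of $\Delta(P_t^H)$ across a Type-$2$ discontinuity $\beta$ is forced to vanish (using $S(P^G) = 0$ on both sides), yielding $\alpha(v_\beta) = \alpha(v^{-1}_\beta)$---the second conclusion. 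Once this is established, $\Delta(H_t') = 0$ for every $t$, and combined with the dichotomy one deduces $S(H_t') = 0$ for all $t$, which is the first conclusion.

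The main obstacle will be the case analysis above: when a single $h_t'$ is simultaneously Type-$1$ and Type-$2$, or when touching $G$-intervals ($g_{2r+1} = g_{2r+2}$) occur at a discontinuity of $T$, the signed preimages under $T|_{G_r}$ and $T|_{G_{r+1}}$ must be matched delicately with the corresponding jumps on the $H$-side. The intrinsic coupling of the two conclusions---one about discontinuity coefficients, the other about $\mathcal A$-subordinateness of intervals---is precisely what forces them to be proved together rather than sequentially.
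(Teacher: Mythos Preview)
Your counting identity $(\dagger)$ and the Type-1/Type-2 dichotomy for the prefixes $P_t^H$ are correct, and they amount to the same bookkeeping the paper carries out in its Case~3/Case~4 computations of $N(H_t',v)$ versus $N(G_r,v)$. So the overall strategy---bijectivity of $T|_X$, $\mathcal A$-subordinateness of the $H_q$'s and $G_r$'s, and a case split on the type of each special point---is the paper's strategy, repackaged via prefix sums.

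There is, however, a concrete error that undoes the intended shortcut. You assert that the only possible signed discontinuity in $H_t'=[h_t',h_{t+1}')$ is the left endpoint $h_t'^+$; but by the signed-interval conventions, $h_{t+1}'^-\in H_t'$ as well, and if $h_{t+1}'$ is one of the $h_q$'s it can be a discontinuity. So $\Delta(H_t')$ generically has \emph{two} boundary contributions, and $\Delta(P_t^H)$ accumulates a term at every discontinuity $h_q$ crossed. To isolate the single-discontinuity equation from $\Delta(P_t^H)=0$ you need two ingredients you never invoke: the $\bm f$-coefficient cancellation $-\alpha(v_{\beta^+})+\alpha(v^{-1}_{\beta^+})-\alpha(v_{\beta^-})+\alpha(v^{-1}_{\beta^-})=0$ for interior discontinuities (the paper's equation~\eqref{eq:H'-int-disc}, which comes from the linear dependence of Theorem~\ref{thm:lin-dep} together with Corollary~\ref{cor:alpha_x=0}), and the separate argument at the left endpoint $a^+$ of $H$ using $a^-\notin X$. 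Once these are in place, the remaining logic is no longer ``simultaneous'': you must first establish $-\alpha(v^{-1}_{\beta})+\alpha(v_{\beta})=0$ at each Type-2 discontinuity, sweeping left to right, before you can conclude $S(H_t')=0$ for an $H_t'$ whose right endpoint is such a discontinuity. This is exactly the paper's two-hypothesis induction (its equation~\eqref{eq:H'-bdry-disc} is the inductive use of the second conclusion), so the prefix-sum formulation does not actually avoid it.
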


\begin{proof}
The proof goes by induction, starting with the leftmost interval $H_0'$ and going to the right. The induction has two hypotheses:
\begin{itemize}
    \item $H_z'$ is an $\mathcal{A}$-subordinate interval for all $z \le t$;
    \item For each discontinuity $\beta$ of $T$ contained in these intervals that is also a Type-$2$ point, we have that:
    \[
    -\alpha(v^{-1}_{\beta}) + \alpha(v_{\beta}) = 0.
    \]
\end{itemize}
We now prove the base of the induction for $t=0$. Note that by assumption, none of the $H$-intervals contain discontinuities in their interior. The proof depends on the type of the boundary point $h_1'$ of $H_0'$. There are $3$ different cases.

\underline{Case 1: $h_1'$ is only a Type-$1$ point}
\vspace{1mm}

Then $H_0' = H_0$, so the first hypothesis follows. Assume first that $a$ is not a discontinuity of $T$. In this case, even if $h_1'$ is a discontinuity of $T$, it is not of Type-$2$, so the second hypothesis follows. If $a^+ = \beta^+$, then since $H$ is assumed to be a component of $X$, we know that $\beta^- \notin X$. Thus the only vectors in $\mathcal{V}$ for which the $\bm{f}$-coefficient at ind($\beta^+$) is non-zero are $v_{\beta^+}$ and $v^{-1}_{\beta^+}$. Thus we have that:
\[
-\alpha(v^{-1}_{\beta^+}) + \alpha(v_{\beta^+}) = 0,
\]
so the second hypothesis follows in this case as well.

\underline{Case 2: $h_1'$ is only a Type-$2$ point}
\vspace{1mm}

Note that this means that $h_1'$ is not a discontinuity of $T$, as discontinuities are Type-$1$ points by the assumption that $H$-intervals do not contain discontinuities in their interiors. Moreover, we have that $H_0' = T(G_r)$, for some $0 \le r \le l$. Assume that $a$ is not a discontinuity of $T$. Then the second hypothesis follows, as there are no discontinuities in $H_0'$. Moreover, all of the points in $H'_0 \cap \mathcal{P}$ belong to the same $P(v)$ as their preimages in $G_r$ and thus $N(H_0', v) = N(G_r, v)$ for all $v \in \mathcal{V}$. Thus $H'_0$ is an $\mathcal{A}$-subordinate interval, because $G_r$ is. Same as in the previous case, if $a^+ = \beta^+$, then:
\[
\alpha(v^{-1}_{\beta^+}) - \alpha(v_{\beta^+}) = 0,
\]
so the second hypothesis follows.

\underline{Case 3: $h_1'$ is both a Type-$1$ and a Type-$2$ point}
\vspace{1mm}

This means that $H_0' = H_0 = T(G_r)$, for some $0 \le r \le l$, so the first hypothesis follows. Assume that $a$ is not a discontinuity of $T$. If the right boundary point $h_1'$ of $H'_0$ is not a discontinuity of $T$, the second hypothesis trivially follows. If $h_1'^- = \betas^-$ is a discontinuity of $T$, then we have that $N(H_0', v) = N(G_r, v)$ for all $v \in \mathcal{V}$, except for $v_{\betas}$ and $v^{-1}_{\betas}$. Thus we have that:
\[
0 = \sum_{v \in \mathcal{V}} \alpha(v) N(G_r, v) - \sum_{v \in \mathcal{V}} \alpha(v) N(H'_0, v) = \alpha(v^{-1}_{\betas^-}) - \alpha(v_{\betas^-}),
\]
so the second hypothesis follows. If we assume $a^+ = \beta^+$, then as in Case $1$, we get that:
\[
\alpha(v^{-1}_{\beta^+}) - \alpha(v_{\beta^+}) = 0.
\]
If $h_1'$ is not a discontinuity of $T$, the second hypothesis follows, so we may assume $h_1'^-=\betas^-$. Then we have that:
\[
0 = \sum_{v \in \mathcal{V}} \alpha(v) N(G_r, v) - \sum_{v \in \mathcal{V}} \alpha(v) N(H'_0, v) = \alpha(v^{-1}_{\beta^+}) - \alpha(v_{\beta^+}) + \alpha(v^{-1}_{\betas^-}) - \alpha(v_{\betas^-}),
\]
which implies:
\[
\alpha(v^{-1}_{\betas^-}) - \alpha(v_{\betas^-}) = 0,
\]
so the second hypothesis follows.

Assuming the two induction hypothesis, we now want to prove that $H_{t+1}' = [h_{t+1}',h_{t+2}')$ is also an $\mathcal{A}$-subordinate interval, for some $t \ge 0$. The proof now depends on the type of both $h_{t+1}'^+$ and $h_{t+2}'^-$, so there are $4$ cases.

\underline{Case 1: Both are only Type-$1$ points}
\vspace{1mm}

Then $H_{t+1}' = H_q$ for some $q$, so we are done, because in this case even if the boundary points are discontinuities, they are only of Type-$1$, so we need not check the second inductive hypothesis. 

\underline{Case 2: Both are only Type-$2$ points}
\vspace{1mm}

Then $H_{t+1}' = T(G_r)$ for some $r$, and neither of the boundary points is a discontinuity of $T$. Thus the first inductive hypothesis follows from the fact that $N(H_0', v) = N(G_r, v)$ for all $v \in \mathcal{V}$, and the second because there are no discontinuities in $H'_{t+1}$.

\vspace{1mm}

\underline{Case 3: $h_{t+1}'^+$ is a Type-$1$ point and $h_{t+2}'^-$ is a Type-$2$ point}

\vspace{1mm}

Then $H_{t+1}'$ is contained in the image of the interval $G_r$ for which $T(g_{2r+1}^-) = h_{t+2}'^-$. Let $H'_{u}, \dots, H'_{t}$ be all of the other $H'$-intervals contained in $T(G_r)$ and note that they form a partition of $T(G_r)$. Thus we have that:

\[
\sum_{z=u}^{t+1} N(H'_z, v) = N(G_r,v)
\]
for all $v$ except those for which either $v_{first} = \beta$ or $T(v_{last}) = \beta$, where $\beta$ is a discontinuity contained in $T(G_r)$. Thus we have that:

\begin{align}
\label{eq:h-t+1}
\begin{split}
\sum_{v \in \mathcal{V}} \alpha(v) N(H'_{t+1}, v) &= \sum_{v \in \mathcal{V}} \alpha(v) N(G_r, v) - \sum_{z=u}^t \left( \sum_{v \in \mathcal{V}} \alpha(v) N(H'_z, v) \right) \\ &+ \sum_{\beta \in T(G_r)} \left( -\alpha(v_{\beta}) + \alpha(v^{-1}_{\beta}) \right) \\
&= \sum_{\beta \in T(G_r)} \left( -\alpha(v_{\beta}) + \alpha(v^{-1}_{\beta}) \right).
\end{split}
\end{align}
Here we sum over all signed discontinuities $\beta \in T(G_r)$. We now need to consider all of the discontinuities contained in $T(G_r)$. The analysis is different for discontinuities on the boundary and those in the interior. For a discontinuity $\beta$ contained in the interior of $T(G_r)$, we know that:

\begin{equation}
\label{eq:H'-int-disc}
-\alpha(v_{\beta^+}) + \alpha(v^{-1}_{\beta^+}) -\alpha(v_{\beta^-}) + \alpha(v^{-1}_{\beta^-}) = 0,
\end{equation}
by the assumption of Theorem \ref{thm:lin-dep} and by Corollary \ref{cor:alpha_x=0}. If the left boundary point $h'^+_{t+1}$ of $T(G_r)$ is a discontinuity $\betas^+$, then we know that:
\begin{equation}
\label{eq:H'-bdry-disc}
-\alpha(v_{\betas^-}) + \alpha(v^{-1}_{\betas^-}) = 0,
\end{equation}
by the second inductive hypothesis. If the right endpoint $h'^-_{t+2}$ of $T(G_r)$ is a discontinuity $\betass^-$, then it is a Type-$1$ point as well, so we know that $H'_{t+1}$ is an $H$-interval by the first assumption about the $H$-partition, and thus an $\mathcal{A}$-subordinate interval. Putting equations \eqref{eq:h-t+1}, \eqref{eq:H'-int-disc} and \eqref{eq:H'-bdry-disc} together gives:

\begin{align*}
0 = &\sum_{v \in \mathcal{V}} \alpha(v) N(H'_{t+1}, v) \\
\overset{\eqref{eq:h-t+1}}{=} &\sum_{\beta \in T(G_r)} \left( -\alpha(v_{\beta}) + \alpha(v^{-1}_{\beta}) \right) \\
\overset{\eqref{eq:H'-int-disc}\&\eqref{eq:H'-bdry-disc}}{=} &-\alpha(v_{\betas^+}) + \alpha(v^{-1}_{\betas^+}), 
\end{align*}
This proves the second inductive hypothesis, so the inductive step is done in this case. Thus we may assume that $h_{t+2}'$ is not a discontinuity. Then by equations \eqref{eq:h-t+1}, \eqref{eq:H'-int-disc} and \eqref{eq:H'-bdry-disc} we have that:

\begin{align*}
0 \overset{\eqref{eq:H'-int-disc} \& \eqref{eq:H'-bdry-disc}}{=} &\sum_{\beta \in T(G_r)} \left( -\alpha(v_{\beta}) + \alpha(v^{-1}_{\beta}) \right) \\
\overset{\eqref{eq:h-t+1}}{=} &\sum_{v \in \mathcal{V}} \alpha(v) N(H'_{t+1}, v), 
\end{align*}
so $H'_{t+1}$ is an $\mathcal{A}$-subordinate interval in this case as well. Finally, if the left boundary point $h'^+_{t+1}$ of $T(G_r)$ is not a discontinuity, then by equations \eqref{eq:h-t+1} and \eqref{eq:H'-int-disc} we have that:
\begin{align*}
0 \overset{\eqref{eq:H'-int-disc}}{=} &\sum_{\beta \in T(G_r)} \left( -\alpha(v_{\beta}) + \alpha(v^{-1}_{\beta}) \right) \\
\overset{\eqref{eq:h-t+1}}{=} &\sum_{v \in \mathcal{V}} \alpha(v) N(H'_{t+1}, v),
\end{align*}
which finishes the proof.

\underline{Case 4: $h_{t+1}'$ is a Type-$2$ point and $h_{t+2}'$ is a Type-$1$ point}
\vspace{1mm}

Then similarly as in the previous case, we know that $H'_{t+1}$ is contained in a larger $H$-interval $H_q$, which is partitioned into $H'$-intervals $H'_u, \dots, H'_t$, which are $\mathcal{A}$-subordinate by the inductive hypothesis, and $H'_{t+1}$. Similarly as in the previous case, we have that:

\begin{align*}
\sum_{v \in \mathcal{V}} \alpha(v) N(H'_{t+1}, v) &= \sum_{v \in \mathcal{V}} \alpha(v) N(H_q, v) - \sum_{z=u}^t \left( \sum_{v \in \mathcal{V}} \alpha(v) N(H'_z, v) \right) \\
&+ \sum_{\beta \in H_q} \left( -\alpha(v_{\beta}) + \alpha(v^{-1}_{\beta}) \right) \\
&= \sum_{\beta \in H_q} \left( -\alpha(v_{\beta}) + \alpha(v^{-1}_{\beta}) \right).
\end{align*}
From here we simply need to consider whether the boundary points of $H_q$ are discontinuities and the proof follows analogously as in the previous case.
\end{proof}

It is interesting to note that the analogous statement for the `pull back' of a partition does not hold. A simple counterexample occurs when the interval $J_0$ does not contain a discontinuity, but its image $T(J_0)$ contains a single discontinuity $\beta$ in its interior. In this case $T(J_0)$ is partitioned into two $\mathcal{A}$-subordinate intervals $T(J_1)$ and $T(J_2)$, where $T(J_1)$ and $T(J_2)$ touch at $\beta$. Thus if we consider $H = T(J_0)$, the pull-back version of Proposition \ref{prop:push-fwd} would give us that $J_1$ and $J_2$ are $\mathcal{A}$-subordinate. This would mean that $\alpha_j^0 = \alpha^{0,+}(j,1)$ and $\alpha^{0,-}(j,1) = 0$, where $a_j$ is the discontinuity in $J-0$ that lands on $\beta$. By Theorem \ref{thm:lin-dep}, we know that this does not necessarily hold, so in general we can not `pull back' a partition.

\subsection{Proof of Theorem \ref{thm:lin-dep}}

In this subsection, we finish the proof of Theorem \ref{thm:lin-dep}. We do this by continuing with the process of refining partitions started in Subsection \ref{subsec:refining-part}. This will be done by refining the maximal partition $\mathcal{I}^M$ from Definition \ref{defn:max-part}. This refinement is obtained by further partitioning the maximal intervals in $X$ that do not contain discontinuities in their interiors into smaller intervals.

As we will now care only about the partitions of the component intervals of $X$, we introduce a new label for such partitions. We will say that a partition $\mathcal{I}_*$ of the components of $X$ into $\mathcal{A}$-subordinate intervals is an \textit{$\mathcal{A}X$-partition}. Such a partition is defined by a finite set of points that are contained in the interiors of intervals of $X$, which we call \textit{the break points} of $\mathcal{I}_*$. By Corollary \ref{cor:a-subord-int-in-X}, we know that the partition of $X$ into maximal intervals that do not contain discontinuities in their interiors is an $\mathcal{A}X$-partition. As we do not need to consider $\mathcal{I}^M$ as a partition of $I$ anymore, we will denote this $\mathcal{A}X$-partition by $\mathcal{I}^M$ as well. Thus the set of break points $\mathcal{B}_0$ of $\mathcal{I}^M$ is the union of the boundary points of all maximal intervals contained in $X$ that do not contain a discontinuity in their interiors. This set is clearly the union of all discontinuities of $T$ and all the boundary points of $X$. We will show how to inductively refine this partition by applying Proposition \ref{prop:push-fwd}.

More precisely, we will show by induction that for each $m \ge 0$, there is an $\mathcal{A}X$-partition $\mathcal{I}^{M+m}$ such that its set of break points $\mathcal{B}_m$ is equal to the union of the following two types of points:

\begin{enumerate}
    \item Points $z \in X$ such that there exists a $q \le m$ and a discontinuity $\beta \in X$ such that $z = T^q(\beta)$ and $T^r(\beta) \notin J_0$ for all $0 < r \le q$;
    \item Points $z \in X$ such that there exists a $q \le m$ and a boundary point $x$ of $X$ such that $z = T^q(x)$, and $T^r(x) \notin J_0$ for all $0 < r \le q$.
\end{enumerate}
These two properties clearly imply that $\mathcal{B}_{m+1} \supset \mathcal{B}_m$, so $\mathcal{I}^{M+m+1}$ is a refinement of $\mathcal{I}^{M+m}$.

\begin{proof}[Proof of Theorem \ref{thm:lin-dep}]
The base of induction is given by Lemma \ref{lem:dist-pair} and Proposition \ref{prop:refine}. Assuming we have a level $m$ partition $\mathcal{I}^{M+m}$ such that its set of break points $\mathcal{B}_m$ is equal to the union of the two types of points described above, we show how to refine it into a level $m+1$ partition $\mathcal{I}^{M+m+1}$. We describe this refinement in terms of the new break points that get added to $\mathcal{B}_m$ by applying Proposition \ref{prop:push-fwd}.

Let $H$ be any component of $X$ not equal to $J_0$. By the inductive assumption, $\mathcal{I}^{M+m}$ induces a partition of $H$ into $\mathcal{A}$-subordinate intervals. We may take these $\mathcal{A}$-subordinate intervals to be the $H$-intervals from Proposition \ref{prop:push-fwd}. As $H$ is a component interval of $X$, we know that its $T$-preimage in $X$ consists of maximal intervals that do not contain discontinuities in their interiors, which are $\mathcal{A}$-subordinate by Corollary \ref{cor:a-subord-int-in-X}. As $\mathcal{I}^{M+m}$ is a refinement of $\mathcal{I}^M$, these preimage intervals are further partitioned into $\mathcal{A}$-subordinate intervals, and we may take these intervals to be the $G$-intervals from Proposition \ref{prop:push-fwd}. Thus by Proposition \ref{prop:push-fwd} we know that the partition into $H$-intervals can be refined into a partition into $\mathcal{A}$-subordinate $H'$-intervals. The set of boundary points of these $H'$-intervals is equal to the union of the images of break points in $\mathcal{B}_m$ contained in the $G$-intervals and the break points in $\mathcal{B}_m \cap H$. This holds for any $H \neq J_0$, and thus for each break point $p \in \mathcal{B}_m$, we get that $T(p)$ is a break point of $\mathcal{B}_{m+1}$, provided $T(p) \notin J_0$. Thus every point of the first and second type described above is contained in $\mathcal{B}_{m+1}$. Thus the partition $\mathcal{I}^{M+m+1}$ obtained by applying Proposition \ref{prop:push-fwd} has the desired properties, which completes the induction step.

Thus for every sufficiently large $m$ every point of either of these two types is a break point of $\mathcal{B}_{m}$:

\begin{enumerate}
    \item Points $z \in X$ such that there exists a $q$ and a discontinuity $\beta \in X$ such that $z = T^q(\beta)$ and $T^r(\beta) \notin J_0$ for all $0 < r \le q$;
    \item Points $z \in X$ such that there exists a $q$ and a boundary point $x$ of $X$ such that $z = T^q(x)$ and $T^r(x) \notin J_0$ for all $0 < r \le q$;
\end{enumerate}
Let $\beta^{0,+}(j,k)$, for $0 \le j \le N_0-1$ and $k < m^{0,+}_j$, be a discontinuity in the orbit of $J_0$, and let $P(v)$ be the orbit associate to the vector $v$ such that $v_{first} = \beta^{0,+}(j,k)$. Note that because of $k < m^{0,+}_j$, we know that $T(v_{last}) \in \mathcal{P}$. For $m$ large enough, we therefore know that $v_{last}$ is a break point of $\mathcal{B}_m$. Thus if we apply Proposition \ref{prop:push-fwd} to the component interval $H$ of $X$ containing $T(v_{last})$, we get that:
\begin{equation}
\label{eq:alpha-eqcc}
\alpha^{0,+}(j,k) = \alpha^{0,+}(j,k+1),    
\end{equation}
if $k+1 < m^{0,+}_j$, or
\begin{equation}
\label{eq:alpha-eqcr}
\alpha^{0,+}(j,m^{0,+}_j-1) = \alpha^{0,+}(j,m^{0,+}_j),
\end{equation}
if $k+1 = m^{0,+}_j$. This is because $T(v_{last}) = \beta^{0,+}(j,k+1)$ is a Type-$2$ point. It is clear that an analogous argument applies to discontinuities of the form $\beta^{0,-}(j,k)$, for $1 \le j \le N_0$ and $k < m^{0,+}_j$, $\beta^{i,+}(0,k)$, for $1 \le i \le n$ and $k \le m^{i,+}_{0}$, and $\beta^{i,-}(1,k)$, for $1 \le i \le n$ and $k \le m^{i,-}_{1}$. Thus combining equations \eqref{eq:alpha-eqcc} and \eqref{eq:alpha-eqcr} for all of these discontinuities gives:
\begin{align}
\label{eq:alpha-eq-ccr}
\begin{split}
&\alpha^{i,+}(0,1) = \dots = \alpha^{i,+}(0,m_0^{i,+}) \\
&\alpha^{i,-}(1,1) = \dots = \alpha^{i,-}(1,m_{1}^{i,-}); \\
&\alpha^{0,+}(j,1) = \dots = \alpha^{0,+}(j,m_j^{0,+}-1) = a_j^{0,+}; \\
&\alpha^{0,-}(j+1,1) = \dots = \alpha^{0,-}(j+1,m_{j+1}^{0,-}-1) = \alpha_{j+1}^{0,-}; \\
&\alpha_0^0 = \alpha^{0,+}(0,1); \\
&\alpha_{N_0}^0 = \alpha^{0,-}(N_0,1), \\
\end{split}
\end{align}
for all $1 \le i \le n$ and $0 \le j \le N_0-1$, respectively.

Next, for every $0 \le j \le N_0$, we know that both $a^{0,+}_j$ and $a^{0,-}_{j+1}$ have to land on discontinuities before the interval $[a^{0,+}_j,a^{0,-}_{j+1}$ return to $J_0$. Thus there is a time $q$, such that the points $T^q(a^{0,+}_j)$ and $T^q(a^{0,-}_{j+1})$ are both break points of $\mathcal{B}_m$, for a sufficiently large $m$. Thus the interval $[T^q(a^{0,+}_j),T^q(a^{0,-}_{j+1}))$ is $\mathcal{A}$-subordinate, so we get that
\begin{equation}
\label{eq:alpha-eq+-}
\alpha(v) = -\alpha(w),
\end{equation}
where $v$ and $w$ are the vectors such that $T^q(a^{0,+}_j) \in P(v)$ and $T^q(a^{0,-}_{j+1}) \in P(w)$, respectively. Moreover, $v$ is either a critical connection vector or a return vector related to the orbit of $a^{0,+}_j$, and $w$ is either a critical connection vector or a return vector related to the orbit of $a^{0,-}_{j+1}$. Again, an analogous argument applies to discontinuities of the form $\beta^{0,-}(j,k)$, for $1 \le j \le N_0$ and $k < m^{0,+}_j$, $\beta^{i,+}(0,k)$, for $1 \le i \le n$ and $k \le m^{i,+}_{0}$, and $\beta^{i,-}(1,k)$, for $1 \le i \le n$ and $k \le m^{i,-}_{1}$. Thus by combining \eqref{eq:alpha-eq-ccr} and \eqref{eq:alpha-eq+-} for all of these discontinuities we get both of the conclusions \eqref{eq:lin-dep-equality1} and \eqref{eq:lin-dep-equality2} from the statement of Theorem \ref{thm:lin-dep}.
\end{proof}

\section{Stability is equivalent to ACC and Matching}
\label{sec:theoremB} 

\subsection{Definitions and examples} 

Let us recall the definition of a stable $\ITM$ from the introduction:

\begin{definition}[Stable maps]
\label{defn:stable} 
We say that a map $T$ is \textit{stable} if there a neighbourhood $\mathcal{U}$ of $T$ in $\ITM(r)$ such that:

\begin{enumerate}
\item The mapping $\overline{X}$ assigning to each $\ITM$ in $\mathcal{U}$ its non-wandering set is continuous with respect to the Hausdorff topology. Moreover, for each $\tilde{T} \in \mathcal{U}$, $\overline{X}(\Tilde{T})$ is homeomorphic to $\overline{X}(T)$.
\item The number of discontinuities in $I \setminus \overline{X}(\Tilde{T})$ is constant in $\mathcal{U}$.
\end{enumerate}
\end{definition} 

A corollary of Theorem \ref{thm:ep-dense-param} is that stable maps are of finite type:

\begin{corollary}
\label{cor:stable-fin-type}
Stable maps are of finite type.
\end{corollary}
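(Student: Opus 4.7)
The plan is to combine the density of eventually periodic maps with the structural lemma for $\overline{X}$. Given a stable $T$ with stability neighbourhood $\mathcal{U}$, the density statement produces a nearby model that is manifestly of finite type, and the homeomorphism clause in Definition \ref{defn:stable} forces $\overline{X}(T)$ to share its topology; the topology of a finite union of intervals then excludes a Cantor component.

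Concretely, I would first invoke Theorem \ref{thm:ep-dense-param} (applied to the ambient rational family $\ITM(r)$ itself) to produce an eventually periodic map $\tilde T \in \mathcal{U}$. By Corollary \ref{cor:ep-fin-type}, $\tilde T$ is of finite type, so by Theorem \ref{thm:fin-type-char} the set $\overline{X}(\tilde T)$ is a finite disjoint union of non-degenerate closed intervals. Condition (1) of Definition \ref{defn:stable} then guarantees that $\overline{X}(T)$ is homeomorphic to this finite union.

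The remaining step is purely topological. Applying Lemma \ref{lem:x-structure}, I decompose $\overline{X}(T) = A_1 \cup A_2$ with $A_1$ a finite union of intervals and $A_2$ either empty or a Cantor set, and argue that $A_2$ must be empty. Indeed, $A_1$ has only finitely many boundary points, so at most finitely many points of $A_2$ can lie in $\overline{A_1}$; every remaining point of $A_2$ has a neighbourhood in $\overline{X}(T)$ disjoint from $A_1$, and since the Cantor set is totally disconnected each such point is its own connected component of $\overline{X}(T)$. If $A_2$ were non-empty it would be uncountable, producing uncountably many connected components of $\overline{X}(T)$, which contradicts the finite number of components of a finite union of non-degenerate closed intervals. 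Hence $A_2 = \emptyset$, so $\overline{X}(T)$ is a finite union of intervals, and Theorem \ref{thm:fin-type-char} concludes that $T$ is of finite type.

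I do not expect a serious obstacle here: essentially all the work is done by results already established. The only non-routine step is the component-count argument ruling out a Cantor component, which rests on the observation that although $A_2$ may touch $A_1$ at finitely many boundary points, it cannot be absorbed into the component structure of any finite union of intervals.
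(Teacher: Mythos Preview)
Your proposal is correct and follows essentially the same approach as the paper: both pick an eventually periodic $\tilde T\in\mathcal U$ via Theorem~\ref{thm:ep-dense-param}, note it is of finite type by Corollary~\ref{cor:ep-fin-type}, and then use the homeomorphism clause of stability together with Lemma~\ref{lem:x-structure} to conclude that $\overline{X}(T)$ cannot contain a Cantor piece. Your component-counting argument makes explicit what the paper leaves as a one-line assertion, but the underlying strategy is identical.
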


\begin{proof}
By Theorem \ref{thm:ep-dense-param} we know that eventually periodic maps are dense in $\ITM(r)$. Thus they are also dense in the neighbourhood $\mathcal{U}$ of a stable map $T$ on which the definition of stability \ref{defn:stable} holds. This means that there is at least one map in $\mathcal{U}$ for which the non-wandering set is equal to a finite union of intervals since eventually periodic maps are of finite type by Corollary \ref{cor:ep-fin-type}. Thus an infinite map can not be contained in $\mathcal{U}$, since the non-wandering set of an infinite type map must contain a Cantor set by Lemma \ref{lem:nonwandering}, which is not homeomorphic to a finite union of intervals. 
\end{proof}

Note that this definition is softer than the usual notions of stability in 1-dimensional dynamics, like $\Omega$-stability (see \cite{MR1239171}) and $J$-stability (see \cite{MR1312365}). The difference is that we do not require any dynamical conjugacy between the maps in the neighbourhood $\mathcal{U}$, just that several properties remain the same: finite type, number of discontinuities and the shape of the non-wandering set.

Recall that we use the shorthand $\Tilde{X}$ for the non-wandering set of a perturbation $\Tilde{T}$ of $T$. The next two lemmas follow immediately from the fact that the non-wandering set $\Tilde{X}$ of a sufficiently small perturbation $\Tilde{T}$ of $T$ is close to and homeomorphic to the non-wandering set $X$ of $T$.

\begin{lemma}
\label{lem:conseq-stab}
For a sufficiently small perturbation $\Tilde{T}$ of a stable map $T$, we have that $\overline{X}(\Tilde{T})$ consists of the same number of intervals as $X$ and these intervals are pairwise close to each other in sense of Hausdorff. This means that each component $J$ of $X$ has a well-defined continuation $\Tilde{J} \subset \Tilde{X}$, and in particular that its boundary points $x$ and $y$ also have continuations $\Tilde{x}$ and $\Tilde{y}$. Moreover, the continuation $\Tilde{J}$ of an interval $J$ contains the continuations of the discontinuities contained in $J$. \qed
\end{lemma}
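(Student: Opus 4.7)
The plan is to unpack Definition~\ref{defn:stable} directly. Since $T$ is stable, Corollary~\ref{cor:stable-fin-type} together with Theorem~\ref{thm:fin-type-char} guarantees that $X$ is a finite disjoint union of half-open intervals $J_1\sqcup\cdots\sqcup J_k$, and the closures $\overline{J_1},\dots,\overline{J_k}$ are pairwise separated by some definite distance $\delta>0$. Property~(1) of stability provides a neighborhood $\mathcal{U}$ of $T$ on which $\overline{X}(\Tilde{T})$ is both Hausdorff-close to $\overline{X}$ and homeomorphic to it. The homeomorphism condition forces $\overline{X}(\Tilde{T})$ to have exactly $k$ connected components, each a non-degenerate closed interval. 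Shrinking $\mathcal{U}$ so that the Hausdorff distance is less than $\delta/3$ traps each such component in the $\delta/3$-neighborhood of exactly one $\overline{J_i}$, yielding a canonical pairing $\Tilde{J_i}\leftrightarrow J_i$. This takes care of the assertions about the number of components and their pairwise Hausdorff-closeness.

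For the continuation of the boundary points of $J_i$, I will invoke Lemma~\ref{lem:J-dynamics}(a), which writes every such boundary point as $T^{k_1}(\beta^+)$ or $T^{k_2}(\betas^-)$ for some signed discontinuity in $X\cap\mathcal{C}$. Since $T$ is a piecewise translation, the signed itinerary of a signed point is preserved under sufficiently small perturbation: the perturbed iterate $\Tilde{T}^{j}(\Tilde{\beta}^+)$ sits on the same side of any incident discontinuity as $T^{j}(\beta^+)$ does, because the signed-point convention records exactly which branch of the next application of $T$ is to be used. Consequently $\Tilde{x}:=\Tilde{T}^{k_1}(\Tilde{\beta}^+)$ and $\Tilde{y}:=\Tilde{T}^{k_2}(\Tilde{\betas}^-)$ are well-defined and continuous in $\Tilde{T}$, and Hausdorff-closeness forces the interval $[\Tilde{x},\Tilde{y}]$ to be the component $\Tilde{J_i}$ produced in the previous paragraph.

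Finally, for a discontinuity $\beta\in J_i$ the continuation $\Tilde{\beta}$ is literally a coordinate of the parameter vector of $\Tilde{T}$ and so depends continuously on it; after shrinking $\mathcal{U}$, $\Tilde{\beta}$ lies inside the $\delta/3$-neighborhood of $\overline{J_i}$. Property~(2) of Definition~\ref{defn:stable} fixes the number of discontinuities that fall outside $\overline{X}(\Tilde{T})$, so since $\beta\in\overline{X}$ the continuation $\Tilde{\beta}$ cannot escape $\overline{X}(\Tilde{T})$; combined with the pairing above, this places $\Tilde{\beta}$ inside $\Tilde{J_i}$, as required. The only delicate ingredient is the signed-itinerary stability used in the middle paragraph; once that is granted, the rest is mere continuity of the parameter coordinates together with the Hausdorff-closeness supplied by stability.
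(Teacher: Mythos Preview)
The paper records this lemma with a bare \qed and the prefatory remark that it ``follows immediately'' from Hausdorff-closeness plus the homeomorphism clause in Definition~\ref{defn:stable}. Your first paragraph is exactly that unpacking and is correct: finitely many separated components, homeomorphism fixes the component count, and a $\delta/3$ argument gives the canonical pairing $J_i\leftrightarrow\Tilde J_i$.

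The second paragraph, however, both overreaches and contains a real error. The lemma's ``continuations $\Tilde x,\Tilde y$'' are simply the endpoints of the interval $\Tilde J_i$ you already produced; nothing dynamical is being asserted. Your attempt to realise them as $\Tilde T^{k_1}(\Tilde\beta^+)$ via Lemma~\ref{lem:J-dynamics}(a) rests on the claim that the signed itinerary of a signed discontinuity is preserved under small perturbation. That claim is false in general: if $T^{j}(\beta^+)$ lands exactly on some other discontinuity $\betas$, the signed convention tells you to use the branch to the right of $\betas$, but after perturbation $\Tilde T^{j}(\Tilde\beta^+)$ may sit strictly to the \emph{left} of $\Tilde\betas$ and hence switch branches. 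Ruling this out is precisely what conditions A1 and A2 are for, and the identification you are aiming at is the content of Proposition~\ref{prop:stab-return-map}, proved later under those extra hypotheses. For the present lemma you should simply drop this paragraph.

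Your third paragraph is essentially right, but the sentence ``since $\beta\in\overline X$ the continuation $\Tilde\beta$ cannot escape $\overline X(\Tilde T)$'' needs one more line: a constant count in $I\setminus\overline X(\Tilde T)$ does not by itself forbid a swap. You must first observe (by Hausdorff-closeness and continuity of the $\beta$-coordinates) that any discontinuity originally at definite distance from $\overline X$ remains outside $\overline X(\Tilde T)$; then the constant count forces those originally in $\overline X$ to remain in $\overline X(\Tilde T)$, and the $\delta/3$ pairing places $\Tilde\beta$ in the correct $\Tilde J_i$.
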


\begin{lemma}
\label{lem:loc-disc}
Each sufficiently small perturbation $\Tilde{T}$ of a stable map $T$ has the same number of discontinuities in each of the following sets:
\begin{enumerate}
    \item Interior of the non-wandering set: $\text{int}(X)$.
    \item Interior of the complement of the non-wandering set: $I \setminus \overline{X}$.
    \item Boundary of the non-wandering set: $\partial X$.
\end{enumerate}
Moreover, each continuation $\Tilde{J}$ of an interval $J$ of $X$ contains the continuations of discontinuities in $J$. \qed
\end{lemma}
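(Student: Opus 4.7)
The plan is to combine a simple counting principle with Lemma~\ref{lem:conseq-stab} and the second clause of Definition~\ref{defn:stable}. Since any map in $\ITM(r)$ has exactly $r-1$ geometric discontinuities, writing $a$, $b$, $c$ for the number of discontinuities of $T$ lying respectively in $\text{int}(X)$, $\partial X$, and $I\setminus\overline{X}$, I have $a+b+c=r-1$ and the analogous identity $\tilde{a}+\tilde{b}+\tilde{c}=r-1$ for every perturbation $\tilde{T}$ in a small enough neighbourhood $\mathcal{U}$ of $T$. The stability hypothesis directly gives $\tilde{c}=c$ on $\mathcal{U}$. It therefore suffices to establish the two one-sided inequalities $\tilde{a}\geq a$ and $\tilde{b}\geq b$; equality in both then follows by subtraction.

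To prove $\tilde{a}\geq a$, I would take any discontinuity $\beta\in\text{int}(X)$, so that $\beta$ lies in the interior of some component $J=[x^+,y^-]$ of $X$. By Lemma~\ref{lem:conseq-stab}, the continuation $\tilde{\beta}$ lies in $\tilde{J}=[\tilde{x}^+,\tilde{y}^-]$, whose boundary is exactly $\{\tilde{x},\tilde{y}\}$. Because $\beta\neq x,y$ as geometric points and the continuations $\tilde{\beta},\tilde{x},\tilde{y}$ depend continuously on the parameter of $\tilde{T}$, after shrinking $\mathcal{U}$ if necessary we have $\tilde{\beta}\neq\tilde{x},\tilde{y}$, and hence $\tilde{\beta}\in\text{int}(\tilde{J})\subset\text{int}(\tilde{X})$. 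Distinct discontinuities of $T$ have distinct continuations (the map $\beta_s\mapsto\tilde{\beta}_s$ is an affine perturbation of the identity on the parameter coordinates, as in Subsection~\ref{subsec:prod}), so this yields an injection from the $a$ interior discontinuities of $T$ into those of $\tilde{T}$, giving $\tilde{a}\geq a$.

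The argument for $\tilde{b}\geq b$ is strictly parallel: if $\beta\in\partial X$, then $\beta$ is the geometric point underlying a boundary point of some component $J$ of $X$, and Lemma~\ref{lem:conseq-stab} asserts that $\tilde{\beta}$ is the corresponding boundary point of $\tilde{J}$, hence $\tilde{\beta}\in\partial\tilde{X}$; injectivity of continuation gives $\tilde{b}\geq b$. Combined with the first paragraph this forces $\tilde{a}=a$ and $\tilde{b}=b$, proving the three claimed equalities. The ``Moreover'' clause is immediate from the last sentence of Lemma~\ref{lem:conseq-stab} and requires no further work. I do not foresee any real obstacle here: the statement is essentially a bookkeeping consequence of the preceding lemma together with the constancy of the total number of discontinuities.
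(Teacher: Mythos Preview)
Your counting strategy and the inequality $\tilde a\ge a$ are correct, and this is essentially the argument the paper has in mind (the paper gives no proof at all, marking the lemma with \qed and saying it follows immediately from Hausdorff closeness and the homeomorphism $\overline{X}(\tilde T)\cong\overline{X}(T)$). However, your step $\tilde b\ge b$ has a genuine gap. You write that Lemma~\ref{lem:conseq-stab} ``asserts that $\tilde\beta$ is the corresponding boundary point of $\tilde J$'', but it does not. That lemma says the boundary point $x$ of $J$ has a continuation $\tilde x$, \emph{defined} as the boundary point of the Hausdorff-close interval $\tilde J$; this is a dynamically determined point (an iterate of some discontinuity under $\tilde T$, cf.\ Lemma~\ref{lem:J-dynamics}), and there is no reason it should coincide with the perturbed parameter $\tilde\beta$ even when $\beta=x$ as geometric points. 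So you have not shown that a discontinuity on $\partial X$ remains on $\partial\tilde X$.

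The fix is symmetry. Every $\tilde T$ in the stability neighbourhood $\mathcal U$ is itself stable (take the same $\mathcal U$ as its neighbourhood), so you can rerun your correct argument for $\tilde a\ge a$ with the roles of $T$ and $\tilde T$ swapped to obtain $a\ge\tilde a$, hence $\tilde a=a$; then $\tilde b=b$ follows from $\tilde a+\tilde b=a+b$. Equivalently, first prove $\tilde c\ge c$ directly (a discontinuity in the open set $I\setminus\overline X$ stays in $I\setminus\overline{\tilde X}$ by the same Hausdorff-closeness reasoning you used for $\tilde a\ge a$); together with $\tilde a\ge a$ and $\tilde c=c$ this forces $\tilde b\le b$, and symmetry gives the reverse inequality.
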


Let us now give a non-trivial example of a stable map, shown in figure \ref{fig:stab-map} below.

\begin{example}[Stable map]
\label{ex:stable-map} 
Assume that $r=3$ and that the interval $J = [T^2(\beta_2^+),T(\beta_2^-)]$ 
contains $\beta_2$ in its interior. Let $J^-=[T^2(\beta_2^+),\beta_2^-]$ and $J^+=[\beta_2^+,T(\beta_2^-)]$.
Assume the following about the orbits of $\beta_2^+$ and $\beta_2^-$:

\begin{itemize}
    \item $T(\beta_2^+)\in I_1$ and $T^2(\beta_2^+) \in I_2$;
    \item $T(\beta_2^-)\in I_3$ and $T^2(\beta_2^-)\in I_1$;
\end{itemize} 
Then:

\begin{itemize}
    \item $T^3(\beta_2^+) = \beta_2^+  + \gamma_3 + \gamma_1 + \gamma_2$;
    \item $T^3(\beta_2^-) = \beta_2^-  + \gamma_2 + \gamma_3 + \gamma_1$,
\end{itemize}
so $T^3(\beta_2^+)\sim T^3(\beta_2^-)$. If we additionally assume that $T^3(\beta_2^+), T^3(\beta_2^-) \in J$, then, for a suitable choice of parameters as in the figure below, $T$ is stable and we have that $X(T)$ is equal to the orbit of $J$. Indeed, the itineraries of $\beta_2^+$ and $\beta_2^-$ remain the same for all nearby maps $\Tilde{T}$, which means that the interval $\Tilde{J} = [\Tilde{T}^2(\Tilde{\beta}_2^+),\Tilde{T}(\Tilde{\beta}_2^-))$ is a continuation of $J$. 
\end{example} 

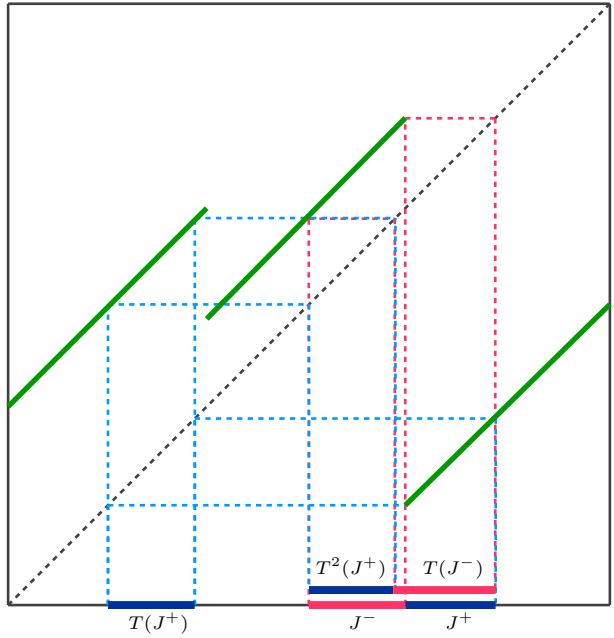
\begin{figure}[h]
\label{fig:stab-map}
\begin{center}
\definecolor{qqzzff}{rgb}{0.,0.6,1.}
\definecolor{qqttzz}{rgb}{0.,0.2,0.6}
\definecolor{ffttww}{rgb}{1.,0.2,0.4}
\definecolor{uququq}{rgb}{0.25,0.25,0.25}
\definecolor{qqzzqq}{rgb}{0.,0.6,0.}
\begin{tikzpicture}[line join=round,x=8.0cm,y=8.0cm]
\clip(-0.1,-0.1) rectangle (1.1,1.1);
\draw [line width=1.0pt,color=uququq] (0.,0.)-- (1.,0.);
\draw [line width=1.0pt,color=uququq] (0.,1.)-- (0.,0.);
\draw [line width=1.0pt,dash pattern=on 2pt off 2pt,color=uququq] (0.,0.)-- (1.,1.);
\draw [line width=1.0pt,color=uququq] (1.,0.)-- (1.,1.);
\draw [line width=1.0pt,color=uququq] (1.,1.)-- (0.,1.);
\draw [line width=1.0pt,dash pattern=on 2pt off 2pt,color=ffttww] (0.4996686055531796,0.)-- (0.49966860555317955,0.6425257484103225);
\draw [line width=1.0pt,dash pattern=on 2pt off 2pt,color=ffttww] (0.49966860555317955,0.6425257484103225)-- (0.6425257484103225,0.6425257484103225);
\draw [line width=1.0pt,dash pattern=on 2pt off 2pt,color=ffttww] (0.66,0.)-- (0.66,0.8095238095238095);
\draw [line width=1.0pt,dash pattern=on 2pt off 2pt,color=ffttww] (0.66,0.8095238095238095)-- (0.8095238095238095,0.8095238095238095);
\draw [line width=1.0pt,dash pattern=on 2pt off 2pt,color=ffttww] (0.8095238095238095,0.8095238095238095)-- (0.8095238095238095,0.024594454816063052);
\draw [line width=1.0pt,dash pattern=on 2pt off 2pt,color=qqzzff] (0.166,0.)-- (0.166,0.166);
\draw [line width=1.0pt,dash pattern=on 2pt off 2pt,color=qqzzff] (0.166,0.166)-- (0.66,0.166);
\draw [line width=1.0pt,dash pattern=on 2pt off 2pt,color=qqzzff] (0.3102479928758915,0.)-- (0.3102479928758915,0.3102479928758915);
\draw [line width=1.0pt,dash pattern=on 2pt off 2pt,color=qqzzff] (0.3102479928758915,0.3102479928758915)-- (0.8102479928758916,0.3102479928758915);
\draw [line width=1.0pt,dash pattern=on 2pt off 2pt,color=qqzzff] (0.8102479928758916,0.3102479928758915)-- (0.8102479928758916,0.);
\draw [line width=1.0pt,dash pattern=on 2pt off 2pt,color=ffttww] (0.6425257484103225,0.6425257484103225)-- (0.6425257484103224,0.024594454816063052);
\draw [line width=1.0pt,dash pattern=on 2pt off 2pt,color=qqzzff] (0.166,0.)-- (0.166,0.5);
\draw [line width=1.0pt,dash pattern=on 2pt off 2pt,color=qqzzff] (0.3102479928758915,0.)-- (0.3102479928758915,0.6435813262092248);
\draw [line width=1.0pt,dash pattern=on 2pt off 2pt,color=qqzzff] (0.3102479928758915,0.6435813262092248)-- (0.6435813262092248,0.6435813262092248);
\draw [line width=1.0pt,dash pattern=on 2pt off 2pt,color=qqzzff] (0.6435813262092248,0.024594454816063052)-- (0.6435813262092248,0.6435813262092248);
\draw [line width=1.0pt,dash pattern=on 2pt off 2pt,color=qqzzff] (0.166,0.5)-- (0.5,0.5);
\draw [line width=1.0pt,dash pattern=on 2pt off 2pt,color=qqzzff] (0.5,0.5)-- (0.5,0.024594454816063052);
\draw [line cap=square,line width=3pt,color=qqttzz] (0.66,0.)-- (0.81,0.);
\draw [line cap=square,line width=3pt,color=qqttzz] (0.5,0.025)-- (0.64,0.025);
\draw [line cap=square,line width=3pt,color=qqttzz] (0.166,0.)-- (0.31,0.);
\draw [line cap=square,line width=3pt,color=ffttww] (0.64,0.025)-- (0.81,0.025);
\draw [line cap=square,line width=3pt,color=ffttww] (0.5,0.)-- (0.66,0.);
{\scriptsize
\draw (0.59,0.0) node[anchor=north] {$J^-$};
\draw (0.75,0.0) node[anchor=north] {$J^+$};
\draw (0.25,0.0) node[anchor=north] {$T(J^+)$};
\draw (0.57,0.09) node[anchor=north] {$T^2(J^+)$};
\draw (0.74,0.09) node[anchor=north] {$T(J^-)$};
}
\draw [line width=2pt,color=qqzzqq] (0.66,0.166)-- (1.,0.5);
\draw [line width=2pt,color=qqzzqq] (0.,0.33)-- (0.33,0.66);
\draw [line width=2pt,color=qqzzqq] (0.33,0.476)-- (0.66,0.81);
\end{tikzpicture}
\caption{{A particular map as in Example ~\ref{ex:stable-map} with $\beta_0=0$, $\beta_1=1/3$, $\beta_2=2/3$, $\beta_3=1$, $\gamma_1=1/3$, $\gamma_2=1/7$, $\gamma_3=-1/2$. The intervals $J^-$ and $T(J^-)$ are shown in pink and the intervals $J^+$, $T(J^+)$, $T^2(J^+)$ are drawn in blue.}}
\label{fig3}
\end{center}
\end{figure}

The definition of stability mainly concerns the topology and the location of the non-wandering set. Thus the dynamical meaning of stability is unclear from the definition, and the dynamical information retained by making a small perturbation of a stable map is relatively soft. We would therefore like to characterize stability in terms of more concrete dynamical properties. Intuitively speaking, three things can happen for an arbitrarily small perturbation $\Tilde{T}$ of a map $T$ that can break stability:

\begin{enumerate}
    \item A definite part of $X$ gets removed, i.e. $\tilde{X}$ is smaller (either by size or by the number of intervals) than $X$ by a definite amount, so the mapping $\overline{X}$ is not lower semi-continuous.
    \item A definite new part gets added to $X$, i.e. $\Tilde{X}$ is bigger (either by size or by the number of intervals) than $X$ by a definite amount, so the mapping $\overline{X}$ is not upper semi-continuous.
    \item Discontinuities that were in $X$ are not in $\Tilde{X}$. 
\end{enumerate}
We are therefore looking for dynamical properties that guarantee that none of these three things can happen for an arbitrarily small perturbation of $T$.
To guarantee that the first thing does not happen, we can ask for the dynamics of the full orbit of every interval $J$ of $X$ to remain the same after perturbation (as in the example above). For the third, we can simply require that there are no discontinuities in the boundary of $X$ (assuming the first and the second things do not happen). The second one is the most elusive, and we will need to discuss it a bit before giving the definitions of the required dynamical properties.

The following example illustrates the simplest way in which the set $X$ can become larger.

\begin{example}
\label{ex:ghost-preimage}
Assume that there are two discontinuities $\betas$ and $\betass$ for which there exist $k_1 > 0$ and $k_2 > 0$ such that:
\[
T^{k_1}(\betas^+) = \betass^+ \text{ and } T^{k_2}(\betass^-) = \betas^-.
\]
Moreover, assume that none of the discontinuities $\betas^+, \betas^-, \betass^+$ and $\betass^-$ are in $X$ and that there exists a perturbation $\Tilde{T}$ of $T$ such that:
\begin{itemize}
    \item The itinerary of $\betas^+$ remains the same up to time $k_1$ and at time $k_1$ it lands $\epsilon$-distance to the left of $\betass^-$;
    \item The itinerary of $\betass^-$ remains the same up to time $k_2$ and at time $k_2$ it lands $\epsilon$-distance to the right of $\betas^+$.
\end{itemize}
Note that there exists an interval of points to the right of $\betas^+$ all of which have the same itinerary as $\betas^+$ up to time $k_1$. The same holds for $\betass^-$ and its itinerary up to time $k_2$. In particular, these intervals of definite size map forward continuously up to times $k_1$ and $k_2$, respectively. By our assumption on the perturbation $\Tilde{T}$, for a sufficiently small $\epsilon$ the interval $[\betas^+,\betas^+ + \epsilon)$ maps forward continuously up to time $k_1 + k_2$. Thus it is periodic with period $k_1 + k_2$ and is thus contained in $\Tilde{X}$. As $\betas^+, \betas^-, \betass^+$ and $\betass^-$ were by assumption not in $X$ and therefore a definite distance away from it, we see that $\overline{X}(\Tilde{T})$ and $\overline{X}(T)$ cannot be close. Thus $X$ can not be a stable map.
\end{example}

This example is the simplest case of a more general phenomenon called \textit{ghost preimages}. We give the definition for a $+$-type discontinuity, with the one for $-$-type being analogous.

\begin{definition}[Ghost preimage]
Let $\betas^+$ a discontinuity of $T$. A discontinuity $\betass^-$ that lands on $\betas^-$ is called a \textit{ghost preimage} of $\betas^+$.
\end{definition}

Thus a ghost preimage of a $+$-type discontinuity $\betas^+$ is a $-$-type discontinuity $\betass^-$ that lands on $\betas^-$. The definition can easily be extended to a $-$-type discontinuity. The motivation for the name is the following: $\betass^-$ is \textit{almost} a preimage of $\betas^+$, i.e. by an arbitrarily small perturbation we can make the iterate $T^k(\betass^-)$ land to the right of $\betas^+$, thus creating an actual preimage of $\betas^+$.

%Note that the relationship $\beta_2$ is a ghost preimage of $\beta_1$ is injective for $\beta_2$, i.e. $\beta_2$ has at most one discontinuity $\beta_1$ whose ghost preimage it is. This is because a ghost preimage of a discontinuity $\beta$ has to have the property that it does not land on any other discontinuity before coming landing on the other side of $\beta$.

In Example \ref{ex:ghost-preimage} we have that $\betas^+$ is a ghost preimage of $\betass^-$ and that $\betass^-$ is a ghost preimage of $\betas^+$ as well. This is the property that allows for the enlargement of $X$, and it motivates the following definition:

\begin{definition}[Ghost tree]
\label{def:ghost-tree}
Let $\beta$ be a discontinuity of $T$. The \textit{ghost tree} $\mathcal{GT}(\beta)$ of $\beta$ is defined inductively in the following way. Set $\beta$ to be the root of the tree, i.e. the set of level $0$ vertices of the tree. Assume that we have defined all of the vertices of level $\le n$ and all of the edges between them. Then the level $n+1$ vertices of the tree correspond to the set of all ghost preimages (if such exist) of the discontinuities corresponding to level $n$ vertices of the tree. The new edges are those between discontinuities and their ghost preimages, i.e. a new direct edge $\betas \to \betass$ is added if and only if $\betas$ is a level $n+1$ vertex, $\betass$ is a level $n$ vertex and $\betas$ is a ghost preimage of $\betass$. 
\end{definition}

Thus $\mathcal{GT}(\beta)$ is a directed tree, with well-defined levels, and edges exist only between vertices of consecutive levels. If $\betas \to \betass$, then we say that $\betass$ is a \textit{parent} of $\betas$ and that $\betas$ is a \textit{child} of $\betass$. The types of discontinuities alternate between consecutive levels and the tree can have finitely or infinitely many levels. The following simple lemma gives a criterion for when a ghost tree is infinite:

\begin{lemma}[Infinite ghost trees]
\label{lem:inf-gt}
The ghost tree $\mathcal{GT}(\beta)$ of some discontinuity $\beta$ is infinite if $\beta$ appears as a vertex in $\mathcal{GT}(\beta)$ at some level $n > 0$.
\end{lemma}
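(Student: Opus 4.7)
The plan is to exploit the observation that ``is a ghost preimage of'' is a relation between discontinuities of $T$ alone, independent of which ghost tree we place them in. Hence whenever a discontinuity $\betas$ labels a vertex at level $k$ of $\mathcal{GT}(\beta)$, its children at level $k+1$ are exactly the ghost preimages of $\betas$ in the sense of the dynamics of $T$; in particular, the set of children depends only on the label $\betas$ and not on $k$.

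Suppose now that some vertex at level $n > 0$ of $\mathcal{GT}(\beta)$ carries the label $\beta$. The unique path in $\mathcal{GT}(\beta)$ from this vertex up to the root produces a sequence of labels $v_0, v_1, \dots, v_n$ with $v_0 = v_n = \beta$, where $v_i$ labels a vertex at level $i$ and, by the edge rule in Definition~\ref{def:ghost-tree}, $v_i$ is a ghost preimage of $v_{i-1}$ for each $1 \le i \le n$. The first step is to record this closed chain
\[
\beta = v_0, \; v_1, \; v_2, \; \dots, \; v_{n-1}, \; v_n = \beta,
\]
and to observe that, for each $0 \le i < n$, it supplies a specific ghost preimage $v_{i+1}$ of $v_i$ (with indices read cyclically modulo $n$).

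Next I would build an infinite branch of $\mathcal{GT}(\beta)$ by unwinding this cycle. Starting at the root $v_0 = \beta$ and descending through $v_1, v_2, \dots, v_n = \beta$, we reach a copy of $\beta$ at level $n$; because the children of any $\beta$-labelled vertex are exactly the ghost preimages of $\beta$, the descent can be continued with $v_1$ at level $n+1$, $v_2$ at level $n+2$, and so on. This produces a vertex of $\mathcal{GT}(\beta)$ at every level of the form $kn + i$ with $k \ge 0$ and $0 \le i < n$, so $\mathcal{GT}(\beta)$ contains vertices at every level and is therefore infinite. There is no serious obstacle here; the only point one must verify is that the ghost preimage relation is intrinsic to the pair of discontinuities involved and does not depend on their position in any tree, which is immediate from Definition~\ref{def:ghost-tree}.
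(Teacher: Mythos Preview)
Your proof is correct and follows essentially the same approach as the paper's: both exploit that the children of a vertex depend only on its label, so once $\beta$ reappears at level $n>0$ the pattern of levels repeats cyclically, yielding an infinite tree. The paper's proof is terser and additionally remarks that the alternation of discontinuity types forces $n\ge 2$, but this is not needed for the conclusion and your more explicit unwinding of the cycle is perfectly fine.
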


Note that this is exactly the case in Example \ref{ex:ghost-preimage}.

\begin{proof}
If $\beta$ appears again as a vertex on some level $n > 0$, then all of the levels that appeared before the second appearance of $\beta$ have to repeat. There is at least one level between these appearances, as the discontinuity type has to change from level to level. Thus the tree must be infinite.
\end{proof}

We are now ready to state the two dynamical properties that characterize stability. All of our statements work for all components $J$ of $X$, except in the special case when no point in the interior of $J$ lands on a discontinuity and the return map $R_J$ to this component is the identity. We will call such components of $X$ \textit{dynamically trivial}, and components not of this form \textit{dynamically non-trivial}.

\begin{definition}[Absence of Critical Connections (ACC)]
\label{defn:acc}
We say that a finite-type $T$ satisfies the \textit{ACC} condition if the following three conditions hold: 
\begin{enumerate}
    \item (A1) For every component $J$ of $X$ and each point $a \in J$, we have that the orbit of $a$ up to and including the return time to $J$ contains at most one critical point of $T$;
    \item (A2) For every dynamically non-trivial component $J$ of $X$, we have that none of the boundary points of $J$ land on discontinuities up to and not including the return time to $J$;
    \item (A3) For every discontinuity $\beta \notin X$, its ghost tree $\mathcal{GT}(\beta)$ does not contain $\beta$.
\end{enumerate}
\end{definition}

The reason for the name is that if any of these properties is violated, then the defining parameters $(\gamma \, \beta)$ of $T$ satisfy a non-trivial linear equation which we call a \textit{critical connection}. Note that these are particular critical connections that we want to avoid and that we do not need to avoid all of them. An easy consequence of A1 and A2 is that there are no discontinuities in the boundary of $X$. Indeed, the preimage in $X$ such a discontinuity is either a discontinuity, which violates A1, or a boundary point of a dynamically non-trivial interval of $X$, which violates A2.

Note that if A1 and A2 hold, then every vertex of the ghost tree $\mathcal{GT}(\beta)$ for a $\beta \notin X$ corresponds to a discontinuity that is not contained in $X$. Indeed, by the above, we have that there are no discontinuities in the boundary. Thus if $\beta_2^-$ is a ghost preimage of $\beta_1^+$, and $\beta_2^-$ is contained in $X$, this means that $\beta_1^+$ is also contained in $X$. Thus every ghost preimage of a $\beta \notin X$ must also not be in $X$. Thus the claim follows by induction. 

\begin{definition}[Matching]
We say that a finite-type $T$ satisfies the \textit{Matching} condition if for every dynamically non-trivial interval $J$ of $X$, we have that exactly one point $a \in J$ lands on a critical point before returning to $J$, and this point $a$ is in the interior of $J$.
\end{definition}

The reason we call this property Matching is that if $a$ is the single point from above, then we must have that:

\begin{enumerate}
    \item $J = [R_J(a^+), R_J(a^-))$;
    \item $R^2_j(a^+) \sim R^2_J(a^-)$,
\end{enumerate}
and thus the second iterates under $R_J$ of $a^+$ and $a^-$ must `match'. An analogous property also called Matching has been discussed in slightly different contexts (see \cite{MR3893724}, \cite{MR3597033}, \cite{MR2422375}). In the case when no point in the interior of $J$ lands on a discontinuity, we know that $J$ is a maximal periodic interval and thus both boundary points of $J$ land on discontinuities. If A1 and A2 hold for $T$, then the boundary points of such intervals only land on discontinuities and are not actually themselves discontinuities of $T$, since there are no discontinuities in the boundary of $X$, by the discussion below Definition \ref{defn:acc}.

With the definitions out of the way, we can now recall the statement of Theorem B:

\begin{theorem}[Characterization of Stability]
\label{thm:stability=accm}
A finite-type interval translation map $T$ is stable if and only if it satisfies the ACC and Matching properties.
\end{theorem}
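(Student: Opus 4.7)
The plan is to prove the two directions of Theorem \ref{thm:stability=accm} separately, using Corollary \ref{cor:lin-indep} as the main tool in both. That corollary provides, for any component $J_0$ of $X$ and any maximal periodic intervals $J_1,\dots,J_n$ with pairwise disjoint orbits, a large linearly independent family of critical-itinerary vectors. This linear independence supplies the freedom to design perturbations that preserve chosen itineraries exactly while breaking other chosen critical equations, which is precisely the mechanism needed to locate or destroy the ACC/Matching conditions.

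For the easier direction, ACC and Matching together imply stability, I would first describe each component $J$ of $X$ explicitly in terms of critical orbits. If $J$ is dynamically non-trivial, Matching provides a unique interior point $a\in J$ landing on some critical point $\beta$, and the return dynamics gives $J = [R_J(a^+), R_J(a^-))$ with the closing-up relation $R_J^2(a^+) \sim R_J^2(a^-)$. If $J$ is dynamically trivial, then $J$ is a maximal periodic interval whose boundary is given by orbits of discontinuities lying outside $X$. Conditions A1 and A2 of ACC rule out any extra critical connections among the relevant orbits, so under a small perturbation $\tilde{T}$ the itinerary of $\tilde{a}$ up to its return time persists, and the interval $\tilde{J} := [\tilde{R}_J(\tilde{a}^+), \tilde{R}_J(\tilde{a}^-))$ is a genuine continuation of $J$ inside $\tilde{X}$. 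Condition A3 rules out the mechanism of Example \ref{ex:ghost-preimage}: no discontinuity outside $X$ lies in its own ghost tree (by Lemma \ref{lem:inf-gt}), so no new periodic interval can be born under perturbation. Together these facts show that $\overline{X}(\tilde{T})$ is homeomorphic to $\overline{X}(T)$, varies Hausdorff-continuously with $\tilde{T}$, and contains the same discontinuities, as required by Definition \ref{defn:stable}.

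For the harder direction I argue by contrapositive: if any of A1, A2, A3, or Matching fails, I produce arbitrarily small perturbations violating one of the three clauses of Definition \ref{defn:stable}. In each case the strategy is to locate the offending critical equation and use Corollary \ref{cor:lin-indep} to construct a perturbation that destroys exactly that one equation while keeping the rest of the skeleton intact. If A1 fails, a return orbit to some $J$ passes through two discontinuities, yielding two critical-connection vectors that can be varied independently; breaking one changes the branch count of $R_J$ and hence the topology of $\overline{X}$. If A2 fails, the boundary point of $J$ is pinned by an extra critical equation that, once broken, moves the boundary by a definite amount, violating continuity. If Matching fails, either $R_J$ has at least two interior critical landings, or the unique critical landing is at a boundary of $J$; in each subcase linear independence gives a perturbation preventing $\tilde{R}_J$ from closing up, so a definite portion of $J$ escapes $\tilde{X}$ and lower semicontinuity of $\overline{X}$ fails.

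The most delicate case is the failure of A3, where some $\beta\notin X$ belongs to its own ghost tree. Here the argument generalizes Example \ref{ex:ghost-preimage}: a ghost cycle $\beta = \beta_{(0)} \to \beta_{(1)} \to \cdots \to \beta_{(k)} = \beta$ corresponds to a sequence of ``almost-landings'' of signed discontinuities onto each other, and using Corollary \ref{cor:lin-indep} I can perturb simultaneously along this cycle so that each almost-landing becomes a genuine landing with a prescribed side. Composing the linearised landings around the cycle forces a small periodic interval to appear, uniformly away from the old $\overline{X}$; this makes $\overline{X}(\tilde T)$ jump upward discontinuously, violating upper semicontinuity. The main obstacle of the proof is arranging this case analysis so that the exact perturbation required in each failure mode actually lies inside the span guaranteed by Corollary \ref{cor:lin-indep}, and in particular verifying for the A3 case that the perturbations needed at the various levels of the ghost tree are compatible and can be chosen simultaneously without forcing any unintended critical connection elsewhere.
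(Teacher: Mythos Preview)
Your treatment of the direction ``ACC + Matching $\Rightarrow$ stability'' and of the A3 failure case is essentially the paper's argument, and is fine.

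The gap is in your contrapositive strategy for A1, A2, and Matching. You assert that if A1 fails, breaking one of the two critical-connection vectors ``changes the branch count of $R_J$ and hence the topology of $\overline{X}$'', and that if Matching fails, a suitable perturbation makes ``a definite portion of $J$ escape $\tilde X$''. Neither implication is justified. If $a\in J$ lands on $\beta_1$ then $\beta_2$ and you nudge so that $\tilde\beta_1^+$ misses $\tilde\beta_2$ by $\epsilon$, you create a new branch point $a'$ of $R_{\tilde J}$ at distance $O(\epsilon)$ from $a$; the return map now has one more branch, but $J$ can still be a component of $\tilde X$ with $\tilde R_J$ bijective, so none of the clauses in Definition~\ref{defn:stable} is violated. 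Likewise, breaking a closing-up equation by $\epsilon$ produces an $\epsilon$-sized gap in $\tilde R_J(\tilde J)$; it is not clear without further argument that this forces $\overline{X}(\tilde T)$ to fail to be Hausdorff-close to, or homeomorphic to, $\overline{X}(T)$. The stability definition concerns $\overline{X}$, not the combinatorics of $R_J$, and the link between the two is precisely what is delicate.

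The paper avoids this by reversing the logic. It first shows (Proposition~\ref{prop:stab-return-map}) that stability together with A1 and A2 forces the entire return-map skeleton (branch points, itineraries, permutation) to persist for all nearby $\tilde T$. The closing-up equations then hold on an open set of parameters, so their differentials vanish identically; this yields linear dependences among the return and critical-connection vectors that contradict Corollary~\ref{cor:lin-indep} unless $N_0\le 2$, and a further computation rules out $N_0=2$ with nontrivial permutation. This gives Matching. For A1 and A2 themselves the paper argues indirectly: perturb $T$ to a rationally independent $\tilde T$ (which automatically satisfies ACC), apply the Matching result just proved to $\tilde T$, and then use the structural Lemmas~\ref{lem:indep-orb} and~\ref{lem:triv-int-struct} about ACC+Matching maps to transfer the conclusions back to $T$ via the stability of $X$. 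Your proposal does not contain an analogue of Proposition~\ref{prop:stab-return-map}, and without it the contrapositive perturbations you describe do not land a contradiction with Definition~\ref{defn:stable}.
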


The proof of Theorem \ref{thm:stability=accm} is spread over the next three subsections. The outline of the proof is as follows. For the `only if' direction, we will show that a stable map $T$ that already satisfies A1 and A2 must also satisfy Matching. We will then show that any nearby map must also satisfy A1 and A2, which shows that any stable map must satisfy A1 and A2. If a stable map does not satisfy A3, we will be able to produce an explicit perturbation of the map that increases the non-wandering set by a definite size, which gives a contradiction. Putting all of this together shows that stability implies ACC and Matching. For the other direction, we will show that the ACC and Matching properties are very rigid, and that therefore any map close to such a map must also satisfy these properties, giving stability.

\subsection{Stability, A1 and A2 imply Matching}

Recall that $k_s(x)$ is the number of entries of the orbit of $x$ to the interval $I_s$ up to, but not including, time $k$:
\[
k_s(x) \coloneqq \# \{ T^j(x) \in I_s \text{ for } 0 \le j < n \}.
\]
With this definition, we have the following explicit formula for any iterate of $x$:

\[
T^k(x) = x + \sum_{s=1}^r k_s(x) \gamma_s.
\]
We now recall the notation for the dynamics of a return map $R_J$ to $J$, from the beginning of Section 4. The notation is slightly changed, so that it better fits the notation of this section. Let $J = [x,y)$ be a dynamically non-trivial interval of $X$ and let $a_1, a_2, \dots a_{N-1}$ be the points in $J$ which land on discontinuities before returning to $J$ and let $l_1, l_2, \dots, l_N$ be the corresponding landing times on critical points. More precisely, let $l_i \ge 0$ be the smallest $l_i$ such that $T^{l_i}(a_i) = \beta_{J,i} \in \mathcal{C}$. By the A1 assumption, $l_i$ is unique. Let $J_1 = [x,a_1^-), J_2 = [a_1^+, a_2^-), \dots, J_N = [a_{N-1}^+,y)$, and let $k_1, k_2, \dots, k_{N}$ be the return times of these intervals, i.e. $R_J(J_i) = T^{k_i}(J_i)$. Note that we must have $l_i < k_i$ and $l_i < k_{i+1}$. We will also use the labels $a_0^+ \coloneqq x$ and $a_{N+1}^- = y$, but these points do not land on discontinuities by A2.

Let us note that we consider all $a$'s in $J$ which land on critical points. For simplicity, we choose to use the term `discontinuity of $R_J$' as a shorthand for `the points in $J$ which land on discontinuities of $T$ before returning to $J$', even though the return map might not actually be discontinuous at these points. Likewise, we will refer to the intervals $J_1, \dots, J_{N}$ as the `continuity intervals of $R_J$'.

It is convenient to associate to each $J$ a permutation $\sigma_J$ which corresponds to the order on $J$ in which the subintervals return to $J$. Thus if $d=2$ and the order in which the intervals return is: $R_J(J_3) \; R_J(J_2) \; R_J(J_1)$, then $\sigma_J = (3 2 1)$. 

Let $\tau_J = (i_1, i_2, \dots, i_{N})$ be the inverse of $\sigma_J$, so that the order of the images under the return map is $R_J(J_{i_1}) \; R_J(J_{i_2}) \dots R_J(J_{i_{N}})$. Then the following equations must be satisfied:

\[
\begin{array}{cc}
\label{eq:RJeq}
     &  R_J(a_{i_1}^-) \sim R_J(a_{i_2-1}^+) \\
     &  R_J(a_{i_2}^-) \sim R_J(a_{i_3-1}^+) \\
     & \dots \\
     & R_J(a_{i_{N-1}}^-) \sim R_J(a_{i_{N}-1}^+).
\end{array}
\]
These equations are central to the stability of the interval $J$, i.e. to the property that $J$ always has a continuation $\Tilde{J}$ for any sufficiently small perturbation. To show this, we need to analyse what happens when we perturb a map $T$ that is stable and satisfies A1 and A2. By $\Tilde{T}$ we will denote a perturbation of $T$. As before, by $\Tilde{Z}$ we will denote the continuation of any object of interest $Z$, e.g. $\Tilde{\beta}^+$, $\Tilde{X}$, $\Tilde{T}$ and similar.

The main thing that properties A1 and A2 allow is control over finite time itineraries for any sufficiently small perturbation. To make this precise, consider any point $x \in I$ that does not land on a critical point before time $n$. Let $s(x,i)$, for $0 < i < n$, be index such that  the iterate $T^i(x)$ is contained in $I_{s(x,i)}$. Associated to each iterate $T^i(x)$ we therefore have the following two vectors:

\begin{align*}
v^{i,+} &:= \left(\sum_{1 \le s \le r} i_s(x) \, \bm{e}_s, \,  -\bm{f}_{s(x,i)-1}\right); \\
v^{i,-} &:= \left(\sum_{1 \le s \le r} i_s(x) \, \bm{e}_s, \, -\bm{f}_{s(x,i)}\right).
\end{align*}

Thus we have the following formula for the distance between an iterate of $x$ and the critical points in $I_{s(x,i)}$:

\begin{align*}
T^i(x) - \beta_{s(x,i)-1}^+ = x + \langle v^{i,+}, (\gamma \, \beta) \rangle; \\
\beta_{s(x,i)}^- - T^i(x) = -\langle v^{i,+} , (\gamma \, \beta) \rangle - x.
\end{align*}

By the assumption on time $n$, all of the quantities above are positive for $0 < i < n$. Thus they remain positive for all sufficiently small perturbation $\Tilde{T}$ of $T$, because they clearly depend continuously on $(\gamma \, \beta)$. This means that the itinerary of $x$ up to time $n$ remains the same for all sufficiently small perturbations. Note that this in fact also holds for a small interval of definite size around this point. This follows immediately from the fact that if some orbit up to time $n$ of some point $x$ does not land on a discontinuity, then the same holds true for an interval of definite size (depending on $n$) around $x$.

We would like to have the same conclusions for itineraries of critical points. For this we have to slightly change the argument because the points themselves change under perturbation. Assume that some discontinuity $\beta$ does not land on a discontinuity before time $n$. Then analogously as above we define $s(\beta,i)$, for $0 < i < n$, to be index such that  the iterate $T^i(\beta)$ is contained in $I_{s(\beta,i)}$. Again, we have the following vectors:

\begin{align*}
v^{i,+}(\beta) &:= \left(\sum_{1 \le s \le r} i_s(\beta) \, \bm{e}_s, \, \bm{f}_{\text{ind}(\beta)} -\bm{f}_{s(x,i)-1}\right); \\
v^{i,-}(\beta) &:= \left(\sum_{1 \le s \le r} i_s(\beta) \, \bm{e}_s, \, \bm{f}_{\text{ind}(\beta)} -\bm{f}_{s(x,i)}\right),
\end{align*}

and the following formula for the distance from the critical points in $I_{s(x,i)}$:

\begin{align*}
T^i(\beta) - \beta_{s(x,i)-1}^+ = \langle v^{i,+}, (\gamma \, \beta) \rangle; \\
\beta_{s(x,i)}^- - T^i(\beta) = -\langle v^{i,+} , (\gamma \, \beta) \rangle .
\end{align*}

Thus by the same argument as before, we have that the itinerary of $\beta$ up to time $n$ does not change for all sufficiently small perturbations.

We can analogously do this for the backward itinerary of any point $x$ or discontinuity $\beta$, so we get that they also persist under sufficiently small perturbations. Moreover, we can use ACC2 in the same way to prove that the itinerary of every boundary point of $X$ also remains the same after a sufficiently small perturbation and that this also holds on a neighbourhood of definite size around these points.

We will now put together these observations to prove that the structure of the entire return map $R_J$ to any interval $J$ of $X$ remains stable for a sufficiently small perturbation. More precisely, we will prove that the number of points that land on discontinuities remains the same, that the images of $J_1, \dots, J_N$ under $R_J$ remain in the same order and that their return times remain the same. We first give a lemma that eliminates some annoying cases in the proof of Proposition \ref{prop:stab-return-map}.

\begin{lemma}
\label{lem:not-fixed}
Let $T$ be a stable map that also satisfies property A2. Then we must have that have that $R_J(J_1) \neq J_1$ and $R_J(J_{N}) \neq J_{N}$, for any dynamically non-trivial interval $J$ of $X$. In other words, the return map cannot fix the boundary sub-intervals. 
\end{lemma}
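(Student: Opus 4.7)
The plan is to argue by contradiction and exploit stability to perturb the translation amount on $J_1$, then show that the resulting return map fails to be a bijection on the continuation of $J$. Suppose $R_J(J_1)=J_1$. Since $R_J$ restricted to $J_1$ is a translation that maps $J_1$ onto itself, the translation amount $c_1$ must vanish. Writing $c_1=\langle v^*,(\gamma\,\beta)\rangle$ with $v^*=\sum_{s=1}^r (k_1)_s(x^+)\bm{e}_s$ the itinerary vector of $x^+$ through $k_1$ iterates, we have $v^*\neq 0$ because $J$ is dynamically non-trivial ($k_1\geq 1$, so at least one count is positive). I would then pick a perturbation direction $\delta$ realising $\langle v^*,\delta\rangle=\epsilon$ for an arbitrarily small $\epsilon>0$; this is possible because the constraint $\langle v^*,\delta\rangle=0$ is codimension one.

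The next step is to transfer the combinatorial structure of $R_J$ to the perturbed return map $\tilde R_{\tilde J}$. By Lemma~\ref{lem:conseq-stab}, the continuation $\tilde J$ of $J$ is a component of $\tilde X$ whose left boundary is $\tilde x^+=\tilde T^{l_1}(\tilde\beta^+)$. The product-notation formulas show that, in the absence of new critical collisions, the itineraries of every point of $\tilde J$ up to the return time to $\tilde J$ agree with the pre-perturbation itineraries. Since A1 and A2 hold for $T$, they persist under small perturbations, so the points $\tilde a_1,\dots,\tilde a_{N-1}$ are exactly the points of $\tilde J$ that land on discontinuities of $\tilde T$, with the same landing times $l_i$ and return times $k_i$. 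Hence the continuity intervals of $\tilde R_{\tilde J}$ are the continuations $\tilde J_1,\dots,\tilde J_N$, and, by continuity of positions for small $\delta$, the left-to-right order of the images is preserved; in particular $\sigma_{\tilde J}=\sigma_J$, so $\tilde R_{\tilde J}(\tilde J_1)$ remains the leftmost image in $\tilde J$.

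The contradiction then follows from geometry. The new translation amount on $\tilde J_1$ is $\tilde c_1=c_1+\langle v^*,\delta\rangle=\epsilon>0$, so $\tilde R_{\tilde J}(\tilde J_1)=\tilde J_1+\epsilon$ has left endpoint $\tilde x^++\epsilon$, strictly to the right of $\tilde x^+$. But $\tilde T$ restricted to $\tilde X$ is a bijection and $\tilde J$ is a component of $\tilde X$, so $\tilde R_{\tilde J}$ is a bijection of $\tilde J$ and its images partition $\tilde J$; the leftmost image must therefore begin at $\tilde x^+$. For $\delta$ sufficiently small, the other images $\tilde R_{\tilde J}(\tilde J_i)$, $i\geq 2$, stay within $O(\delta)$ of their pre-perturbation positions, and those positions are separated from $x^+$ by at least the length of $J_1$. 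Consequently the strip $[\tilde x^+,\tilde x^++\epsilon)$ is covered by no image, contradicting surjectivity of $\tilde R_{\tilde J}$. The case $R_J(J_N)=J_N$ is handled by the mirror argument using the right-boundary $y^-$ and the itinerary vector of $y^-$ over $k_N$ iterates.

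The main obstacle I foresee is the combinatorial persistence claim $\sigma_{\tilde J}=\sigma_J$ together with the assertion that no new discontinuity of $\tilde R_{\tilde J}$ appears inside $\tilde J$. This is exactly where A1 (no double critical landings before returning to $J$) and A2 (boundary points of $J$ miss $\mathcal{C}$ until their return) do the work: they guarantee that every landing-on-discontinuity event that organises the return map is generic and robust, and that the perturbation does not create a spurious extra landing on a discontinuity nor destroy an existing one before the return. Once that bookkeeping is set up, the contradiction is driven purely by the impossibility of the leftmost image of a partition drifting away from the left boundary of $\tilde J$ while remaining leftmost.
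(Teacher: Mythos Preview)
Your perturbation approach has two genuine problems. First, you invoke A1, but the lemma only assumes stability and A2; at this point of the paper A1 has not yet been established for stable maps (that comes later, in Theorem~\ref{thm:s-acc}). Second, and more seriously, the persistence claim you rely on---that $\tilde J$ has left endpoint $\tilde x^+$, that the continuity intervals $\tilde J_1,\dots,\tilde J_N$ have the same return times, and that $\sigma_{\tilde J}=\sigma_J$---is precisely the content of Proposition~\ref{prop:stab-return-map}. But in the paper's logical order, property~(1) of that proposition (identifying the boundary of $\tilde J$) is proven \emph{using} the present lemma. So your argument, as structured, is circular: you need to know where the left endpoint of $\tilde J$ sits in order to derive the contradiction, yet that identification is exactly what this lemma is later used to justify.

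The paper's proof avoids all of this by not perturbing at all. If $R_J(J_1)=J_1$, then the left boundary point $x^+$ of $J$ is periodic with period $k_1$. By Lemma~\ref{lem:J-dynamics}(a), $x^+=T^m(\beta^+)$ for some discontinuity $\beta^+\in X\cap\mathcal C$. Since $T|_X$ is a bijection and $x^+$ is periodic, pulling back $m$ times shows $\beta^+$ lies on the periodic orbit of $x^+$; hence $x^+$ lands on the discontinuity $\beta^+$ at some time strictly before its return to $J$, contradicting A2 directly. Stability is used only to guarantee that $T$ is of finite type (so that Lemma~\ref{lem:J-dynamics} applies). This is a three-line argument with no perturbation, no A1, and no appeal to the structure of $\tilde R_{\tilde J}$.
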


\begin{proof}
We give the proof for $J_1$, with the one for $J_{N}$ being analogous. Assume the opposite, that $R_J(J_1) = J_1$. Then we have that $R_J(x) = x$, i.e. $x$ is periodic. Since $x = T^m(\beta^+)$, where $\beta \in X$ is a discontinuity of the starting map $T$, and $T\vert_X$ is a bijection, by pulling back with $(T\vert_X)^{-1}$, we see that $x$ must land on $\beta^+$ before it returns to $J$. This is a contradiction with A2. 
\end{proof}

\begin{proposition}[Stability of the return map]
\label{prop:stab-return-map}
Let $T$ be a stable map that also satisfies properties A1 and A2. Let $\Tilde{T}$ be a sufficiently small perturbation of $T$. Let $J = [x, y)$, where $x = T^{m_1}(\beta_{J,\tau(1)-1}^+)$ and $\ = T^{m_2}(\beta_{J,\tau(N)}^-)$, be a dynamically non-trivial interval of $X$ and let $\Tilde{J}$ be its continuation. Then the following properties hold:
\begin{enumerate}
    \item $\Tilde{J} = [\Tilde{T}^{m_1}(\Tilde{\beta}_{J,\tau(1)-1}^+),\Tilde{T}^{m_2}(\Tilde{\beta}_{J,\tau(N)}^-))$;
    \item The only points in $\Tilde{J}$ that land on critical points before they return are $\Tilde{a}_1, \dots, \Tilde{a}_{N-1}$, where $\Tilde{a}_i := \Tilde{T}^{-l_i}(\Tilde{\beta}_{J,i})$ for $1 \le i \le N-1$;
    \item Each $\Tilde{a}_i^{+}$, for $0 \le i \le N-1$, and $\Tilde{a}_i^{-}$, for $1 \le N$, has the same itinerary up to time $k_i$ as before the perturbation;
    \item The return time of each interval $[\Tilde{a}_i^+, \Tilde{a}_{i+1}^-)$ is $k_{i+1}$ (where we use the convention that $\Tilde{a}_0^+ = \Tilde{x}$ and $\Tilde{a}_{d+1}^- = \Tilde{y}$);
    \item The order in $\Tilde{J}$ in which the intervals return is given by the same permutation $\tau = (j_1, j_2, \dots, j_{d+1})$ as for $J$.
\end{enumerate}
\end{proposition}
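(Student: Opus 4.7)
The plan is to combine the finite-time itinerary preservation established immediately before the statement with the topological information provided by stability. Note that Lemma~\ref{lem:not-fixed} together with Lemma~\ref{lem:J-dynamics} justify writing the boundary of $J$ in the form $x = T^{m_1}(\beta_{J,\tau(1)-1}^+)$ and $y = T^{m_2}(\beta_{J,\tau(N)}^-)$. Since $T$ satisfies A1 and A2, each distance vector $v^{i,\pm}$ associated to (a) these two boundary orbits, (b) the orbits of $\beta_{J,i}^{\pm}$ up to their respective return times, and (c) the orbits of the landing points $a_i^{\pm}$ up to time $l_i$, evaluates to a strictly positive number against $(\gamma\,\beta)$. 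Because these pairings depend linearly on the parameter vector, the strict inequalities persist for sufficiently small perturbations $\tilde{T}$, giving uniform control on itineraries and hence on iterate locations up to the relevant finite times.

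For part (1), this control shows that $\tilde{T}^{m_1}(\tilde{\beta}_{J,\tau(1)-1}^+)$ and $\tilde{T}^{m_2}(\tilde{\beta}_{J,\tau(N)}^-)$ track $x$ and $y$ continuously. Lemma~\ref{lem:conseq-stab} provides a unique continuation $\tilde{J}$ that is Hausdorff-close to $J$, and by Lemma~\ref{lem:J-dynamics} applied to $\tilde{T}$ its boundary points are themselves iterates of critical points in $\tilde{X}$. Invoking Lemma~\ref{lem:loc-disc}, no other candidate critical orbit can produce a boundary point within Hausdorff distance of $x$ or $y$, so the two tracked points must \emph{be} the boundary of $\tilde{J}$.

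For parts (2)--(5), I would define $\tilde{a}_i := \tilde{T}^{-l_i}(\tilde{\beta}_{J,i})$; A1 ensures that the segment from $a_i$ to $T^{l_i-1}(a_i)$ contains no stray discontinuity, an open condition under which $\tilde{a}_i$ is uniquely and continuously defined near $a_i$. The itinerary claim (3) splits into the portion up to time $l_i$, where $\tilde{a}_i^\pm$ stays near $a_i^\pm$ and the $v^{i,\pm}$ estimates apply, and the portion from $l_i$ to $k_i$, which is handled by tracking $\tilde{\beta}_{J,i}^\pm$ via A1; claim (4) follows from (3) combined with (1), and (5) follows because the image intervals $\tilde{R}_{\tilde{J}}(\tilde{J}_i)$ are small translations of the original $R_J(J_i)$ and must retain their cyclic order inside $\tilde{J}$. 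For claim (2), the intervals $[\tilde{a}_i^+,\tilde{a}_{i+1}^-)$ partition $\tilde{J}$ by (1), and their $\tilde{T}^{k_{i+1}}$-images have total length equal to $|\tilde{J}|$ and lie inside $\tilde{J}$; since $\tilde{R}_{\tilde{J}}$ is a bijection on $\tilde{J}$, these branches exhaust it. Any extra landing point $\tilde{a}^{\star}$ would have to lie near some $a \in J\setminus\{a_1,\dots,a_{N-1}\}$ whose return-time orbit avoids $\mathcal{C}$, and by compactness of $J\setminus\bigcup_i(a_i-\delta,a_i+\delta)$ together with the $v$-vector argument this avoidance persists under small enough perturbation, ruling out $\tilde{a}^{\star}$.

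The main obstacle is claim (2): ruling out new discontinuities of the return map without yet having access to the full Theorem~B. The resolution relies on the openness of the A1 condition at non-landing points (so no new branches appear away from the already-found $\tilde{a}_i$) combined with the piecewise-isometry counting argument that forces the branches associated to the $\tilde{a}_i$ to tile $\tilde{J}$ exactly. The remaining items are then routine consequences of the itinerary control.
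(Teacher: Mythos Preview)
Your argument for item (1) has a genuine gap, and since your proofs of (2) and (4) rely on (1), the whole chain is affected.

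You write that by Lemma~\ref{lem:loc-disc} ``no other candidate critical orbit can produce a boundary point within Hausdorff distance of $x$ or $y$.'' But Lemma~\ref{lem:loc-disc} only controls the \emph{number} of discontinuities in $\operatorname{int}(\tilde X)$, $I\setminus\overline{\tilde X}$, and $\partial\tilde X$; it says nothing about which \emph{iterate} of which critical point realises a given boundary component. Lemma~\ref{lem:J-dynamics}(a) tells you that $\partial\tilde J$ is of the form $\tilde T^{k}(\tilde\beta^{+})$ for some $\tilde\beta^{+}\in\tilde X\cap\mathcal C$ and \emph{some} $k\ge 0$, with no a~priori bound on $k$. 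In particular, the very critical point $\tilde\beta_{J,\tau(1)-1}^{+}$ has infinitely many forward iterates inside $\tilde X$, several of which may be arbitrarily close to $x$ (the return map is a rotation, so its orbit is typically dense in $\tilde J$). Your continuity/uniqueness argument therefore does not single out the iterate $m_{1}$.

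The paper avoids this by reversing the logical order. It proves (3) first, exactly as you do, and then uses the fact that $\tilde J$ is a \emph{maximal} interval of $\tilde X$ (this is what stability supplies via Lemma~\ref{lem:conseq-stab}) to obtain (2) and (4) directly: any image $\tilde T^{k_{i+1}}(\tilde a_{i}^{+})$ automatically lies in $\tilde X$ by forward invariance, so if it fell just to the left of $\tilde J$ one would get an interval of $\tilde X$ properly containing $\tilde J$, contradicting maximality. This single observation replaces both your length--counting argument and your need to know $\partial\tilde J$ in advance. Once (2)--(5) are in hand, item (1) comes last: by Lemma~\ref{lem:not-fixed} the left endpoint $\tilde x$ equals $\tilde R_{\tilde J}(\tilde a_{j}^{+})$ for some $j>0$, and by (5) this $j$ is $\tau(1)-1$, giving $\tilde x=\tilde T^{m_{1}}(\tilde\beta_{J,\tau(1)-1}^{+})$.

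In short: do not try to identify $\partial\tilde J$ up front. Use maximality of $\tilde J$ in $\tilde X$ to force the return times to be correct, and read off the boundary afterwards.
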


\begin{proof}
We know that the itinerary of $\Tilde{a}_i^{\pm}$ contained in the interior of $\Tilde{J}$ remains unchanged until time $k_i$, when it lands back into $\Tilde{J}$. This comes from the stability of the backward itinerary of $\beta_{J,i}$ until time $l_i$ and the stability of the forward orbit of $\beta_{J,i}$ up to time $k_i$. That the same holds for $\Tilde{a}_0^+$ and $\Tilde{a}_N^-$ follows from the stability of the forward orbit of the boundary points of $X$. Thus the third property follows.

For the second property, assume the contrary: there exists some other point $z \in \Tilde{J}$ that lands on a discontinuity before returning to $\Tilde{J}$. Let $\Tilde{J}_i = [\Tilde{a}_{i-1}^+,\Tilde{a}_{i}^+)$ be the interval that contains $z$. In the case when both $\Tilde{a}_{i-1}^+$ and $\Tilde{a}_{i}^+$ return to $\Tilde{J}$ at time $k_i$, we know that $z$ must land on a discontinuity before time $k_i$. This is clearly impossible, as the itineraries of $\Tilde{a}_{i-1}^+$ and $\Tilde{a}_{i}^+$ must therefore be different, which contradicts the third property. Thus we know that $i = \tau(1)$ or $i = \tau(N)$. We may assume the former, with the proof in the other case being analogous. This means that $\Tilde{a}_{i-1}^+$ and $z$ land outside of $\Tilde{J}$ at time $k_i$. The entire orbit of $\Tilde{J}$ must be contained in $\Tilde{X}$, so in particular the interval $[T^{k_i}(\Tilde{a}_{i-1}^+),\Tilde{a}_0^+]$ must also be contained in $X$. This contradicts the maximality of $\Tilde{J}$, so the second property follows.

In fact, the proof of the second property also implies the fourth. Indeed, if we assume that some interval $[\Tilde{a}_{i-1}^+,\Tilde{a}_{i}^+)$ does not return to $\Tilde{J}$ at time $k_i$, we get a contradiction with the maximality of $\Tilde{J}$. The fifth property follows from this as well.

The first property follows from Lemma \ref{lem:not-fixed}: $\Tilde{a}_0^+$ must be the image under $\Tilde{R_J}$ of some $\Tilde{a}_i^+$ with $i > 0$. This must be the same $i$ as for $T$. The proof for $\Tilde{a}_N^-$ is analogous, so the first property follows.

\end{proof}

Let us state as a corollary the fact that this lemma immediately implies that the equations for the return map still hold after perturbation:

\begin{corollary}
\label{cor:stab-eq}
Let $T$ and $J$ be as in Proposition \ref{prop:stab-return-map}. Then for a sufficiently small perturbation of $T$, all of the following equations still hold:
\[
\begin{array}{cc}
\label{eqn:pert-RJeq}
     &  \Tilde{R}_J(\Tilde{a}_{i_1}^-) \sim \Tilde{R}_J(\Tilde{a}_{i_2-1}^+) \\
     &  \Tilde{R}_J(\Tilde{a}_{i_2}^-) \sim \Tilde{R}_J(\Tilde{a}_{i_3-1}^+) \\
     & \dots \\
     & \Tilde{R}_J(\Tilde{a}_{i_{N-1}}^-) \sim \Tilde{R}_J(\Tilde{a}_{i_{N}-1}^+).
\end{array}
\]
\end{corollary}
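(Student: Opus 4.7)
The plan is to deduce Corollary \ref{cor:stab-eq} directly from the five conclusions of Proposition \ref{prop:stab-return-map}, with essentially no extra work. The point of the corollary is simply to package the geometric consequence of items (1)--(5) of the proposition in the algebraic form that will be convenient later. The main (and only) content is that a bijective piecewise translation of $\tilde{J}$ whose continuity pieces return in the same combinatorial order must have its pieces meet edge-to-edge exactly as before.

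First I would observe that, since $\tilde{J}$ is a component of $\tilde{X}$ and $\tilde{T}$ is a bijection on $\tilde{X}$, the return map $\tilde{R}_J$ is a bijection of $\tilde{J}$ onto itself. By parts (1), (2) and (4) of Proposition \ref{prop:stab-return-map}, the continuity intervals of $\tilde{R}_J$ are exactly $\tilde{J}_1 = [\tilde{x},\tilde{a}_1^-], \tilde{J}_2 = [\tilde{a}_1^+,\tilde{a}_2^-], \dots, \tilde{J}_N = [\tilde{a}_{N-1}^+, \tilde{y}]$, and the return time of $\tilde{J}_i$ equals the original return time $k_i$. By part (5), the order in $\tilde{J}$ in which the images $\tilde{R}_J(\tilde{J}_i)$ appear is given by the same permutation $\tau = (i_1, i_2, \dots, i_N)$.

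Combining these, the images $\tilde{R}_J(\tilde{J}_{i_1}), \tilde{R}_J(\tilde{J}_{i_2}), \dots, \tilde{R}_J(\tilde{J}_{i_N})$ are pairwise disjoint half-open intervals whose union is all of $\tilde{J} = [\tilde{x},\tilde{y})$ and which appear in $\tilde{J}$ in this left-to-right order. Hence the right endpoint of the $j$-th image must coincide (as signed points) with the left endpoint of the $(j+1)$-th image, which is exactly
\[
\tilde{R}_J(\tilde{a}_{i_j}^-) \sim \tilde{R}_J(\tilde{a}_{i_{j+1}-1}^+)
\]
for $j = 1, \dots, N-1$. This is the list of equations asserted by the corollary.

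There is no real obstacle to overcome here; all of the genuine work has already been done in Proposition \ref{prop:stab-return-map} (in particular in using A1, A2, and stability to control itineraries of boundary points, interior preimages of discontinuities, and the combinatorics of the return map). The only thing one needs to verify carefully is that $\tilde{R}_J$ really is a surjection onto $\tilde{J}$, so that the images actually tile $\tilde{J}$ and the adjacency relations force the matching $\sim$ conditions rather than leaving gaps; this is where bijectivity of $\tilde{T}$ on $\tilde{X}$, together with part (1) of the proposition identifying $\tilde{J}$ exactly, is used.
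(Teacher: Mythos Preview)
Your proposal is correct and follows the same approach as the paper, which simply states the corollary as an immediate consequence of Proposition \ref{prop:stab-return-map} without further argument. You have merely spelled out in detail what the paper leaves implicit: that bijectivity of $\tilde{R}_J$ on $\tilde{J}$ together with the preserved combinatorics forces the images to tile $\tilde{J}$ edge-to-edge.
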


With this proposition, we are ready to prove that any dynamically non-trivial interval $J$ of $X$ contains exactly one discontinuity.

\begin{proposition}
\label{prop:exactly-1}
Let $T$ be a stable map that satisfies the ACC condition and let $J$ be a dynamically non-trivial interval of $X$. Then the return map $R_J$ to $J$ contains exactly one discontinuity.
\end{proposition}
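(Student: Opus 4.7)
The plan is to argue by contradiction: assume $N \ge 3$ (so $R_J$ has at least two discontinuities) and derive an inconsistency using Corollary~\ref{cor:stab-eq} together with the linear independence machinery of Section~\ref{sec:lin-indep}. Since $J$ is dynamically non-trivial, Lemma~\ref{lem:not-fixed} forces $N \ge 2$, so the contradiction will pin $N$ down to $2$.

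First, I would translate a single matching equation into a vector identity in $W(r)$. Pick any $j \in \{1,\dots,N-1\}$. By Corollary~\ref{cor:stab-eq}, the equation $\tilde{R}_J(\tilde{a}_{i_j}^-) \sim \tilde{R}_J(\tilde{a}_{i_{j+1}-1}^+)$ persists on an open neighbourhood of $T$, and since both sides are linear functionals of $(\gamma\,\beta)$, a non-zero linear functional cannot vanish on an open set, so this collapses to an identity between coefficient vectors. When $k \in \{1,\dots,N-1\}$ is an interior index, $R_J(a_k^{\pm}) = \langle R_k^{0,\pm}, (\gamma\,\beta)\rangle$; for the right boundary, since $y^- = R_J(a_{\tau(N)}^-)$ is itself an iterate of a discontinuity, one gets $R_J(y^-) = \langle R_{\tau(N)}^{0,-} + R_N^{0,-}, (\gamma\,\beta)\rangle$, using that under A2 the boundary return vector $R_N^{0,-}$ has zero $\bm{f}$-component; symmetrically $R_J(x^+) = \langle R_{\tau(1)-1}^{0,+} + R_0^{0,+}, (\gamma\,\beta)\rangle$. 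The matching equation therefore becomes an identity supported on at most four return vectors with coefficients in $\{-1,0,1\}$.

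Second, I would feed this identity into Theorem~\ref{thm:lin-dep} with all remaining coefficients set to zero. Under A1 one has $m_j^{0,\pm} = 1$ for all $j$, so the critical connection vectors $C^{0,\pm}(j,k)$ are vacuous; under A2 the boundary landing vectors $L_0^0$ and $L_N^0$ are absent. The surviving conclusions of Theorem~\ref{thm:lin-dep} become the chain equalities $\alpha_k^{0,+} = -\alpha_{k+1}^{0,-}$ for $0 \le k \le N-1$ together with the landing equalities $\alpha_j^{0,+} + \alpha_j^{0,-} = 0$ for $1 \le j \le N-1$. A short case analysis on whether $i_j$ and $i_{j+1}-1$ are interior or boundary shows these cannot simultaneously hold when $N \ge 3$: in the generic interior/interior sub-case the landing equality at $k = i_j$ forces $i_j = i_{j+1}-1$, and then the chain equality at $k = i_j$ yields $-1 = 0$; each single-boundary sub-case fails via a chain violation at $k = 0$, $k = N-1$, or $k = \tau(N)-1$ (the last would require $\tau(1) = \tau(N)$, impossible in a permutation); and the doubly-boundary sub-case $i_j = N, \, i_{j+1}-1 = 0$ only closes up when $\tau(1) = N$ and $\tau(N) = 1$, which together force $N = 2$.

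The main technical subtlety is the bookkeeping around the boundary corrections: because $y$ and $x$ are themselves parameter-dependent iterates of discontinuities, any matching equation involving $y^-$ or $x^+$ picks up an extra return vector $R_{\tau(N)}^{0,-}$ or $R_{\tau(1)-1}^{0,+}$. It is precisely this extra content that makes the chain and landing equalities close exactly when $N = 2$ (where the matching combines both boundaries), and forces at least one of them to fail for every larger $N$.
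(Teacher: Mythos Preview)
Your approach is correct and takes a genuinely different route from the paper. The paper argues by \emph{explicit perturbations}: for $N\ge 4$ it picks an interior/interior matching equation, perturbs a single $\beta$-parameter (or a single $\gamma$-parameter in the sub-case $i_j=i_{j+1}-1$) and reads off either $\epsilon=0$ or an equality of branch return vectors $v_{i_j}=v_{i_j+1}$, which contradicts Corollary~\ref{cor:lin-indep}; the leftover case $N=3$, $\sigma=(3\,2\,1)$ is then handled separately by combining both matching equations to obtain $v_1+v_3=v_2$, again contradicting Corollary~\ref{cor:lin-indep}. You instead exploit that the matching equation holds on an open set of parameters (via Corollary~\ref{cor:stab-eq}) to upgrade it directly to a vector identity in $W(r)$, and then feed the resulting linear dependence (with all $L$-, $C$-, and $C_\beta$-coefficients set to zero) into Theorem~\ref{thm:lin-dep}. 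Under A1 and A2 the conclusions of that theorem collapse to the chain equalities $\alpha_k^{0,+}=-\alpha_{k+1}^{0,-}$ and the landing equalities $\alpha_k^{0,+}+\alpha_k^{0,-}=0$ (the latter hold individually for each $k$ because distinct $a_j$'s must land on distinct discontinuities, as $T\vert_X$ is injective). Your case analysis then closes correctly: in particular, for $N=3$ with $\tau=(3,2,1)$ a single matching equation already violates the chain equality at $k=0$, so you avoid the paper's ad hoc treatment of that permutation. What your approach buys is uniformity --- no split on $i_j=i_{j+1}-1$, no separate $N=3$ computation --- at the cost of invoking the full Theorem~\ref{thm:lin-dep} rather than only its corollary; what the paper's approach buys is concreteness, since each contradiction is witnessed by an explicit one-parameter deformation.
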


\begin{proof}
Assume first that the return map has $3$ of more discontinuities. Then there exists an equation:
\[
\Tilde{R}_J(\Tilde{a}_{i_j}^-) \sim \Tilde{R}_J(\Tilde{a}_{i_{j+1}-1}^+)
\]
such that neither $\Tilde{a}_{i_j}^-$ nor $\Tilde{a}_{i_{j+1}-1}^+$ are boundary points of $\Tilde{J}$. Thus we have that :
\[
\Tilde{a}_{i_j}^- + \sum_{s=1}^r (k_{i_j})_s(a_{i_j}^-) \Tilde{\gamma}_s = \Tilde{a}_{i_{j+1}-1}^+ + \sum_{s=1}^r (k_{i_{j+1}})_s(a_{i_{j+1}-1}^+) \Tilde{\gamma}_s
\]
for any sufficiently small perturbation $\delta$. Let us first assume that the discontinuities $\Tilde{a}_{i_j}^-$ and $\Tilde{a}_{i_{j+1}-1}^+$ are different, i.e. that $i_j \neq i_{j+1}-1$.

Now, let the perturbation $\Tilde{T}$ be such that we only change $\beta_{J,i_j}$ to $\Tilde{\beta}_{J,i_j} \coloneq \beta_{i_j} + \epsilon$, where $\epsilon$ is sufficiently small so that Proposition \ref{prop:stab-return-map} applies. By Corollary \ref{cor:stab-eq}, we have that:

\begin{align*}
\Tilde{a}_{i_j}^- + \sum_{s=1}^r (k_{i_j})_s(a_{i_j}^-) \Tilde{\gamma}_s &= \Tilde{a}_{i_{j+1}-1}^+ + \sum_s (k_{i_{j+1}})_s(a_{i_{j+1}-1}^+) \Tilde{\gamma}_s \\
\implies \epsilon &= 0,
\end{align*}
as the only value that changes is $a_{i_j}^-$, since $\Tilde{a}_{i_j}^- := \Tilde{T}^{-l_{i_j}}(\Tilde{\beta}_{J,i_j}^-)$ by definition. This is clearly a contradiction, so we may assume $i_j = i_{j+1}-1$. Then if we make a perturbation that only changes some $\gamma_t$ to $\Tilde{\gamma}_t := \gamma_t + \epsilon$, we get:

\begin{align*}
&\epsilon\left(-(l_{i_j})_t(a_{i_j}^-) + (k_{i_j} - l_{i_j})_t(\beta_{J,i_j}^-)\right) = \\
&\epsilon\left(-(l_{i_{j+1}-1})_t(a_{i_{j+1}-1}^+) + (k_{i_{j+1}} - l_{i_{j+1}-1})_t(\beta_{J,i_{j+1}-1}^+)\right),
\end{align*}
by Corollary \ref{cor:stab-eq}. Since the above holds for all $\epsilon$ and $t$, we get:
\[
(k_{i_j} - l_{i_j})_t(\beta_{J,i_j}^-) = (k_{i_{j+1}} - l_{i_{j+1}})_t(\beta_{J,i_{j+1}-1}^+),
\]
since $(l_{i_j})_t(a_{i_j}^-) = (l_{i_{j+1}-1})_t(a_{i_{j+1}-1}^+)$, as these points have the same itinerary up to time landing time $l_{i_j} = l_{i_{j+1}-1}$ to $\beta_{J,i_{j}}$. Since this holds for all $t$, we must in fact have that the return vectors $v_{i_j}$ and $v_{i_j+1}$, defined as:

\begin{align*}
v_{i_j} &:= \left(\sum_{1 \le s \le r} (k_{i_j})_s(a_{i_j-1}^+) \, \bm{e}_s, \,  0\right) \\
v_{i_j+1} &:= \left(\sum_{1 \le s \le r} (k_{i_j}+1)_s(a_{i_j}^+) \, \bm{e}_s, \,  0\right)
\end{align*}
of $J_{i_j}$ and $J_{i_j+1}$ are equal. This is a contradiction with Corollary \ref{cor:lin-indep}. Indeed, we have that $v_{i_j}$ is equal to the sum of all vectors in Corollary \ref{cor:lin-indep} related to the orbit up to return time of $a_{i_j-1}^+$ and that $v_{i_j+1}$ is equal to the sum of all vector related to the orbit of $a_{i_j}^+$. Thus if they are equal, we get a non-trivial dependence between the vectors in Corollary \ref{cor:lin-indep} by putting the coefficient $1$ in front of vector in Corollary \ref{cor:lin-indep} related to the orbit up to return time of $a_{i_j-1}^+$, coefficient $-1$ in front of every vector related to the orbit of $a_{i_j}^+$ and $0$ in front of every other vector. This is a contradiction, so this case is impossible.

Assume now that $J$ contains exactly two discontinuities. By Lemma \ref{lem:not-fixed}, we know that we must have $\sigma_J = (3 2 1)$. The above equations now reduce to:

\begin{align*}
&a_1^+ + \sum_{s=1}^r (k_2)_s(a_1^+)\gamma_s = y + \sum_{s=1}^r (k_3)_s(y)\gamma_s \\
&a_2^- + \sum_{s=1}^r (k_2)_s(a_2^-)\gamma_s = x + \sum_{s=1}^r (k_1)_s(x)\gamma_s,
\end{align*}
where $x = T^{k_3-l_2}(\beta_{J,2}^+)$ and $y = T^{k_1-l_1}(\beta_{J,1}^-)$. Thus by doing the same perturbations as above, we get that:

\begin{align*}
-(l_1)_t(a_1^+) + (k_2-l_1)_t(\beta_{J,1}^+) &= (k_3 + k_1 - l_1)_t(\beta_{J,1}^-) \\
-(l_2)_t(a_2^-) + (k_2-l_2)_t(\beta_{J,2}^-) &= (k_1 + k_3 - l_2)_t(\beta_{J,2}^+),
\end{align*}
for all $1 \le t \le r$. If we define the vectors $v_1, v_2$ and $v_3$ as above, this implies $v_1 + v_3 = v_2$. In the same way as above, this contradicts Corollary \ref{cor:lin-indep}, as we again get a non-trivial linear dependence between vectors from this corollary.

Thus we get that it is impossible for $J$ to contain more than one discontinuity. As it is by assumption dynamically non-trivial, it must therefore contain exactly one discontinuity.

\end{proof}

Proposition \ref{prop:exactly-1} tells us that every dynamically non-trivial interval $J$ must contain exactly one point that lands on a discontinuity before returning $J$, which implies Matching. Thus we have the following result:

\begin{theorem}[Stability + A1 + A2 $\implies$ Matching]
\label{thm:s+A1+A2=M}
A stable map that satisfies properties A1 and A2 must also satisfy the Matching property. \qed
\end{theorem}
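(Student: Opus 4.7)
The plan is to unpack Proposition \ref{prop:exactly-1} and check that its conclusion is literally the Matching condition, once property A2 is invoked to locate the distinguished point in the interior. No further work is required; the real content has already been established by the linear-independence machinery developed in Section~\ref{sec:lin-indep} and used in the proof of Proposition \ref{prop:exactly-1}.

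More concretely, I would first recall what must be verified. Matching requires, for every dynamically non-trivial component $J$ of $X$, two things: \emph{(i)} exactly one point $a \in J$ lands on a critical point of $T$ before returning to $J$, and \emph{(ii)} this $a$ lies in the \emph{interior} of $J$. Part \emph{(i)} is exactly the content of Proposition \ref{prop:exactly-1}, which says that $R_J$ has precisely one discontinuity. Note that while Proposition \ref{prop:exactly-1} is stated under the full ACC hypothesis, a quick look at its proof shows that only A1 (to ensure the landing time $l_i$ of $a_i$ on a critical point is well-defined and unique) and A2 (which is the input to Lemma \ref{lem:not-fixed} and to Proposition \ref{prop:stab-return-map}) are actually used; A3 plays no role in producing the contradiction with Corollary \ref{cor:lin-indep}. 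Hence Proposition \ref{prop:exactly-1} applies under the weaker hypotheses of the present theorem.

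For part \emph{(ii)}, I would apply A2 directly: the boundary points $a_0^+ = x$ and $a_{N+1}^- = y$ of a dynamically non-trivial $J$ do not land on discontinuities before their first return to $J$, by definition of A2. The unique point $a$ produced in (i) \emph{does} land on a discontinuity before returning, so it cannot coincide with either boundary point and must therefore be interior. Together with (i), this is precisely the Matching property.

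There is no serious obstacle: the theorem is essentially a restatement of Proposition \ref{prop:exactly-1} combined with A2. The only non-trivial point worth flagging in the write-up is the observation above that A3 is not needed in the proof of Proposition \ref{prop:exactly-1}, so that invoking it here under the hypotheses Stability + A1 + A2 is legitimate.
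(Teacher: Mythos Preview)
Your proposal is correct and follows exactly the paper's approach: the paper's proof is essentially the single sentence that Proposition~\ref{prop:exactly-1} implies Matching. Your additional remark that the proof of Proposition~\ref{prop:exactly-1} uses only A1 and A2 (not A3) is a helpful clarification, since that proposition is formally stated under the full ACC hypothesis while Theorem~\ref{thm:s+A1+A2=M} assumes only A1 and A2.
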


\subsection{Stability implies ACC}

Because of Theorem \ref{thm:s+A1+A2=M}, we now only need to prove that stability implies ACC to complete the first implication of Theorem \ref{thm:stability=accm}. Our proof will be indirect: we will first perturb the map so that it satisfies ACC, and therefore Matching, and use this to conclude the same thing about the starting map. 

We now recall the definition of rational independence:

\begin{definition}[Rational independence]
Let $\{ s_1, s_2, \dots, s_n \}$ be a finite set of real numbers. These numbers are said to be rationally independent if the following holds:
\[
\left( \sum_i q_i s_i = 0, \; \text{with} \; q_i \in \mathbb{Q} \right) \implies \left( q_i = 0 \; \forall i \right).
\]
\end{definition}

This condition on the set $\{ \beta_1, \dots, \beta_r, \gamma_1, \dots, \gamma_r \}$ is stronger than ACC. Indeed, if ACC does not hold, i.e. if one of A1, A2 or A3 is violated, then we have an equation of the following form:
\[
\beta + \sum_{s=1}^r n_s \gamma_s = \betas,
\]
which implies that the the numbers $\{ \beta_1, \dots, \beta_r, \gamma_1, \dots, \gamma_r \}$ are rationally dependent.

Rational independence, and therefore ACC, can be achieved by an arbitrarily small perturbation. This follows from the next lemma applied to the set  $\{ \beta_1, \dots, \beta_r, \gamma_1, \dots, \gamma_r \}$.

\begin{lemma}
\label{rat-ind}
Let $\{ s_1, s_2, \dots, s_n \}$ be a finite set of real numbers. Then we can make an arbitrarily small perturbation to each of the numbers so that they become rationally independent.
\end{lemma}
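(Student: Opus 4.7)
The plan is to reduce rational independence to avoiding a countable family of hyperplanes in the space of perturbations and then apply a Baire category or measure-theoretic argument.

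First, I would parametrize perturbations by vectors $\bm{\epsilon} = (\epsilon_1,\dots,\epsilon_n)\in\mathbb{R}^n$, sending $s_i \mapsto s_i + \epsilon_i$. For each nonzero rational vector $\bm{q} = (q_1,\dots,q_n)\in\mathbb{Q}^n\setminus\{0\}$, define
\[
H_{\bm{q}} := \left\{ \bm{\epsilon}\in\mathbb{R}^n : \sum_{i=1}^n q_i(s_i+\epsilon_i) = 0 \right\}.
\]
Since $\bm{q}\neq 0$, each $H_{\bm{q}}$ is an affine hyperplane in $\mathbb{R}^n$, hence closed and nowhere dense (and of $n$-dimensional Lebesgue measure zero).

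The perturbed set $\{s_1+\epsilon_1,\dots,s_n+\epsilon_n\}$ is rationally independent if and only if $\bm{\epsilon}\notin H_{\bm{q}}$ for every $\bm{q}\in\mathbb{Q}^n\setminus\{0\}$. Because $\mathbb{Q}^n\setminus\{0\}$ is countable, the bad set $\bigcup_{\bm{q}} H_{\bm{q}}$ is a countable union of nowhere dense sets (equivalently, has Lebesgue measure zero). By the Baire category theorem (or simply because a countable union of measure-zero sets cannot cover any open ball), in every neighborhood of the origin in $\mathbb{R}^n$ there exists $\bm{\epsilon}$ avoiding all the $H_{\bm{q}}$. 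Such a $\bm{\epsilon}$ gives an arbitrarily small perturbation producing a rationally independent set.

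The only conceivable obstacle is the countability of $\mathbb{Q}^n\setminus\{0\}$, which is immediate. No further work is required.
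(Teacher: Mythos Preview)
Your proof is correct. The paper takes a slightly different route: it argues by induction on the index $k$, assuming $s_1,\dots,s_k$ are already rationally independent and then perturbing only $s_{k+1}$. The key observation there is that the set of reals rationally dependent on $s_1,\dots,s_k$ (namely their $\mathbb{Q}$-span) is countable, so its complement is dense in $\mathbb{R}$ and one can choose a nearby replacement for $s_{k+1}$. Your approach instead perturbs all coordinates at once and trades the one-dimensional countability observation for a Baire-category (equivalently, measure-zero) argument in $\mathbb{R}^n$. The paper's version is marginally more elementary in that it never invokes Baire category, while yours is a one-shot argument that avoids the induction; both are standard and entirely adequate for this lemma.
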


\begin{proof}
The proof is by induction. Assume that the numbers $s_1, \dots, s_k$ are rationally independent. We need to show that we can add $s_{k+1}$ do this set by an arbitrarily small perturbation. Indeed, the set of all numbers which are rationally dependent with $s_1, \dots, s_k$ is equal to the set of all linear combinations with rational coefficients of these numbers. This set is only countably infinite, so there is a real number arbitrarily close to $s_{k+1}$ which is rationally independent from $s_1, \dots, s_k$.
\end{proof}

Before going into the proof of Theorem \ref{thm:s-acc}, we give two lemmas about the maps that satisfy properties A1, A2 and Matching.

\begin{lemma}
\label{lem:indep-orb}
Let $T$ be a finite type map that satisfies properties A1, A2 and Matching. Then the orbit of any point $z \in X$ enters at most one interval of $X$ that contains a discontinuity of $T$.
\end{lemma}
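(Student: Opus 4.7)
The plan is to argue by contradiction: assume the $T$-orbit of $z \in X$ enters two distinct components $J_1 \neq J_2$ of $X$, each containing a discontinuity of $T$. Properties A1 and A2 together imply that no critical point of $T$ lies in $\partial X$, so the discontinuities in $J_1, J_2$ lie in the interiors: write $\beta_i \in \mathrm{int}(J_i)$. Since a $T$-discontinuity in the interior of $J_i$ prevents $R_{J_i}$ from being the identity, each $J_i$ is dynamically non-trivial. By Proposition \ref{prop:exactly-1} the return map $R_{J_i}$ has a unique discontinuity, and since $R_{J_i}$ is automatically discontinuous at $\beta_i$, the matching point must equal $\beta_i$, i.e.\ $a_{J_i} = \beta_i$.

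The central step is to show that the orbit of $\beta_1$ itself lands on $\beta_2$ at some time $0 < j$ strictly before the return time to $J_1$. This will violate A1, since the orbit of $\beta_1^-$ would then contain the two distinct critical points $\beta_1^-$ (at time $0$) and $\beta_2^-$ (at time $j$), yet A1 caps the number of critical points in such an orbit at one. To set this up, I analyse the two continuity intervals $J_1^L = [x_1,\beta_1)$ and $J_1^R = [\beta_1,y_1)$ of $R_{J_1}$. On $J_1^L$, the orbit-to-return passes through $J_2$ at a fixed first time $j$, and $z' \mapsto T^j(z')$ is a translation with image $W^L \subset J_2$. If $W^L$ contained $\beta_2$, some $z^* \in \mathrm{int}(J_1^L)$ would satisfy $T^j(z^*) = \beta_2$, its orbit hitting the critical point $\beta_2$; this makes $z^*$ a second discontinuity of $R_{J_1}$ in the interior of $J_1^L$, contradicting uniqueness of $a_{J_1} = \beta_1$. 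Hence $W^L$ sits strictly on one side of $\beta_2$; defining $W^R$ analogously for $J_1^R$, the same holds there.

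Using that $T|_X$ is a bijection together with the symmetric statement for $R_{J_2}$ (whose unique discontinuity is also forced to be $\beta_2$), I argue that $W^L \sqcup W^R$ must exhaust $J_2 \setminus \{\beta_2\}$, so that $\{W^L, W^R\} = \{J_2^L, J_2^R\}$ with $J_2^L = [x_2,\beta_2)$ and $J_2^R = [\beta_2,y_2)$. Comparing endpoints — the right endpoint of $W^L$ as a signed point corresponds to the right endpoint of $J_2^L$ — yields $T^j(\beta_1^-) = \beta_2^-$, delivering the required critical-point landing and hence the contradiction with A1.

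The principal obstacle is rigorously justifying the covering identity $W^L \sqcup W^R = J_2 \setminus \{\beta_2\}$ in the general case, where the orbit starting in $J_1$ may revisit $J_1$ several times before first reaching $J_2$. My plan here is to replace the naive first-entry map by an appropriate iterated first-return map from $J_1$ to $J_2$, and to invoke A1 together with Proposition \ref{prop:exactly-1} once more to show that this iterated map carries no new interior discontinuities, thereby preserving the clean two-branch bijection structure underlying the endpoint identification.
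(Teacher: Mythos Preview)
Your approach is genuinely different from the paper's, aiming at a direct violation of A1 via $T^j(\beta_1^-)=\beta_2^-$, but the central step does not go through.

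A minor point first: you invoke Proposition~\ref{prop:exactly-1}, whose hypothesis is stability; Lemma~\ref{lem:indep-orb} does not assume stability. This is harmless, since Matching alone already says the unique point in the interior of $J_i$ landing on a critical point is $\beta_i$, so $a_{J_i}=\beta_i$.

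The real gap is the covering identity $W^L\sqcup W^R = J_2\setminus\{\beta_2\}$, and your proposed fix via an ``iterated first-return map'' from $J_1$ to $J_2$ does not rescue it. You correctly note that $J_1^L$ may revisit $J_1$ before reaching $J_2$. But once it does, $R_{J_1}(J_1^L)$ straddles $\beta_1$ and is split into two pieces; each further return adds another $R_{J_1}$-preimage of $\beta_1$ as a new branch point. Consequently the first-entry map $\Phi\colon J_1\to J_2$ is an interval translation map with, in general, many branches, not two. Neither A1 nor Matching (nor Proposition~\ref{prop:exactly-1}) bounds the number of branches of $\Phi$---those results control only the return map $R_{J_1}$. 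With many branches there are many branch-boundary images in $J_2$, and no mechanism forces $\Phi(\beta_1^-)$ to equal $\beta_2^-$; the ``two-branch bijection structure'' you rely on simply is not there.

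The paper sidesteps this entirely and gets a different contradiction. It picks $k_1<k_2$ so that $T^{k_1}(z)\in J_1$ enters $J_2$ \emph{before} returning to $J_1$, and $T^{k_2}(z)\in J_2$ enters $J_1$ before returning to $J_2$. Then one half of $J_1$, say $[\beta_1^+,y_1)$, travels rigidly (A1, A2, Matching) into $J_2$ and lands \emph{compactly} inside one half $J'$ of $J_2$. When the strictly larger interval $J'$ subsequently travels rigidly into $J_1$, the sub-piece coming from $[\beta_1^+,y_1)$ lands with $\beta_1^+$ at the left boundary $x_1$ of $J_1$; the leftover points of $J'$ to the left of this sub-piece are therefore forced to land strictly to the left of $x_1$, yet remain in $X$. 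This contradicts the maximality of $J_1$ as a component of $X$---a structural contradiction, not a violation of A1.
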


\begin{proof}
Assume the contrary, that for some point $z$ we have that $T^{k_1}(z) \in J_1 = [x_1,y_1) \ni \beta$ and $T^{k_2}(z) \in J_2 = [x_2,y_2) \ni \betas$, with $k_1 < k_2$. Without loss of generality, we may assume that $T^{k_1}(z)$ lands into $J_2$ before returning to $J_1$. Indeed, we can choose $k_1$ to be the last time $z$ is in $J_1$ before landing to $J_2$. Moreover, we may assume that $T^{k_2}(z)$ lands to $J_1$ before returning to $J_2$, by choosing $k_2$ as the last time $T^{k_1}(z)$ is in $J_2$ before returning to $J_1$. We may assume $T^{k_1}(z) \in [\beta^+, y_1)$, with the other case being analogous. By Matching, no point except $\beta^+$ in the interior of this interval lands on a discontinuity before returning to $J_1$, $\beta^+$ does not land because of A1 and $y$ does not land because of A2. Thus it must at time $k_2 - k_1$, when it is contained in $J_2$, be compactly contained in one of the intervals $[x_2, \betas^-), [\betas^+, y_2)$, which we call $J'$. As $J_2$ is dynamically non-trivial, by Matching and ACC1 no point in $J'$ can land on discontinuity before returning to $J_2$. By our choice of $k_2$, the interval $T^{k_2-k_1}([\beta^+, y_1))$ must land to $J_1$ before it returns to $J_2$. At this time $\beta^+$ must land on $x$. But the points from $J'$ to the left of $T^{k_2-k_1}(\beta^+)$ must land outside of $J$. Since they must be contained in $X$, this contradicts the maximality of $J$.
\end{proof}

\begin{lemma}
\label{lem:triv-int-struct}
Assume that a finite type map $T$ satisfies properties A1, A2 and Matching. Then every trivial interval $J = [x,y)$ of $X$ has the property that $x$ lands only on the discontinuity $\beta^+$ and $y$ lands only on the discontinuity $\beta^-$, for some $\beta \in X$.
\end{lemma}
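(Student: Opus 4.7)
The plan is to first show that the boundary points $x$ and $y$ each land on exactly one discontinuity of $T$ within their common period, and then to force these two discontinuities to coincide. The initial observation is that, as noted after Definition~\ref{defn:acc}, A1 and A2 together exclude any discontinuity from the boundary of $X$, so neither $x$ nor $y$ is itself a discontinuity; in particular $x$ lies in the interior of some partition element $I_s$, so $x^-$ and $x^+$ agree in itinerary at time $0$.

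For the first step, I would use the maximality of $J$ to force $x^+$ to meet a discontinuity before returning to $J$. If it did not, then inductively $x^-$ would share the itinerary of $x^+$ through time $p$: at each step the image is not a discontinuity of $T$ (else $x^+$ would meet one), so the two signed versions remain glued. This would make $x^-$ periodic with the same itinerary, extending $J$ to the left and contradicting maximality. By A1 there is then a unique time $0 \le m_1 < p$ and a unique $\beta^+$ with $T^{m_1}(x^+) = \beta^+$; the sign must be $+$ because translations preserve signedness. Symmetrically, $T^{m_2}(y^-) = \betas^-$ for a unique $\betas^-$ and a unique time $0 \le m_2 < p$.

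The heart of the matter is to show $\beta = \betas$, and I would split into two subcases. If $m_1 = m_2 = m$, then $J$ maps continuously as a translation through time $m$ (by A1 on the two boundary points and by triviality of $J$ on the interior), so $T^m(J)$ is a signed interval of positive length $|J|$ with endpoints $\beta^+$ and $\betas^-$ and no discontinuities in its interior. Since $T^m(J) \subset X$, it lies in a single component $J_*$ of $X$. The endpoints $\beta$ and $\betas$, being discontinuities, cannot lie on $\partial X$, so they must lie in $\mathrm{int}(J_*)$. If $\beta \neq \betas$ this gives $J_*$ two distinct interior critical points, violating Matching; and $\beta = \betas$ would force $|J|=0$. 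So the subcase $m_1=m_2$ is impossible.

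In the remaining case $m_1 \neq m_2$, WLOG $m_1 < m_2$, the interval $T^{m_1}(J)$ still has $\beta^+$ on its boundary but $T^{m_1}(y^-)$ is no longer critical (A1 applied to $y^-$), so $T^{m_1}(J)$ sits inside a non-trivial component $J_\beta$ of $X$ whose unique interior critical point is $\beta$; analogously $T^{m_2}(J) \subset J_{\betas}$, with unique interior critical point $\betas$. Then the orbit of $x^+ \in X$ enters both $J_\beta$ (at time $m_1$) and $J_{\betas}$ (at time $m_2$), and Lemma~\ref{lem:indep-orb} forces $J_\beta = J_{\betas}$; Matching inside this common interval then yields $\beta = \betas$. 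I expect the main subtlety to be cleanly justifying that $T^{m_1}(J)$ is a genuine interval of length $|J|$ with no discontinuities in its interior, i.e.\ correctly invoking triviality of $J$ together with A1 on both boundary points, so that the reduction to Lemma~\ref{lem:indep-orb} and Matching goes through.
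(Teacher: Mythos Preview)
Your proof is correct and follows essentially the same approach as the paper's: both split into the cases $m_1 = m_2$ (ruled out via Matching on the common component) and $m_1 \neq m_2$. The only minor difference is that in the second case you invoke Lemma~\ref{lem:indep-orb} on the orbit of $x^+$ to conclude $J_\beta = J_{\betas}$, whereas the paper argues directly that once $T^{m_1}(J)$ lies in the non-trivial component $J_1$, the orbit of $y^-$ can only ever meet the single critical point $\betas$ of $J_1$ (by Matching and A1), forcing $\betass^- = \betas^-$; your route is a clean modularisation of the same idea.
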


\begin{proof}
Assume first that $x$ lands on $\betas^+$ at the same time $y$ lands on $\betass^-$. This means that $\betas^+$ and $\betass^-$ are contained in the same interval of $X$. Since by A1 and A2 there are no discontinuities in the boundary of $X$, we know that the return map to this interval of $X$ has at least two discontinuities of its return map. This is a contradiction with Matching, so this case is impossible.

Let $T^{k_1}(x) = \betas^+$ and $T^{k_2}(y) = \betass^-$, and assume without loss of generality that $k_1 < k_2$. By A1 and A2, the interval $J_1$ of $X$ containing $T^{k_1}(J)$ must be dynamically non-trivial, which means by Matching that no point in $J_1$, except $\betas$, lands on a discontinuity before returning to $J_1$. But $\betas^+$ and $\betas^-$ also do not land on any other discontinuities because of A1. As we know that $y$ must land on a discontinuity, this forces $\betass^- = \betas^-$.
\end{proof}

We are now ready to prove the following theorem, which alongside Theorem \ref{thm:s+A1+A2=M} proves the first implication of Theorem \ref{thm:stability=accm}:

\begin{theorem}
\label{thm:s-acc}
A stable map $T$ satisfies the ACC condition.
\end{theorem}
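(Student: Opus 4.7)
The plan is to follow the hint: perturb $T$ to a nearby $\Tilde{T}$ satisfying ACC, upgrade $\Tilde{T}$ to Matching via Theorem~\ref{thm:s+A1+A2=M}, and then transfer all the structural information back to $T$. First, stability is an open condition (the neighbourhood $\mathcal{U}$ of Definition~\ref{defn:stable} witnesses stability for every map it contains), so every sufficiently small perturbation of $T$ is itself stable. Second, by Lemma~\ref{rat-ind} I can pick $\Tilde{T}$ arbitrarily close to $T$ whose parameters $\{\Tilde{\beta}_i,\Tilde{\gamma}_j\}$ are rationally independent. Such a $\Tilde{T}$ satisfies ACC vacuously: every failure of A1, A2, or A3 yields a non-trivial rational relation of the form $\betas+\sum_s n_s\gamma_s=\betass$ (ghost-preimage cycles give such relations via $T^{k_i}(\beta_{i+1})=\beta_i$). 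Being stable and having A1+A2, Theorem~\ref{thm:s+A1+A2=M} applied to $\Tilde{T}$ gives Matching for $\Tilde{T}$, so Lemmas~\ref{lem:indep-orb} and~\ref{lem:triv-int-struct} fully describe its orbit structure.

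I derive A1 and A2 for $T$ by combining three ingredients. First, Lemma~\ref{lem:loc-disc} implies that the discontinuity counts of $T$ in $\text{int}(X)$, $\partial X$, and $I\setminus\overline{X}$ match those of $\Tilde{T}$. Since $\Tilde{T}$ has no discontinuity on $\partial\Tilde{X}$ (noted after Definition~\ref{defn:acc}), neither does $T$, and each component of $X$ carries at most one discontinuity. Second, continuations $\Tilde{J}\supset\{\Tilde{\beta}:\beta\in J\cap\mathcal{C}\}$ from Lemma~\ref{lem:conseq-stab} locate each discontinuity within its intended component. Third, itineraries of discontinuities up to any fixed finite time are stable under perturbation. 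With these in hand, an A1 failure would force two distinct critical points of $T$ in a single return-orbit; the analogous phenomenon would persist for $\Tilde{T}$ after continuation, contradicting Lemma~\ref{lem:indep-orb} applied to $\Tilde{T}$. An A2 failure would exhibit a boundary point of $J$ landing on a discontinuity, which by Lemma~\ref{lem:J-dynamics} translates into a critical connection $T^{m+k}(c)=\betas$; under $\Tilde{T}$ this becomes a near-miss, and together with Lemma~\ref{lem:triv-int-struct} for $\Tilde{T}$ it is incompatible with $\Tilde{J}$ being a genuine component of $\Tilde{X}$.

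For A3 the indirect route does not suffice, since the ghost tree of a $\beta\notin X$ is invisible to ACC+Matching. I use instead the direct construction of Example~\ref{ex:ghost-preimage}. Suppose $\beta\notin X$ and $\beta\in\mathcal{GT}(\beta)$ at some level $n\ge 1$; let $\beta=\beta_0,\beta_1,\dots,\beta_n=\beta$ be the resulting ghost-preimage cycle, with $T^{k_i}(\beta_{i+1})=\beta_i$ as geometric points. An arbitrarily small coordinated perturbation of the $\gamma$'s, choosing each near-miss direction consistently, produces $\Tilde{T}$ with $\Tilde{T}^K(\Tilde{\beta}^+)\in[\Tilde{\beta}^+,\Tilde{\beta}^++\delta)$ where $K=\sum_i k_i$ and $\delta\sim\epsilon$. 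The interval $[\Tilde{\beta}^+,\Tilde{\beta}^++\delta)$ then maps forward continuously with period $K$, yielding a new component of $\Tilde{X}$. Since $\beta\notin X$ has positive distance from $\overline{X}$, this forces a definite Hausdorff jump in $\overline{X}$, contradicting stability of $T$.

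The main obstacle is the A1/A2 transfer. The itinerary-stability estimates in Proposition~\ref{prop:stab-return-map} are themselves proven under A1+A2, so they cannot be invoked directly for $T$. The fix is to run the Proposition~\ref{prop:stab-return-map} argument from $\Tilde{T}$'s side: since $\Tilde{T}$ has A1+A2, its orbit structure is rigid on an entire stability neighbourhood, which contains $T$, and the orbit data of $T$ is forced to agree with that of $\Tilde{T}$ in the relevant respects. Any alleged A1 or A2 violation for $T$ is then pulled forward into a corresponding violation for $\Tilde{T}$, giving the contradiction. The A3 step is conceptually simpler but requires care in tracking signs through the ghost-preimage cycle so that the cumulative near-misses compose to a periodic interval on the \emph{correct} side of $\Tilde{\beta}$.
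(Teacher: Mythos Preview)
Your overall strategy is the paper's own: perturb to a rationally independent $\Tilde{T}$ (hence ACC), upgrade to Matching via Theorem~\ref{thm:s+A1+A2=M}, and transfer the structural consequences back to $T$ through Lemmas~\ref{lem:conseq-stab} and~\ref{lem:loc-disc}. The A1/A2 transfer you sketch is workable, though the paper's execution is cleaner than reversing Proposition~\ref{prop:stab-return-map}: it picks an auxiliary point $z$ in the same continuity interval as $a^+$ that does \emph{not} land on any discontinuity before return. The itinerary of such a $z$ is robustly stable (no A1/A2 hypothesis on $T$ needed), so Lemma~\ref{lem:indep-orb} for $\Tilde{T}$ applies directly to $z$, and since $z$ and $a^+$ visit the same components of $X$, the conclusion transfers. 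Your reversed-Proposition route can be made to work but ``same itinerary'' only records which $I_s$ is visited, not whether a boundary is hit exactly, so you would still need an extra step to rule out the second landing.

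The genuine gap is in A3. You assert that ``an arbitrarily small coordinated perturbation of the $\gamma$'s, choosing each near-miss direction consistently'' produces the periodic interval, but you do not justify that such a perturbation exists. This is not automatic: the ghost-preimage cycle $\beta_{i_1}^+ \leftarrow \beta_{i_2}^- \leftarrow \dots \leftarrow \beta_{i_{2k+1}}^+=\beta_{i_1}^+$ imposes $2k$ simultaneous sign conditions of the form $\langle v_j,\delta\rangle=\pm\epsilon$, and the orbits may pass through intermediate discontinuities whose landings must also be preserved. The paper resolves this by splitting each $v_j$ into the critical-connection vectors $C_\beta$ for $\beta\in\mathcal{C}_{\neg X}$ along the orbit and then invoking Corollary~\ref{cor:lin-indep}, which guarantees that these $C_\beta$ are linearly independent. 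Only with that independence can one solve for $\delta$ achieving all the prescribed values at once (and keep the intermediate landings intact). Example~\ref{ex:ghost-preimage}, which you cite, simply \emph{assumes} the perturbation exists in the two-vertex case; it does not supply the mechanism. Without Corollary~\ref{cor:lin-indep} your A3 argument is incomplete.
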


\begin{proof}
Let us first derive some simpler properties of $T$. Let $\Tilde{T}$ be a sufficiently small perturbation of $T$ which satisfies properties A1 and A2, and for which Lemma \ref{lem:loc-disc}. Then $\Tilde{T}$ also satisfies the Matching condition, by Theorem \ref{thm:s+A1+A2=M}. This means that there are no discontinuities in the boundary of $\Tilde{X}$, so the same must be true for $X$, by Lemma \ref{lem:loc-disc}. Moreover, each dynamically non-trivial interval $\Tilde{J}$ of $\Tilde{X}$ contains at most one discontinuity in its interior. By Lemma \ref{lem:loc-disc}, the same must be true for each corresponding interval $J$ of $X$. For a trivial interval $J$, Lemma \ref{lem:triv-int-struct} shows that the boundary points of $J$ land only on the $+$ and $-$ of a single discontinuity $\beta \in X$.

Assume that $T$ does not satisfy A1: some interval $J = [x,y)$ contains a point $a$ which lands on a discontinuity more than once before returning to $J$. By the last part of the paragraph above, we know that $J$ is dynamically non-trivial. By the first part, we know that $a$ is in the interior of $J$ and that $a^+$ and $a^-$ are the only points in $J$ that land on a discontinuity. We may without loss of generality assume that $a^+$ lands on two discontinuities, with the other case being analogous. Thus there are times $k_1 < k_2$ such that $T^{k_1}(a^+) = \betas^+ \in J_1$ and $T^{k_2}(a^+) = \betass^+ \in J_2$. Let $z$ be a point in the interval $[a^+,y)$ which does not land on a critical point before return time and assume that the above perturbation is small enough so that $z$ is still contained in $\Tilde{J}$ return time and has the same itinerary up to return time to $\Tilde{J}$. By Lemma \ref{lem:indep-orb}, this means that the $\Tilde{T}$-orbit of $z$ up to return time to $\Tilde{J}$ lands into at most one interval of $\Tilde{X}$ that contains a discontinuity. Thus we have the same conclusion for the $T$-orbit of $z$ up to return time, as the $T$ and $\Tilde{T}$ orbits of $z$ pass through corresponding intervals of $X$ and $\Tilde{X}$, respectively. Moreover, this has to also hold for every point in the continuity interval $[a,y)$ of $R_J$ containing $z$. Thus $a^+$ does not land in both $J_1$ and $J_2$, which is a contradiction. Thus $T$ must satisfy A1.  

Next, assume that $T$ violates A2: there is a point $x$ in the boundary of a dynamically non-trivial interval $J = [x,y)$ of $X$ that lands on a discontinuity before return time (the case for $y$ is analogous). Let $z$ be a point in the interior of $J$, to the left of $a$, the only discontinuity of $R_J$. In particular, the orbit of $z$ up to return time does not land on any critical point. We may thus assume that the perturbation is small enough so that $z$ has the same itinerary up to return time with respect to the intervals of $X$ and Lemma \ref{lem:indep-orb} holds for $z$ and $\Tilde{T}$. Thus the $T$-orbit of $z$ has the same properties: it enters at most one interval of $X$ that contains a discontinuity of $T$. Therefore this has to hold until return time for the continuity interval $[x,a)$ of $R_J$ that contains $z$. Thus it is impossible that both $x$ and $a$ land on a discontinuity, which is a contradiction. Thus $T$ must satisfy A2 as well.

Finally, assume that $T$ violates A3. We will show that we can perturb $T$ in such a way that the $X$ increases by definite size, contradicting the stability of $T$. Indeed, if A3 does not hold, then there is some $\beta$ that appears in its ghost tree $\mathcal{GT}(\beta)$. Without loss of generality, we may assume that it is of $+$-type and equal to $\beta_{i_1}^+$. Thus we have a finite path in the ghost tree: $\beta_{i_1}^+ \leftarrow \beta_{i_2}^- \leftarrow \beta_{i_3}^+ \leftarrow \dots \leftarrow \beta_{i_{2k+1}}^+ = \beta_{i_1}^+$. By A1 and A2, none of the discontinuities in this path are contained in $X$. For even $j$ with $2 \le j \le 2k$, let $k_j$ be the time at which $\beta_{i_j}^+)$ lands at $\beta_{{i_{j-1}}}^+$ and let $v_j$ be the landing vector:
\[
v_j := \left(\sum_{1 \le s \le r} (k_j)_s(\beta) \, \bm{e}_s, \, \bm{f}_{\text{ind}(\beta_{i_j}^+)} -\bm{f}_{\text{ind}(\beta_{i_{j-1}}^+)}\right).
\]
Define $k_j$ and $v_j$ analogously for odd $i$ with $3 \le j \le 2k+1$. If the orbit of $\beta_{i_j}^+$ lands on $n_i > 0$ other discontinuities before time $k_j$, we split $v_j$ into $n_j+1$ vectors that correspond to consecutive landings in this orbit. By Corollary \ref{cor:lin-indep}, all of these vectors are linearly independent, as they correspond to discontinuities in $\mathcal{C}_{\neg X}$. Thus for sufficiently small $\epsilon$, there exist a sufficiently small perturbation $\delta$ (in the sense that $\Tilde{T}$ is still in $\mathcal{U}$) so that we have:

\begin{align*}
\langle v_j, \delta \rangle = -\epsilon &\text{ for odd $i$ }; \\
\langle v_j, \delta \rangle = +\epsilon &\text{ for even $i$ },
\end{align*}

and so that $\beta_{i_j}^+$ lands on the same discontinuities before time $k_j$ as before, for all $1 \le j \le 2k+1$.

This makes $\beta_{i_1}^+$ periodic with period $\sum_{j=1}^{2k+1} k_j$, and is therefore contained in a periodic interval contained in $\Tilde{X}$. As $\beta_{i_1}^+$ was a definite distance away from $X$, since there are no discontinuities in the boundary of $X$, this means that for a sufficiently small perturbation the set $\Tilde{X}$ contains more discontinuities than $X$. This contradicts the stability of $T$, so A3 follows.
\end{proof}

\subsection{ACC + Matching implies Stability}

We are now going to prove the other direction in Theorem \ref{thm:stability=accm}, which finishes the proof.

\begin{theorem}
\label{thm:acc+m-stability}
Let $T$ be a finite type map that satisfies the ACC and Matching properties. Then $T$ is a stable map.
\end{theorem}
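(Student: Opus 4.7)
The plan is to show that any sufficiently small perturbation $\tilde T$ of $T$ inherits the same qualitative structure: each component of $\overline X$ admits a natural continuation in $\overline{\tilde X}$, no new components appear, and every discontinuity of $\tilde T$ stays on the correct side of $\overline{\tilde X}$. The argument splits into the two tasks of establishing continuations and ruling out new components.

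For continuations, the first step is to verify that the itineraries of all critical points $\beta \in \mathcal C$ and of all boundary points of $\overline X$, up to the relevant return times, persist under a small perturbation. This is exactly what Proposition~\ref{prop:stab-return-map} does, and inspecting its proof one sees that only ACC1 and ACC2 are used -- stability of $T$ is not invoked. Applying this gives, for each component $J=[x,y)$ of $X$ with $x=T^{m_1}(\beta_1^{\pm})$ and $y=T^{m_2}(\beta_2^{\mp})$ (see Lemma~\ref{lem:J-dynamics}), a candidate continuation $\tilde J = [\tilde T^{m_1}(\tilde\beta_1^{\pm}), \tilde T^{m_2}(\tilde\beta_2^{\mp}))$. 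For dynamically non-trivial $J$, Matching picks out a unique interior discontinuity $a$ of $R_J$, and for $\tilde J$ to close up as a single component of $\tilde X$ the matching identity
\[
\tilde R_{\tilde J}^{2}(\tilde a^{+}) \sim \tilde R_{\tilde J}^{2}(\tilde a^{-})
\]
must persist. The crucial observation is that this identity is automatic: the two $R_J$-orbits of $a^{+}$ and $a^{-}$ visit each interval $I_s$ with the same multiplicities (this is forced by the fact that $J$ is the image of a single interval cut at $a$, cf.\ Example~\ref{ex:stable-map}), so the associated coefficient vectors in $W(r)$ agree on their $\bm{e}$-parts, and the $\sim$-relation becomes a sign-of-approach statement that is preserved as long as the itineraries are. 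Dynamically trivial components are easier: $\tilde R_{\tilde J}$ is merely a rotation, possibly now with nonzero angle, and $\tilde J$ persists as a single interval. Hence every component of $X$ and its full finite orbit continues into $\tilde X$, Hausdorff-close and homeomorphic to the original.

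To rule out new components, I would argue by contradiction: suppose some arbitrarily small $\tilde T$ produces a component $\tilde P \subset \overline{\tilde X}$ not close to the orbit of any component of $X$. The boundary points of $\tilde P$ are $\tilde T$-iterates of continuations of discontinuities of $T$, and since $\tilde P$ sits a definite distance from $X$ these generating discontinuities must lie in $\mathcal C \setminus X$. In the dynamically non-trivial case, the condition that makes $\tilde P$ a genuine interval rather than two disjoint arcs is a matching relation between the forward orbits of two such discontinuities $\beta_1,\beta_2 \notin X$; tracking the sign structure of this relation in the limit $\tilde T \to T$ produces a closed path in the ghost-preimage graph passing through $\beta_1$ and back, i.e.\ a cycle in $\mathcal{GT}(\beta_1)$, contradicting ACC3 via Lemma~\ref{lem:inf-gt}. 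The dynamically trivial case yields an analogous and shorter ghost-tree cycle. This rules out the mechanism exhibited in Example~\ref{ex:ghost-preimage}.

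Combining the two tasks, $\overline{\tilde X}$ is exactly the Hausdorff-continuous, combinatorially equivalent continuation of $\overline X$; since ACC1+ACC2 place every discontinuity either strictly inside $\text{int}\,\overline X$ or strictly outside $\overline X$ (discontinuities on $\partial X$ are ruled out, as noted after Definition~\ref{defn:acc}), the count of discontinuities in $I \setminus \overline{\tilde X}$ is preserved. The main obstacle is the ruling-out step: without ACC3, arbitrarily small perturbations really can inflate previously wandering regions into new periodic intervals, and the delicate point is to extract from a hypothetical new component a genuine cycle in the ghost-preimage graph. This is precisely the phenomenon ACC3 was introduced to forbid.
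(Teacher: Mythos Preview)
Your proposal follows the same two-part architecture as the paper---establish continuations of components of $X$, then rule out new components via A3---but each part has issues worth flagging.

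\textbf{Continuations.} Your citation of Proposition~\ref{prop:stab-return-map} is not quite clean: that proposition is stated for stable $T$, and its proof does use stability (specifically, the maximality of $\tilde J$ as an interval of $\tilde X$, which comes from Lemma~\ref{lem:conseq-stab}). What you actually need is only the itinerary-persistence part (property 3 there), and you are right that this only uses A1 and A2; the paper argues this directly rather than by invoking the proposition. Your observation that the matching identity $R_J^2(a^+)\sim R_J^2(a^-)$ is automatic---because the orbits of $a^+$ and $a^-$ up to time $k_1+k_2$ traverse the same multiset of continuity intervals $I_s$, just in swapped order---is correct and is exactly the mechanism the paper uses (implicitly) to show $[\tilde T^{k_2}(\tilde a^+),\tilde T^{k_1}(\tilde a^-))$ is $\tilde R$-invariant. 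Your treatment of dynamically trivial components is confused, though: $R_J$ is the identity on such a $J$, and after perturbation there is no reason the same interval supports a well-defined return map. The paper instead observes (via Lemmas~\ref{lem:indep-orb} and~\ref{lem:triv-int-struct}) that every trivial component is an iterate $T^k(J_j)$ of a branch of some non-trivial $J_\beta$, so its continuation is $\tilde T^k(\tilde J_j)$ and inherits maximality from the $J'$-neighbourhood argument.

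\textbf{Ruling out new components.} Here there is a genuine gap. You start from a hypothetical new component $\tilde P$ and want to extract a ghost-tree cycle from ``the matching relation'' for $\tilde P$. But $\tilde P$ need not satisfy Matching---its return map could have many branches---so there is no single such relation to take a limit of; and ``in the limit $\tilde T\to T$'' is not a well-posed operation, since different small perturbations may produce combinatorially different new components. The paper's argument is forward rather than backward: it fixes a neighbourhood $X'\supset X$, picks a discontinuity $\tilde\beta_{i_1}^+\notin X$ whose $\tilde T$-orbit fails to land in $X'$, and shows that the first itinerary change forces $T^{n_1}(\beta_{i_1}^+)=\beta_{i_2}^+$ with $\tilde T^{n_1}(\tilde\beta_{i_1}^+)$ now just to the left of $\tilde\beta_{i_2}^-$. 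One then follows $\beta_{i_2}^-$ and iterates with alternating signs to build a path $\beta_{i_1}^+\leftarrow\beta_{i_2}^-\leftarrow\beta_{i_3}^+\leftarrow\cdots$ in the ghost tree; finiteness of $\mathcal C$ forces a repeat, contradicting A3. The delicate inner step---showing that at each stage the gap between the tracked iterate of $\tilde\beta_{i_1}^+$ and the current discontinuity shrinks by a definite $\epsilon>0$, so the sign eventually flips---is what makes the induction terminate, and your sketch does not supply any analogue of this mechanism.
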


\begin{proof}

We will show that there for a sufficiently small perturbation of $T$, $\Tilde{X}$ is close to and homeomorphic to $X$, and contains the same discontinuities as $X$. By definition, this shows that $T$ is stable.

By A1 and A2 we know that that there are no discontinuities in the boundary of $X$. Moreover, for any $T$ that satisfies A1, A2 and Matching, every dynamically non-trivial interval $J$ has the property that only a single point $a$ in the interior of $J$ lands on a discontinuity and that the return map to $J$ is a rotation. This gives $X = \bigsqcup_{\beta \in X} O(J_\beta)$, where $J_{\beta}$ is the dynamically non-trivial interval of $X$ containing $\beta$. These orbits are disjoint and there is a definite distance between them because of Lemma \ref{lem:indep-orb}.

We have that every dynamically non-trivial interval $J$ is of the form $[T^{k_2}(a^+), T^{k_1}(a^-))$, where $R_J = T^{k_1}$ on $J_1 = [x,a^-)$ and $R_J = T^{k_2}$ on $J_2 = [a^+,y)$. Moreover, for every such interval $J$ of $X$, we have that there is an interval $J' = [x', y')$ containing $J$ such that:
\begin{itemize}
    \item No point in $[x',\beta^-)$ lands on discontinuity up to time $k_1$ and no point in $[\beta^+, y')$ lands on a discontinuity up to time $k_2$.
    \item The intervals $[x',x)$ and $[y,y')$ have definite size.
\end{itemize}
Indeed, this follows from A2. We may assume that $J'$ is sufficiently small so that for a sufficiently small perturbation of $T$, the itineraries of points in $[x',\Tilde{a}^-)$ up to time $k_1$ and $[\Tilde{a}^+, y')$ remain the same as for $T$. Moreover, we may assume that the perturbation is small enough so that the itineraries of $a^+$ and $a^-$ up to time $k_1 + k_2$ remain the same as for $T$. Thus the interval $[\Tilde{T}^{k_2}(\Tilde{a}^+), \Tilde{T}^{k_1}(\Tilde{a}^-))$ has a well-defined return map on it that is a rotation, so it is in particular contained in $\Tilde{X}$. We will show that it must be an interval of $\Tilde{X}$. For a sufficiently small perturbation, it is compactly contained in $J'$. This means that the points in $[x',\Tilde{T}^{k_2}(\Tilde{a}^+))$ land into $[\Tilde{T}^{k_2}(\Tilde{a}^+), \Tilde{T}^{k_1}(\Tilde{a}^-))$ at time $k_1$ and points in $[\Tilde{T}^{k_1}(\Tilde{a}^-),y')$ land into $[\Tilde{T}^{k_2}(\Tilde{a}^+), \Tilde{T}^{k_1}(\Tilde{a}^-))$ after time $k_2$, which means that they are not contained in $\Tilde{X}$. Thus $[\Tilde{T}^{k_2}(\Tilde{a}^+), \Tilde{T}^{k_1}(\Tilde{a}^-))$ is an interval of $\Tilde{X}$, so it is in fact equal to $\Tilde{J}$.

For every trivial interval $J$ of $X$, we know that there is a single dynamically non-trivial interval $J_{\beta}$ of $X$ that contains a discontinuity $\beta$ of $T$ into which $J$ lands, by Lemma \ref{lem:indep-orb} and Lemma \ref{lem:triv-int-struct}. Thus it must be contained in the orbit of $J_{\beta}$. Let $J_1$ and $J_2$ be the two continuity intervals of $J_{\beta}$. As $J$ is trivial, it must in fact be equal to an iterate of either $J_1$ or $J_2$. We may assume the former, with the other case being analogous. Assume $J = T^{k_1}(J_1)$. By the discussion for dynamically non-trivial intervals, this iterate moves continuously for sufficiently small perturbations. Thus $\Tilde{T}^{k_1}(\Tilde{J}_1)$ is an interval contained in $\Tilde{X}$. We now show that it must be maximal. Indeed, there is again an interval $J' \supset J$ that is of a definite size larger than $J$ that has the same itinerary as $J$ up to the time $J$ lands into $J_{\beta}$. We may assume $J'$ is sufficiently small so that it lands into the interior of $J_{\beta}'$ and that this continues to hold for all sufficiently small perturbations. As above, this means that the points in $J' \setminus \Tilde{T}^{k_1}(\Tilde{J}_1)$ are not contained in $\Tilde{X}$, which show that $\Tilde{T}^{k_1}(\Tilde{J}_1)$ is an interval of $\Tilde{X}$.

Thus we know that every interval $J$ of $X$ has a well-defined continuation $\Tilde{J}$ for all sufficiently small perturbations. We will show that $\Tilde{X}$ must in equal $\bigsqcup_{\beta \in X} \Tilde{O}(\Tilde{J}_\beta)$, which completes the proof. Let $X'$ be the union of all intervals $J'$.

Let us first show that this is the case if the itineraries of critical points outside of $X$ do not change up to their landing time into the interior $J'$. Indeed, since for any sufficiently small perturbation the points in $J' \setminus \Tilde{J}$ eventually land into $\Tilde{J}$, and thus into $\Tilde{X}$, if none of the itineraries change, then all of the critical points outside of $X$ still land into the interior of $X'$. Moreover, if these itineraries do not change, then they also do not change for intervals of definite size containing these discontinuities. We may assume these intervals are small enough so that they also land into the interior of $X'$ after a sufficiently small perturbation. Thus all of these intervals are not contained in $\Tilde{X}$, so all of the discontinuities that were not in $X$ remain outside of $\Tilde{X}$. By the Orbit Classification Lemma \ref{lem:orb-class}, we know that every point in $z \in I \setminus X'$ that is not eventually periodic must either land on a discontinuity or accumulate on a discontinuity. If it lands on a discontinuity, it is clearly not in $\Tilde{X}$. It either accumulates onto a discontinuity inside of $X'$, or onto a discontinuity outside of $X'$, and in this case, it must eventually land into the interval of definite size that lands into the interior of $X'$. Thus in both cases, it eventually lands into $\bigsqcup_{\beta \in X} \Tilde{O}(\Tilde{J}_\beta)$, which means that it is not in $\Tilde{X}$. If $z \in I \setminus X'$ is eventually periodic, it must land into a periodic interval of $\Tilde{T}$. The boundary points of such an interval must be the landings of discontinuities contained in $\Tilde{X}$, so $z$ must also land into $\bigsqcup_{\beta \in X} \Tilde{O}(\Tilde{J}_\beta)$. Thus every point must eventually land into $\bigsqcup_{\beta \in X} \Tilde{O}(\Tilde{J}_\beta)$, which means $\Tilde{X} = \bigsqcup_{\beta \in X} \Tilde{O}(\Tilde{J}_\beta)$. By the characterization of finite type maps \ref{thm:fin-type-char}, we know that $\Tilde{T}$ is of finite type, so the claim follows.

Thus it must hold that for any sufficiently small perturbation, there is a critical point outside of $X$ for which the itinerary changes. By the paragraph above, we may assume that the perturbation is small enough so that if the itinerary of a critical point does not change, then there is an interval of definite size that contains it and lands into the interior of $X'$ at the same time as before the perturbation. Moreover, we may assume that the perturbation is small enough so that the itinerary of every discontinuity outside of $X$ up to some finite time in which it does not land on another discontinuity remains the same as before the perturbation. Thus if some $\beta \notin X$ does not land on a discontinuity before the time it lands into the interior of $X'$, we may assume it still does so.

Now, let $\Tilde{T}$ be a sufficiently small perturbation as in the preceding paragraph. Let $\Tilde{\beta_{i_1}^+}$ (we may assume it is of $+$-type, with the other case being analogous) be a discontinuity that:
\begin{itemize}
    \item It changes itinerary before landing into the interior of $X'$;
    \item It does not eventually land into $X'$.
\end{itemize}
Such a discontinuity exists because if every discontinuity that changes itinerary still lands into $X'$, then there are again intervals of definite size containing such discontinuities that also land into $X'$, so in the same way as in the paragraph above we must have that every point $z \in I \setminus X'$ eventually lands into $X'$, which gives $\Tilde{X} = \bigsqcup_{\beta \in X} \Tilde{O}(\Tilde{J}_\beta)$.

We will show now that there is a finite path $\beta_{i_1}^+ \leftarrow \beta_{i_2}^- \leftarrow \dots \leftarrow \betas$ in the ghost tree of $\beta_1^+$ such that $\betas$ already appears earlier in the path. This means that $\betas$ must be in its ghost tree $\mathcal{GT}(\betas)$, which contradicts A3.

Let $n_1$ be the first time at which the itinerary of $\beta_{i_1}^+$ changes. Since the perturbation is small enough, we know that $\beta_{i_1}^+$ must have landed on a discontinuity $\beta_{i_2}^+$ at time $n_1$, and now it lands to the left of it. We now claim that there is a discontinuity $\beta_{i_3}^-$ such that $\beta_{i_2}^-$ lands on $\beta_{i_3}^-$ at some time $n_2$ and such that $\Tilde{T}^{n_1 + n_2}(\Tilde{\beta}_{i_1}^+)$ lands to the right of $\Tilde{\beta}_{i_3}^+$.

Indeed, if $\beta_{i_2}^-$ does not change itinerary before landing into $X'$, the same holds for a definite size interval to the left of $\Tilde{\beta}_{i_2}^-$ and thus for $\Tilde{T}^{n_1}(\Tilde{\beta}_{i_1}^+)$, which is a contradiction with our choice of $\beta_{i_1}^+$. Let $m_{2,1}$ be the first time at which $\beta_{i_2}^-$ changes itinerary. In the same way above, we know that we must have $T^{m_{2,1}}(\beta_{i_2}^-) = \beta_{{i_2},1}^-$ for some discontinuity $\beta_{2,1}^-$ not contained in $X$ and that $\beta_{i_2}^-$ now lands to the right of $\beta_{i_2,1}^+$ after perturbation. If $\Tilde{T}^{n_1 + m{2,1}}(\Tilde{\beta}_{i_1}^+)$ is to the left of $\Tilde{\beta}_{i_2,1}^-$, then we may repeat this argument for $\beta_{i_2,1}^-$, and get that it must land at some $\beta_{i_2,2}^-$ at time $m_{2,2}$, and it has to land to the right of $\Tilde{\beta}_{i_2,2}^+$ after the perturbation. Continuing by induction, we get that there is a minimal finite $t_2$ such that $\Tilde{T}^{n_1 + m_{2,1} + \dots + m_{2,t_2}}(\beta_{i_1}^+)$ lands to the right of $\beta_{i_2,t_2}^+$ and that we have $\beta_{i_2,1}^- \to \beta_{i_2,2}^- \to \dots \to \beta_{i_2,t_2}^-$. There has to exist such a finite $t_2$ because there are finitely many discontinuities, so each iterate $\Tilde{T}^{m_{2,i}}(\Tilde{\beta}_{i_2,i}^i)$ moves to the right by at least $\epsilon > 0$. Thus at each step $i$ of the induction $\beta_{i_2,i}^- - \Tilde{T}^{n_1 + m_{2,1} + \dots + m_{2,i}}(\beta_{i_1}^+)$ has to be smaller than $\beta_{i_2,i-1}^- - \Tilde{T}^{n_1 + m_{2,1} + \dots + m_{2,i-1}}(\beta_{i_1}^+)$ by at least $\epsilon$. We can therefore set $n_2 := m_{2,1} + \dots + m_{2,t_2}$ and $\beta_{i_3}^+ := \beta_{i_2,t_2}^+$.

This argument can now be repeated for $\beta_{i_3}^+$, so by induction we get an infinite path $\beta_{i_1}^+ \leftarrow \beta_{i_2}^- \leftarrow \beta_{i_3}^+ \leftarrow \dots$ in the ghost tree of $\beta_{i_1}^+$. Since there are only finitely many discontinuities, one of them has to repeat, so we get the required finite sequence.

\end{proof}

\section{Eventually periodic maps can be approximated by stable maps}
\label{proofoftheoremC} 

\subsection{Unstable number and correspondence}

Recall the statement of Theorem C:

\begin{theorem}
\label{thm:ep->acc+m}
Let $T$ be an eventually periodic map. Then arbitrarily close to $T$ in the parameter space, there is a stable map $\Tilde{T}$.  
\end{theorem}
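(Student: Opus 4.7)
The plan is to reduce the problem to breaking, one at a time, the finitely many critical coincidences that can prevent ACC or Matching, using the linear independence provided by Corollary~\ref{cor:lin-indep} to construct controlled perturbations. Since $T$ is eventually periodic, Corollary~\ref{cor:ep-fin-type} gives that $T$ is of finite type, so $X$ is a finite union of intervals, and the return map to each component is determined by finite combinatorial data. The obstructions to stability come in a finite list: (i) a point in some component $J \subset X$ whose orbit hits more than one critical point before returning (violating A1 or Matching); (ii) a boundary point of a dynamically non-trivial component whose orbit hits a critical point before returning (violating A2); (iii) a discontinuity $\beta \notin X$ that appears in its own ghost tree $\mathcal{GT}(\beta)$ (violating A3). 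It is natural to introduce an \emph{unstable number} $U(T)$ that counts the total number of such offending coincidences, weighted appropriately, and to proceed by induction on $U(T)$.

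First I would handle the obstruction type (iii). Each ghost-tree cycle encodes a sequence of equalities of the form $\langle C_\beta, (\gamma\,\beta)\rangle = 0$ for the vectors $C_\beta$ with $\beta \in \mathcal{C}_{\neg X}$ appearing in the statement of Theorem~\ref{thm:lin-dep}. By Corollary~\ref{cor:lin-indep}, applied with $J_0$ chosen to be any component of $X$ and $J_1, \dots, J_n$ the maximal periodic intervals, each such vector $C_\beta$ is linearly independent from the dynamical vectors describing the return dynamics on the components of $X$. Hence there is a perturbation direction $\delta$ with $\langle C_\beta, \delta\rangle \ne 0$ and $\langle w, \delta\rangle = 0$ for all other dynamical vectors we wish to preserve. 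A sufficiently small perturbation in this direction breaks one chosen ghost-tree identity without changing any finite itinerary that we wish to keep, and leaves the non-wandering set essentially intact; in particular no new components of $X$ appear, because the accompanying intervals of definite size around the discontinuities are still mapped outside $X$ after perturbation.

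Next I would handle obstructions (i) and (ii). Here the bad coincidence is an extra critical landing inside the orbit of some component $J \subset X$. Choose $J_0 = J$ in Corollary~\ref{cor:lin-indep}: the vectors $L_j^0, R_j^{0,\pm}, C^{0,\pm}(j,k)$ are linearly independent, together with the critical connection vectors $C_\beta$ for $\beta \in \mathcal{C}_{\neg X}$ and the return/critical connection vectors associated to the remaining periodic intervals. Each individual bad landing corresponds to one such dynamical vector being zero (or to two such landings being forced to coincide), and the independence gives a perturbation direction that removes this single coincidence while freezing the itineraries attached to all the other dynamical vectors. After perturbation, the component $J$ persists as a continuation $\tilde J$, its return map has one fewer unwanted critical landing, and, crucially, the unstable number strictly decreases. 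The maximal periodic intervals $J_i$ that were not touched continue to be bona fide periodic intervals of $\tilde T$ because we preserved exactly the critical connection vectors $C^{i,\pm}(0,k), C^{i,\pm}(1,k)$ defining their return-cycle relations.

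The main obstacle is the bookkeeping around components of $X$ that may merge, split, or disappear under perturbation. This is why the statement of Corollary~\ref{cor:lin-indep} is tailored to distinguish a single focus component $J_0$ from periodic companions $J_1, \dots, J_n$: when removing a coincidence inside $J_0$, we hold the periodic skeleton rigid, so the relevant $J_i$ persist and no accidental new periodic interval can appear near the perturbed discontinuity. I would verify by the continuity arguments from Proposition~\ref{prop:stab-return-map} (or its ingredients) that after each elementary perturbation the components of $\tilde X$ are in natural bijection with those of $X$, so $U(\tilde T)$ is well-defined and $U(\tilde T) < U(T)$. Iterating finitely many times yields a map $\tilde T$ arbitrarily close to $T$ that is of finite type and satisfies ACC and Matching, and then Theorem~B (Theorem~\ref{thm:stability=accm}) gives stability.
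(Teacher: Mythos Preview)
Your high-level architecture matches the paper's: define an unstable number, use Corollary~\ref{cor:lin-indep} to find perturbations that decrease it, then invoke Theorem~B. But the inductive step, as you describe it, has substantial gaps.

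First, you never guarantee that the perturbed map $\tilde T$ remains eventually periodic. Your induction hypothesis implicitly requires this (otherwise $\tilde X$ need not be a finite union of intervals tiled by periodic pieces, and the combinatorics you rely on collapse). The paper handles this by choosing the perturbation sizes $\epsilon_1,\epsilon_2$ to be \emph{rational} multiples of the length $|P(\beta^+)|$ of the governing periodic interval, so that the new return map $\tilde R_{J_0}$ is conjugate to an $\ITM$ with rational parameters and hence is eventually periodic.

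Second, and more seriously, your claim that after perturbation ``the component $J$ persists as a continuation $\tilde J$, its return map has one fewer unwanted critical landing'' is not what actually happens, and the paper does not argue this way. If you merely nudge a single critical connection vector $C^{0,+}(j,k)$, the perturbed orbit of $\beta^{0,+}(j,k)$ now follows the orbit of the \emph{other sign} of the next discontinuity, which can create or destroy periodic pieces in ways you have not controlled; there is no reason $U$ should drop. The paper instead perturbs specific \emph{return vectors} $R_j^{0,+}$ (keeping all landings and critical connections fixed) so as to open a gap in the image of $\tilde R_{J_0}$, and then shows that iterates of this gap swallow a chosen discontinuity, ejecting it from $\tilde X$ entirely. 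Making this work requires a case analysis on the number $N_0$ of branches of $R_{J_0}$ (the cases $N_0\ge 4$, $N_0=3$, $N_0=2$, $N_0=1$ each need a different perturbation formula), together with a preliminary reduction (Lemma~\ref{lem:pert-corr}) to maps satisfying the \emph{correspondence} property of Definition~\ref{def:corr}. Correspondence is what makes Lemma~\ref{lemma:corr+u=a1a2m} go through: without it, $U(T)=0$ does not by itself imply A1, A2, and Matching.

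Finally, your ordering is backwards relative to the paper. Fixing A1/A2/Matching involves perturbations that may introduce new critical connections among discontinuities outside $X$, so A3 must be repaired \emph{last}, once the structure of $X$ has been frozen.
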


The idea of the proof is to inductively remove the critical connections that prevent the map $T$ from being stable. Our main tools will be Theorem \ref{thm:lin-dep} and Corollary \ref{cor:lin-indep}. In this subsection, we formalise this idea and prove a few preliminary lemmas, before going into the proof of Theorem \ref{thm:ep->acc+m} in the next subsection. We first introduce a few concepts related to the dynamics of the discontinuities of an eventually periodic map $T$.

\begin{definition}[Cycle of discontinuities]
Let $\beta$ be a discontinuity of an eventually periodic map $T$ contained in $X$. Then the cycle of $\beta$ is defined as:
\[
C(\beta) = O(\beta) \cap \mathcal{C},
\]
i.e. the set of all discontinuities in the orbit of $\beta$.
\end{definition}

We say that a cycle is non-trivial if $C(\beta) \neq \{\beta\}$. In this case $\beta$ lands on at least one other discontinuity of $T$.

Assuming $T$ is eventually periodic, we know that for all $\betas, \betass \in X$ we have that $\betas \in C(\betass)$ if and only if $\betass \in C(\betas)$. Thus the set of all discontinuities contained in $X$ is partitioned into cycles, which we consider as equivalence classes of the relation $\betas \simeq \betass$ if and only if $\betas \in O(\betass)$. Let $\mathcal{Z}$ be the set of all equivalence classes, i.e. the set of all cycles.

A stable eventually periodic map has the property that the size of each cycle is exactly one. Additionally, it has no discontinuities in the boundary of $X$. With this in mind, we define a number which tells us how far an eventually periodic map is from having this property:

\begin{definition}[Unstable number of $T$]
Let $T$ be an eventually periodic map. Then the \textit{unstable number} of $T$, denote by $U(T)$m is defined as:
\[
U(T) \coloneqq \left( \sum_{C \in \mathcal{Z}} (|C|-1) \right) + |\partial X \cap \mathcal{C}|
\]
\end{definition}

For a periodic discontinuity $\beta$, we will denote by $P(\beta)$ the maximal periodic interval containing $\beta$, in the sense of Definition \ref{defn:max-per-int}. For brevity, we will usually omit stating maximality when discussing periodic intervals, but it is always assumed.

\begin{definition}[Correspondence]
\label{def:corr}
Let $T$ be an eventually periodic map. We say that $T$ has the \textit{correspondence property} if for each discontinuity $\beta^+ \in X$, we have that $\beta^-$ eventually lands into the maximal periodic interval $P(\beta^+)$ and if analogously for every discontinuity $\betas^- \in X$, $\betas^+$ eventually lands into $P(\betas^-)$.
\end{definition}

Since the orbits of $\beta^-$ and $\beta^+$ both end up in the same periodic cycle of intervals, these points have `corresponding dynamics'. This is a fairly strong property. One of the things it implies is the following restriction on the structure of every interval $J$ of $X$:

\begin{lemma}
\label{lemma:corr-rj-structure}    
Let $T$ be an eventually periodic map that has the correspondence property. Then for every interval $J$ of $X$, there exists a discontinuity $\beta^+ \in X$ such that $J$ is contained in the union of iterates of $P(\beta^+)$.
\end{lemma}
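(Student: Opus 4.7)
My plan is to exploit the decomposition of $X$ into maximal periodic intervals and then chain adjacent intervals together via the correspondence property. Since $T$ is eventually periodic and $T\vert_X$ is a bijection, every point of $X$ is periodic (any periodic orbit in the forward iterates extends backward in $X$ by bijectivity), so $X$ is a finite disjoint union of maximal periodic intervals in the sense of Definition~2.5. Each component $J$ of $X$ can therefore be written as a consecutive concatenation $J = P_1 \cup \cdots \cup P_m$ of such intervals.

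The central step is the following adjacency claim: if $P_i$ and $P_{i+1}$ are adjacent inside $J$ and share a geometric boundary $q_i$, then they belong to the same periodic orbit. The right boundary $q_i^-$ of $P_i$ and the left boundary $q_i^+$ of $P_{i+1}$ both lie in $X$; since $T$ is continuous at $q_i$ whenever $q_i$ is not itself a discontinuity of $T$, their signed orbits stay parallel until they land, simultaneously at some time $n_i$, on the signed parts $\beta_i^-, \beta_i^+$ of a common geometric discontinuity $\beta_i$ of $T$. (When $q_i$ is itself a discontinuity, take $n_i = 0$ and $\beta_i = q_i$.) Bijectivity of $T\vert_X$ forces $\beta_i^\pm \in X$, and by construction $T^{n_i}(P_i) = P(\beta_i^-)$ and $T^{n_i}(P_{i+1}) = P(\beta_i^+)$. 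The correspondence property, applied to $\beta_i^\pm \in X$, then identifies the orbits of $P(\beta_i^-)$ and $P(\beta_i^+)$ as a single periodic cycle $\mathcal{O}$; iterating across all internal boundaries of $J$ places $P_1, \ldots, P_m$ in this common $\mathcal{O}$, and any $\beta_i^+$ produced by the chaining serves as the desired $+$-signed discontinuity in $X$ with $P(\beta_i^+) \in \mathcal{O}$.

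The case $m = 1$ (a single maximal periodic interval) is handled by observing that at least one external boundary of $J$ must land on a signed discontinuity in $X$ (otherwise $J$ would be disjoint from every $O(J_{\beta'})$, contradicting the characterisation of $A_1$ in Lemma~2.2); correspondence converts any $-$-landing into the required $+$-signed discontinuity. If $J$ contains no discontinuity at all, then by Lemma~2.2, $J$ is contained in the $T$-orbit $O(J_{\beta'})$ of some component $J_{\beta'}$ that does contain a discontinuity $\beta' \in X$, and $T$-invariance of $O(P(\beta^+))$ transfers the conclusion. The main obstacle is rigorously verifying the key adjacency claim, in particular showing that the parallel orbits of $q_i^-$ and $q_i^+$ reach a genuine discontinuity rather than terminating at $\partial I = \{0^+, 1^-\}$; this relies on the sign-preservation of iterates under $T$ and the fact that $q_i^\pm$ cannot simultaneously land at the geometric points $0$ or $1$, as this would throw one signed iterate outside $[0,1)$.
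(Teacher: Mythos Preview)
Your proof is correct and rests on the same key observation as the paper's: two adjacent maximal periodic intervals in $J$ touch at a point whose signed parts eventually land on $\beta^-,\beta^+$ for a common discontinuity $\beta$, and the correspondence property then forces $P(\beta^-)$ and $P(\beta^+)$ into the same periodic cycle. The paper compresses this into a four-line argument by contradiction (assume two touching periodic intervals with disjoint orbits, look at the touching point, derive a violation of correspondence), whereas you run the same idea constructively, chaining across all internal boundaries and explicitly exhibiting the required $\beta^+$. Your extra care with the $m=1$ case and with ruling out landings on $0^+,1^-$ is not in the paper's proof but is implicitly covered there by Lemma~\ref{lem:J-dynamics}(a); invoking that lemma directly would streamline your argument considerably.
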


Note that since $T$ is eventually periodic, each interval of $X$ already has the property of being `tiled' by periodic intervals. If we additionally assume correspondence, the lemma above says that there exists a $\beta^+ \in X$ such that all of these periodic intervals are iterates of $P(\beta^+)$.

\begin{proof}
Assume the contrary, that there is an interval $J$ of $X$ containing iterates of two periodic intervals with disjoint orbits. Let $P_1$ and $P_2$ be two touching periodic intervals in $J$ with this property. By maximality, the point at which they touch must eventually land on a discontinuity $\beta$. Thus $\beta^-$ and $\beta^+$ belong to periodic intervals with disjoint orbits, which contradicts the correspondence property.
\end{proof}

A map that has unstable number zero and satisfies the correspondence property is almost stable:

\begin{lemma}
\label{lemma:corr+u=a1a2m}
Let $T$ be an eventually periodic map. Assume that $U(T) = 0$ and that $T$ satisfies the correspondence property. Then $T$ also satisfies the A1, A2 and Matching properties.
\end{lemma}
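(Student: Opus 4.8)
The plan is to convert the single numerical hypothesis $U(T)=0$ into two concrete structural facts about the discontinuities lying in $X$, and then to read off the three properties, letting the correspondence property --- through Lemma \ref{lemma:corr-rj-structure} --- supply the control over the global geometry. Since $U(T)=\bigl(\sum_{C\in\mathcal Z}(|C|-1)\bigr)+|\partial X\cap\mathcal C|$ is a sum of non-negative integers, $U(T)=0$ says precisely: (i) every cycle is trivial, i.e.\ no discontinuity of $T$ lying in $X$ lands on a different discontinuity; and (ii) $\partial X\cap\mathcal C=\varnothing$, so every discontinuity in $X$ lies in $\text{int}(X)$. I would also record two consequences used throughout: since $T$ is eventually periodic and $T|_X$ is a bijection, every discontinuity in $X$, and by Lemma \ref{lem:J-dynamics}(a) every boundary point of every component of $X$, is periodic; and by Lemma \ref{lemma:corr-rj-structure} the correspondence property forces each component $J$ of $X$ to be a run of finitely many iterates of a single maximal periodic interval, so $R_J$ is an interval exchange all of whose orbits share one common period --- a dynamically non-trivial $J$ being a run of at least two such iterates.

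I would first prove A1, which uses only (i). Fix a component $J$ and a point $a\in J$; since $X$ is forward invariant the whole orbit of $a$ lies in $X$. If two distinct critical points $\gamma_1,\gamma_2$ appeared along the orbit of $a$ up to and including its first return to $J$, with $\gamma_1$ preceding $\gamma_2$, then $\gamma_1,\gamma_2\in X\cap\mathcal C$ and $\gamma_2\in O(\gamma_1)\cap\mathcal C=C(\gamma_1)$, contradicting (i). Hence at most one critical point occurs along that orbit, which is exactly A1.

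For A2, let $J=[x^+,y^-]$ be dynamically non-trivial and suppose, for contradiction, that the boundary point $x^+$ lands on a discontinuity strictly before returning to $J$ (the case of $y^-$ is symmetric). By Lemma \ref{lem:J-dynamics}(a) write $x^+=T^{k_1}(\beta^+)$ with $\beta^+\in X\cap\mathcal C$ of period $p$; by (ii), $k_1\ge 1$, so $x^+\neq\beta^+$. Since $O(x^+)=O(\beta^+)$ and, by (i), this orbit meets $\mathcal C$ only in $\beta^+$, the discontinuity hit by $x^+$ is $\beta^+$ itself; a period count (the period $p$ must divide $k_1$ plus the landing time, and both lie in $[1,p-1]$) pins the landing time at $p-k_1$, so the return time $r$ of $x^+$ to $J$ satisfies $r>p-k_1$ and $\beta^+\notin J$. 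To contradict this I would use the tile picture from Lemma \ref{lemma:corr-rj-structure}: $J$ is a run of iterates of $P(\beta^+)$ whose left endpoints run through the orbit of $\beta^+$ and whose right endpoints run through the orbit of $\beta^-$, with leftmost tile $T^{k_1}(P(\beta^+))$; by (ii), $P(\beta^+)$ is not the leftmost tile of its own component and its left neighbour there is $P(\beta^-)$. Feeding the correspondence relations for $\beta$ (that $\beta^-$ lands in $P(\beta^+)$ and $\beta^+$ lands in $P(\beta^-)$) into the bookkeeping of which iterates of $P(\beta^+)$ lie in $J$ should force $x^+$ to re-enter $J$ within $p-k_1$ steps, the desired contradiction. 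Making this bookkeeping precise is where the real work lies.

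Finally, for Matching, granting A1 and A2: let $J$ be dynamically non-trivial. By the remarks above $R_J$ is an interval exchange all of whose orbits have one common period, and I would use correspondence to forbid a run of three or more iterates of $P(\beta^+)$ inside a single component --- equivalently, to exclude a ``spurious'' cut point of the return map --- so that $J$ is the union of exactly two maximal periodic intervals meeting at a single point $a$; then $R_J$ is a non-trivial rotation, hence a two-interval exchange, so exactly one point of $J$ lands on a discontinuity of $T$ before returning, and by A2 that point is interior, which is Matching. An alternative for the ``at most one'' half is the perturbation argument of Proposition \ref{prop:exactly-1}: if $R_J$ had three or more continuity intervals, the relations $\widetilde{R}_J(\widetilde{a}_{i_j}^-)\sim\widetilde{R}_J(\widetilde{a}_{i_{j+1}-1}^+)$ would persist under suitable small perturbations (persistence following now from A1 and A2, already in hand) and would yield a linear dependence among the dynamical vectors ruled out by Corollary \ref{cor:lin-indep}. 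The main obstacle throughout is the passage from the combinatorial statement ``each component is a run of iterates of a single maximal periodic interval'' to the geometric conclusion that this run has length exactly two and the return map is a rotation; this, and not $U(T)=0$, is where the correspondence property does its work.
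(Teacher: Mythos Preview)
Your argument for A1 is correct and matches the paper's. The trouble is with A2 and Matching.

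For A2 you embark on a period-counting and tile-bookkeeping argument that you yourself flag as incomplete (``Making this bookkeeping precise is where the real work lies''). The paper bypasses all of this with a single observation you are missing: once Lemma~\ref{lemma:corr-rj-structure} tells you $J$ is tiled by iterates of $P(\beta^+)$, the \emph{only} discontinuities any point of $J$ can land on before returning are $\beta$ and the discontinuity $\betas$ that the right endpoint of $P(\beta^+)$ hits (since $U(T)=0$ forces each of these orbits to meet $\mathcal C$ just once). With this in hand, A2 is two lines: if the left boundary point $x^+$ lands on a discontinuity it must be $\beta^+$; since $J$ is dynamically non-trivial some interior $a$ also lands on a discontinuity, necessarily $\beta$ or $\betas$; the first case contradicts disjointness of the branch orbits of $R_J$, the second forces $\beta^+$ to land on $\betas^+$, contradicting $U(T)=0$. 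Matching follows by the identical dichotomy applied to two interior points.

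Your primary route to Matching rests on a false claim: you assert that correspondence ``forbid[s] a run of three or more iterates of $P(\beta^+)$ inside a single component'', so that $J$ consists of exactly two tiles. This is not true. A component whose return map is a two-branch rotation by $p/q$ is tiled by $q$ iterates of $P(\beta^+)$, and $q$ can be arbitrarily large; the number of tiles reflects the return times, not the number of branches. Your alternative via Proposition~\ref{prop:exactly-1} is also problematic: that proposition assumes $T$ is stable (needed so that $J$ has a continuation $\widetilde J$ under perturbation), which is precisely what you are trying to establish here, so invoking it is circular unless you redo the persistence argument from scratch.
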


\begin{proof}
Since $U(T) = 0$, no point in $X$ can land on two discontinuities, so A1 follows.

By the Lemma \ref{lemma:corr-rj-structure}, we know that any interval $J$ of $X$ is tiled by intervals that are all iterates of a single maximal periodic interval $P(\beta^+)$. Since $U(T) = 0$, $\beta^+$ does not land on any other discontinuity. Since $P(\beta^+)$ is a maximal periodic interval, there exists a discontinuity $\betas^- \in X$ such that the right boundary point of $P(\beta^+)$ lands on $\betas^-$ at some time $k$. Additionally, we have that $P(\betas^-) = T^k(P(\beta^+))$.

Assume that the left boundary point of a dynamically non-trivial interval $J$ lands on a discontinuity before returning to $J$. The case for the right boundary point is analogous. We know that this discontinuity must be $\beta^+$. Since the interval is dynamically non-trivial, a point in the interior of $J$ must also land on a discontinuity of $T$ before returning to $J$. This discontinuity has to be either $\beta$ or $\betas$. In the first case, we get a contradiction with the fact that orbits of the components of a return map must be disjoint up to return time. In the second case, we get that $\beta^+$ lands on $\betas^+$, which is a contradiction with $U(T) = 0$. Thus both cases are impossible, so A2 follows.

If a dynamically non-trivial interval $J$ does not satisfy the Matching property, then there are two points in the interior of $J$ that land on discontinuity before returning to $J$. These discontinuities can again be only $\beta$ or $\betas$, and we get a contradiction in the same way as for A2. Thus Matching follows as well.
\end{proof}

Our goal now is to produce a finite sequence of sufficiently small perturbations each of which decreases the unstable number of $T$, and also makes $T$ satisfy the correspondence property. By induction, this will result in a map that has unstable number zero and satisfies the correspondence property. We then do a final perturbation to get a map that also satisfies A3, which by Lemma \ref{lemma:corr+u=a1a2m} means it satisfies ACC and Matching. 

The main idea of the proof is to remove the discontinuities that contribute to the unstable number of $T$, i.e. are in a cycle inside of $X$ or in the boundary of $X$, or violate the correspondence property, while preserving the dynamical properties of points not in the orbit of these discontinuities. 

This result allows us to show that an arbitrarily small perturbation of $T$ satisfies the correspondence property and does not increase the unstable number:

\begin{lemma}[Perturbation to correspondence]
\label{lem:pert-corr}
Let $T$ be an eventually periodic map. There exists an arbitrarily small perturbation $\Tilde{T}$ of $T$ that satisfies the correspondence property and $U(\Tilde{T}) \le U(T)$.
\end{lemma}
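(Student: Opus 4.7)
My plan is to induct on the number $M(T)$ of violations of correspondence in $T$ (i.e., signed discontinuities $\beta^{\pm}\in X$ whose complementary half $\beta^{\mp}$ fails to land in $P(\beta^{\pm})$). The base case $M(T)=0$ is immediate. For the inductive step, fix an offending $\beta$; without loss of generality $\beta^+\in X$, and $\beta^-$ either lies outside $X$ or lies in a maximal periodic interval whose orbit is disjoint from that of $P(\beta^+)$. I will construct an arbitrarily small perturbation $\tilde T$ that fixes this one violation, preserves eventual periodicity, does not destroy any correspondence already satisfied by $T$, and does not increase $U$.

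The perturbation is designed to satisfy two requirements: (a) every maximal periodic interval $P_i$ of $T$ has a continuation $\tilde P_i$ of the same period and itinerary in $\tilde T$, and (b) $\tilde\beta^-$ eventually lands inside the continuation $\tilde P(\tilde\beta^+)$. Requirement (a) is a finite collection of linear equations on the parameter vector, namely the critical connection vectors $C^{i,+}(0,m_0^{i,+})$ of Theorem~\ref{thm:lin-dep}; maintaining them automatically preserves all correspondences already satisfied by $T$, since those are open conditions persisting under arbitrarily small perturbations preserving the periodic structure. Requirement (b) is an open condition that will be achieved by shifting the eventual iterate of $\tilde\beta^-$ in a prescribed direction; the amount by which it shifts under a perturbation $\delta$ is linear in $\delta$ and captured by a dynamical vector of the form described in Section~\ref{sec:lin-indep} (either a $C_{\beta^-}$ in the sense of Corollary~\ref{cor:lin-indep} when $\beta^-\notin X$, or a landing/return combination into another periodic interval $J_i$ when $\beta^-\in X$).

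I then appeal to Corollary~\ref{cor:lin-indep} applied with $J_0$ the component of $X$ containing $P(\beta^+)$ and $J_1,\dots,J_n$ ranging over all remaining maximal periodic intervals of $T$. The corollary ensures that the dynamical vectors controlling requirement (b) are linearly independent from those encoding requirement (a). Consequently the affine subspace of parameter vectors preserving (a) admits a tangent direction $\delta$ along which the eventual iterate of $\tilde\beta^-$ moves nontrivially. Choosing $\delta$ with the correct sign and sufficiently small magnitude places this iterate inside $\tilde P(\tilde\beta^+)$, achieving (b). By Lemma~\ref{lem:rat-fam}, rational points are dense in this affine subspace, and a rational parameter vector produces an eventually periodic map by the argument in the proof of Theorem~\ref{thm:ep-dense-param}.

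The main obstacle is the bound $U(\tilde T)\le U(T)$: I must guarantee that the perturbation does not create a new cycle of discontinuities in $\tilde X$ nor move a discontinuity into $\partial\tilde X$. Either event would be witnessed by a new critical connection, i.e.\ a further linear equation on the parameter vector. A rational perturbation chosen generically inside the affine subspace of (a), exactly as in the proof of Lemma~\ref{rat-ind}, satisfies no linear equation beyond those already enforced, so no spurious critical connections arise. This yields $U(\tilde T)\le U(T)$ and $M(\tilde T)<M(T)$, completing the inductive step; iterating finitely many times produces the desired arbitrarily small perturbation.
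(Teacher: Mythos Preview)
Your approach has a genuine gap: requirement (b) cannot in general be achieved by an \emph{arbitrarily small} perturbation while maintaining (a). You describe (b) as ``an open condition'' obtained by ``shifting the eventual iterate of $\tilde\beta^-$ in a prescribed direction,'' but landing in the \emph{specific} periodic interval $\tilde P(\tilde\beta^+)$ is only open around parameter values where $\beta^-$ \emph{already} lands there. In the situation at hand, $\beta^-$ either lies in a periodic interval whose orbit is disjoint from that of $P(\beta^+)$, or lies outside $X$ and eventually lands in such a periodic interval. In either case its eventual landing point is a \emph{definite} distance from the orbit of $P(\beta^+)$; a perturbation that preserves all periodic intervals (your condition (a)) and all finite itineraries moves that landing point by $O(\|\delta\|)$, which is never enough to carry it into $\tilde P(\tilde\beta^+)$. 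The linear independence from Corollary~\ref{cor:lin-indep} tells you the relevant linear functional is nonzero on the constraint subspace, but it does not let you solve a \emph{definite}-size affine equation with an infinitesimal perturbation.

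The paper takes the opposite route: instead of moving $\beta^-$ \emph{into} $P(\beta^+)$, it \emph{destroys} the periodic interval $P(\beta^+)$ entirely. Concretely, one keeps all critical connections for the other periodic intervals and for $\mathcal{C}_{\neg X}$, keeps all but the last critical connection along the period of $P(\beta^+)$, and breaks that last one by $-\epsilon$. Then the right edge of $P(\beta^+)$ leaks to the left of $\beta^-$, and since $\beta^-$ (by the very violation of correspondence) eventually lands in a \emph{different} periodic interval, the whole orbit of $P(\beta^+)$ drains into that interval and is removed from $\tilde X$. This eliminates the cycle $C(\beta^+)$ from $\tilde X$, so the correspondence violation at $\beta$ disappears (because $\tilde\beta^+\notin\tilde X$), and $U(\tilde T)\le U(T)$ follows directly since a whole cycle has been deleted while nothing else changed. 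No genericity argument is needed to control $U$, in contrast to your last paragraph, where the claim that a generic rational perturbation ``satisfies no linear equation beyond those already enforced'' does not by itself rule out new discontinuities entering $\tilde X$ or $\partial\tilde X$.
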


Similar to the statements of Theorem \ref{thm:lin-dep} and Corollary \ref{cor:lin-indep}, for brevity, we will omit stating the ranges of all indices explicitly.

\begin{proof}
Assume that $T$ does not satisfy the correspondence property at some discontinuity $\beta$. Without loss of generality, assume that $\beta^+ \in X$, with the other case being analogous. Let $n = |\mathcal{Z}|$ be the number of critical cycles in $X$. Let $J = J_1$ be the periodic interval $P(\beta^+)$. For each of the remaining $n-1$ cycles, let $\beta'$ be any discontinuity from this cycle and let $JP(\beta')$ be the corresponding maximal periodic interval. We label these intervals as $J_2, \dots, J_n$, in any order. By construction, these are maximal intervals with disjoint orbits, so they satisfy the assumption of Corollary \ref{cor:lin-indep} (note that we may clearly omit the interval $J_0$ from the statement of Corollary \ref{cor:lin-indep} and still get the same conclusion for the vectors related to $J_1, \dots, J_n$). By linear independence, there exists a arbitrarily small perturbation $\delta$ of parameters $(\gamma \, \beta)$ such that:
\begin{align*}
&\langle C^{i,+}(0,k), \delta \rangle = \langle C^{1,-}(1,k), \delta \rangle = 0 \text{ for i } > 1, \\
& \langle C_{\beta}, \delta \rangle = 0 \text{ for } \beta \in \mathcal{C}_{\neg X}, \\
& \langle C^{1,+}(0,k), \delta \rangle = 0 \text{ for } k < m_0^{1,+}, \\
& \langle C^{1,-}(1,k), \delta \rangle = 0 \text{ for } k < m_1^{1,-}, \\
& \langle C^{1,+}(0,m_i^+), \delta \rangle = -\epsilon,
\end{align*}
where $\epsilon > 0$ is arbitrarily small. In the first line, we are using $C^{i,-}(1,m^{i,-}_1) = \sum_{k=1}^{m^{i,+}_0} C^{i,+}(0,k) - \sum_{k=1}^{m^{i,-}_1-1} C^{i,+}(1,k)$. If $\epsilon$ is sufficiently small, the first two equations above imply that the dynamics of any critical point whose orbit is disjoint from the orbit of $J_1$ remain unchanged. Indeed, the first equation gives that every such point in $X$ is still periodic and that its periodic interval moves continuously with $\delta$. The second shows that any such point in $\mathcal{C}_1$ still lands on a periodic discontinuity. Finally, we may assume that $\delta$ is small enough so that every such point in $\mathcal{C}_2$ still lands into the interior of a periodic interval, as these intervals move continuously with $\delta$. In particular, for any such point, it is in $\Tilde{X}$ after perturbation if and only if it was in $X$ before the perturbation. 

Note that the points in $\mathcal{C}_1$ that are not contained in $X$ still land on discontinuities in $O(J_1)$ and that the points in $\mathcal{C}_2$ still land into the interior of the orbit of $J_1$ for a sufficiently small perturbation.

By the third and fourth equations, the dynamics of the interval $J_1$ also do not change up until the time it returns to itself, when, by the fifth equation, the interval $I_{\epsilon}(\beta^+) = [\beta^+, \beta^+ + \epsilon)$ now lands to the left of $\beta^-$. By assumption, $\beta^-$ does not land into $J_1$. Since $T$ is eventually periodic, $\beta^-$ must eventually into a periodic interval, whose orbit must be disjoint from the orbit of $J_1$. By the above, the dynamics of this periodic interval and of $\beta^-$ remain unchanged after the perturbation. Thus, for $\epsilon$ sufficiently small, the interval $I_{\epsilon}(\beta^+)$ eventually lands into a periodic interval whose orbit does not intersect $I_{\epsilon}(\beta^+)$. Thus $I_{\epsilon}(\beta^+)$ is not contained in $\Tilde{X}$ anymore. By our choice of $\delta$, every point in $J_1$ eventually lands into $I_{\epsilon}(\beta^+)$, the entire orbit of $J_1$ is also not in $\Tilde{X}$. Thus we have that the entire critical cycle $C(\beta^+)$ is not in $\Tilde{X}$ anymore. Thus every discontinuity that was not contained in $X$ and landed into $J_1$ is also not contained in $\Tilde{X}$. As the dynamics of all other critical points do not change, we have that $U(\Tilde{T}) \le U(T)$. We also know that $\Tilde{T}$ is eventually periodic, since all critical points that land into the orbit of $J_1$ eventually land into a periodic interval, while all other critical points remain eventually periodic.

By induction, after a finite number of arbitrarily small perturbations, we have that $\Tilde{T}$ satisfies the correspondence property and that $U(\Tilde{T}) \le U(T)$.
\end{proof}

\subsection{Proof of Theorem \ref{thm:ep->acc+m}}

As mentioned before, our strategy is to remove from $X$ the discontinuities that are contained in non-trivial critical cycles. The perturbation we make depends on the location of the critical value corresponding to the critical point we want to remove. More precisely, we want the critical value to not be the first or the last one in an interval $J$ of $X$. If the number of continuity of $J$ is at least $4$ such a critical value can easily be found. The cases with less continuity intervals need to be treated separately. In all cases, the perturbation we choose is simple and explicit, but a lot of things need to be checked to see that it works. Similar to the statements of Theorem \ref{thm:lin-dep} and Corollary \ref{cor:lin-indep}, we will assume that the boundary points of $J_0$ land on discontinuities of $T$, and do the proof under this assumption. The analysis in the other case is simpler.

\begin{proof}[Proof of Theorem \ref{thm:ep->acc+m}]
By Lemma \ref{lem:pert-corr}, we may assume that $T$ satisfies the correspondence property. Assume first that there is a $\beta^+ \in X$ that lands on at least one other discontinuity, with the proof of $\beta^-$ being analogous. Let $J_0$ be the interval of $X$ containing such a $\beta^+$ with the maximal number of branches, and let $N_0$ be the number of branches of the return map to $J_0$. If $N_0 \ge 2$, we may assume that $\beta^+$ is in the interior of $J_0$. Indeed, if it is in the boundary, we can replace $\beta^+$ by the first critical point in the orbit of $J_0$ that is contained in the interior of an iterate of $J_0$. Recall that $\sigma$ denotes the permutation corresponding to the order in which the intervals of $J_0$ return to $J$, and that $\tau := \sigma^{-1}$.

\underline{Case 1 : $N_0 > 3$}

Let $v_2$ be the second critical value of the return map $R_{J_0}$ to $J$, with respect to the order inside of $J_0$ and let $J^0_{\tau(2)}$ and $J^0_{\tau(3)}$ be the two continuity intervals of the return map such that the images of their boundary points touch at $v_2$. Let $a_p^0$, with $1 \le p \le N_0-1$, be the first critical point of $R_{J_0}$ in the orbit of $v_2$, which exists because $J_0$ is, by Lemma \ref{lemma:corr-rj-structure}, equal to the union of iterates of a single periodic interval. Let $P \ge 0$ be minimal time such that $R_{J_0}^P(v_2) = a_p^0$. Let $a$ and $b$ be the integers:
\begin{itemize}
    \item $a:= \# \{ R_{J_0}^t(v_2) \in J_{\tau(2)}^0; 0\le t < P \}$;
    \item $b := \# \{ R_{J_0}^t(v_2) \in J_{\tau(3)}^0; 0\le t < P \}$.
\end{itemize}
Let $\epsilon_1, \epsilon_2 > 0$ be sufficiently small such that for $b > 0$ we have that:
\[
\frac{a}{b+1} < \frac{\epsilon_2}{\epsilon_1} < \frac{a+1}{b},
\]
or if $b = 0$, such that:
\[
a \epsilon_1 < \epsilon_2.
\]
In both cases, $\epsilon_1$ and $\epsilon_2$ satisfy:
\[
-\epsilon_2 < -a \epsilon_1 + b \epsilon_2 < \epsilon_1.
\]
Note that $\epsilon_1$ and $\epsilon_2$ can clearly be chosen arbitrarily small. Moreover, we may choose $\epsilon_1$ and $\epsilon_2$ to be of the form $r_1 |P(\beta^+)|$, $r_2 |P(\beta^+)|$, where $r_1$ and $r_2$ are small rational numbers.

Let $n$ be the number of critical cycles in $X$ not contained in the orbit of $J_0$. For each such cycle, let $\beta'$ be one of the discontinuities contained in it and let $P(\beta')$ be the corresponding maximal periodic interval. We label these intervals as $J_1, \dots, J_n$. The intervals $J_0, J_1, \dots, J_n$ therefore satisfy the assumptions of Corollary \ref{cor:lin-indep}. Thus we may make a perturbation $\delta$ of the parameters $(\gamma \, \beta)$ such that:
\begin{align*}
&\langle C^{i,+}(0,k), \delta \rangle = \langle C^{i,-}(1,k), \delta \rangle = 0 \text{ for } i \ge 1, \\
& \langle C_{\beta}, \delta \rangle = 0 \text{ for } \beta \in \mathcal{C}_{\neg X}, \\
& \langle L_{j}^0, \delta \rangle = \langle C^{0,+}(j,k), \delta \rangle= \langle C^{0,-}(j,k), \delta \rangle = 0, \\
& \langle R_{\tau(2)-1}^{0,+}, \delta \rangle = -\epsilon_1, \\
& \langle R_{\tau(3)-1}^{0,+}, \delta \rangle = \epsilon_2, \\
& \langle R_{j}^{0,+}, \delta \rangle = 0, \text{ for } j \neq \tau(2)-1, \tau(3)-1.
\end{align*}
The analysis is now similar as in Lemma \ref{lem:pert-corr}: the only dynamical change is that the image of the interval $J_{\tau(2)}^0$ at the time of returning to $J_0$ shifts to the left by $\epsilon_1$ (fourth equation) and the image of $J_{\tau(3)}^0$ shifts to the right by $\epsilon_2$ (fifth equation). Indeed, the first two equations guarantee that the dynamics of critical points with orbits disjoint from $J_0$ have the same dynamics as before. The third equation says that for each $0 \le j \le N_0$, the same point $a_j^0$ in $J_0$ lands on the discontinuity $\Tilde{\beta}_0(j)$. Moreover, the $\Tilde{T}$-orbit of $a_j^{0,\pm}$ up to the time $r_j^{0,\pm}$ contains same set of discontinuities as before the perturbation. Finally, the last three equations mean that each interval $[a_j^{0,+},a_{j+1}^{0,-})$, for $0 \le j < N_0$, returns to $J_0$ at the same time $r_j^+$ as before. Thus the return map $\Tilde{R}_{J_0}$ to $J_0$ is well-defined and has the same intervals of continuity. The images under this return map of every interval except $J_{\tau(2)}^0$ and $J_{\tau(3)}^0$ are also the same. In particular, their itineraries remain the same for sufficiently small $\epsilon$. The resulting perturbation for the return map with $5$ branches and the associated permutation $\sigma = (5 3 2 1 4)$ is shown in Figure \ref{fig:perturbation of a return map}.

\begin{figure}
    \centering
    \includegraphics[width=\linewidth]{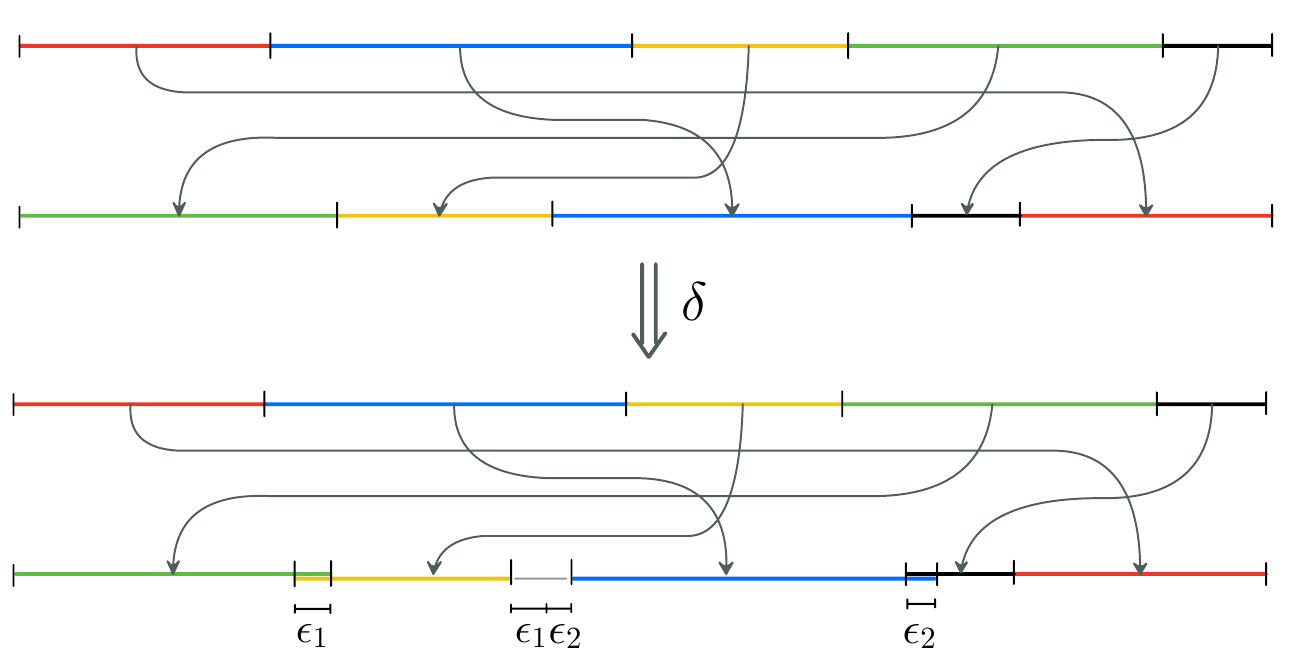}
    \caption{The result of applying $\delta$ to the particular $J_0$ for which the return map has $5$ branches.}
    \label{fig:perturbation of a return map}
\end{figure}

Because $\Tilde{R}_{J_0}$ is well-defined, we have that $\Tilde{X} \cap \Tilde{O}(J_0) = \Tilde{O}(\Tilde{X} \cap J_0)$ and $\Tilde{X} \cap J_0$ = $\bigcap_{n=1}^{\infty} \Tilde{R}^n_{J_0}(J_0)$. The interval $[v_2-\epsilon_1,v_2+\epsilon_2)$ is not in $\Tilde{X}$, since it is not in the image of $\Tilde{R}_{J_0}$ and $\Tilde{X} \cap J_0$ = $\bigcap_{n=1}^{\infty} \Tilde{R}^n_{J_0}(J_0)$. On the other hand, the intervals $[v_1-\epsilon_1, v_1)$ and $[v_3,v_3+\epsilon_2)$ now both have two $\Tilde{R}_{J_0}$-preimages, and every other point in $J_0$ only has a single $\Tilde{R}_{J_0}$-preimage. As $a_p$ is by assumption the first critical point in the orbit of $v_2$, we know that $v_2$ must land on it before it lands on $v_1$ or $v_3$, as they are critical values. Thus for sufficiently small $\epsilon_1$ and $\epsilon_2$, we know that $[v_1-\epsilon_1, v_1)$ and $[v_3,v_3+\epsilon_2)$ are not contained in the $\Tilde{T}$-iterates up to and including time $P$ of $[v_2-\epsilon_1,v_2+\epsilon_2)$. Thus all of these iterates of $[v_2-\epsilon_1,v_2+\epsilon_2)$ are also not contained in $\Tilde{X}$.

The image $\Tilde{R}^P_{J_0}([v_2-\epsilon_1,v_2+\epsilon_2))$ is equal to the image $R_{J_0}([v_2-\epsilon_1,v_2+\epsilon_2))$ shifted by $-a \epsilon_1 + b \epsilon_2$, by the definition of $a$ and $b$. Thus by our choice of $\epsilon_1$ and $\epsilon_2$, we have that $a_p^0$ is still contained in this image. Thus $a_p^0$ is also not contained in $\Tilde{X}$. Consider the set of $\Tilde{T}$ iterates of the intervals $J_j^0$ up to the times when they return to $J_0$. By our choice of $\delta$, we know all of these intervals are mutually disjoint, with the only possible exceptions being:
\begin{enumerate}
    \item Iterates of $J_{\tau(2)}$ for the times after $a^{0,+}_{\tau(2)-1}$ lands on $\beta^{0,+}(\tau(2)-1,m_{\tau(2)-1}^{0,+})$ and iterates of $J_{\tau(1)}$ for the times after $a^{0,-}_{\tau(1)}$ lands on $\beta^{0,-}(\tau(1),m_{\tau(1)}^{0,-})$;
    \item Iterates of $J_{\tau(3)}$ after the time $a^{0,-}_{\tau(3)}$ for the times after $\beta^{0,-}(\tau(3),m_{\tau(3)}^{0,-})$ and iterates of $J_{\tau(4)}$ for the times after $a^{0,+}_{\tau(4)-1}$ lands on $\beta^{0,+}.(\tau(4)-1,m_{\tau(4)-1}^{0,+})$.
\end{enumerate}
Since there are only two possible exceptions to $\Tilde{R}_{J_0}$ iterates up to return time of the $J_i$'s to not be disjoint, we know that we have that every point in the orbit of $a_p^0$ up to the landing on $\beta_0(p)$, as well as all of the discontinuities in the orbit up to return time of $\beta_0^+(p)$ and $\beta_0^-(p)$ to $J_0$ do not have any additional iterated $\Tilde{T}$-preimages contained in $J_0$. Since $\Tilde{X} \cap \Tilde{O}(J) = \Tilde{O}(\Tilde{X} \cap J)$, this means that all of these points are not contained in $\Tilde{X}$.

Assuming that $\Tilde{T}$ is eventually periodic, this shows that $U(\Tilde{T}) < U(T)$. The proof is the same as in Lemma \ref{lem:pert-corr}. The discontinuities in $\mathcal{C}_1$ and $\mathcal{C}_2$ that landed into $J_0$ still do so for a sufficiently small $\delta$, so they are still not contained in $\Tilde{X}$, as we can choose $\delta$ small enough so that $\Tilde{O}(J_0)$ remains a positive distance away from these discontinuities. Moreover, every critical cycle not contained in the $T$-orbit of $J_0$ remains unchanged, and all of the discontinuities that land into intervals corresponding to these cycles are still not in $\Tilde{X}$. Thus the contribution of all of these critical points to $U(\Tilde{T})$ remains the same. The set of discontinuities contained in $\Tilde{O}(J_0) \cap \Tilde{X}$ is smaller by at least one and they possibly split into several cycles instead of forming a single one. Thus the contribution of these discontinuities to $U(\Tilde{T})$ is strictly smaller than before, so $U(\Tilde{T}) < U(T)$ follows.

Finally, we see that $\Tilde{R}_{J_0}$ is conjugate, via the affine map that sends $J_0$ to $[0,1)$, to an $\ITM$ on $N_0$ intervals for which all parameters are rational. Indeed, we know by Lemma \ref{lemma:corr-rj-structure} that $J_0$ is equal to the union of iterates of $P(\beta^+)$, so the length of every $J_j^0$ is an integer multiple of $|P(\beta^+)|$, and by our choice of $\epsilon_1, \epsilon_2$, every translation factor associated to every branch of $\Tilde{R}_{J_0}$ is also a rational multiple of $|P(\beta^+)|$. This means that every point in $J_0$ is eventually periodic. Moreover, every critical point that lands into $\Tilde{O}(J_0)$ is also eventually periodic. Since the dynamics of other critical points do not change, we see that $\Tilde{T}$ remains eventually periodic.

\underline{Case 2 : $N_0 = 3$}

This case is different because we only have two critical values of $R_{J_0}$ and thus we are unable to make the same perturbation as in Case 1 and still have that $\Tilde{R}_{J_0} \subset J_0$ (with $\Tilde{R}_{J_0}$ defined as in Case 1). The perturbation $\delta$ in this case depends on the permutation $\sigma$ associated to $R_{J_0}$. We give the proof for $\sigma = (3 2 1)$, with the proof in every other case being analogous.

Without loss of generality, we may assume that the first critical value of $R_{J_0}$ in the backward $R_{J_0}$-orbit of $\beta$ is $v_1$, with the other case being analogous. Let $P$ be such that $R_{J_0}^P(v_1) = \beta$. Thus we have that $v_1^- = R_{J_0}(y^{0,-})$ and $v_1^+ = R_{J_0}(a_1^{0,+})$. In this case, let:

\begin{itemize}
    \item $a:= \# \{ R_J^t(v_1) \in J_3; 0\le t < P \}$;
    \item $b := \# \{ R_J^t(v_1) \in J_2; 0\le t < P \}$.
\end{itemize}
Once again, let $\epsilon_1, \epsilon_2 > 0$ be sufficiently small and of the same form as before, so that for $b > 0$:
\[
\frac{a}{b+1} < \frac{\epsilon_2}{\epsilon_1} < \frac{a+1}{b},
\]
or if $b = 0$:
\[
a \epsilon_1 < \epsilon_2.
\]
In both cases, we again have that:
\[
-\epsilon_2 < -a \epsilon_1 + b \epsilon_2 < \epsilon_1.
\]
We can now choose the analogous intervals $J_1, \dots, J_n$ as in Case 1, and thus find a perturbation $\delta$ such that:
\begin{align*}
&\langle C^{i,+}(0,k), \delta \rangle = \langle C^{i,-}(1,k), \delta \rangle = 0 \text{ for } i \ge 1, \\
&\langle C_{\beta}, \delta \rangle = 0 \text{ for } \beta \in \mathcal{C}_{\neg X}, \\
& \langle L_{j}^0, \delta \rangle = \langle C^{0,+}(j,k), \delta \rangle= \langle C^{0,-}(j,k), \delta \rangle = 0, \\
& \langle R_{0}^{0,+}, \delta \rangle = -\epsilon_1, \langle R_{1}^{0,+}, \delta \rangle = \epsilon_2, \langle R_{2}^{0,+}, \delta \rangle = 0.
\end{align*}
Thus we once again have that $\Tilde{R}_{J_0}(J_0) \subset J_0$, and the rest of the analysis is the same as in Case 1. The dynamics outside of the orbit of $J_0$ do not change and $\Tilde{R}_{J_0}$ is conjugate to an $\ITM$ with rational parameters, so $\Tilde{T}$ is eventually periodic. To prove that $U(\Tilde{T}) < U(T)$, it is enough to show that $\Tilde{\beta} \notin \Tilde{X}$. The intervals $[v_1, v_1+\epsilon_2)$ and $[a_3^{0,-}-\epsilon_1, a_3^{0,-})$ are not in the image $\Tilde{R}_{J_0}(J_0)$ and therefore not in $\Tilde{X}$. Since $\langle R_{2}^{0,+}, \delta \rangle = 0$, we thus have that the only $\Tilde{R}_{J_0}$-preimage of $[v_1-\epsilon_1, v_1)$ is $[a_3^{0,-}-\epsilon_1, a_3^{0,-})$, and therefore $[v_1-\epsilon_1, v_1)$ is also not in $\Tilde{X}$. Thus the image $\Tilde{R}_{J_0}^P([v_1-\epsilon_1, v_1+\epsilon_2])$ is also not in $\Tilde{X}$. It is equal to the image $R_{J_0}^P([v_1-\epsilon_1, v_1+\epsilon_2])$ shifted by exactly $-a \epsilon_1 + b \epsilon_2$, and therefore still contains $\beta$. Thus $\beta \notin \Tilde{X}$.

\underline{Case 3 : $N_0 = 2$}

In the first two cases, we were able to make a perturbation that removed from $X$ a discontinuity that was the first one in the orbit of some $a_j^0 \in J_0$. Such a perturbation is not possible in this case, so we will instead remove the discontinuities that come later in the orbits of $a_0^{0,+}, a_1^{0,\pm}$ and $a_2^{0,-}$.

Again, we may choose $\delta$ so that the dynamics do not change outside the orbit of $J_0$, but for $J_0$ we have:
\begin{align*}
& \langle L_{0}^0, \delta \rangle = \epsilon, \langle L_{1}^0, \delta \rangle = 0, \langle L_{2}^0, \delta \rangle = -\epsilon, \\
& \langle C^{0,+}(1,1), \delta \rangle = \epsilon, \, \langle C^{0,-}(1,1), \delta \rangle = -\epsilon, \\
& \langle C_J^{0,+}(1,k), \delta \rangle = 0, \, \langle C_J^{0,-}(1,k), \delta \rangle = 0, \text{ for all } k > 1, \\
& \langle R_{0}^{0,+}, \delta \rangle = \epsilon, \langle R_{1}^{0,+}, \delta \rangle = -\epsilon,
\end{align*}
where $\epsilon > 0$ is arbitrarily small. The first equation means that the $\Tilde{T}$-iterates on $a_0^{0,+}$ and $a_2^{0,-}$ not land on the first critical points in their $T$-orbits. The second and third equation give that all of the critical connections in the $T$-orbit of $J_0$, except the landing of $\beta^{0,-}(1,1)$ to $\beta^{0,-}(1,2)$ and of $\beta^{0,+}(1,1)$ to $\beta^{0,+}(1,2)$, are preserved under the iterates of $\Tilde{T}$. The fourth equation means that the return map $\Tilde{R}_{J_0}$ to $J_0$ remains unchanged. This means that the interval $I^+_{\epsilon} = [\beta^{0,+}(1,2), \beta^{0,+}(1,2) + \epsilon)$ maps forward continuously with $\Tilde{T}$-iterates of $J_2^0$ and lands to the left of $J_1^0$ at the time when $J_2^0$ returns to $J_0$ under the iterates of $\Tilde{T}$. Analogously, the interval $I^-_{\epsilon} = [\beta^{0,-}(1,2) - \epsilon, \beta^{0,-}(1,2))$ lands to the right of $J_2^0$ at the time $J_1^0$ returns to $J_0$ under the iterates of $\Tilde{T}$. Thus $I^+_{\epsilon}$ follows $J_1^0$ continuously under the iterates of $\Tilde{T}$ until the time $J_1^0$ returns to $J_0$, and at this time $I^+_{\epsilon}$ lands into $J_0$. As the $\Tilde{T}$-orbit of $J_0$ is disjoint from $I^+_{\epsilon}$, we conclude that $I^+_{\epsilon}$ is not in $\Tilde{X}$. Analogously, we have that $I^-_{\epsilon}$ is also not in $\Tilde{X}$. Thus every discontinuity in the orbit of $a_0^{0,+},a_2^{0,-}, \beta^{0,+}(1)$ and $\beta^{0,-}(1)$ is not contained in $\Tilde{X}$ anymore, except for $\beta$. As $\Tilde{R}_{J_0}$ is the same as before perturbation, all of these discontinuities are eventually periodic and $\beta$ is still periodic. Thus $\Tilde{T}$ is still eventually periodic. Moreover, the critical cycles of $\beta^+$ and $\beta^-$ now have size one, so $U(\Tilde{T}) < U(T)$ as well.

\underline{Case 4 : $N_0 = 1$}

In this case $\beta^+$ is in the boundary of $J_0 = P(\beta^+)$. Since $T$ satisfies the correspondence property, we know that $\beta^-$ must eventually land into $J$. Thus we make a similar perturbation as in Case 3: we keep all of the dynamics outside of the orbit of $J_0$ as they are, and for $J_0$ we choose a $\delta$ such that:
\begin{align*}
& \langle L_{0}, \delta \rangle = 0, \langle L_{1}, \delta \rangle = -\epsilon, \\
& \langle C^{0,+}(0,1), \delta \rangle = \epsilon, \\
& \langle C^{0,+}(0,k), \delta \rangle = 0, \, \langle C^{0,-}(1,k), \delta \rangle = 0, \text{ for all } k > 1, \\
& \langle R_{0}^{0,+}, \delta \rangle = -\epsilon,
\end{align*}
where $\epsilon > 0$ is arbitrarily small. Thus we avoid the first critical connection, for both $a_0^{0,+} = \beta^+$ and $a_1^{0,-}$, and keep all of the others in the orbit of $a_0^{0,+}$. The return map to $J_0$ again does not change. Similarly as in Case 3, we again have that the interval $I^+_{\epsilon} = [\beta^{0,+}(0,2), \beta^{0,+}(0,2) + \epsilon)$ follows $J_0$ continuously under the iterates of $\Tilde{T}$ until $J_0$ returns to itself, when it lands to the left of $\beta^-$. By assumption, $\beta^-$ eventually lands into $J_0$, and thus, for sufficiently small $\epsilon$, $J^+_{\epsilon}$ eventually lands into $J$ as well. Since the $\Tilde{T}$-orbit of $J$ is disjoint from it, we know that $J^+_{\epsilon}$ is not in $X$. Same as in Case 3, we thus know that $\Tilde{T}$ is eventually periodic and $U(\Tilde{T}) < U(T)$.

Since we have now covered all cases, we may assume by induction that we have a map that satisfies the correspondence property and for which no critical point in $X$ lands on a different critical point. Assume now that there exists some discontinuity $\beta^+$ in the boundary of some interval $J$ of $X$, with the case for $\beta^-$ being analogous. Since $T$ satisfies the correspondence property, we know that $J_0 = P(\beta^+)$, because otherwise $\beta^+$ would have to land on some other discontinuity. We also know that $\beta^-$ must eventually land into $J$. Let $t_1$ be the time at which $\beta^-$ lands into $J_0$ and let $l$ be the distance between $\beta^+$ and $T^{t_1}(\beta^-)$. Let $t_2$ be the minimal period of $\beta^+$. We then make a perturbation $\delta$ that keeps all dynamics outside of the orbit of $J$ as they are and shifts the return map to $J_0$ to the left by $\epsilon$, i.e. $\Tilde{T}^{t_2}(\beta^+) = -\epsilon$. Note that this means that $\Tilde{T}^{t_1}(\beta^-) = T^{t_1}(\beta^-)$. For a sufficiently small $\epsilon$, the interval $[\beta-\epsilon, \beta)$ does not contain any $+$-type discontinuity before it lands into $J_0$, even after perturbation. Thus the return map to the interval $J' = [\beta-\epsilon, \beta+l)$ is now well defined and equal $\Tilde{T}^{t_1}$ on $J_1' = [\beta-\epsilon, \beta)$ and $\Tilde{T}^{t_2}$ on $J_2' = [\beta, \beta + l)$. Taking $\epsilon$ to be a rational multiple of $l$ means that $\beta^+$ remains eventually periodic. Thus, $\Tilde{T}$ is still eventually periodic and there is one less discontinuity in the boundary of $\Tilde{X}$. We have added $\beta^-$ to $\Tilde{X}$, but by construction, it is the only discontinuity in its cycle, and therefore $U(\Tilde{T}) < U(T)$.

Thus by induction, we get a map that satisfies the correspondence property and has unstable number zero. By Lemma \ref{lemma:corr+u=a1a2m} we know that it also satisfies properties A1, A2 and Matching. Assume now that it does not satisfy A3. This in particular means that $C_{\neg X} \neq \emptyset$. Then we can make a perturbation $\delta$ that does not change the dynamics of all critical points contained in $X$, and for which:

\begin{align*}
& \langle C_{\beta}, \delta \rangle = -\epsilon, \text{ for all } \beta \in C_{\neg X} \cap \mathcal{C}^-, \\
& \langle C_{\beta}, \delta \rangle = \epsilon, \text{ for all } \beta \in C_{\neg X} \cap \mathcal{C}^+,
\end{align*}
where $\epsilon > 0$ is small enough so that the itineraries up to the landing time into $X$ of discontinuities in $C_{\neg X}$ do not change. Thus there are no critical connections outside of $X$ anymore, so A3 must hold. Since the dynamics of all other critical points do not change, A1, A2 and Matching still hold. Thus the resulting map is stable.
\end{proof}

\section{Almost every finite type map is stable}
\label{sec:ae-fin-stable}

In this section, we show that the Boshernitzan--Kornfeld Conjecture implies the Irrational Rotations Conjecture. In fact, this results follows immediately the Characterization of Stability Theorem \ref{thm:stability=accm} and the following theorem:

\begin{theorem}
\label{thm:fin-tip-ae-rot}
The set of all infinite type maps has full measure in $ITM(r) \setminus S(r)$.    
\end{theorem}

\begin{proof}[Proof of Boshernitzan--Kornfeld Conjecture $\implies$ Irrational Rotations Conjecture]
Since infinite type maps have zero measure in $\ITM(r)$, by Theorem \ref{thm:fin-tip-ae-rot} the stable maps have full measure in $\ITM(r)$. By Theorem \ref{thm:stability=accm}, we know that stable maps correspond to maps for which the return map to every interval is either a rotation or the identity. The set of stable maps with rationally independent coefficients forms a full measure subset of stable maps, and for these maps, every return map corresponds to an irrational circle rotation. Thus the maps corresponding to irrational circle rotations form a full measure subset of $\ITM(r)$. 
\end{proof}

We now turn to the proof of Theorem \ref{thm:fin-tip-ae-rot}. Until now, we only needed to consider first return maps on intervals contained in $X$, for which it is known that they are bijective and have finitely many branches, and thus correspond to $\IET$s. It is clear that this notion can be generalized to an arbitrary interval $J \subset X$.

\begin{definition}
\label{defn:generelised-rJ}
Let $T$ be an $\ITM$ and assume that there is an interval $J \subset I$ such that each point of $I$ returns to $J$ under iterates of $T$. Then $J$ is partitioned into maximal intervals on which the itinerary up to the return time to $J$ is constant. The map $R_J$ defined piecewise on these intervals as the iterate of $T$ for which this interval returns to $J$ is called the return map to $J$.
\end{definition}

Compared to the definition of the return map to a component interval of $X$, the generalized return map does not need to be bijective. Thus the images of its branches are not necessarily disjoint. The next lemma tells us when a generalized return map has a continuation for all sufficiently small perturbations.

\begin{lemma}
\label{lem:J-continuity}
Let $T$ be an $\ITM$ and let $J \subset I$ be an interval such that the generalized return map $R_J$ is well-defined. Assume that:
\begin{itemize}
    \item For each point $a$ in the interior of $J$, we have that the orbit of $a$ contains at most one critical point of $T$;
    \item The boundary points $x,y$ of $J$ do not land on discontinuities for all time.
\end{itemize}
Then for any sufficiently small perturbation of $T$, there exists an interval $\Tilde{J}$ close to $J$ such that the generalized return map $R_{\Tilde{J}}$ to $\Tilde{J}$ is well-defined.
\end{lemma}

\begin{proof}
Let $J_1, \dots, J_n$ be the component intervals of $R_J$, let $r_1, \dots, r_n$ be their return times to $J$, and let $a_1, \dots, a_{n-1}$ be the points in the interior of $J$ that land on discontinuities. By the second assumption, we know that $J_1$ does not land back on itself when it returns to $J$. Indeed, this would mean that $x$ is a periodic point. Since the points to the left of $x$ are not contained in $X$, this means that $x$ must be the left boundary point of a maximal periodic interval, so it must eventually land on discontinuity, contradicting the second assumption. By an analogous argument, we know that $J_n$ also does not land on itself when it returns to $J$, so both $J_1$ and $J_n$ land into the interior of $J$ when they return to $J$. This means that there is an interval $\hat{J}$ containing $J$, which is larger by a definite size, and such that a generalized return map $R_{\hat{J}}$ on $\hat{J}$ is well-defined. By the first assumption, for every sufficiently small perturbation of $T$, the points $a_1, \dots, a_{n-1}$ have continuations $\Tilde{a}_1, \dots, \Tilde{a}_{n-1}$ which land on the same discontinuities as before and have the same landing times. This means that every interval $\Tilde{J}_i = [\Tilde{a}_{i-1},\Tilde{a}_{i})$ has the same itineraries up to time $r_i$. Let $\tau$ be the permutation associated to $J$, so that the intervals $J_{\tau(1)}$ and $J_{\tau(n)}$ get mapped to the front and back of $J$, respectively. For a sufficiently small perturbation $\Tilde{T}^{r_{\tau(1)}}(\Tilde{J}_{\tau(1)})$ and $\Tilde{T}^{r_{\tau(n)}}(\Tilde{J}_{\tau(n)})$ are both contained in $\hat{J}$. Thus the interval $[\Tilde{T}^{r_{\tau(1)}}(a^+_{\tau(1)-1}),\Tilde{T}^{r_{\tau(n)}}(a^-_{\tau(n)}))$ has a well-defined generalized return map and is close to $J$.
\end{proof}

We now show a lemma about the global stability of the critical set and critical orbits in $[T_0]$. Recall that $\mathcal{C}_{X}(\Tilde{T})$ denotes the set of all critical points of $\Tilde{T}$ contained in $X(\Tilde{T})$. For a $\Tilde{\beta} \in \mathcal{C}_{X}(\Tilde{T})$, let $J(\Tilde{\beta})$ be the component interval of $X(\Tilde{T})$ containing $\Tilde{\beta}$. Let $r(\Tilde{\beta}^-)$ and $r(\Tilde{\beta}^+)$ be the return times of $\Tilde{\beta}^-$ and $\Tilde{\beta}^+$ to $J(\Tilde{\beta})$, respectively.

\begin{lemma}
\label{lem:glob-crit-stab}
Let $[T_0]$ be the stable region. Then:
\begin{itemize}
    \item For any $\Tilde{T} \in [T_0]$ the set $\mathcal{C}_{X}(\Tilde{T})$ is equal to the set of continuations of the discontinuities contained in $\mathcal{C}_{X}(T_0)$.
    \item If the $\Tilde{\beta}$ is the continuation of a discontinuity $\beta \in \mathcal{C}_{X}(T_0)$, then $r(\Tilde{\beta}^-) = r(\beta^-)$ and $r(\Tilde{\beta}^+) = r(\beta^+)$.
    \item The $\Tilde{T}$-itineraries of $\Tilde{\beta}^-$ and $\beta^+$ are equal to the $T_0$-itineraries of $\beta^-$ and $\beta^+$ up to and including the time $r(\beta^-)+r(\beta^+)$.
\end{itemize}
\end{lemma}

More briefly, we will say that $\mathcal{C}_{X}(T_0)$, $r(\beta^-), r(\beta^+)$ and the itineraries of $\beta^+,\beta^-$ up to and including the time $r(\beta^-)+r(\beta^+)$ are constant in $[T_0]$. Note that for a stable map $\Tilde{T}$, all of these are locally constant in a small neighbourhood of $\Tilde{T}$ in parameter space. The proof is in the first part of the proof of Theorem \ref{thm:acc+m-stability}. Lemma \ref{lem:glob-crit-stab} tells us that this is true globally inside the stable region as well.

\begin{proof}
Since $[T_0]$ is connected and open, it is path-connected. Let $\eta:[0,1] \to [T_0]$ be a path starting at $T_0$ and ending at $\Tilde{T}$. Since $\eta(t)$ is a stable map, for every $t \in [0,1]$, there is a small neighbourhood $\mathcal{U}(t)$ of $\eta(t)$ such that the critical set, return times and itineraries of critical points up to the times described above are constant. By compactness, the cover of $\eta([0,1])$ by the neighbourhoods $\mathcal{U}(t)$ has a finite subcover $\mathcal{U}(t_1), \dots, \mathcal{U}(t_p)$, where $t_1 < \dots < t_p$. Since these neighbourhoods are open, $\mathcal{U}(t_i) \cap \mathcal{U}(t_{i+1})$ is non-empty, for all $1 \le i < p$. Thus the critical set, return times, and itineraries are constant on the union of these sets as well. By induction, it must be constant on the entire path $\eta([0,1])$, and thus constant on the entire stable region $[T_0]$.
\end{proof}

The crucial reason why Theorem \ref{thm:fin-tip-ae-rot} holds is the following theorem:

\begin{theorem}
\label{thm:cc-in-bdry}
Let $T$ be a finite map in the boundary of stable region. Then $T$ has a critical connection.
\end{theorem}

This means that the parameters defining such a map must satisfy a linear equation with integer coefficients. Thus such maps live on a countable union of codimension-$1$ subspaces and are therefore of zero measure.

\begin{proof}
Let $[T_0]$ be the stable region such that $T \in \partial [T_0]$. Let $\{T_n\}_{n \in \N}$ be a sequence of maps in $[T_0]$ converging to $T$. We may assume that $T$ satisfies the ACC property because otherwise $T$ has a critical connection. We will show that this means that the sequence of non-wandering sets $X(T_n)$ converges in the Hausdorff topology and that the resulting set is a subset of $X(T)$. This immediately follows if we prove that the constancy from Lemma \ref{lem:glob-crit-stab} extends to $T$ in the following sense: the continuations of discontinuities in $\mathcal{C}_X(T_0)$ are contained in $\mathcal{C}_{X}(T)$ and their return times and itineraries are the same as for $T_0$.

For the sake of contradiction, assume that there is a discontinuity $\beta_{T_0} \in \mathcal{C}_{X}(T_0)$ such that its continuation $\beta_T$ has a different itinerary up to the return time of $\beta_{T_0}$ to its component interval of $X(T_0)$. We may without loss of generality assume that $\beta_T$ is of $+$-type, with the other case being analogous. Let $k$ be the smallest time for which the itineraries are different. This means that $T_0^k(\beta_{T_0}) \in I_s$ and $T^k(\beta_T) \in I_{s'}$, where by continuity $s' \in \{s-1,s+1\}$. Since $\beta_T$ is of $+$-type, we must in fact have that $s' = s+1$. Indeed, by ACC, none of the images $T_n^k(\beta_{T_n})$ land on $\beta_s^+(T_n)$, where $\beta_{T_n}$ are the continuations of $\beta_{T_0}$. In particular, they are all to the right of $\beta_s^+$. Thus passing to the limit, we have that $T^n(\beta_T)$ can't be to the left of $\beta_s(T)$, so $s' = s+1$. By a similar argument as before, we have that all of the iterates $(\beta_{T_n})$ must be strictly to the left of $\beta_{s+1}^-(T_n)$. Thus if the itinerary changes, in the limit we must have that $T_n^k(\beta_{T_n})$ lands on $\beta_{s+1}^+(T)$. This is a contradiction with our assumption that there are no critical connections, so $\beta_T$ has the same itinerary up to the return time of $\beta_{T_0}$ to its component interval of $X(T_0)$.

A similar argument then shows that each iterates $T^{r(\beta_T^+)}$ has the same itinerary up to time $r(\beta_T^-)$ as $\beta_T^-$. Indeed, again by looking at the smallest time when the itinerary changes, we get a critical connection for $T$. This clearly shows that the required constancy extends to $T$ in a continuous way, meaning that the orbits of intervals $J_{\beta}$, where $J_{\beta}$ is the interval containing $\beta \in \mathcal{C}_{X}(T_0)$, move continuously from $[T_0]$ to $T$. Finally, we will show that the orbits of different intervals $J_{\beta}$ must remain separated, i.e. they do not touch. Indeed, if two components of different orbits touched, we would have a critical connection. This is because these different components must have different itineraries. Thus they must at some point land on different sides of some discontinuity $\beta_s$. By considering the first such time, we must have that the point at which these components touch lands on $\beta_s$, thus creating a critical connection, since the boundary points of these components are iterates of critical points. Thus $X(T_0)$ has a homeomorphic continuation inside $X(T)$ with the same dynamics as $X(T_0)$.

Since $T$ is not stable, there exists a component interval $J$ of $X(T)$ for which the return map violates Matching, and thus has at least three branches. By our previous discussion, the orbit of $J$ has a definite distance from the continuation of $X(T_0)$. Indeed, it is clear that is disjoint from it and does not touch the the continuation of $X(T_0)$ because the touching point would have to land on discontinuity to separate the orbits of the touching components of $X(T_0)$ and $O(J)$, since they must have different itineraries. Since the ACC condition holds for $T$, the interval $J$ satisfies all of the assumptions of Lemma \ref{lem:J-continuity}. Thus by Lemma \ref{lem:J-continuity}, for any sufficiently small perturbation $\Tilde{T}$ of $T$ there is an interval $\Tilde{J}$ close to $J$ which has a well-defined generalized return map $R_{\Tilde{J}}$. Since $T$ is in the boundary of $J$, we may assume that $\Tilde{T} \in [T_0]$. But then $\Tilde{J}$ 
contain points in $\Tilde{X}$ which are definite distance from the continuation of $X(T_0)$. This is clearly a contradiction, so $T$ does not satisfy ACC and must therefore have a critical connection.
\end{proof}

As a corollary, we get a sufficient condition for a finite type map to not be in the closure of any stable region:

\begin{corollary}
\label{cor:fin-not-in-bdry}
Let $T$ be a finite type map that does not satisfy Matching and has no critical connections. Then $T$ is not in the closure of any stable region.
\end{corollary}

\begin{proof}
Since $T$ does not satisfy Matching, it is not contained in any stable region. Since it does not have any critical connection, by Theorem \ref{thm:cc-in-bdry} it is also not contained in the boundary in any stable region. Thus it is not contained in the closure of any stable region. 
\end{proof}

Theorem \ref{thm:cc-in-bdry} allows us to establish the following criterion for a map to be of infinite type:

\begin{lemma}
\label{lem:inf-type-characterization}
Let $T$ be a map that is not a finite type stable map and assume that the defining parameters $(\gamma_1, \dots, \gamma_r, \beta_1, \dots, \beta_{r-1})$ are rationally independent. Then $T$ is an infinite type map.  
\end{lemma}

To prove this criterion, we need the following lemma about the number of equations a map needs to satisfy in order for a return map to an interval of $X$ to have $n \ge 2$ branches.

\begin{lemma}
\label{lem:rj-equation-number}
Let $T$ be a map such that there is an interval $J$ contained in $X$ on for which the return map $R_J$ has exactly $n \ge 2$ continuity intervals. Then there are at least $n-2$ linearly independent vectors $v_1, \dots, v_{n-2}$ with integer coefficients such that defining parameters $(\gamma \, \beta)$ of $T$ satisfy $\langle v_i, (\gamma \, \beta) \rangle = 0$, for all $1 \le i \le n-2$.
\end{lemma}

\begin{proof}
Let $\sigma$ be the permutation associated to $R_J$ and let $\tau$ be its inverse. We have the following string of $n-1$ equalities that must hold:
\begin{align*}
R_J(a^-_{\tau(1)}) &= R_J(a^+_{\tau(2)-1}) \\
R_J(a^-_{\tau(2)}) &= R_J(a^+_{\tau(3)-1}) \\
&\dots \\
R_J(a^-_{\tau(n-1)}) &= R_J(a^+_{\tau(n)-1}),
\end{align*}
where the $a$'s are the discontinuities of $R_J$ and the boundary points of $J$. There are two cases. The first is when the boundary points of $J$ land on discontinuities. Then we have that $R_J(a^{\pm}_{i}) = \langle R^{J,\pm}_{i}, (\gamma \, \beta) \rangle$, where $R^{J,\pm}_{i}$ is the corresponding return vector, so all the equalities above are equivalent to:
\begin{align}
\label{eq:vector-eq}
\begin{split}
\langle R^{J,-}_{\tau(1)} - R^{J,+}_{\tau(2)-1}, (\gamma \, \beta) \rangle &= 0 \\
\langle R^{J,-}_{\tau(2)} - R^{J,+}_{\tau(3)-1}, (\gamma \, \beta) \rangle &= 0 \\
&\dots \\
\langle R^{J,-}_{\tau(n-1)} - R^{J,+}_{\tau(n)-1}, (\gamma \, \beta) \rangle &= 0
\end{split}
\end{align}
If there is a linear dependence between the $n-1$ vectors $R^{J,-}_{\tau(1)} - R^{J,+}_{\tau(2)-1}, \dots, R^{J,-}_{\tau(n-1)} - R^{J,+}_{\tau(n)-1}$ with coefficients $\alpha_1, \dots, \alpha_{n-1}$, we can extend it to the linear dependence of all vectors associated to the return map $J$, by setting the coefficients for all other vectors to $0$. Then it easily follows from Theorem \ref{thm:lin-dep} that all of the coefficients of this linear dependence must be zero, so $\alpha_1 = \dots = \alpha_{n-1} = 0$. This is due to the simple fact that return vector $R^{J,+}_{\tau(1)-1}$ does not appear among the vectors in \eqref{eq:vector-eq}, so the coefficient next to it is $0$, and this propagates in the conclusion of Theorem \ref{thm:lin-dep} to all coefficients being equal to zero. Thus in this case we get $n-1$ linearly independent vectors.

If either $a_0^+$ or $a_n^-$ does not land on a discontinuity, then we must replace in \eqref{eq:vector-eq} the vector $R_0^{J,+}$ by $R_{\tau(1)-1}^{J,+} + R_0^{J,+}$, or the vector $R_n^{J,-}$ by $R_{\tau(n)}^{J,-} + R_n^{J,-}$. This is because in this case $R_J(a^{+}_{0}) = a_0^+ + \langle R^{J,+}_{0}, (\gamma \, \beta) \rangle$ and $a_0^+ = \langle R_{\tau(1)-1}^{J,+}, (\gamma \, \beta) \rangle$, and similarly for $a_n^-$. If either of these vectors lands on a discontinuity, then the same argument as above shows that we get $n-1$ linearly independent vectors, because either $R_{\tau(1)-1}^{J,+}$ or $R_{\tau(n)}^{J,-}$ will have coefficient zero. Thus we may assume that both boundary points do not land on discontinuities. But then we may simply take the subset of $n-2$ vectors that does not include the vector related to the return of $a_0^+$, and the same argument as before shows that these vectors are linearly independent.
\end{proof}

The last part of the proof where we reduce the set from $n-1$ to $n-2$ vectors might appear unnecessary, but it is simple to check that for cases $n=3$ and $n=2$, the set of $n-1$ vectors we get in the last part of the proof is linearly dependent.

\begin{proof}[Proof of Lemma \ref{lem:inf-type-characterization}]
Since we have assumed that the defining parameters are rationally independent, we know that $T$ has no critical connections so it satisfies ACC. If $T$ did not satisfy Matching, there would have to be a component interval of $X$ with at least three branches of the return map. By Lemma \ref{lem:rj-equation-number} this would mean that there is at least one non-zero vector $v$ with integer coefficients such that $\langle v, (\gamma \, \beta) \rangle$ is equal to $0$. This contradicts our assumption on rational independence, so $T$ satisfies Matching. Thus if $T$ were of finite type, it would have to be stable, but we have assumed that this is not the case. Thus $T$ has to be of infinite type.
\end{proof}

Finally, Lemma \ref{lem:inf-type-characterization} allows us to prove Theorem \ref{thm:fin-tip-ae-rot}:

\begin{proof}[Proof of Theorem \ref{thm:fin-tip-ae-rot}]
Let $\mathrm{INF}(r)$ be the set of all infinite type $\ITM$s on $r$ intervals. The set of all rationally independent parameters $\mathcal{R}(r)$ has full measure in $ITM(r)$. By Lemma \ref{lem:rj-equation-number}, we know that any map associated with such parameters has to be either a stable finite type map or an infinite type map. Thus $S(r) \cup \mathrm{INF}(r)$ has full measure in $ITM(r)$, so $\mathrm{INF}(r)$ has full measure in $ITM(r) \setminus S(r)$.
\end{proof}

\section{Parameter space and future work}
\label{sec:future} 

\subsection{The Bruin--Troubetzkoy family}
\label{subsec:bt-family}
In this subsection, we are going to apply our results to the parameter space of the special family from \cite{MR2013352} and show that the open coloured triangles in Figure~\ref{fig:bt-fin} (duplicated below for easier reference) form an open and dense subset of the parameter space.

\vspace{3mm}
\begin{center}
\label{fig:bt-special-family}
\includegraphics[width=0.5\textwidth]{itm_correct_parameter_space.png}
\end{center}

In \cite{MR2013352}, the authors consider a two-parameter family $\mathcal{T}_{a,b}$, with $0 \le b \le a \le 1$, consisting of maps $T_{a,b}$ defined as follows:
\[
T_{a,b}(x)= \left\{  \begin{array}{ll} x+a  &\mbox{ for } x\in [0,1-a)\\ 
x+b  &\mbox{ for } x\in [1-a,1-b) \\ 
x +b -1 & \mbox{ for } x\in [1-b, 1)  \end{array} \right. 
\]

In \cite{MR2013352} it was shown that the set of all finite type maps in $\mathcal{T}_{a,b}$ is equal to the set of all maps that under the iterates of a certain renormalisation operator get mapped into a certain region. The set of closed yellow triangles touching the bottom side of the big triangle in the figure above corresponds to the set of all maps that get mapped into this region after one step. It is easy to show that the maps on the boundaries of these triangles are not stable (in this family) because they violate ACC and that the maps in the interiors of these triangles are stable. Moreover, every other coloured triangle in the picture gets mapped homeomorphically onto one of these yellow triangles under the iterates of the renormalisation operator. Thus the same conclusion holds for any triangle: the maps in the interior are stable and the maps on the boundary are not. Thus the set of all stable finite type maps is equal to the union of interiors of these triangles. In the remainder of this subsection, we are going to prove that the stable maps form a dense subset of this family, which therefore shows that the set of (open) triangles forms a dense and open subset of the parameter space, as suggested by the figure. The authors of \cite{MR2013352} do not explicitly state this result, but it can probably be derived from their results. 

To apply our results to the family $\mathcal{T}_{a,b}$, we will use the following trick. We may assume that $\beta_0^+$ and $\beta_r^-$ are parameters, and not the fixed points $0^+$ and $1^-$. The boundary points of the interval are parameters of this family but they are not discontinuities of a map. Thus we allow for the interval of the definition of an $\ITM$ to change, which enlarges the space $\ITM(r)$ to $\widehat{\ITM}(r)$ of $2r+1$ parameters. The boundary conditions for this space are:
\begin{align*}
\beta^+_0 &\le \beta_{s-1}^+ + \gamma_s; \\
\beta^-_r &\ge \beta_s^- + \gamma_s, \\
\beta_{s+1} &\ge \beta_{s}
\end{align*}
for all $0 \le s \le r-1$. We will in fact only need to consider a small neighbourhood of $\ITM(r)$ in this space, so all of the parameters we consider are uniformly bounded. The reason for making this extension is that when we make perturbations in the proofs of Theorems \ref{thm:stability=accm} and \ref{thm:ep->acc+m}, we need to be able to change all of the parameters defining a map $T$, and this is impossible if the parameters $0$ and $1$ are fixed.

Thus we can consider an extension $\widehat{\mathcal{T}}_{a, b}$ of the family above in this extended space $\widehat{\ITM}(3)$. The defining equations of this family are:
\begin{align*}
\beta_1^- + \gamma_1 &= \beta_3^- \\
\beta_2^- + \gamma_2 &= \beta_3^- \\
\beta_2^+ + \gamma_3 &= \beta_0^+.
\end{align*}

\begin{theorem}[Density of stable maps in the Bruin--Troubetzkoy family]
Stable maps are dense in $\mathcal{T}_{a,b}$.
\end{theorem}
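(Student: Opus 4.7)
My plan is to follow the three-step scheme of Main Theorem I inside the family $\mathcal{T}_{a,b}$. Viewed in $\ITM(3)$, the family is cut out by the three linear equations $\gamma_1+\beta_1=1$, $\gamma_2+\beta_2=1$, $\gamma_3+\beta_2=0$, all having integer coefficients, so $\mathcal{T}_{a,b}$ is a rational family in the sense of Section \ref{sec:theoremA}. Applying Theorem \ref{thm:ep-dense-param} immediately shows that eventually periodic maps are dense in $\mathcal{T}_{a,b}$; this is the analogue of Theorem A.

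For the analogue of Theorem C, I would take an eventually periodic $T\in\mathcal{T}_{a,b}$ and replicate the inductive perturbation argument of Section \ref{proofoftheoremC}, insisting that every perturbation direction $\delta$ lies in the $2$-plane $\bm{T}_T\mathcal{T}_{a,b}$. This $2$-plane is cut out by the three homogeneous constraints $\langle v_i^c,\delta\rangle=0$ for
\[
v_1^c=\bm{e}_1+\bm{f}_1,\qquad v_2^c=\bm{e}_2+\bm{f}_2,\qquad v_3^c=\bm{e}_3+\bm{f}_2.
\]
Each existing perturbation in the proofs of Lemma \ref{lem:pert-corr} and Theorem C is selected via Corollary \ref{cor:lin-indep}; to keep it inside the family I would impose these three extra equations on $\delta$. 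A solution exists precisely when the dynamical vectors used in Corollary \ref{cor:lin-indep}, together with $\{v_1^c,v_2^c,v_3^c\}$, form a linearly independent set.

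To secure this extended linear independence, I would enlarge the critical set to $\widehat{\mathcal{C}}:=\mathcal{C}\cup\{0^+,1^-\}$, treating the endpoints of $I$ as pseudo-discontinuities. Under this enlargement the three vectors $v_1^c,v_2^c,v_3^c$ become genuine critical connection vectors recording the landings $\beta_1^-\mapsto 1^-$, $\beta_2^-\mapsto 1^-$, and $\beta_2^+\mapsto 0^+$. Because $0^+$ and $1^-$ lie outside $X$ and are dynamically terminal for the orbits we care about, they play the same structural role as the points of $\mathcal{C}_{\neg X}$ in Section \ref{sec:lin-indep}. Re-running the partition-refinement procedure of Subsection \ref{subsec:refining-part} with the enlarged set $\mathcal{P}$, the distinguished-pair Lemma \ref{lem:dist-pair}, the left/right dichotomy of Lemma \ref{lem:l-or-r}, and the push-forward Proposition \ref{prop:refine} each go through, yielding an extension of Theorem \ref{thm:lin-dep} and hence of Corollary \ref{cor:lin-indep} that includes $v_1^c,v_2^c,v_3^c$ in the independent set.

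Granting this extension, each perturbation step in the proof of Theorem C transposes verbatim: the original inner-product prescriptions with dynamical vectors are imposed jointly with $\langle v_i^c,\delta\rangle=0$, and the small constants $\epsilon_1,\epsilon_2$ are still chosen as rational multiples of $|P(\beta^+)|$ so that eventual periodicity is preserved. Each step strictly decreases the unstable number $U$, and after finitely many iterations one obtains a finite-type $\tilde T\in\mathcal{T}_{a,b}$ satisfying ACC and Matching, which by Theorem B is stable. The main obstacle I anticipate is the bookkeeping for the extended Theorem \ref{thm:lin-dep}: one has to verify that the refinement arguments remain valid when orbits terminate at the boundary points of $I$. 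Since these boundary points are dynamically inert, this should be a routine but not entirely trivial adaptation rather than a genuinely new argument.
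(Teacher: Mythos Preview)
Your overall strategy coincides with the paper's: reduce to Theorem~A inside the family, then adapt Theorem~C by carrying along the three family constraints as extra ``critical connection'' vectors in the linear-independence machinery. Where you diverge is in the implementation of that adaptation, and this is where a genuine gap appears.

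You propose to stay in $W(3)=\mathbb{R}^5$ and enlarge only the critical set by adding $0^+,1^-$. The vectors $v_i^c$ you write down, however, are \emph{not} critical connection vectors in the sense of Section~\ref{sec:lin-indep}: the relation $\gamma_1+\beta_1=1$ is affine, not homogeneous, so the vector recording ``$\beta_1^-\mapsto 1^-$'' should carry a $-\bm{f}_3$ term, which does not exist in $W(3)$. This matters because the engine of Theorem~\ref{thm:lin-dep} is Proposition~\ref{prop:refine}, whose proof hinges on the identity that, for every discontinuity $\beta$, the $\bm{f}_{\mathrm{ind}(\beta)}$-component of the dependence relation vanishes (this is what makes \eqref{eq:beta-L} equal zero). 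For the pseudo-discontinuities $0^+$ and $1^-$ there is no such component in $W(3)$, and hence no corresponding cancellation; the refinement argument does not ``go through'' as you suggest. Your parenthetical that this is ``routine bookkeeping'' underestimates the obstruction.

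The paper resolves exactly this point by its detour through the enlarged space $\widehat{\ITM}(3)\subset\mathbb{R}^7$, where $\beta_0$ and $\beta_3$ are genuine parameters with their own $\bm{f}$-coordinates. In $\widehat{W}(3)$ the three defining equations of the family become homogeneous, the constraint vectors become bona fide $C_\beta$-vectors, and Theorem~\ref{thm:lin-dep} applies unchanged. The price is that the perturbed map lies in $\widehat{\mathcal{T}}_{a,b}$ rather than $\mathcal{T}_{a,b}$, so a final affine rescaling (translate by $-\beta_0$, scale by $1/\beta_3$) is needed to land back in $\ITM(3)$; this rescaling preserves all dynamical properties and is the step your outline omits. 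In short, the parameter-space enlargement is not a cosmetic choice but precisely the homogenization that makes your intended extension of Corollary~\ref{cor:lin-indep} valid.
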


\begin{proof}

The following is the list of changes in the proof of our $4$ main Theorems: \ref{thm:ep-dense-param}, \ref{thm:lin-dep}, \ref{thm:stability=accm} and \ref{thm:ep->acc+m}.

\begin{enumerate}
\label{special-family-changes}
    \item 
    \label{item:BT1}
    Theorem \ref{thm:ep-dense-param} - Density of eventually periodic maps in $\mathcal{T}_{a,b}$: We already know that eventually periodic maps are dense in parametric families because Theorem \ref{thm:ep-dense-param} was proven for rational families, and $\mathcal{T}_{a,b}$ is clearly a rational family. Thus every map in the interior of this parameter space can be approximated by an eventually periodic map in this space, so the eventually periodic maps are dense in $\mathcal{T}_{a,b}$.
    \item 
    \label{item:BT2}
    Theorem \ref{thm:lin-dep} - Linear Dependence of itinerary vectors for maps in $\widehat{\ITM}(3)$: We need to add the itinerary vectors for $\beta_0^+$ and $\beta_r^-$ to the statement and proof. This is where the trick of considering the extended parameter space $\widehat{\ITM}(r)$ becomes relevant: we need $0$ and $1$ to be parameters so that the calculations in the proof still apply to them. The proof remains unchanged otherwise. 
    \item 
    \label{item:BT3}	
    Theorem \ref{thm:stability=accm} - Matching and ACC is equivalent to Stability in $\widehat{\ITM}(3)$: The left and right boundary points $\beta_0^+$ and $\beta_r^-$ are not discontinuities of a map in $\widehat{\ITM}(3)$, so they are not critical points (but they are parameters of the map, which is the important point for Theorem \ref{thm:lin-dep}). Thus these points do not need to be included in the definition of ACC, Matching or stability, and so the proof that ACC and Matching characterizes stability does not change. Note that a map $T$ which is stable in $\widehat{\ITM}(3)$ and contained in $\mathcal{T}_{a,b}$ is stable within this parameter family as well, in the sense that there is neighbourhood of $T$ in $\mathcal{T}_{a,b}$ which satisfies all of the properties of stability as in Definition \ref{defn:stable}, 
    \item 
    \label{item:BT4}
    Theorem \ref{thm:ep->acc+m} - Approximation of eventually periodic maps in $\mathcal{T}_{a,b}$ by stable maps in $\widehat{\mathcal{T}}_{a,b}$: By item \ref{item:BT2}, the vectors corresponding to the defining critical connections of the family $\widehat{\mathcal T}_{a,b}$ are included in Theorem \ref{thm:lin-dep}. Thus these critical connections may be preserved every time we make a perturbation in the proof Theorem \ref{thm:ep->acc+m}. Thus the same proof as before shows that we may make a finite sequence of perturbations to an eventually periodic map that makes it stable.
\end{enumerate}
By item~\ref{item:BT1}, if we start with a map $T$ in $\mathcal{T}_{a,b}$, we can perturb it to an eventually periodic map that is still in $\mathcal{T}_{a,b}$. Because of item~\ref{item:BT4}, we may then perturb it to a map in the extended parametric family $\widehat{\mathcal{T}}_{a,b}$ that satisfies ACC and Matching and is thus stable by item~\ref{item:BT3}. This map $\Tilde{T}$ is possibly not contained $\ITM(3)$, but because the perturbations we make are arbitrarily small, there is an arbitrarily small rescaling of it that is contained in $\ITM(3)$. Indeed, we may first translate all of the $\beta$-parameters by $-\beta_0$, so that the left boundary of the interval is $0^+$. Note that this preserves the itineraries of all points and the (dynamically relevant) equations satisfied by the parameters of the map, so it does not change the dynamics. Moreover, this change in parameters can be chosen arbitrarily small, since $\tilde{T}$ is arbitrarily close to a map in $\mathcal{T}_{a,b}$ for which $\beta_0^+ = 0$. Next, we multiply all of the parameters by ${1}/{\beta_1^-}$. This also does not change the dynamics, is arbitrarily small, and gives that the left boundary point is now $1^+$. Thus by this rescaling, arbitrarily close to the stable map in $\widehat{\mathcal{T}}_{a,b}$ we get a map in $\ITM(3)$ with the same dynamical properties, which must therefore be stable in $\widehat{\ITM}(3)$ and contained in $\mathcal{T}_{a,b}$. This map is therefore stable in $\mathcal{T}_{a,b}$, as described in item~\ref{item:BT3}. Thus the stable maps are dense in $\mathcal{T}_{a,b}$.
\end{proof}

\subsection{Future work: geometry and dynamics in the boundary of stable regions}
\label{subsec:param-space}
In this final subsection, we discuss the properties of the full parameter space $\ITM(r)$ of all $\ITM$s on $r$ intervals, and we present a few conjectures. Recall that a connected component of the set of stable maps $\mathcal{S}(r)$ corresponds to the maximal neighbourhood $[T]$ of a stable map $T$ that satisfies the three properties from the definition of stability (Definition \ref{defn:stable}). Such sets $[T]$ are called \textit{stable regions}. A very interesting problem is to describe the geometry of these regions.

\begin{question}
What are the geometric properties of stable regions and their boundaries?
\end{question}

In a companion paper, we will give an example using ghost preimages (see Definition \ref{def:ghost-tree} and Example \ref{ex:ghost-preimage}) which shows that stable regions are not convex in general. Since ghost preimages are codimension $1$ phenomena, it seems plausible that by taking the closure of a stable region, we can make it convex.

\begin{question}
Is the closure of a stable region always convex?
\end{question}

Because the stable regions are generally not convex, the following also appears to be a non-trivial question:

\begin{question}
Are the stable regions simply connected?
\end{question}

Because of ghost preimages, the boundary of stable regions can contain countably many hyperplanes. The accumulation properties of these hyperplanes seem difficult to describe, and we are led to the following question:

\begin{question}
Is the boundary of every stable region locally connected?    
\end{question}

Besides their geometry, a step towards the proof of Boshernitzan--Kornfeld Conjecture is to understand how the stable regions are arranged in the parameter $\ITM(r)$. Are they necessarily separated from each other, or can they `tile' the parameter space?

\begin{question}
Are the stable regions (partially) separated from each other? In other words, does for every stable region $[T]$ exist an open subset of the (outer) boundary on which the set of points that are contained in the boundary of some other stable component is small (e.g. meager or countable)?
\end{question}

Figure \ref{fig:bt-special-family} suggests that this is true: every closed coloured triangle has at least one side on which the set of points where it touches other triangles is countable.

Another interesting problem is describing the dynamics in the boundary of stable regions: 

\begin{question}
What are the dynamical properties of a map in the boundary of a stable region?
\end{question}

The boundary of a stable set is where the topology of a non-wandering set changes, so it is contained in \textit{bifurcation locus} of $\ITM(r)$. Thus the answer to this question would constitute the start of the bifurcation theory of $\ITM$s. Intuitively, the bifurcations should be associated with the changes of critical itineraries, and they should in turn be associated with critical connections. This is confirmed by Theorem \ref{thm:cc-in-bdry} for finite type maps. It is not clear that this should also hold for infinite-type maps:

\begin{question}
Do infinite type maps in the boundary of a stable region have critical connections?
\end{question}

Since a necessary condition for a finite type map to be in the boundary of a stable region is that it has a critical connection, a natural question to ask is whether the maximal possible number of critical connections is sufficient for a map to be in the closure of a stable region. The eventually periodic maps are exactly such maps, so we conjecture the following:

\begin{conjecture}
Each eventually periodic map is contained in the closure of some stable region.
\end{conjecture}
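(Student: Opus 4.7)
The plan is to upgrade the proof of Theorem \ref{thm:ep->acc+m} to produce not just a single nearby stable map, but a one-parameter linear family of stable maps converging to $T$. Concretely, I would aim to construct a direction $\delta \in \bm{T}_T \ITM(r)$ and a scale $s_0 > 0$ such that $T_s := T + s\delta$ is stable for every $s \in (0, s_0]$. Granted such a $\delta$, the ray $\{T_s : s \in (0, s_0]\}$ is a connected subset of $\mathcal{S}(r)$, hence lies in a single stability component $[T_{s_0}]$, whose closure then contains $T$. This is exactly the conclusion we want.

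To build $\delta$, I would revisit the proof of Theorem \ref{thm:ep->acc+m} and observe that each of its finitely many perturbation steps is of the form $\epsilon_k \delta_k$, where $\delta_k$ is a direction prescribed by Corollary \ref{cor:lin-indep} (via conditions like $\langle C^{i,\pm}(\cdot,\cdot), \delta\rangle = 0$ and $\langle R^{0,\pm}_j, \delta\rangle = \pm\epsilon$), and the compatibility constraints between the $\epsilon_k$'s (such as the sandwich inequality $\tfrac{a}{b+1} < \epsilon_2/\epsilon_1 < \tfrac{a+1}{b}$) are \emph{homogeneous} in the $\epsilon_k$. This means we may fix positive constants $c_k$ once and for all, realising all the constraints, and then set $\epsilon_k = s c_k$ with a single scalar $s > 0$. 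Defining $\delta := \sum_k c_k \delta_k$, the construction yields the ray $T + s\delta$.

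Next I would check that the vectors entering the stepwise construction can all be read off from $T$ itself, so that $\delta$ is genuinely a single direction at $T$. By Lemma \ref{lem:pert-corr} we may first ensure the correspondence property; after that, the directions controlling each critical cycle and each ghost tree outside $X$ involve vectors ($L^0_j$, $C^{0,\pm}(j,k)$, $R^{0,\pm}_j$, $C^{i,\pm}(0,k)$, $C_\beta$) from the linearly independent family of Corollary \ref{cor:lin-indep} applied to a suitable choice of $J_0$ and of maximal periodic intervals $J_1, \dots, J_n$. Linear independence lets us simultaneously prescribe $\langle v, \delta\rangle$ for each such $v$ to break precisely the critical connections that violate A1, A2, A3 or Matching at $T$, while leaving untouched those that encode the persistence of $X$ (finite type structure). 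Each such "bad" scalar $\langle L, \delta\rangle$ is nonzero of definite sign, and the corresponding dynamical quantity along the ray equals its value at $T$ plus $s\langle L, \delta\rangle$; hence it is nonzero for all $s \in (0, s_0]$, not merely for one special $s$. Combined with the persistence of itineraries for points whose orbits avoid the broken connections, Theorem \ref{thm:stability=accm} then gives that $T_s$ satisfies ACC and Matching, hence is stable.

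The main obstacle I anticipate is the simultaneous compatibility of all the directions. The proof of Theorem \ref{thm:ep->acc+m} is genuinely sequential: the direction used at step $k$ is computed relative to the map produced at step $k-1$, whose $X$-set may have lost some intervals and whose set of periodic intervals may be smaller. To linearise this, one must verify that, already at $T$, there is a single application of Corollary \ref{cor:lin-indep} (for a judicious choice of $J_0$ and of the maximal periodic intervals $J_1, \dots, J_n$ coming from every critical cycle) whose linearly independent family encompasses all vectors appearing in every stage of the original construction, and that the homogeneous ratio constraints can be solved jointly. This is where I expect the real work to lie, especially for $T$ with many intertwined critical cycles or with boundary discontinuities sitting on several overlapping cycles at once; I would treat these cases by grouping cycles and inducting on the number of orbits of maximal periodic intervals, mirroring the induction in Theorem \ref{thm:ep->acc+m} but keeping all perturbations expressed at $T$.
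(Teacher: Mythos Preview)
This statement is a \emph{conjecture} in the paper, not a theorem; the paper gives no proof. Immediately after stating it, the authors explain why the proof of Theorem~\ref{thm:ep->acc+m} does not settle it: the perturbations there are genuinely sequential, each computed from the dynamics of the \emph{previous} perturbed map, so shrinking all the $\epsilon_k$ may land you in different stable regions for different sizes. Your proposal is an attempt to overcome exactly this, but it does not.

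The core gap is the one you flag yourself as ``the main obstacle'', and it is fatal rather than merely technical. After the first perturbation in Theorem~\ref{thm:ep->acc+m} (say the one from Lemma~\ref{lem:pert-corr} or Case~1), the non-wandering set $X$ shrinks: an entire critical cycle leaves $X$, periodic intervals disappear, and the return-map structure on the surviving components changes. The vectors $L^0_j$, $C^{0,\pm}(j,k)$, $R^{0,\pm}_j$ used in the \emph{next} step are defined relative to this new $X$, new $J_0$, new return times; they are not the same objects as any vectors computable at $T$, and there is no mechanism in the paper for expressing them as linear combinations of vectors at $T$. Your sentence ``the vectors entering the stepwise construction can all be read off from $T$ itself'' is exactly what would need to be proved, and nothing in the paper supports it. Moreover, Corollary~\ref{cor:lin-indep} allows only \emph{one} component interval $J_0$ alongside maximal periodic intervals with disjoint orbits; treating several component intervals of $X$ simultaneously is precisely the open Question the paper poses right after Theorem~\ref{thm:lin-dep}. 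Your fallback of ``inducting on the number of orbits of maximal periodic intervals, mirroring the induction in Theorem~\ref{thm:ep->acc+m}'' is circular: that induction is the sequential procedure whose non-linearisability is the whole difficulty.

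A secondary issue: even granting a single $\delta$, the assertion that $T+s\delta$ is stable for \emph{all} $s\in(0,s_0]$ is not automatic. Along the ray, itineraries of critical points can change at intermediate values of $s$ (whenever an iterate crosses a discontinuity), potentially creating new critical connections not present at either endpoint. Your argument controls only the finitely many connections visible at $T$; it says nothing about connections that appear and disappear as $s$ varies.
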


The proof of Theorem \ref{thm:ep->acc+m} does not give the proof of this conjecture, even though the perturbation is arbitrarily small. The reason for this is that the perturbations we inductively make do not give us full control of the dynamical properties of the resulting map, just that the number of critical connections decreases and that the map remains eventually periodic. Thus it is possible that by making the perturbation smaller, we end up in different stable regions.  

\appendix
\section{Connection to Billiards}
\label{appendix:billiards}

Interval Translation Maps naturally arise in the context of billiards with spy mirrors. This was first observed in \cite{MR1356616} and later studied in more detail in \cite{MR3403406, MR3449199}. In this subsection, we elaborate on an example of this construction.

Let $P$ be a Euclidean polygon. A billiard path in $P$ is the trajectory of a particle moving inside $P$ by straight lines and bouncing off the sides of the polygon according to the physical law of reflection: the angle of incidence equals the angle of reflection. Such trajectories terminate when they meet the angles of the polygon (see \cite{MR2168892} for a standard introduction to billiards). The polygon $P$ is called \emph{rational} if all its angles are rational multiples of $\pi$. For such polygons, one can associate a \emph{canonical} surface $S$ with flat Euclidean metric in such a way that every billiard trajectory in $P$ corresponds to a straight line (geodesic) in $S$. The flat metric on $S$ is defined everywhere except at finitely many cone points, called \emph{singularities}, for which the cone angles are multiples of $ 2\pi $. The surface $S$ is constructed by \emph{unfolding} of $P$ (see \cite{MR0399423, MASUR20021015, MR2809109} for more details of the construction below). 

For each side $e_i$ of $P$, let $ \ell_i \subset \mathbb{R}^2 $ be the line passing through the origin parallel to $e_i$. Consider the group $\mathcal{A}_P$ generated by reflections $R_{e_i}$ in the line $\ell_i$, where $e_i$ runs over the complete list of edges of $P$. Since the angles of $P$ are rational, that is, have the form $2\pi m_i / n_i$ for some coprime integers $m_i < n_i$, the group $\mathcal{A}_P$ will contain precisely $2N$ elements, where $N$ is the least common multiple of the $n_i$'s. The direction of a billiard trajectory in $P$ is constant between bounces and changes after each bounce by the action of an element in $ \mathcal{A}_P $. Thus, every billiard trajectory in a rational billiard can have only finitely many directions. 

To construct $S$, consider $2N$ polygons of the form $g(P) + v_g$, where $g$ runs over the elements in $\mathcal{A}_P$ and the translations $v_g$ are chosen so that the resulting polygons are disjoint; denote them $P_1, \ldots, P_{2N}$. We now glue the edges of these polygons pairwise to form a surface: for every edge $e_i$ in $P$ and an element $g \in \mathcal{A}_P$, consider the element $g \circ R_{e_i} \in \mathcal{A}_P$. For these two elements, define the edges 
\begin{equation}
\label{Eq:Edges}
e' := g(e_i) + v_g \quad \text{ and } \quad e'' := g \circ R_{e_i}(e_i) + v_{g \circ R_{e_i}}.
\end{equation}
It is easy to check that these edges are parallel and belong to a pair of distinct polygons in the collection $P_1, \ldots, P_{2N}$. In this way, all edges of the polygons $P_1, \ldots, P_{2N}$ split in such pairs $e', e''$. We identify these edges by translation (explicitly given by $v_{g \circ R_{e_i}} - v_g$). After all the identifications, we obtain a closed orientable surface with a flat metric except at finitely many conical singularities (corresponding to the identified vertices of the polygons) where the angles are multiples of $2\pi$ (\cite[Lemma 17.3]{MR2809109}). This is the desired surface $S$; such surfaces are called \emph{translation surfaces} to indicate that they were obtained by identifying parallel pairs of sides in a collection of polygons by Euclidean translations.

For a given direction $\theta$, the billiard trajectories in $P$ in direction $\theta$ correspond to a parallel geodesic flow $T_\theta $ on $ S $ in direction $\theta$. Each trajectory, instead of reflecting off a side $e$ of $P$, continues as a straight line into the reflected copy of $P$ attached to $P$ through $e$. This flow is called the \emph{billiard flow}. If $I$ is a segment transverse to this flow, then the first return map of $ T_\theta $ to $I$, properly rescaled, is an \emph{interval exchange transformation} \cite[Section 1.7]{MASUR20021015}. This connection between rational billiards and $\IET$s has been explored in depth in many publications, as mentioned in the introduction.

Interval translation mappings appear in a similar way once we add \emph{spy mirrors} to our billiard table.

A \emph{spy mirror} in $P$ is a two-sided vertical segment, with one \emph{transparent} side and one \emph{reflective} side. The trajectory $\gamma$ of a particle reflects according to the standard rules if it approaches the polygon sides or a spy mirror from its reflective side, and $\gamma $ passes through unobstructed if it approaches a spy mirror from the mirror's transparent side. A \emph{rational billiard with spy mirrors} is a rational polygon $P$ together with finitely many spy mirror segments inside $P$ and the reflection law just described; the spy mirrors are assumed to make rational angles with the sides of $P$. We denote such a billiard table by $\widehat{P}$ \footnote{The polygon $P$ "wears" a hat $ \,\,\widehat{\phantom{}}\,\,\, $, as all typical spies do.}. Unlike classical polygonal billiards, billiards in polygons with spy mirrors are non-invertible dynamical systems. 

We can associate to $\widehat{P}$ a surface $\widehat{S}$ called a \emph{translation surface with spy mirrors}. We utilize a similar construction as above, where instead of $\mathcal{A}_P$, we use the group $\mathcal{A}_{\widehat{P}}$ generated by reflections in lines passing through the origin and parallel to all sides of $P$ \emph{as well as} to all the mirrors in $P$. By the same reasoning, $\mathcal{A}_{\widehat{P}}$ is a finite group with $2\widehat{N}$ elements for some $\widehat{N} \in \mathbb{N}$. The surface $\widehat{S}$ is obtained by gluing together $2\widehat{N}$ rational polygons $\widehat{P}_1, \ldots, \widehat{P}_{2\widehat{N}}$ with spy mirrors, each being a reflected copy of $\widehat{P}$. Each polygon in this collection is of the form $g(\widehat{P}) + v_g$, and we reflect and translate not only $P$ but also all the spy mirrors. In the images, the reflective and transparent sides of the spy mirror are swapped, and $\widehat{S}$ is obtained by gluing pairs $e', e''$ of parallel sides in $\widehat{P}_1, \ldots, \widehat{P}_{2\widehat{N}}$ using the relation \eqref{Eq:Edges}. Note that we do not glue along spy mirrors. In this way, we obtain a translation surface with two-sided slits. Every forward billiard trajectory $\gamma_\theta \subset \widehat{P}$ in direction $\theta$ unfolds into a straight geodesic ray $t_\theta \subset \widehat{S}$ with the property that if it approaches the reflective side of a spy mirror $z'$ in a copy $\widehat{P}_i = g_i(\widehat{P}) + v_{g_i}$, with $ z' = g_i(z) + v_{g_i} $ for some spy mirror $z \subset P$, then $t_\theta$ continues \emph{from the same position on the reflective side} of the parallel mirror $z'' := g_i \circ R_{z}(z) + v_{g \circ R_{z}}$ in the copy of $\widehat{P}$ generated by $g_i \circ R_{z}$ and $v_{g \circ R_{z}}$ (here, $R_z$ is the reflection in the line through the origin parallel to $z$). Thus, if $I \subset \widehat{P}$ is a segment in the complement of the spy mirrors, and $\theta'$ is a direction transverse to $I$, then all billiard trajectories in $\widehat{P}$ starting on $I$ in the direction $ \theta'$ correspond to a flow $T_\theta$ of parallel lines in $\widehat{S}$ starting in some segment $J$ in some transverse direction $\theta$ (that depends on $\theta'$ and the starting interval $I$). If this flow returns to $J$, then the \emph{properly rescaled first return map} is naturally an \emph{interval translation map} (the rescaling is done in the same way as for rational billiards). 

Figure~\ref{Fig:ExampleITM} illustrates the construction in the simplest possible case when $P$ is a square, and $\widehat{P}$ is $P$ together with two vertical spy mirrors starting at the horizontal side. In this case, $\widehat{N} = 2$, the group $\mathcal{A}_{\widehat{P}}$ is generated by reflections in the coordinate axes, and the corresponding translation surface $\widehat{S}$ is a torus obtained by gluing four copies of the square $P$. The first return map of the flow $T_\theta$ on $\widehat{S}$ to the global vertical transversal is shown in Figure~\ref{Fig:ExampleITM2}, left; this map is naturally an $\ITM$ on four intervals (same picture, right). 

\begin{figure}
\begin{center}
\includegraphics[scale=1.1, trim = 30 30 30 30, clip]{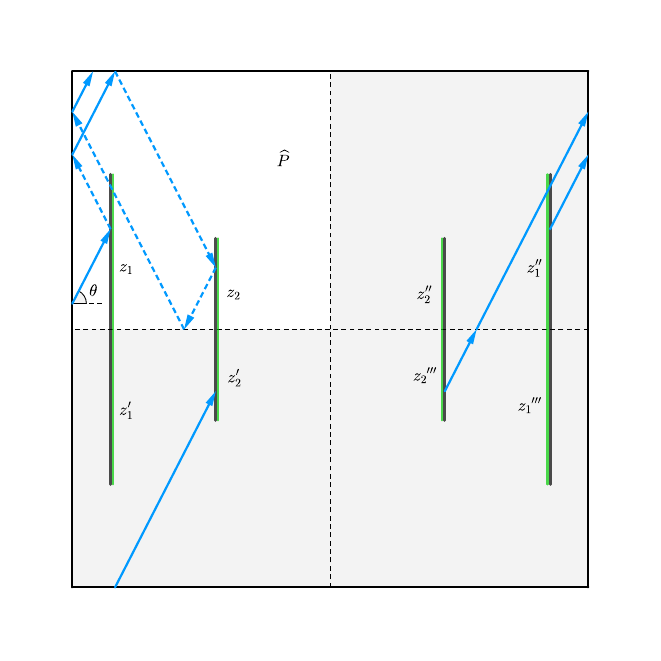}
\caption{The rational polygon $\widehat{P}$ is the white square $P$ with two vertical spy mirrors $z_1, z_2$ starting on the bottom side of $P$. The transparent sides of the spy mirrors are indicated in green, while all the reflective sides and the reflective edges are marked in black. The translation surface $\widehat{S}$ is the torus obtained by gluing four reflected copies of $\widehat{P}$ (one in white and three in gray). In the picture, the opposite sides of the large square are identified. The copies $z_1, z_1'$ and $z_2, z_2'$ of reflected spy mirrors are identified with the two copies $z_1'', z_1'''$ and $z_2'', z_2'''$ respectively, the reflective sides are flipped. The piece of the billiard trajectory in direction $\theta$ is shown in $P$ (solid and dashed segments in $P$). This trajectory unfolds into the straight line trajectory in $ \widehat{S} $ shown with solid blue lines.}
\label{Fig:ExampleITM}
\end{center}
\end{figure}

\begin{figure}
\begin{center}
\makebox[\linewidth][c]{\includegraphics[scale=0.9, trim = 20 20 20 20, clip]{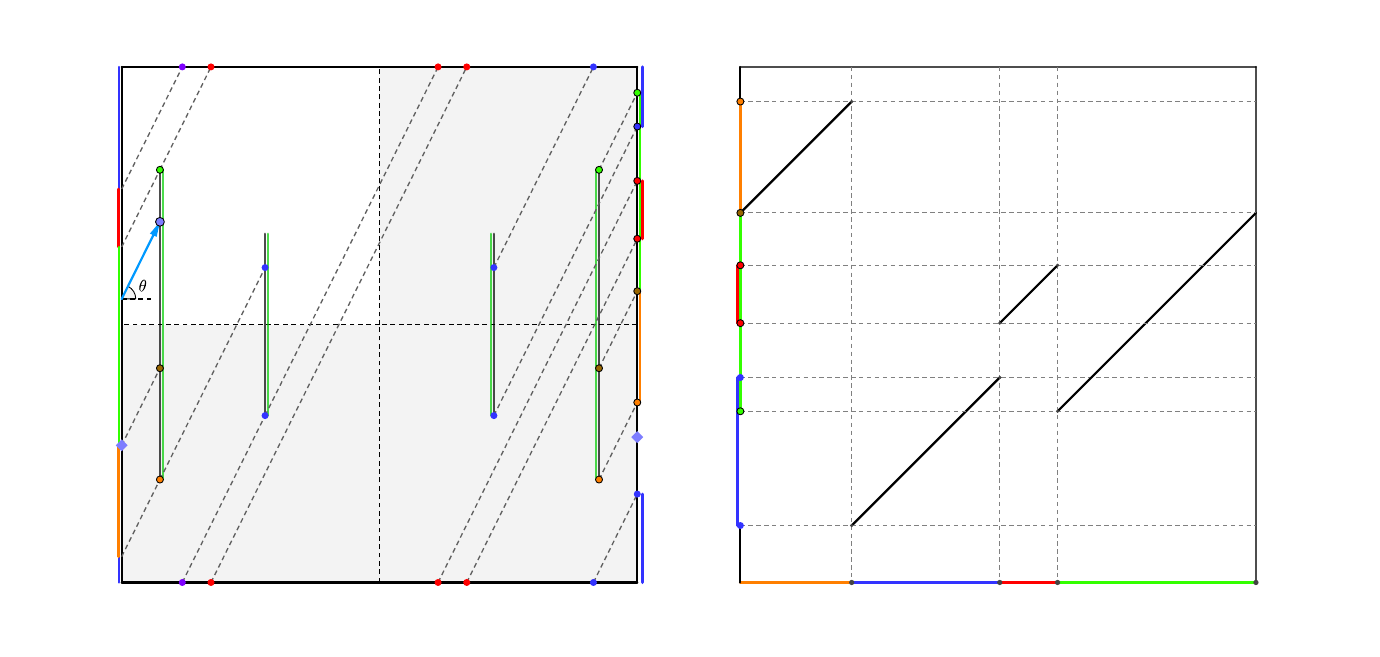}}
\caption{(Left) The flow $T_\theta$ starts on the entire vertical side on the left. Under this flow (with the corresponding jumps due to the presence of spy mirrors), the colored segments on the left (blue, red, green, and orange) are mapped isometrically to the colored segments on the right. In this way, we obtain an $\ITM$ (shown on the right as the graph of the map) as the first return map of the flow to the transversal, properly cut open from the circle to an interval; the cutting is done at the point marked with the rhombus. }
\label{Fig:ExampleITM2}
\end{center}
\end{figure}

\pagebreak

\section{ITM Program (due to Bj\"orn Winckler)}
\label{appendix:program}

The program used for drawing the parameter space of the BT-family (Figure \ref{fig:bt-fin}) is due to Bj\"orn Winckler. The program also draws the dynamical plane of maps from this family, and there is a user interface which allows for the selection of parameters defining a specific map (see the screenshot below). The file and the instructions for how to use it are available on GitHub, on the following \href{https://github.com/LeonStare/ITM-Parameter-and-Dynamical-Space?tab=readme-ov-file}{link}.

\begin{center}
\label{fig:code-interface}
\includegraphics[width=\textwidth]{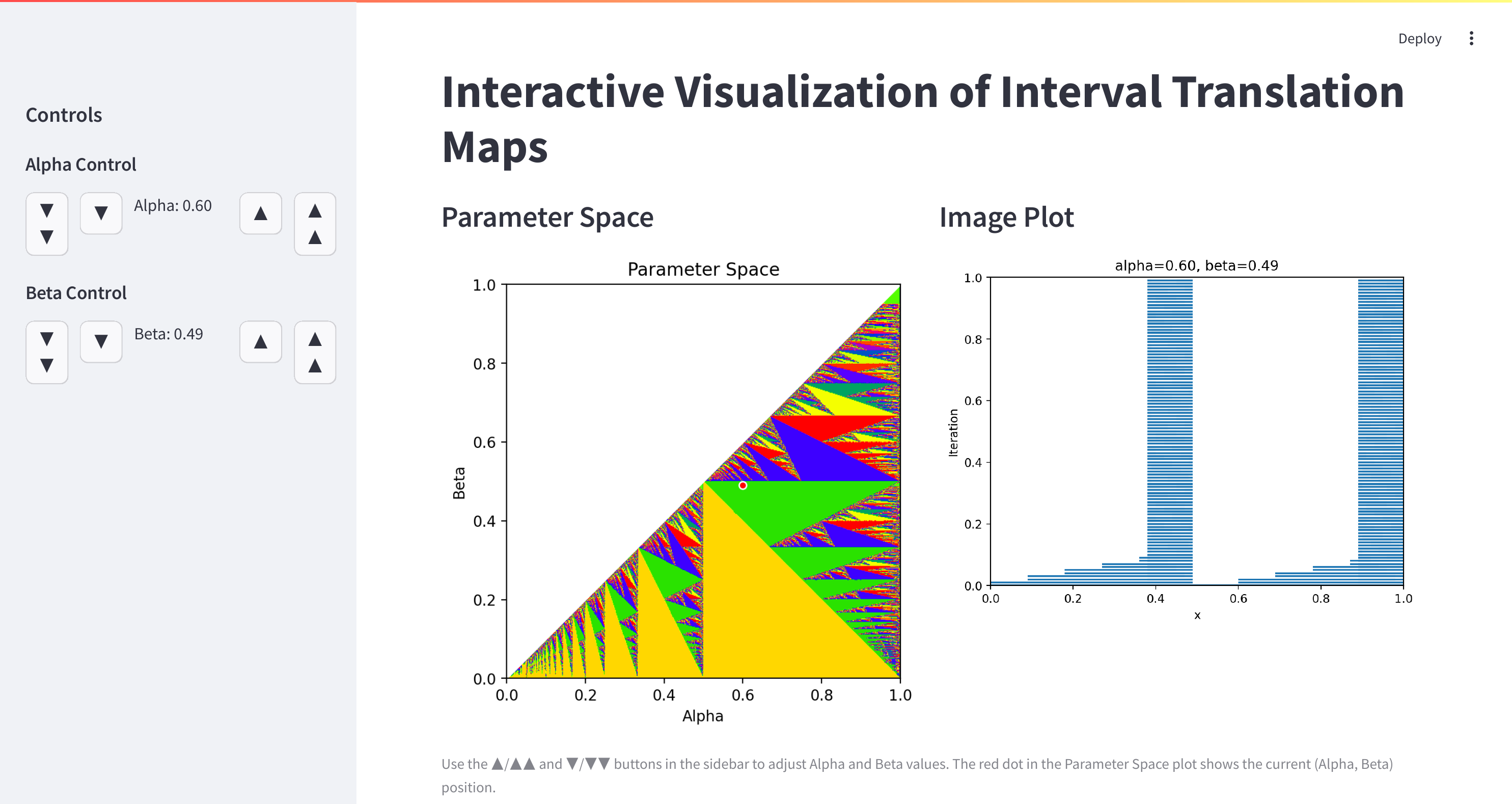}
\end{center}

\addcontentsline{toc}{section}{References}
\printbibliography

@article{MR0435353,
	author = {Keane, Michael},
	doi = {10.1007/BF03007668},
	fjournal = {Israel Journal of Mathematics},
	issn = {0021-2172},
	journal = {Israel J. Math.},
	mrclass = {28A65 (54H20)},
	mrnumber = {435353},
	mrreviewer = {H.\ Keynes},
	number = {2},
	pages = {188--196},
	title = {Non-ergodic interval exchange transformations},
	url = {https://doi.org/10.1007/BF03007668},
	volume = {26},
	year = {1977},
	Bdsk-Url-1 = {https://doi.org/10.1007/BF03007668}}

@book {MR1312365,
    AUTHOR = {McMullen, Curtis T.},
     TITLE = {Complex dynamics and renormalization},
    SERIES = {Annals of Mathematics Studies},
    VOLUME = {135},
 PUBLISHER = {Princeton University Press, Princeton, NJ},
      YEAR = {1994},
     PAGES = {x+214},
      ISBN = {0-691-02982-2; 0-691-02981-4},
   MRCLASS = {58F23 (30D05)},
  MRNUMBER = {1312365},
MRREVIEWER = {Gregery\ T.\ Buzzard},
}

@article {MR2000471,
    AUTHOR = {Kontsevich, Maxim and Zorich, Anton},
     TITLE = {Connected components of the moduli spaces of {A}belian
              differentials with prescribed singularities},
   JOURNAL = {Invent. Math.},
  FJOURNAL = {Inventiones Mathematicae},
    VOLUME = {153},
      YEAR = {2003},
    NUMBER = {3},
     PAGES = {631--678},
      ISSN = {0020-9910,1432-1297},
   MRCLASS = {32G15 (37D40 37D50 37F99)},
  MRNUMBER = {2000471},
MRREVIEWER = {Serge\ L.\ Tabachnikov},
       DOI = {10.1007/s00222-003-0303-x},
       URL = {https://doi.org/10.1007/s00222-003-0303-x},
}

@article {MR3893724,
    AUTHOR = {Bruin, Henk and Carminati, Carlo and Marmi, Stefano and
              Profeti, Alessandro},
     TITLE = {Matching in a family of piecewise affine maps},
   JOURNAL = {Nonlinearity},
  FJOURNAL = {Nonlinearity},
    VOLUME = {32},
      YEAR = {2019},
    NUMBER = {1},
     PAGES = {172--208},
      ISSN = {0951-7715,1361-6544},
   MRCLASS = {37E05 (11A55 11J70 11K50 37A45 37E45)},
  MRNUMBER = {3893724},
MRREVIEWER = {Steven\ M.\ Pederson},
       DOI = {10.1088/1361-6544/aae935},
       URL = {https://doi.org/10.1088/1361-6544/aae935},
}

@article {MR3597033,
    AUTHOR = {Bruin, Henk and Carminati, Carlo and Kalle, Charlene},
     TITLE = {Matching for generalised {$\beta$}-transformations},
   JOURNAL = {Indag. Math. (N.S.)},
  FJOURNAL = {Koninklijke Nederlandse Akademie van Wetenschappen.
              Indagationes Mathematicae. New Series},
    VOLUME = {28},
      YEAR = {2017},
    NUMBER = {1},
     PAGES = {55--73},
      ISSN = {0019-3577,1872-6100},
   MRCLASS = {11K50 (37E05)},
  MRNUMBER = {3597033},
MRREVIEWER = {Alan\ Haynes},
       DOI = {10.1016/j.indag.2016.11.005},
       URL = {https://doi.org/10.1016/j.indag.2016.11.005},
}

@article {MR2422375,
    AUTHOR = {Nakada, Hitoshi and Natsui, Rie},
     TITLE = {The non-monotonicity of the entropy of {$\alpha$}-continued
              fraction transformations},
   JOURNAL = {Nonlinearity},
  FJOURNAL = {Nonlinearity},
    VOLUME = {21},
      YEAR = {2008},
    NUMBER = {6},
     PAGES = {1207--1225},
      ISSN = {0951-7715,1361-6544},
   MRCLASS = {37A45 (11K50 37A35)},
  MRNUMBER = {2422375},
MRREVIEWER = {Anne\ Broise-Alamichel},
       DOI = {10.1088/0951-7715/21/6/003},
       URL = {https://doi.org/10.1088/0951-7715/21/6/003},
}

@article {MR2342693,
    AUTHOR = {Kozlovski, Oleg and Shen, Weixiao and van Strien, Sebastian},
     TITLE = {Density of hyperbolicity in dimension one},
   JOURNAL = {Ann. of Math. (2)},
  FJOURNAL = {Annals of Mathematics. Second Series},
    VOLUME = {166},
      YEAR = {2007},
    NUMBER = {1},
     PAGES = {145--182},
      ISSN = {0003-486X,1939-8980},
   MRCLASS = {37E05 (37C20 37D20 37E10 37F30)},
  MRNUMBER = {2342693},
MRREVIEWER = {Mike\ Todd},
       DOI = {10.4007/annals.2007.166.145},
       URL = {https://doi.org/10.4007/annals.2007.166.145},
}

@article {MR1620850,
    AUTHOR = {McMullen, Curtis T. and Sullivan, Dennis P.},
     TITLE = {Quasiconformal homeomorphisms and dynamics. {III}. {T}he
              {T}eichm\"uller space of a holomorphic dynamical system},
   JOURNAL = {Adv. Math.},
  FJOURNAL = {Advances in Mathematics},
    VOLUME = {135},
      YEAR = {1998},
    NUMBER = {2},
     PAGES = {351--395},
      ISSN = {0001-8708,1090-2082},
   MRCLASS = {58F23 (30C62 30D05 30F60)},
  MRNUMBER = {1620850},
MRREVIEWER = {Athanase\ Papadopoulos},
       DOI = {10.1006/aima.1998.1726},
       URL = {https://doi.org/10.1006/aima.1998.1726},
}

@incollection{MR1796167,
	author = {Schmeling, J\"org and Troubetzkoy, Serge},
	booktitle = {Dynamical systems ({L}uminy-{M}arseille, 1998)},
	isbn = {981-02-4217-4},
	mrclass = {37E05 (37B05 54H20)},
	mrnumber = {1796167},
	mrreviewer = {Peter\ Raith},
	pages = {291--302},
	publisher = {World Sci. Publ., River Edge, NJ},
	title = {Interval translation mappings},
	year = {2000}}

@article{MR0516048,
	author = {Veech, William A.},
	doi = {10.1007/BF02790174},
	fjournal = {Journal d'Analyse Math\'ematique},
	issn = {0021-7670,1565-8538},
	journal = {J. Analyse Math.},
	mrclass = {28D99 (54H20)},
	mrnumber = {516048},
	mrreviewer = {Michael\ Keane},
	pages = {222--272},
	title = {Interval exchange transformations},
	url = {https://doi.org/10.1007/BF02790174},
	volume = {33},
	year = {1978},
	Bdsk-Url-1 = {https://doi.org/10.1007/BF02790174}}

@article{MR0331438,
	author = {Katok, Anatole B.},
	fjournal = {Doklady Akademii Nauk SSSR},
	issn = {0002-3264},
	journal = {Dokl. Akad. Nauk SSSR},
	mrclass = {58F99},
	mrnumber = {331438},
	mrreviewer = {D.\ Newton},
	pages = {775--778},
	title = {Invariant measures of flows on orientable surfaces},
	volume = {211},
	year = {1973}}

@book {MR1239171,
    AUTHOR = {de Melo, Welington and van Strien, Sebastian},
     TITLE = {One-dimensional dynamics},
    SERIES = {Ergebnisse der Mathematik und ihrer Grenzgebiete (3) [Results
              in Mathematics and Related Areas (3)]},
    VOLUME = {25},
 PUBLISHER = {Springer-Verlag, Berlin},
      YEAR = {1993},
     PAGES = {xiv+605},
      ISBN = {3-540-56412-8},
   MRCLASS = {58F03 (58-02 58Fxx)},
  MRNUMBER = {1239171},
MRREVIEWER = {Feliks\ Przytycki},
       DOI = {10.1007/978-3-642-78043-1},
       URL = {https://doi.org/10.1007/978-3-642-78043-1},
}

@article{MR2219821,
	author = {Viana, Marcelo},
	doi = {10.5209/rev\_REMA.2006.v19.n1.16621},
	fjournal = {Revista Matem\'atica Complutense},
	issn = {1139-1138,1988-2807},
	journal = {Rev. Mat. Complut.},
	mrclass = {37-02 (28D05 30F60 37A25 37C40 37D25 37D40 37D50)},
	mrnumber = {2219821},
	mrreviewer = {Thomas\ A.\ Schmidt},
	number = {1},
	pages = {7--100},
	title = {Ergodic theory of interval exchange maps},
	url = {https://doi.org/10.5209/rev_REMA.2006.v19.n1.16621},
	volume = {19},
	year = {2006},
	Bdsk-Url-1 = {https://doi.org/10.5209/rev_REMA.2006.v19.n1.16621},
	Bdsk-Url-2 = {https://doi.org/10.5209/rev%5C_REMA.2006.v19.n1.16621}}

@article{MR0357739,
	author = {Keane, Michael},
	doi = {10.1007/BF01236981},
	fjournal = {Mathematische Zeitschrift},
	issn = {0025-5874,1432-1823},
	journal = {Math. Z.},
	mrclass = {28A65},
	mrnumber = {357739},
	mrreviewer = {M.\ Lo\`eve},
	pages = {25--31},
	title = {Interval exchange transformations},
	url = {https://doi.org/10.1007/BF01236981},
	volume = {141},
	year = {1975},
	Bdsk-Url-1 = {https://doi.org/10.1007/BF01236981}}

@incollection{MR2648692,
	author = {Yoccoz, Jean-Christophe},
	booktitle = {Homogeneous flows, moduli spaces and arithmetic},
	isbn = {978-0-8218-4742-8},
	mrclass = {37-02 (28D05 37A25 37C40 37D25 37D40 37D50)},
	mrnumber = {2648692},
	mrreviewer = {Thomas\ A.\ Schmidt},
	pages = {1--69},
	publisher = {Amer. Math. Soc., Providence, RI},
	series = {Clay Math. Proc.},
	title = {Interval exchange maps and translation surfaces},
	volume = {10},
	year = {2010}}

@article{artigiani2024renormalization,
  title={Renormalization for Bruin-Troubetzkoy ITMs},
  author={Artigiani, Mauro and Hubert, Pascal and Skripchenko, Alexandra},
  journal={arXiv preprint arXiv:2412.07928},
  year={2024}
}

@article{bruin2023interval,
  title={Interval Translation Maps with Weakly Mixing Attractors},
  author={Bruin, Henk and Radinger, Silvia},
  journal={arXiv preprint arXiv:2312.10533},
  year={2023}
}

@article{MR4397159,
	author = {Artigiani, Mauro and Fougeron, Charles and Hubert, Pascal and Skripchenko, Alexandra},
	doi = {10.1090/mosc/311},
	fjournal = {Transactions of the Moscow Mathematical Society},
	issn = {0077-1554,1547-738X},
	journal = {Trans. Moscow Math. Soc.},
	mrclass = {37E05},
	mrnumber = {4397159},
	mrreviewer = {Jo\~ao\ Lopes Dias},
	pages = {157--172},
	title = {A note on double rotations of infinite type},
	url = {https://doi.org/10.1090/mosc/311},
	volume = {82},
	year = {2021},
	Bdsk-Url-1 = {https://doi.org/10.1090/mosc/311}}

@article{MR4075314,
	author = {Kryzhevich, Sergey},
	doi = {10.1051/mmnp/2019041},
	fjournal = {Mathematical Modelling of Natural Phenomena},
	issn = {0973-5348,1760-6101},
	journal = {Math. Model. Nat. Phenom.},
	mrclass = {37A05 (37E05)},
	mrnumber = {4075314},
	mrreviewer = {Antonio\ Linero Bas},
	pages = {Paper No. 15, 14},
	title = {Invariant measures for interval translations and some other piecewise continuous maps},
	url = {https://doi.org/10.1051/mmnp/2019041},
	volume = {15},
	year = {2020},
	Bdsk-Url-1 = {https://doi.org/10.1051/mmnp/2019041}}

@article{MR3449199,
	author = {Skripchenko, Alexandra and Troubetzkoy, Serge},
	doi = {10.5802/aif.2975},
	fjournal = {Universit\'e{} de Grenoble. Annales de l'Institut Fourier},
	issn = {0373-0956,1777-5310},
	journal = {Ann. Inst. Fourier (Grenoble)},
	mrclass = {37D50 (37B10 37B40 37C35)},
	mrnumber = {3449199},
	mrreviewer = {Andr\'as\ Kr\'amli},
	number = {5},
	pages = {1881--1896},
	title = {Polygonal billiards with one sided scattering},
	url = {https://doi.org/10.5802/aif.2975},
	volume = {65},
	year = {2015},
	Bdsk-Url-1 = {https://doi.org/10.5802/aif.2975}}

@article {MR2054049,
    AUTHOR = {Buzzi, J\'er\^ome and Hubert, Pascal},
     TITLE = {Piecewise monotone maps without periodic points: rigidity,
              measures and complexity},
   JOURNAL = {Ergodic Theory Dynam. Systems},
  FJOURNAL = {Ergodic Theory and Dynamical Systems},
    VOLUME = {24},
      YEAR = {2004},
    NUMBER = {2},
     PAGES = {383--405},
      ISSN = {0143-3857,1469-4417},
   MRCLASS = {37E05 (37A05 37B40 37C40)},
  MRNUMBER = {2054049},
MRREVIEWER = {Peter\ Raith},
       DOI = {10.1017/S0143385703000488},
       URL = {https://doi.org/10.1017/S0143385703000488},
}

@article {MR1855837,
    AUTHOR = {Buzzi, J\'er\^ome},
     TITLE = {Piecewise isometries have zero topological entropy},
   JOURNAL = {Ergodic Theory Dynam. Systems},
  FJOURNAL = {Ergodic Theory and Dynamical Systems},
    VOLUME = {21},
      YEAR = {2001},
    NUMBER = {5},
     PAGES = {1371--1377},
      ISSN = {0143-3857,1469-4417},
   MRCLASS = {37B40},
  MRNUMBER = {1855837},
MRREVIEWER = {Romeo\ F.\ Thomas},
       DOI = {10.1017/S0143385701001651},
       URL = {https://doi.org/10.1017/S0143385701001651},
}

@article{MR3403406,
	author = {Skripchenko, Alexandra and Troubetzkoy, Serge},
	doi = {10.1088/0951-7715/28/9/3443},
	fjournal = {Nonlinearity},
	issn = {0951-7715,1361-6544},
	journal = {Nonlinearity},
	mrclass = {37D50 (37B10 37B40 37C35)},
	mrnumber = {3403406},
	mrreviewer = {Nicolas\ Bedaride},
	number = {9},
	pages = {3443--3456},
	title = {Entropy and complexity of polygonal billiards with spy mirrors},
	url = {https://doi.org/10.1088/0951-7715/28/9/3443},
	volume = {28},
	year = {2015},
	Bdsk-Url-1 = {https://doi.org/10.1088/0951-7715/28/9/3443}}

@article{MR2966738,
	author = {Bruin, Henk and Clack, Gregory},
	doi = {10.3934/dcds.2012.32.4133},
	fjournal = {Discrete and Continuous Dynamical Systems. Series A},
	issn = {1078-0947,1553-5231},
	journal = {Discrete Contin. Dyn. Syst.},
	mrclass = {37B10 (37C70 37D25 37E05 37E10)},
	mrnumber = {2966738},
	mrreviewer = {David\ Ralston},
	number = {12},
	pages = {4133--4147},
	title = {Inducing and unique ergodicity of double rotations},
	url = {https://doi.org/10.3934/dcds.2012.32.4133},
	volume = {32},
	year = {2012},
	Bdsk-Url-1 = {https://doi.org/10.3934/dcds.2012.32.4133}}

@article{MR2308208,
	author = {Bruin, Henk},
	doi = {10.1080/14689360601028084},
	fjournal = {Dynamical Systems. An International Journal},
	issn = {1468-9367,1468-9375},
	journal = {Dyn. Syst.},
	mrclass = {37E20 (37E05 37E10)},
	mrnumber = {2308208},
	mrreviewer = {J\'er\^ome\ Buzzi},
	number = {1},
	pages = {11--24},
	title = {Renormalization in a class of interval translation maps of {$d$} branches},
	url = {https://doi.org/10.1080/14689360601028084},
	volume = {22},
	year = {2007},
	Bdsk-Url-1 = {https://doi.org/10.1080/14689360601028084}}

@article{MR0644840,
	author = {Boldrighini, Carlo and Keane, Michael and Marchetti, Federico},
	fjournal = {The Annals of Probability},
	issn = {0091-1798,2168-894X},
	journal = {Ann. Probab.},
	mrclass = {54H20 (58F15)},
	mrnumber = {644840},
	mrreviewer = {C.\ S.\ Hartzman},
	number = {4},
	pages = {532--540},
	title = {Billiards in polygons},
	url = {http://links.jstor.org/sici?sici=0091-1798(197808)6:4<532:BIP>2.0.CO;2-#&origin=MSN},
	volume = {6},
	year = {1978},
	Bdsk-Url-1 = {http://links.jstor.org/sici?sici=0091-1798(197808)6:4%3C532:BIP%3E2.0.CO;2-#&origin=MSN}}

@book{MR0832433,
	author = {Kornfeld, Isaac P. and Fomin, Sergei V. and Sinai, Yakov G.},
	doi = {10.1007/978-1-4615-6927-5},
	isbn = {0-387-90580-4},
	mrclass = {28D05 (54H20 58F11)},
	mrnumber = {832433},
	mrreviewer = {D.\ Newton},
	note = {Translated from the Russian by A. B. Sosinski\u i},
	pages = {x+486},
	publisher = {Springer-Verlag, New York},
	series = {Grundlehren der mathematischen Wissenschaften [Fundamental Principles of Mathematical Sciences]},
	title = {Ergodic theory},
	url = {https://doi.org/10.1007/978-1-4615-6927-5},
	volume = {245},
	year = {1982},
	Bdsk-Url-1 = {https://doi.org/10.1007/978-1-4615-6927-5}}

@book{MR0889254,
	author = {Ma\~n\'e, Ricardo},
	doi = {10.1007/978-3-642-70335-5},
	isbn = {3-540-15278-4},
	mrclass = {58F11 (28Dxx)},
	mrnumber = {889254},
	note = {Translated from the Portuguese by Silvio Levy},
	pages = {xii+317},
	publisher = {Springer-Verlag, Berlin},
	series = {Ergebnisse der Mathematik und ihrer Grenzgebiete (3) [Results in Mathematics and Related Areas (3)]},
	title = {Ergodic theory and differentiable dynamics},
	url = {https://doi.org/10.1007/978-3-642-70335-5},
	volume = {8},
	year = {1987},
	Bdsk-Url-1 = {https://doi.org/10.1007/978-3-642-70335-5}}

@article{MR0399423,
	author = {Zemljakov, Alexander N. and Katok, Anatole B.},
	fjournal = {Akademiya Nauk SSSR. Matematicheskie Zametki},
	issn = {0025-567X},
	journal = {Mat. Zametki},
	mrclass = {28A65 (58F15)},
	mrnumber = {399423},
	mrreviewer = {D.\ Newton},
	number = {2},
	pages = {291--300},
	title = {Topological transitivity of billiards in polygons},
	volume = {18},
	year = {1975}}

@article{MR2152403,
	author = {Suzuki, Hideyuki and Ito, Shunji and Aihara, Kazuyuki},
	doi = {10.3934/dcds.2005.13.515},
	fjournal = {Discrete and Continuous Dynamical Systems. Series A},
	issn = {1078-0947,1553-5231},
	journal = {Discrete Contin. Dyn. Syst.},
	mrclass = {37E05 (37B05 37E10 37E45)},
	mrnumber = {2152403},
	mrreviewer = {Henk\ Bruin},
	number = {2},
	pages = {515--532},
	title = {Double rotations},
	url = {https://doi.org/10.3934/dcds.2005.13.515},
	volume = {13},
	year = {2005},
	Bdsk-Url-1 = {https://doi.org/10.3934/dcds.2005.13.515}}

@article{MR2013352,
	author = {Bruin, Henk and Troubetzkoy, Serge},
	doi = {10.1007/BF02785958},
	fjournal = {Israel Journal of Mathematics},
	issn = {0021-2172,1565-8511},
	journal = {Israel J. Math.},
	mrclass = {37A25 (37B99 37E10)},
	mrnumber = {2013352},
	mrreviewer = {Franco\ Vivaldi},
	pages = {125--148},
	title = {The {G}auss map on a class of interval translation mappings},
	url = {https://doi.org/10.1007/BF02785958},
	volume = {137},
	year = {2003},
	Bdsk-Url-1 = {https://doi.org/10.1007/BF02785958}}

@article{MR1905204,
	author = {Ashwin, Peter and Fu, Xin-Chu},
	doi = {10.1007/s00332-002-0477-1},
	fjournal = {Journal of Nonlinear Science},
	issn = {0938-8974,1432-1467},
	journal = {J. Nonlinear Sci.},
	mrclass = {37E99 (37B99)},
	mrnumber = {1905204},
	mrreviewer = {J\'er\^ome\ Buzzi},
	number = {3},
	pages = {207--240},
	title = {On the geometry of orientation-preserving planar piecewise isometries},
	url = {https://doi.org/10.1007/s00332-002-0477-1},
	volume = {12},
	year = {2002},
	Bdsk-Url-1 = {https://doi.org/10.1007/s00332-002-0477-1}}

@incollection{MR2091702,
	author = {Goetz, Arek},
	booktitle = {Fractals in {G}raz 2001},
	isbn = {3-7643-7006-8},
	mrclass = {37E99 (37B10)},
	mrnumber = {2091702},
	mrreviewer = {Peter\ Ashwin},
	pages = {135--144},
	publisher = {Birkh\"auser, Basel},
	series = {Trends Math.},
	title = {Piecewise isometries---an emerging area of dynamical systems},
	year = {2003}}

@article{MR1772421,
	author = {Goetz, Arek},
	fjournal = {Illinois Journal of Mathematics},
	issn = {0019-2082,1945-6581},
	journal = {Illinois J. Math.},
	mrclass = {37B10 (37B40 37E05)},
	mrnumber = {1772421},
	mrreviewer = {Manfred\ Einsiedler},
	number = {3},
	pages = {465--478},
	title = {Dynamics of piecewise isometries},
	url = {http://projecteuclid.org/euclid.ijm/1256060408},
	volume = {44},
	year = {2000},
	Bdsk-Url-1 = {http://projecteuclid.org/euclid.ijm/1256060408}}

@article{MR1738947,
	author = {Goetz, Arek},
	doi = {10.1017/S0143385799151964},
	fjournal = {Ergodic Theory and Dynamical Systems},
	issn = {0143-3857,1469-4417},
	journal = {Ergodic Theory Dynam. Systems},
	mrclass = {37B10 (54H20)},
	mrnumber = {1738947},
	mrreviewer = {Pawe\l\ G\'ora},
	number = {6},
	pages = {1485--1501},
	title = {Sofic subshifts and piecewise isometric systems},
	url = {https://doi.org/10.1017/S0143385799151964},
	volume = {19},
	year = {1999},
	Bdsk-Url-1 = {https://doi.org/10.1017/S0143385799151964}}

@article{MR1231840,
	author = {Levitt, Gilbert},
	doi = {10.1007/BF01244321},
	fjournal = {Inventiones Mathematicae},
	issn = {0020-9910,1432-1297},
	journal = {Invent. Math.},
	mrclass = {58H05 (57R30 58F18)},
	mrnumber = {1231840},
	mrreviewer = {P.\ Molino},
	number = {3},
	pages = {633--670},
	title = {La dynamique des pseudogroupes de rotations},
	url = {https://doi.org/10.1007/BF01244321},
	volume = {113},
	year = {1993},
	Bdsk-Url-1 = {https://doi.org/10.1007/BF01244321}}

@article{MR1356616,
	author = {Boshernitzan, Michael and Kornfeld, Isaac},
	doi = {10.1017/S0143385700009652},
	fjournal = {Ergodic Theory and Dynamical Systems},
	issn = {0143-3857,1469-4417},
	journal = {Ergodic Theory Dynam. Systems},
	mrclass = {58F03 (58F11)},
	mrnumber = {1356616},
	mrreviewer = {Jos\'e\ Miguel\ Moreno},
	number = {5},
	pages = {821--832},
	title = {Interval translation mappings},
	url = {https://doi.org/10.1017/S0143385700009652},
	volume = {15},
	year = {1995},
	Bdsk-Url-1 = {https://doi.org/10.1017/S0143385700009652}}

@article{MR4441154,
	author = {Blank, Michael},
	doi = {10.1016/j.aim.2022.108529},
	fjournal = {Advances in Mathematics},
	issn = {0001-8708,1090-2082},
	journal = {Adv. Math.},
	mrclass = {37A05 (37B05 37E30)},
	mrnumber = {4441154},
	pages = {Paper No. 108529, 20},
	title = {Invariant measures of torus piecewise isometries},
	url = {https://doi.org/10.1016/j.aim.2022.108529},
	volume = {406},
	year = {2022},
	Bdsk-Url-1 = {https://doi.org/10.1016/j.aim.2022.108529}}

@article{MR4082258,
	author = {Ashwin, Peter and Goetz, Arek and Peres, Pedro and Rodrigues, Ana},
	doi = {10.1017/etds.2018.112},
	fjournal = {Ergodic Theory and Dynamical Systems},
	issn = {0143-3857,1469-4417},
	journal = {Ergodic Theory Dynam. Systems},
	mrclass = {37E05 (37A20 37E20)},
	mrnumber = {4082258},
	mrreviewer = {Ronaldo\ Alves\ Garcia},
	number = {5},
	pages = {1153--1179},
	title = {Embeddings of interval exchange transformations into planar piecewise isometries},
	url = {https://doi.org/10.1017/etds.2018.112},
	volume = {40},
	year = {2020},
	Bdsk-Url-1 = {https://doi.org/10.1017/etds.2018.112}}

@article{MR4032960,
	author = {Yi, Ren},
	doi = {10.1080/10586458.2017.1422159},
	fjournal = {Experimental Mathematics},
	issn = {1058-6458,1944-950X},
	journal = {Exp. Math.},
	mrclass = {37E99},
	mrnumber = {4032960},
	number = {4},
	pages = {456--474},
	title = {The triple lattice {PET}s},
	url = {https://doi.org/10.1080/10586458.2017.1422159},
	volume = {28},
	year = {2019},
	Bdsk-Url-1 = {https://doi.org/10.1080/10586458.2017.1422159}}

@article{MR3124735,
	author = {Volk, Denis},
	doi = {10.3934/dcds.2014.34.2307},
	fjournal = {Discrete and Continuous Dynamical Systems. Series A},
	issn = {1078-0947,1553-5231},
	journal = {Discrete Contin. Dyn. Syst.},
	mrclass = {37C70 (37C05 37C20 37D20 37D45)},
	mrnumber = {3124735},
	number = {5},
	pages = {2307--2314},
	title = {Almost every interval translation map of three intervals is finite type},
	url = {https://doi.org/10.3934/dcds.2014.34.2307},
	volume = {34},
	year = {2014},
	Bdsk-Url-1 = {https://doi.org/10.3934/dcds.2014.34.2307}}

@article{MR3010377,
	author = {Hooper, W. Patrick},
	doi = {10.1007/s00222-012-0393-4},
	fjournal = {Inventiones Mathematicae},
	issn = {0020-9910,1432-1297},
	journal = {Invent. Math.},
	mrclass = {37Exx (51M20)},
	mrnumber = {3010377},
	number = {2},
	pages = {255--320},
	title = {Renormalization of polygon exchange maps arising from corner percolation},
	url = {https://doi.org/10.1007/s00222-012-0393-4},
	volume = {191},
	year = {2013},
	Bdsk-Url-1 = {https://doi.org/10.1007/s00222-012-0393-4}}

@article{MR2486783,
	author = {Goetz, Arek and Quas, Anthony},
	doi = {10.1017/S0143385708080280},
	fjournal = {Ergodic Theory and Dynamical Systems},
	issn = {0143-3857,1469-4417},
	journal = {Ergodic Theory Dynam. Systems},
	mrclass = {37E15 (28D05 37C25)},
	mrnumber = {2486783},
	mrreviewer = {Nicolas\ Bedaride},
	number = {2},
	pages = {545--568},
	title = {Global properties of a family of piecewise isometries},
	url = {https://doi.org/10.1017/S0143385708080280},
	volume = {29},
	year = {2009},
	Bdsk-Url-1 = {https://doi.org/10.1017/S0143385708080280}}

@article{MR2221800,
	author = {Vivaldi, Franco and Lowenstein, John H.},
	doi = {10.1088/0951-7715/19/5/004},
	fjournal = {Nonlinearity},
	issn = {0951-7715,1361-6544},
	journal = {Nonlinearity},
	mrclass = {37E99 (11Z05 37G15)},
	mrnumber = {2221800},
	mrreviewer = {Byungik\ Kahng},
	number = {5},
	pages = {1069--1097},
	title = {Arithmetical properties of a family of irrational piecewise rotations},
	url = {https://doi.org/10.1088/0951-7715/19/5/004},
	volume = {19},
	year = {2006},
	Bdsk-Url-1 = {https://doi.org/10.1088/0951-7715/19/5/004}}

@article{MR2039048,
	author = {Lowenstein, John H. and Kouptsov, Konstantin L. and Vivaldi, Franco},
	doi = {10.1088/0951-7715/17/2/001},
	fjournal = {Nonlinearity},
	issn = {0951-7715,1361-6544},
	journal = {Nonlinearity},
	mrclass = {37M25 (28A78 28A80 37B99 37C45 37E99)},
	mrnumber = {2039048},
	mrreviewer = {Ali\ Messaoudi},
	number = {2},
	pages = {371--395},
	title = {Recursive tiling and geometry of piecewise rotations by {$\pi/7$}},
	url = {https://doi.org/10.1088/0951-7715/17/2/001},
	volume = {17},
	year = {2004},
	Bdsk-Url-1 = {https://doi.org/10.1088/0951-7715/17/2/001}}

@article{MR1992662,
	author = {Boshernitzan, Michael and Goetz, Arek},
	doi = {10.1017/S0143385702001396},
	fjournal = {Ergodic Theory and Dynamical Systems},
	issn = {0143-3857,1469-4417},
	journal = {Ergodic Theory Dynam. Systems},
	mrclass = {37B25 (37A05 37E99)},
	mrnumber = {1992662},
	mrreviewer = {Peter\ Ashwin},
	number = {3},
	pages = {759--770},
	title = {A dichotomy for a two-parameter piecewise rotation},
	url = {https://doi.org/10.1017/S0143385702001396},
	volume = {23},
	year = {2003},
	Bdsk-Url-1 = {https://doi.org/10.1017/S0143385702001396}}

@article{MR1938473,
	author = {Kouptsov, Konstantin L. and Lowenstein, John H. and Vivaldi, Franco},
	doi = {10.1088/0951-7715/15/6/306},
	fjournal = {Nonlinearity},
	issn = {0951-7715,1361-6544},
	journal = {Nonlinearity},
	mrclass = {37E99 (37C25 37E20 37M99)},
	mrnumber = {1938473},
	number = {6},
	pages = {1795--1842},
	title = {Quadratic rational rotations of the torus and dual lattice maps},
	url = {https://doi.org/10.1088/0951-7715/15/6/306},
	volume = {15},
	year = {2002},
	Bdsk-Url-1 = {https://doi.org/10.1088/0951-7715/15/6/306}}

@article {MR644019,
    AUTHOR = {Veech, William A.},
     TITLE = {Gauss measures for transformations on the space of interval
              exchange maps},
   JOURNAL = {Ann. of Math. (2)},
  FJOURNAL = {Annals of Mathematics. Second Series},
    VOLUME = {115},
      YEAR = {1982},
    NUMBER = {1},
     PAGES = {201--242},
      ISSN = {0003-486X},
   MRCLASS = {28D05 (58F11)},
  MRNUMBER = {644019},
MRREVIEWER = {Michael\ Keane},
       DOI = {10.2307/1971391},
       URL = {https://doi.org/10.2307/1971391},
}

@article {MR644018,
    AUTHOR = {Masur, Howard},
     TITLE = {Interval exchange transformations and measured foliations},
   JOURNAL = {Ann. of Math. (2)},
  FJOURNAL = {Annals of Mathematics. Second Series},
    VOLUME = {115},
      YEAR = {1982},
    NUMBER = {1},
     PAGES = {169--200},
      ISSN = {0003-486X},
   MRCLASS = {28D05 (10K10 58F11 58F18)},
  MRNUMBER = {644018},
MRREVIEWER = {D.\ Newton},
       DOI = {10.2307/1971341},
       URL = {https://doi.org/10.2307/1971341},
}

@article {MR1393518,
    AUTHOR = {Zorich, Anton},
     TITLE = {Finite {G}auss measure on the space of interval exchange
              transformations. {L}yapunov exponents},
   JOURNAL = {Ann. Inst. Fourier (Grenoble)},
  FJOURNAL = {Universit\'e{} de Grenoble. Annales de l'Institut Fourier},
    VOLUME = {46},
      YEAR = {1996},
    NUMBER = {2},
     PAGES = {325--370},
      ISSN = {0373-0956,1777-5310},
   MRCLASS = {58F11 (28D05)},
  MRNUMBER = {1393518},
MRREVIEWER = {Vadim\ A.\ Ka\u imanovich},
       DOI = {10.5802/aif.1517},
       URL = {https://doi.org/10.5802/aif.1517},
}

@incollection {MR1733872,
    AUTHOR = {Zorich, Anton},
     TITLE = {How do the leaves of a closed {$1$}-form wind around a
              surface?},
 BOOKTITLE = {Pseudoperiodic topology},
    SERIES = {Amer. Math. Soc. Transl. Ser. 2},
    VOLUME = {197},
     PAGES = {135--178},
 PUBLISHER = {Amer. Math. Soc., Providence, RI},
      YEAR = {1999},
      ISBN = {0-8218-2094-X},
   MRCLASS = {57M50 (37C10 37C85 37E35)},
  MRNUMBER = {1733872},
MRREVIEWER = {Boris\ Hasselblatt},
       DOI = {10.1090/trans2/197/05},
       URL = {https://doi.org/10.1090/trans2/197/05},
}

@incollection {MR2261104,
    AUTHOR = {Zorich, Anton},
     TITLE = {Flat surfaces},
 BOOKTITLE = {Frontiers in number theory, physics, and geometry. {I}},
     PAGES = {437--583},
 PUBLISHER = {Springer, Berlin},
      YEAR = {2006},
      ISBN = {978-3-540-23189-9; 3-540-23189-7},
   MRCLASS = {37D40 (30F30 32G15 37D50 57M50)},
  MRNUMBER = {2261104},
MRREVIEWER = {Thomas\ A.\ Schmidt},
}

@book {MR2168892,
    AUTHOR = {Tabachnikov, Serge},
     TITLE = {Geometry and billiards},
    SERIES = {Student Mathematical Library},
    VOLUME = {30},
 PUBLISHER = {American Mathematical Society, Providence},
      YEAR = {2005},
     PAGES = {xii+176},
      ISBN = {0-8218-3919-5},
   MRCLASS = {51-02 (37-01 37D50 37J10 70H05 82C05)},
  MRNUMBER = {2168892},
MRREVIEWER = {Roberto\ Markarian},
       DOI = {10.1090/stml/030},
       URL = {https://doi.org/10.1090/stml/030},
}

@book {MR2809109,
    AUTHOR = {Schwartz, Richard Evan},
     TITLE = {Mostly surfaces},
    SERIES = {Student Mathematical Library},
    VOLUME = {60},
 PUBLISHER = {American Mathematical Society, Providence, RI},
      YEAR = {2011},
     PAGES = {xiv+314},
      ISBN = {978-0-8218-5368-9},
   MRCLASS = {57-01 (30-01 30Fxx 57N05)},
  MRNUMBER = {2809109},
MRREVIEWER = {Bruno\ P.\ Zimmermann},
       DOI = {10.1090/stml/060},
       URL = {https://doi.org/10.1090/stml/060},
}

@incollection{MASUR20021015,
title = {Chapter 13 Rational billiards and flat structures},
editor = {Boris Hasselblatt and Anatole Katok},
series = {Handbook of Dynamical Systems},
publisher = {Elsevier Science},
volume = {1},
pages = {1015-1089},
year = {2002},
issn = {1874-575X},
doi = {https://doi.org/10.1016/S1874-575X(02)80015-7},
url = {https://www.sciencedirect.com/science/article/pii/S1874575X02800157},
author = {Howard Masur and Serge Tabachnikov},
}

\end{document}